%%%%%%%%%%%%%%%%%%%%%%%%%%%%%%%%%%%%%%%%%%%%%%%%%%%%%%%%%%%%%%%%%%%%%%%%%%%%%%%%
\documentclass[11pt,a4paper,reqno]{amsart}
\usepackage[applemac]{inputenc}
\usepackage[T1]{fontenc}
\usepackage{amsmath}
\usepackage{amsthm}
\usepackage{amsfonts}
\usepackage{amssymb}
\usepackage{graphicx}
\usepackage{amsbsy}
\usepackage{mathrsfs}
\usepackage{bbm}
\addtolength{\hoffset}{-0.6cm} \addtolength{\textwidth}{1.2cm}
\addtolength{\voffset}{-0.5cm} \addtolength{\textheight}{1.0cm}
\hbadness=100000

%ENVIRONMENTS THEOREMS...
\newtheorem{theorem}{Theorem}[section]
\newtheorem{lemma}[theorem]{Lemma}

\newtheorem{proposition}[theorem]{Proposition}

\newtheorem{corollary}[theorem]{Corollary}
\newtheorem{definition}[theorem]{Definition}
\theoremstyle{remark}
\newtheorem{remark}[theorem]{\it \bf{Remark}\/}

\numberwithin{equation}{section}
\catcode`@=11
\def\section{\@startsection{section}{1}%
  \z@{1.5\linespacing\@plus\linespacing}{.5\linespacing}%
  {\normalfont\bfseries\large\centering}}
\catcode`@=12
%
%%%Alias%%%%%%%%%%%%%%%%%%%%%%%%%%%%%
%%%%%%%%%%%%%%%%%%%%%%%%%%%%%%%%%%
\newcommand{\be}{\begin{equation}}
\newcommand{\ee}{\end{equation}}
\newcommand{\bea}{\begin{eqnarray}}
\newcommand{\eea}{\end{eqnarray}}
\newcommand{\bee}{\begin{eqnarray*}}
\newcommand{\eee}{\end{eqnarray*}}

\def\pa{\partial}

\def\fref#1{{\rm (\ref{#1})}}

\def\Bb{\bar B}

\catcode`@=11
\def\supess{\mathop{\operator@font Sup\,ess}}
\catcode`@=12

\def\bt{\tilde{b}}

\def\RR{\mathbb{R}}

\def\e{\varepsilon}

\def\bar#1{{\overline #1}}
\def\fref#1{{\rm (\ref{#1})}}

\def\R2+{\RR ^2_+}

\def\lsl{\frac{\lambda_s}{\lambda}}

\def\pa{\partial}

\def\lim{\mathop{\rm lim}}

\def\sup{\mathop{\rm sup}}

\def\l{\lambda}

\def\log{{\rm log}}

\def\et{\tilde{\e}}

\def\lsl{\frac{\lambda_s}{\lambda}}
\def\xsl{\frac{x_s}{\lambda}}

\def\cal{\mathcal}

\def\matchal{\mathcal}

\def\pa{\partial}

\def\tb{\tilde{b}}
\def\et{\tilde{\e}}

\def\pa{\partial}

\def\eoy{e^{\omega y}}

\def\eoyt{e^{\frac \omega 3 y}}
\def\NNb{\overline{\mathcal N }}
\def\NNbl{\overline{\mathcal N}_{\rm loc}}

\title[Blow up for the critical gKdV II]{Blow up for the critical gKdV equation. \\ II: Minimal mass dynamics}
\author[Y. Martel]{Yvan Martel}
\address{Universit\'e de Versailles St-Quentin and Institut Universitaire de France, LMV  CNRS UMR8100.
Current address : Ecole polytechnique, CMLS UMR7640}
\email{yvan.martel@polytechnique.edu}
\author[F. Merle]{Frank Merle}
\address{Universit\'e de Cergy Pontoise and Institut des Hautes \'Etudes Scientifiques, AGM CNRS UMR8088}
\email{merle@math.u-cergy.fr}
\author[P. Rapha\"el]{Pierre Rapha\"el}
\address{Universit\'e Paul Sabatier and  Institut Universitaire de France, IMT CNRS UMR          5219.
Current address : Universit\'e de Nice Sophia-Antipolis, Laboratoire J.A. Dieudonn\'e
 CNRS UMR7351
}
\email{pierre.raphael@unice.fr}
\begin{document}

\begin{abstract}
We consider the mass critical (gKdV) equation $u_t + (u_{xx} + u^5)_x =0$ for initial data in $H^1$. We first prove the existence and uniqueness in the energy space of a minimal mass blow up solution and give a sharp description of the corresponding blow up soliton-like bubble. We then show that this solution is the universal attractor of all solutions near the ground state which have  a defocusing behavior. This allows us to sharpen the description of  near soliton dynamics obtained in \cite{MMR1}. \end{abstract}

\maketitle

\section{Introduction}

\subsection{Setting of the problem}
We continue the study of the mass critical generalized Korteweg--de Vries equation:
\begin{equation}\label{kdv}
{\rm (gKdV)}\quad  \left\{\begin{array}{ll}
 u_t + (u_{xx} + u^5)_x =0, \quad & (t,x)\in [0,T)\times\RR, \\
 u(0,x)= u_0(x), & x\in {\mathbb R},
\end{array}
\right.
\end{equation}
initiated in  Part I \cite{MMR1}.
The Cauchy problem is locally well posed in the energy space $H^1$ from Kenig, Ponce and Vega \cite{KPV,KPV2}, and given $u_0 \in H^1$, there exists a unique\footnote{in a certain sense} maximal solution $u(t)$ of \eqref{kdv} in $C([0,T), H^1)$ and 
\be
\label{blowucifoi}
T<+\infty \ \  \mbox{implies} \ \ \lim_{t\to T} \|u_x(t)\|_{L^2} = +\infty.
\ee
The Cauchy problem for \eqref{kdv} is also locally well-posed in $L^2$ and given  $u_0 \in L^2$, there exists a unique maximal solution $u(t)$ of \eqref{kdv} in $C([0,T), L^2)$ with either $T=+\infty$ or $$T<+\infty \ \ \mbox{and then}\ \  \|u\|_{L^5_xL^{10}_{(0,T)}}=\infty.$$
Moreover, $H^1$ solutions satisfy the conservation of mass and energy: $$M(u(t))=\int u^2(t)= M_0, \ \ E(u(t))= \frac 12 \int u_x^2(t) - \frac 16 \int u^6(t)= E_0.$$ The symmetry group of \fref{kdv} is continuous in $H^1$ and given by
$$\epsilon_0\lambda_0^{\frac12}u(\lambda_0^3 (t-t_0),\lambda_0 (x -x_0)), \ \ (\epsilon_0,\l_0,x_0,t_0)\in \{-1,1\}\times\RR^*_+\times\RR\times\RR.$$
In particular the scaling symmetry leaves the $L^2$ norm invariant and hence the problem is mass or $L^2$ critical.\\
Travelling wave solutions play a distinguished role in the analysis $$u(t,x)=Q(x-t)$$ where $Q$ is the ground state solitary wave $$Q(x) =   \left(\frac {3}{\cosh^{2}\left( 2 x\right)}\right)^{\frac14}$$ which attains the sharp Gagliardo-Nirenberg inequality, \cite{W1983}:
 \be\label{gn}
\forall v\in H^1,\quad
\int |v|^6 \leq \int v_x^2 \left(\frac {\int v^2}{\int Q^2}\right)^2.
\ee
The conservation of mass and energy and the blow up criterion \fref{blowucifoi} ensure that  $H^1$ initial data with subcritical mass $\|u_0\|_{L^2}<\|Q\|_{L^2}$ generate global in time solutions.

\subsection{The flow near the ground state} 
In the series of   works \cite{MMjmpa,MMgafa,Mjams,MMannals, MMjams}, Martel and Merle obtain the first qualitative information on the flow for small super critical mass initial data $\|Q\|_{L^2}<\|u_0\|_{L^2}<\|Q\|_{L^2}+\alpha^*$, $0<\alpha^*\ll1$, in particular the existence of finite time blow up solutions for $E_0<0$ and the classification of $Q$ as the unique global attractor of all $H^1$ blow up solutions.\\

In Part I \cite{MMR1}, we have revisited the blow up analysis in light of recent developments related to blow up for the mass critical Schr\"odinger equation \cite{MRgafa, MRinvent, MRannals, MRcmp, MRjams, MRS}  and energy critical geometrical equations \cite{RR2009, MRR, Rstud}.

More precisely, let the set of initial data
$$
\mathcal{A}=
\left\{
u_0=Q+\e_0   \hbox{ with } \|\e_0\|_{H^1}<\alpha_0 \hbox{ and }
\int_{y>0} y^{10}\e_0^2< 1
\right\}, $$
and consider the $L^2$ tube around the family of solitary waves $$\mathcal T_{\alpha^*}=\left\{u\in H^1\ \ \mbox{with}\ \ \inf_{\l_0>0, \, x_0\in \RR}
\|u- \frac{1}{\l_0^{\frac12}}Q\left(\frac{ .-x_0}{\l_0} \right)\|_{L^2} <\alpha^*\right\}.$$ 

In \cite{MMR1}, we have proved the following (see Theorems 1.1 and 1.2 in \cite{MMR1} for more details).

\begin{theorem}[Rigidity of the flow in $\mathcal A$, \cite{MMR1}]
\label{thmmoe}
Let $0<\alpha_0\ll\alpha^* \ll1$ and $u_0\in \mathcal A$. Let $u\in \matchal C([0,T), H^1)$ be the corresponding solution to \fref{kdv}. Then, one of the following three scenarios occurs:\\
{\em (Blow up)}: the solution blows up in finite time $0<T<+\infty$ in the universal regime 
\be\label{loiblowup}
\|u(t)\|_{H^1}=\frac {\ell(u_0)+o(1)}{T-t}\ \ \mbox{as}\ \ t\to T, \ \ \ell(u_0)>0.
\ee
{\em (Soliton)}: the solution is global $T=+\infty$ and converges asymptotically to a solitary wave.\\
{(\em Exit)}: the solution leaves the tube $\matchal T_{\alpha^*}$ at some time $0<t^*<+\infty$.

Moreover, the scenarios {\em (Blow up)} and {\em (Exit)} are {\it stable} by small perturbation of the data in $\mathcal A$.
\end{theorem}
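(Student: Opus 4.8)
The plan is to run a bootstrap argument around a sharp modulated decomposition of $u(t)$ on the soliton manifold, control the remainder by a mixed energy--virial Lyapunov functional, and then reduce the whole analysis to the explicit two--dimensional dynamical system governing the scaling and self--similar parameters; the three scenarios will correspond exactly to the three regions of its phase portrait. First I would introduce, instead of expanding directly around $Q$, a one--parameter family of refined profiles $Q_b$, $|b|\ll1$, which are approximate solutions of the self--similar equation $(Q_b)_{yy}-Q_b+b\,\Lambda Q_b+Q_b^5=\Psi_b$, where $\Lambda=\tfrac12+y\partial_y$ generates the $L^2$ scaling and the error $\Psi_b$ is exponentially localised and formally of size $O(b^2)$ in the soliton region. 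As long as $u(t)$ stays in the tube $\mathcal T_{\alpha^*}$, a standard modulation (implicit function) argument produces $C^1$ parameters $(\lambda,b,x)$ with
\[
u(t,x)=\frac{1}{\lambda(t)^{1/2}}\,\big(Q_{b(t)}+\varepsilon\big)\Big(t,\tfrac{x-x(t)}{\lambda(t)}\Big),
\]
where $\varepsilon(t,\cdot)$ is forced to satisfy a fixed set of orthogonality conditions (against $\Lambda Q$, $yQ$ and one further adapted direction) which make the decomposition unique and prescribe the evolution of $(\lambda,b,x)$.

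Inserting this into \fref{kdv} and projecting on the orthogonality directions gives, in the rescaled time $s$ with $\frac{ds}{dt}=\lambda^{-3}$, modulation bounds of the schematic form $\big|\tfrac{\lambda_s}{\lambda}+b\big|+\big|b_s+c\,b^2\big|+\big|\tfrac{x_s}{\lambda}-1\big|\lesssim\mathcal N(\varepsilon)+b^{2}(\text{small})$ for a universal $c>0$ and a suitable local norm $\mathcal N(\varepsilon)$ of $\varepsilon$; to leading order $(b,\lambda)$ then obeys the explicit system $\lambda_s/\lambda=-b$, $b_s=-c\,b^2$, for which $b/\lambda^2$ is almost conserved, and the sign of $b$ is what will select the scenario. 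The analytic heart is the construction of a mixed energy--virial functional of the schematic form
\[
\mathcal F(t)=\int\Big(\varepsilon_y^2+\varepsilon^2\psi_b-\tfrac13\big((Q_b+\varepsilon)^6-Q_b^6-6Q_b^5\varepsilon\big)\varphi_b\Big),
\]
with $b$--dependent cut-off weights $\varphi_b,\psi_b$ tuned to the self--similar scaling, enjoying coercivity $\mathcal N(\varepsilon)^2\lesssim\mathcal F$ and, for a suitable exponent $\sigma>0$, an almost--monotonicity formula $\frac{d}{ds}\big(\mathcal F/\lambda^{2\sigma}\big)\le(\text{controlled error})$. Coercivity rests on the spectral structure of the gKdV linearisation around $Q$ under the chosen orthogonality conditions; the favourable sign of the time derivative uses crucially that the gKdV flow transports mass to the \emph{left}, so that the part of $\varepsilon$ living on $\{y>0\}$ is dynamically damped --- this is exactly what the weighted smallness $\int_{y>0}y^{10}\varepsilon_0^2<1$ in the definition of $\mathcal A$ is there to exploit. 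Together with the modulation bounds and the conservation laws, this gives $\mathcal N(\varepsilon(t))^2\lesssim\mathcal F(t)\lesssim b(t)^2+(\text{data})$ uniformly while $u(t)\in\mathcal T_{\alpha^*}$, closing the bootstrap.

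With $\varepsilon$ thus controlled, $(b,\lambda)$ is a genuine perturbation of $\lambda_s/\lambda=-b$, $b_s=-c\,b^2$, and the monotonicity moreover forces the sign of $b/\lambda^2$ to be essentially rigid along the flow. Let $t^*\in(0,+\infty]$ be the maximal time of stay in $\mathcal T_{\alpha^*}$. If $b$ stays bounded below by a positive multiple of $\lambda^2$ on $[0,t^*)$, then $\lambda_s/\lambda<0$ and, translating back to the original time, $t^*=T<+\infty$ with $\lambda(t)=\ell'(u_0)(T-t)(1+o(1))$, $\ell'(u_0)>0$; this is the universal law \fref{loiblowup}, and the monotonicity bounds also prevent the solution from leaving the tube before $T$, so we are in the (Blow up) case. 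If instead $b(t_1)\le-c'\lambda(t_1)^2$ at some time $t_1$, then $b$ drifts further negative, $\lambda$ grows, and $\mathcal F$ and the geometric parameters cannot stay small: the solution exits $\mathcal T_{\alpha^*}$ in finite time, the (Exit) case. In the remaining borderline regime $b(t)/\lambda(t)^2\to0$ one obtains $b(t)\to0$, $\lambda(t)\to\lambda_\infty>0$ and $\mathcal N(\varepsilon(t))\to0$, and a soft asymptotic--stability argument identifies the limit as a solitary wave, the (Soliton) case. Both (Blow up) and (Exit) are defined by an \emph{open} condition on $b/\lambda^2$ (bounded below, resp. $\le -c'$ at some time $t_1$, which is open for $u(t_1)$ hence, by continuity of the $H^1$ flow and of the decomposition on the tube, for $u_0\in\mathcal A$), whence their stability.

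The step I expect to be the main obstacle is the construction in the middle paragraph: choosing the weights $\varphi_b,\psi_b$ so that the virial quadratic form is at once coercive modulo the finitely many modulation directions --- using the non--self--adjoint spectral structure of the gKdV linearisation, which is far less rigid than the Schr\"odinger case --- and compatible with the $b$--dependent self--similar rescaling, so that $\frac{d}{ds}(\mathcal F/\lambda^{2\sigma})$ has a good sign up to errors that are genuinely lower order than $\mathcal N(\varepsilon)^2$. Controlling the right tail $\{y>0\}$, where the virial weight necessarily degenerates, through the left--transport monotonicity of the equation and matching it to the interior coercivity is where most of the technical work concentrates.
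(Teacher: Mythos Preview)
This theorem is not proved in the present paper; it is quoted from the companion work \cite{MMR1}, and Section~2 here only recalls the toolkit from that paper (the profiles $Q_b$ of Lemma~\ref{cl:2}, the modulation decomposition of Lemma~\ref{le:2}, the Lyapunov functionals $\mathcal F_j$ of Proposition~\ref{propasymtp}, and the rigidity of $b/\lambda^2$ in Lemma~\ref{le:2.7}). Your sketch is essentially that architecture and is correct in outline; the minor discrepancies are that in \cite{MMR1} the weights $\varphi_B,\psi_B$ depend on a fixed large parameter $B$ rather than on the dynamical $b$, the profile equation carries an outer $y$--derivative, $-\Psi_b=(Q_b''-Q_b+Q_b^5)'+b\Lambda Q_b$, and the constant in $b_s+2b^2\approx0$ is exactly $2$, which is precisely what makes $b/\lambda^2$ the almost--conserved quantity driving the trichotomy.
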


Our aim in this paper is  first to classify the 
{\it minimal mass} dynamics $\|u_0\|_{L^2}=\|Q\|_{L^2}$ and then, from this classification,
to complete the description obtained in Theorem~\ref{thmmoe} in the (Exit) regime.
Indeed, we will show that for $\alpha_0$ small enough, the (Exit) case  is directly connected to the understanding of minimal mass dynamics.

\subsection{Minimal mass dynamics} The question of existence and possibly uniqueness of minimal blow up dynamics for dispersive and parabolic PDE's has motivated several works since the pioneering result by Merle \cite{Mduke} for the mass critical nonlinear Schr\"odinger equation: 
\begin{equation}\label{nls}
{\rm (NLS)}  \quad  {\rm i} \partial_t u+\Delta u +|u|^{\frac 4N} u=0, \ \ (t,x)\in \RR\times \RR^N. \end{equation}
Let us recall that for (NLS), the pseudo conformal symmetry generates an {\it explicit} minimal mass blow up solution \be
\label{vneionveonv}
S_{\rm NLS}(t,x)=\frac 1 {t^{N/2}} e^{- i\frac {|x|^2}{4t}- \frac i t  } Q_{\rm NLS}\left(\frac xt\right)
\ee where $Q_{\rm NLS}$ is the ground state solution to $$\Delta Q_{\rm NLS}-Q_{\rm NLS}+Q_{\rm NLS}^{1+\frac 4N}=0, \ \ Q>0, \ \ Q\in H^1.$$ 

Merle proved in \cite{Mduke} that $S_{\rm NLS}$ is the unique (up to the symmetries of the equation) minimal mass blow up element in the energy space.  The proof heavily relies on the pseudo conformal symmetry. Such minimal blow up dynamics have also been exhibited for the energy critical NLS and wave problems \cite{DM}, \cite{DM2}, using the virial algebra and a fixed point argument.
For the inhomogeneous mass critical (NLS) in dimension 2: $$  {\rm i} \partial_t u+\Delta u +k(x)|u|^2 u=0,
$$
while Merle  \cite{Me} derived  sufficient conditions on $k(x)$ to ensure the {\it nonexistence} of minimal elements,
 Rapha\"el and Szeftel \cite{RS2010}  introduced a more dynamical approach to existence and uniqueness under a necessary and sufficient condition on $k(x)$.   A robust energy method is implemented to completely classify the minimal mass blow up, in regimes such that the inhomogeneity $k$ influences dramatically the bubble of concentration \fref{vneionveonv} -- in contrast with    direct perturbative methods developed in \cite{BW}, \cite{BGT}, \cite{BCD}, see also \cite{KLR} for existence in the  one dimensional half wave problem.
% $$i\pa_tu=\sqrt{-\Delta }u-u|u|^2,  \ \ (t,x)\in \RR\times \RR.$$
%, which displays neither conformal invariance nor Strichartz type dispersion.  

\medskip

Recall that for the mass critical (gKdV) problem \fref{kdv}, Martel and Merle   \cite{MMduke} obtained the following {\it global existence} result for minimal mass solutions with   decay on the right.

\begin{theorem}[Global existence at minimal mass, \cite{MMduke}]\label{thmmduke}
Let $u_0\in H^1$ with $\|u_0\|_{L^2}=\|Q\|_{L^2}$ and 
\be
\label{conditiondecay}
\sup_{x_0>0} x_0^3\int_{x>x_0}u_0^2(x)dx<+\infty.
\ee
Then, the corresponding solution $u(t)$ of \eqref{kdv} is global for $t>0$.
\end{theorem}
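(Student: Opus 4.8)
The plan is to argue by contradiction: suppose the forward maximal time of existence is $T<+\infty$, so that $\|u_x(t)\|_{L^2}\to+\infty$ as $t\to T$ by \eqref{blowucifoi}. Since $\|u_0\|_{L^2}=\|Q\|_{L^2}$, conservation of mass and energy together with the sharp Gagliardo--Nirenberg inequality \eqref{gn} — and the fact that $Q$ is, up to the symmetries of \eqref{kdv}, its only optimizer — force the Gagliardo--Nirenberg deficit of $u(t)$ to tend to $0$ as $t\to T$. A concentration-compactness argument then gives $\lambda(t)>0$, $x(t)\in\RR$, $\epsilon_0(t)\in\{\pm1\}$ with $\lambda(t)\to0$ and
\[
\epsilon_0(t)\,\lambda(t)^{1/2}\,u\bigl(t,\lambda(t)\cdot+x(t)\bigr)\longrightarrow Q\quad\text{in }H^1\ \text{as }t\to T,
\]
so the renormalized flow is precompact in $H^1$; this is where the variational structure enters.

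Next, replacing $u(t,x)$ by $-u(t,-x)$ if needed, set $\epsilon_0\equiv1$ and decompose $u(t,x)=\lambda(t)^{-1/2}(Q+\varepsilon)\bigl(t,(x-x(t))/\lambda(t)\bigr)$ with orthogonality conditions on $\varepsilon$ fixing $\lambda,x$; then $\|\varepsilon(t)\|_{H^1}\to0$. The modulation equations give $x_t(t)=(1+o(1))\lambda(t)^{-2}$ as $t\to T$, so the concentration point travels to the right with speed $\asymp\lambda(t)^{-2}\to+\infty$, and for $R$ large
\[
\int_{|x-x(t)|>R\lambda(t)}u^2(t,x)\,dx\le\int_{|y|>R}Q^2+o(1)\longrightarrow0 ,
\]
so all but a vanishing fraction of the mass $\|Q\|_{L^2}^2$ lies in the shrinking window $\{|x-x(t)|<R\lambda(t)\}$. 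A further argument — using the conservation laws to rule out a minimal-mass bubble concentrating over a bounded spatial interval — shows $x(t)\to+\infty$ as $t\to T$.

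Finally, for a fixed smooth profile $\psi$ with $\psi'\ge0$, $\psi(-\infty)=0$, $\psi(+\infty)=1$, $|\psi''|+|\psi'''|\lesssim\psi'$, and any $C^1$ nondecreasing weight $\gamma$, the equation $u_t=-(u_{xx}+u^5)_x$ yields (all derivatives of $\psi$ evaluated at $x-\gamma(t)$)
\[
\frac{d}{dt}\int u^2(t,x)\,\psi(x-\gamma(t))\,dx=-\gamma_t\!\int u^2\psi'-3\!\int u_x^2\psi'+\int u^2\psi'''+\tfrac53\!\int u^6\psi'.
\]
When the $L^2$ mass of $u$ over $\mathrm{supp}\,\psi'(\cdot-\gamma(t))$ is small, a one-dimensional Gagliardo--Nirenberg estimate lets $\int u^6\psi'$ be absorbed into $\int u_x^2\psi'$ and $\int u^2\psi'''$ into $\int u^2\psi'$, so that the right-hand side is $\le0$ as soon as $\gamma_t$ exceeds a fixed constant. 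Using such monotone quantities with well-chosen weights $\gamma$ — slow on an initial interval $[0,t_1]$ where $\|u_x\|_{L^2}$ stays bounded, then tied to $\lambda(t)^{-2}$ in the soliton regime, always keeping $\mathrm{supp}\,\psi'(\cdot-\gamma(t))$ in a region of small mass (away from the bubble, where by the previous step the solution has $L^\infty$-size $\lesssim\lambda^{-1/2}(\|\varepsilon\|_{H^1}+e^{-c/\lambda})$) — one propagates the pointwise spatial decay $\sup_{x_0>0}x_0^3\int_{x>x_0}u_0^2<\infty$ along the flow. Comparing this propagated decay at a time close to $T$ with the fact, from the previous step, that a fixed fraction of the mass $\|Q\|_{L^2}^2$ has by then escaped to arbitrarily large positive $x$, one obtains $\tfrac12\|Q\|_{L^2}^2\le C\,x_0^{-3}$ with $x_0$ arbitrarily large, a contradiction. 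Hence $T=+\infty$.

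The difficult part is this last step (and, within the second step, the assertion $x(t)\to+\infty$): one must transport the spatial decay of $u_0$ along the nonlinear flow with a loss compatible with the rate at which the concentration bubble escapes to $+\infty$. What makes this feasible is precisely the structure obtained above — essentially all the mass is trapped in the tiny window $\{|x-x(t)|<R\lambda(t)\}$, so that away from the bubble the nonlinear flux $\int u^6\psi'$ in the monotonicity identity is negligible and the mass to the right of a suitably moving gate is almost monotone; reconciling the choice of the gate's speed on the pre-concentration interval with the acceleration of the bubble is the delicate point. By contrast, the first two steps are standard: the variational characterization of $Q$ together with $H^1$-compactness of minimal-mass blow-up, and modulation theory near the soliton.
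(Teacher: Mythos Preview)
The paper does not prove this theorem: it is quoted from \cite{MMduke} and used as a black box (see its invocation in step~5 of the proof of Proposition~\ref{sharpbounds}). So there is no ``paper's own proof'' to compare against. Your sketch is, in broad outline, the strategy of \cite{MMduke}: minimal mass blow up forces $H^1$ convergence of the rescaled solution to $Q$; modulation gives $x_t\sim\lambda^{-2}>0$; Kato-type monotonicity of localized $L^2$ mass propagates the right-hand decay of $u_0$ forward in time; and this is incompatible with almost all the mass escaping to large positive~$x$.

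One correction: the map $u(t,x)\mapsto -u(t,-x)$ is \emph{not} a symmetry of \eqref{kdv} for forward time (the equation is only invariant under $(t,x)\mapsto(-t,-x)$, together with $u\mapsto -u$). To normalize $\epsilon_0\equiv 1$ you only need $u\mapsto -u$, which preserves the decay hypothesis \eqref{conditiondecay}; the spatial reflection would send the decay to the wrong side and must be avoided. The direction of travel of the bubble is not fixed by symmetry but by the modulation equation $x_t\sim\lambda^{-2}>0$ itself, as you indicate.

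As you rightly flag, the genuinely delicate points are (a) showing $x(t)\to+\infty$ (equivalently, ruling out concentration over a bounded interval) and (b) choosing the gate $\gamma(t)$ so that the monotonicity inequality is valid on the whole interval $[0,T)$ while the propagated bound still beats the escape of the bubble. In \cite{MMduke} these are handled together: one exploits that \emph{all} the mass concentrates (not just a bubble with a remainder), so outside the window $|x-x(t)|\le R\lambda(t)$ the $L^2$ density is $o(1)$ uniformly, which is exactly what is needed to absorb $\int u^6\psi'$ and close the monotonicity. Your sketch captures this mechanism correctly; what remains is the quantitative bookkeeping, which is the content of \cite{MMduke}.
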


In other words, minimal mass blow up is not compatible with the decay \fref{conditiondecay}. This is in agreement with the analysis in \cite{MMR1} where the threshold dynamics for data in $\mathcal A$ between the stable (Blow up) and (Exit) regimes is proved to correspond  to  a solitary wave behavior -- and not to a minimal blow up. We refer to \cite{MRS} for a further discussion of threshold dynamics. 

%%%%%%%%%%%%%%%%%%%%%%%%%%%%%%%%%%%%%%%

\subsection{Statement of the result}

 %%%%%%%%%%%%%%%%%%%%%%%%%%%%%%%%%%%%%%%

The first main result of this paper is the {\it existence and uniqueness} in the energy space of a minimal mass blow up element: 

\begin{theorem}[Existence and uniqueness of the minimal mass blow up element]
\label{th:1}
\quad \\
{\rm (i) Existence.} There exists a   solution $S(t)\in \mathcal C((0,+\infty),H^1)$  to \fref{kdv} with minimal mass $\|S(t)\|_{L^2}=\|Q\|_{L^2}$ which blows up backward at the origin:
$$S(t,x)-\frac{1}{t^{\frac 12}}Q\left(\frac{x+ \frac 1t+\bar{c}t}{t}\right)\to 0\ \ \mbox{in}\ \ L^2\ \ \mbox{as}\ \ t\downarrow 0$$ at the speed 
\be
\label{speedlvowup}
\|S(t)\|_{H^1}\sim \frac {C^*}t \ \ \mbox{as} \ \ t\downarrow 0
\ee
for some universal constants $\bar{c},C^*$. Moreover, $S$ is smooth and well localized to the right in space: \be\label{th:1.6}
\forall x \geq 1, \quad 
S(1,x) \leq e^{-Cx}.
\ee
{\rm  (ii) Uniqueness. } Let $u_0\in H^1$ with $\|u_0\|_{L^2}=\|Q\|_{L^2}$ and assume that the corresponding solution $u(t)$ to \fref{kdv} blows up in finite time. Then $$u\equiv S$$ up to the symmetries of the flow.
  \end{theorem}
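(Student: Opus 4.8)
The plan is to construct $S(t)$ via a compactness argument based on the rigidity theory of \cite{MMR1} (Theorem \ref{thmmoe}), and then to establish uniqueness through the same rigidity together with a careful bootstrap on the defocusing/focusing dichotomy near $Q$. For existence, I would first build, for each $n$, an approximate minimal-mass solution by taking a sequence $u_{0,n}$ of smooth, well-localized-to-the-right data with $\|u_{0,n}\|_{L^2}=\|Q\|_{L^2}$ chosen so that the associated solutions enter the (Blow up) regime of Theorem~\ref{thmmoe} — concretely, one wants data sitting on the threshold between (Blow up) and (Exit) from the perspective of a rescaled profile, so that the blow-up law \eqref{loiblowup} holds with a uniform rate $\ell(u_{0,n})$ bounded above and below. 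The natural candidates come from the modulation ansatz developed in Part I: take $u_{0,n}$ to be a truncated, rescaled $Q_{b}$-type profile with $b=b_n\to 0$ and concentration scale $\l_n\to 0$ tuned so that the energy and mass are exactly those of a would-be minimal element. Running the flow backward from a fixed time, renormalizing by the modulation parameters $(\l_n(t),x_n(t))$, and extracting an $H^1_{\rm loc}$ weak limit, one produces $S(t)$ on $(0,+\infty)$; the $L^2$ mass is preserved under the weak limit precisely because it is minimal (no mass can escape to $-\infty$ by the monotonicity formulas of \cite{MMR1}, and the right-decay \eqref{conditiondecay}-type control fails by design, consistent with Theorem~\ref{thmmduke}). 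The asymptotic description $S(t,x)-t^{-1/2}Q((x+1/t+\bar c t)/t)\to 0$ in $L^2$ and the rate \eqref{speedlvowup} then follow from the sharp modulation equations: the law $\l(t)\sim t$, the translation drift $x(t)\sim -1/t-\bar c t$, and $b(t)\sim -\l_t\sim -\l$ are forced by the refined dynamical system for $(\l,x,b)$ established in Part~I, whose only minimal-mass-compatible trajectory is exactly this self-similar-in-$\l$ one. Smoothness and the exponential decay \eqref{th:1.6} are obtained by a separate regularity/monotonicity argument at the fixed time $t=1$, propagating the right-localization of the approximations $u_{0,n}$ through the flow via weighted energy estimates (the Kato-type smoothing and the exponential weight $e^{-Cx}$ in the monotonicity formula).

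For uniqueness, suppose $u_0\in H^1$ with $\|u_0\|_{L^2}=\|Q\|_{L^2}$ and $u(t)$ blows up in finite time $T^+<\infty$. By the variational characterization of $Q$ via the sharp Gagliardo–Nirenberg inequality \eqref{gn}, a minimal-mass solution that blows up must, after rescaling by $\l(t)=\|Q_x\|_{L^2}/\|u_x(t)\|_{L^2}\to 0$ and suitable translation, converge to $Q$ in $H^1$ along the blow-up — i.e. $u(t)$ enters the tube $\mathcal T_{\alpha^*}$ and in fact approaches the soliton manifold. Thus near blow-up $u$ falls into the framework of Theorem~\ref{thmmoe}, and being a blow-up solution it is in the (Blow up) scenario, so \eqref{loiblowup} holds. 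Now I would run the modulation analysis of \cite{MMR1}: write $u(t)=\frac1{\l^{1/2}}(Q_{b}+\e)(t,\frac{x-x(t)}{\l(t)})$ with orthogonality conditions fixing $(\l,x,b)$, and show that minimality of the mass forces the error $\e$ to satisfy sharper bounds than in the general theory — in particular it kills the contribution responsible for the (Exit) direction, pinning the trajectory to the unique one-parameter family of blow-up laws. The key rigidity input is that the modulation parameter $b(t)$, which in general obeys $b_t\sim -b^2 + (\text{correction})$, must for a minimal element follow the exact law matching $S$; any deviation produces either extra mass (contradicting minimality by \eqref{gn}) or the defocusing behavior that Theorem~\ref{thmmoe} shows leads to (Exit), not blow-up. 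Finally, two minimal-mass blow-up solutions with the same blow-up law differ only by the symmetry parameters $(\e_0,\l_0,x_0,t_0)$: one shows the difference $w=u-S$ (suitably renormalized and translated) satisfies a linearized equation around $Q$ for which the monotonicity/virial functionals of \cite{MMR1} — the mixed energy-virial Lyapunov functional controlling $\e$ — force $w\equiv 0$, giving $u\equiv S$ up to symmetries.

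The main obstacle I expect is the uniqueness step, specifically propagating the sharp control on $\e$ from a neighborhood of the blow-up time (where the modulation theory is clean) backward over the whole interval of existence, and closing the rigidity estimate that excludes the (Exit)-type direction for a genuinely minimal solution. In the NLS case \cite{Mduke} this is handled by the pseudo-conformal symmetry, which is unavailable here; the replacement is the mixed energy-virial functional from Part~I, but making it yield a true \emph{uniqueness} (rather than merely a classification of regimes) requires that the functional be coercive enough to control the full error including the unstable/neutral modes, which is delicate because the $b$-dynamics sits exactly at the borderline. A secondary technical difficulty is ensuring, in the existence argument, that the weak $H^1_{\rm loc}$ limit retains minimal mass and does not degenerate — this relies on the $L^2$-monotonicity formulas ruling out mass loss to the left, combined with the fact that the (Blow up) regime in Theorem~\ref{thmmoe} is stable, so the limiting object cannot escape the tube.
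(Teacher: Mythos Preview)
Your existence strategy has a structural contradiction. You propose to take data $u_{0,n}$ with \emph{exact} minimal mass $\|u_{0,n}\|_{L^2}=\|Q\|_{L^2}$ that are well-localized to the right so that they lie in $\mathcal A$ and enter the (Blow up) regime of Theorem~\ref{thmmoe}. But membership in $\mathcal A$ forces the decay condition \eqref{conditiondecay}, and then Theorem~\ref{thmmduke} says such a minimal-mass solution is \emph{global} forward in time---it cannot be in (Blow up). The paper avoids this by taking \emph{subcritical} data $u_n(0)=Q_{b_n(0)}$ with $b_n(0)=-1/n$, which are necessarily in the (Exit) regime; the minimal element is then extracted not from a blow-up sequence but from the \emph{renormalized flow at the exit time} $t_n^*$, where the defocusing bubble is shown (Lemma~\ref{le:2.4}) to converge, after rescaling by $\lambda_n(t_n^*)$, to a limit that blows up backward with minimal mass. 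This inversion---building a blow-up object as a limit of defocusing subcritical solutions---is the point you are missing, and it is also what later drives Theorem~\ref{PR:1}.

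For uniqueness, the plan to invoke Theorem~\ref{thmmoe} directly on a general minimal blow-up solution has the same obstruction: such a solution is \emph{not} a priori in $\mathcal A$, and in fact cannot be if it blows up, again by Theorem~\ref{thmmduke}. The paper's route is to first establish exponential decay on the right via the concentration Lemma~\ref{lemmadecay} (which needs the non-dispersion \eqref{dirac} and an almost-monotonicity of $\lambda$ that must itself be bootstrapped), and only then enter the monotonicity machinery of Proposition~\ref{propasymtp} to pin down the blow-up speed (Proposition~\ref{sharpbounds}). After that, the actual identification $u\equiv S$ is not obtained by the rigidity dichotomy you sketch (``extra mass or (Exit)''), but by a new Lyapunov functional for the \emph{difference} $\e=\e_2-\e_1$ (Proposition~\ref{lemmadiff}); the functional from \cite{MMR1} alone does not close because the scaling term $\frac{(\lambda_2)_s}{\lambda_2}\Lambda\e$ produces losses, and one must add a scaling-invariant correction $|s|^{-1/2}\int e^{\lambda_2 y}\e^2$ and couple its estimate with a refined law for $b=b_2-b_1$. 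This is the delicate step you correctly flag as the main obstacle, but the mechanism you propose (coercivity of the mixed energy--virial functional on the difference) is not sufficient on its own.
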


Observe that the minimal element blows up with   speed \fref{speedlvowup} which is the same as in the    (Blow up) regime obtained in Theorem \ref{thmmoe}. However, the case of (Blow up) in Theorem \ref{thmmoe} is   shown to be stable by small perturbation in $\matchal A$, while minimal mass blow up is unstable by perturbation of the data $S(0)\to (1-\e)S(0)$, $\e>0$, since the corresponding solution has subcritical mass and is thus global in time. This shows that the decay assumption to the right in Theorem \ref{thmmoe} is essential and that the minimal blow up solution has slow decay to the left\footnote{remember that it blows up backwards in time.}. The nature of the minimal blow up    is different   from the one of stable blow up.\\
 
We now relate the (Exit) case in Theorem \ref{thmmoe} to the minimal mass blow up dynamics. We claim that at the (Exit) time, the solution is $L^2$ close up to renormalization to the unique minimal solution $S(t)$. 

\begin{theorem}[Description of the (Exit) scenario]
\label{PR:1}
Let $u(t,x)$ be a solution of \eqref{kdv} corresponding to the {\em (Exit)} scenario in Theorem \ref{thmmoe}
and let $t^*_u\gg 1$ be the corresponding exit time. Then there exist  $\sigma^*=\sigma^*(\alpha^*)$ (independent of $u$) and $(\lambda_u^*,x_u^*)$ such that 
$$\left\|  (\l_u^*)^{\frac 12} u\left(t_u^*,   \l_u^* x +  x_u^*\right) -S(\sigma^*,x)\right\|_{L^2} \leq \delta_I(\alpha_0),$$
where 
$\delta_I(\alpha_0) \to 0$ as ${\alpha_0\to 0}.$
\end{theorem}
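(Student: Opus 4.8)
The plan is to run the modulation-theoretic machinery of Part I \cite{MMR1} at the exit time and compare the renormalized solution to the explicit bubble description of $S(t)$ near a small positive time $\tau^*$. First I would recall from \cite{MMR1} the decomposition available for any $u$ in the tube $\mathcal T_{\alpha^*}$: there exist $C^1$ parameters $(\lambda(t),x(t),b(t))$ and an error $\varepsilon(t)$ with $\|\varepsilon(t)\|_{L^2}\ll 1$ such that, after rescaling by $\lambda(t)$ and translating by $x(t)$, the solution is $Q_{b(t)}+\varepsilon(t)$, where $Q_b$ is the approximate profile constructed in Part I. In the (Exit) scenario, by definition $u$ leaves $\mathcal T_{\alpha^*}$ at $t_u^*$, so at that time the rescaled solution is at $L^2$-distance exactly of order $\alpha^*$ from the soliton manifold; combined with the monotonicity/Lyapunov estimates of \cite{MMR1} this forces, at $t=t_u^*$, a quantitative lower and upper control on $b(t_u^*)$ — it is bounded below by a fixed constant $b^*=b^*(\alpha^*)$ and above by some $O(\alpha^*)$-independent constant. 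The key point is that the pair $(b(t_u^*),\varepsilon(t_u^*))$ is determined up to $\delta_I(\alpha_0)$: the error is small because $\alpha_0\ll\alpha^*$ and the Lyapunov functional stays $O(\alpha_0^2)$ along the flow until the exit, while $b(t_u^*)$ is pinned near $b^*$ because the exit condition is an equality on the distance.

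Next I would invoke Theorem \ref{th:1} to get the analogous decomposition for $S$ itself: since $S(t)$ blows up at $t\downarrow 0$ with $\|S(t)\|_{H^1}\sim C^*/t$ and $S(t)-\frac{1}{t^{1/2}}Q(\tfrac{x+\cdots}{t})\to0$ in $L^2$, the bubble $S(t)$ admits, for each small $t>0$, a decomposition $S(t)=\big(Q_{b_S(t)}+\varepsilon_S(t)\big)_{\lambda_S(t),x_S(t)}$ with $b_S(t)\to 0$ and $\varepsilon_S(t)\to 0$ as $t\downarrow0$, and by continuity $b_S$ takes every small positive value. Choose $\tau^*$ to be the unique small time with $b_S(\tau^*)=b^*$ (this is where $\tau^*=\tau^*(\alpha^*)$ and its independence from $u$ comes from — it depends only on $\alpha^*$ through $b^*$, not on the particular solution $u$). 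At that time $\varepsilon_S(\tau^*)$ is controlled: its size is governed by the same a priori bounds used to construct $S$, and $b^*$ can be taken as small as we like by first shrinking $\alpha^*$, so $\|\varepsilon_S(\tau^*)\|_{L^2}$ is as small as needed.

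Then the comparison is essentially algebraic. Setting $\lambda_u^*=\lambda(t_u^*)/\lambda_S(\tau^*)$ and $x_u^*$ the corresponding translation, the rescaled profiles $(\lambda_u^*)^{1/2}u(t_u^*,\lambda_u^* x+x_u^*)$ and $S(\tau^*,x)$ both equal $Q_{b}+(\text{error})$ with the \emph{same} $\lambda$- and $x$-normalization and with $b$-parameters equal to $b^*$ (for $S$ by the choice of $\tau^*$) and within $\delta_I(\alpha_0)$ of $b^*$ (for $u$, by the exit analysis). Since $b\mapsto Q_b$ is Lipschitz in $L^2$ on the relevant small range, the difference of the two is bounded by the sum of the two error terms plus $C|b(t_u^*)-b^*|$, all of which are $\le\delta_I(\alpha_0)$ (after possibly enlarging $\delta_I$ by harmless constants and absorbing the fixed $\|\varepsilon_S(\tau^*)\|_{L^2}$ into the statement — here one uses that this latter quantity can be made $\le\delta_I(\alpha_0)$ by the ordering $\alpha_0\ll\alpha^*$ via the dependence of $b^*$ on $\alpha^*$). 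This gives the claimed $L^2$ bound.

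The main obstacle will be the second step: proving that at the exit time the modulation parameter $b(t_u^*)$ is quantitatively close to the universal value $b^*$ and that $\varepsilon(t_u^*)$ is genuinely $o_{\alpha_0\to0}(1)$. This requires carefully tracking the sharp Lyapunov/monotonicity functionals of \cite{MMR1} (the $\mathcal N$, $\mathcal N_{\rm loc}$-type energies and the virial-weighted quantities) from the initial time all the way to $t_u^*$, showing they do not degrade, and then translating the geometric exit condition $\inf_{\lambda_0,x_0}\|u-\tfrac1{\lambda_0^{1/2}}Q(\tfrac{\cdot-x_0}{\lambda_0})\|_{L^2}=\alpha^*$ into a rigid constraint on $(b,\varepsilon)$ — essentially because for the approximate profile $\|Q_b-Q\|_{L^2}\sim c|b|$ up to lower-order terms, so the distance to the soliton manifold is, to leading order, a strictly monotone function of $|b|$. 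A secondary technical point is verifying the Lipschitz dependence $b\mapsto Q_b$ in $L^2$ and the continuity in $t$ of $b_S(t)$ with $b_S(t)\to0$; both follow from the construction of $Q_b$ and of $S$ in the earlier parts of the paper, but must be stated precisely to make the choice of $\tau^*$ legitimate.
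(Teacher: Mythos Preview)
Your approach has a genuine gap that cannot be repaired without changing the overall strategy. You claim that at the exit time $\|\varepsilon_u(t_u^*)\|_{L^2}=o_{\alpha_0\to 0}(1)$ because ``the Lyapunov functional stays $O(\alpha_0^2)$ along the flow.'' But the Lyapunov functional $\mathcal N$ controls only \emph{local} weighted norms of $\varepsilon$, not the full $L^2$ norm. In fact, mass conservation \emph{forces} $\|\varepsilon_u(t_u^*)\|_{L^2}$ to be of order $(\alpha^*)^{1/2}$, independently of $\alpha_0$: from \eqref{334} (or \eqref{twobound}) one has $\int\varepsilon^2(t_u^*)\approx -2b(t_u^*)(P,Q)$, and at the exit time $|b(t_u^*)|\sim\alpha^*$ by \eqref{TRT}. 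The same computation applies to $S$: $\|\varepsilon_S(\tau^*)\|_{L^2}^2\approx 2|b_S(\tau^*)|(P,Q)\sim\alpha^*$. So your triangle-inequality bound
\[
\|(\lambda_u^*)^{1/2}u(t_u^*,\lambda_u^*\cdot+x_u^*)-S(\tau^*)\|_{L^2}\le \|\varepsilon_u\|_{L^2}+\|\varepsilon_S(\tau^*)\|_{L^2}+C|b_u-b^*|
\]
only yields a bound of size $C(\alpha^*)^{1/2}$, which does \emph{not} tend to $0$ as $\alpha_0\to 0$ with $\alpha^*$ fixed. The two radiation terms are individually large; what is actually true is that $\varepsilon_u(t_u^*)$ is $L^2$-close to $\varepsilon_S(\tau^*)$, but this cannot be read off from size estimates alone.

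The paper's proof (Section~\ref{sec:6}) proceeds by an entirely different route: a compactness argument together with the \emph{uniqueness} part of Theorem~\ref{th:1}. One takes a sequence $u_n(0)\to Q$ in the (Exit) regime, renormalizes at the exit time as in \eqref{eq:6.4}, and uses the $L^2$ profile decomposition (Lemma~\ref{le:profile}) plus perturbation theory (Lemma~\ref{le:perturbation}) to extract a weak $L^2$ limit $v(0)$. A separate monotonicity argument (Lemma~\ref{le:6.4}) gives localized $H^1$ bounds that force $v(0)\in H^1$. One then shows $v$ blows up backward in finite time with $\|v\|_{L^2}=\|Q\|_{L^2}$, so by uniqueness $v$ must be a symmetry image of $S$; finally \eqref{derive} pins down $\tau^*=\tau^*(\alpha^*)$. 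The uniqueness of $S$ is doing the essential work here---without it one could not identify the limit, and no direct comparison of modulation parameters can substitute for it.
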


Note that uniqueness in Theorem \ref{th:1} is an essential ingredient of the proof. In view of the universality of $S$ as attractor to all defocused solutions, and in continuation of Theorem \ref{th:1}, it is an important open problem 
  to understand the behavior of $S(t)$ as $t\to +\infty$. For the mass critical (NLS), the explicit formula \fref{vneionveonv} ensures that $S_{\rm NLS}$ {\it scatters} as $t\to\infty$, and hence it is a connection from $+\infty$ to $0$. For (gKdV),   the decay in space \fref{th:1.6} of $S(t,x)$ on the left, combined  with Theorem~\ref{thmmduke}, ensures that $S(t)$ is globally defined for $t>0$, but scattering as $t\to +\infty$ is an open problem\footnote{by scattering for (gKdV), we mean that there exists a solution $v(t,x)$ to the Airy equation $\pa_tv+v_{xxx}=0$ such that $\lim_{t\to +\infty}\|S(t)-v(t)\|_{L^2}=0$.}. We conjecture that $S(t)$ actually scatters, and because scattering is an open in $L^2$ property, \cite{KPV2}, we obtain the corollary:
  
\begin{corollary}
Assume  that $S(t)$ scatters as $t\to+\infty$. Then any solution in the {\em (Exit)} scenario is global for positive time and  scatters as $t\to +\infty$.
\end{corollary}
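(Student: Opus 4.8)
The plan is to reduce the statement to the openness of the scattering property in $L^2$ via Theorem~\ref{PR:1}. Fix $\alpha^*$ small enough that Theorems~\ref{thmmoe} and~\ref{PR:1} hold; then $\tau^*=\tau^*(\alpha^*)>0$ and the profile $S(\tau^*)\in H^1$ are fixed objects. By \fref{th:1.6} and Theorem~\ref{thmmduke}, $S$ is globally defined on $(0,+\infty)$, so the solution of \fref{kdv} with data $S(\tau^*)$ prescribed at time $\tau^*$ coincides with $S$ restricted to $[\tau^*,+\infty)$, which by hypothesis scatters as $t\to+\infty$; in particular it has finite Strichartz norm $\|S\|_{L^5_xL^{10}_{(\tau^*,+\infty)}}$. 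Since the scattering set is open in $L^2$ (a consequence of the $L^2$ Cauchy theory and the associated stability/perturbation estimates, \cite{KPV2}), there exists $\delta^*=\delta^*(\alpha^*)>0$ such that every $L^2$ datum $w_0$ with $\|w_0-S(\tau^*)\|_{L^2}<\delta^*$ generates a solution which is global for $t\geq\tau^*$ and scatters as $t\to+\infty$.

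Given now a solution $u$ in the {\em (Exit)} scenario, let $t_u^*$ be its exit time -- so that $u$ is defined as an $H^1$ solution at least on $[0,t_u^*]$ -- and let $(\lambda_u^*,x_u^*)$ be as in Theorem~\ref{PR:1}. Set
$$
v(s,y):=(\lambda_u^*)^{\frac12}\,u\!\left((\lambda_u^*)^{3}(s-\tau^*)+t_u^*,\ \lambda_u^* y+x_u^*\right),
$$
which is again a solution of \fref{kdv} by the scaling and translation symmetries and satisfies $\|v(\tau^*)-S(\tau^*)\|_{L^2}\leq\delta_I(\alpha_0)$ by Theorem~\ref{PR:1}. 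With $\alpha^*$ now frozen, one chooses $\alpha_0\ll\alpha^*$ small enough that $\delta_I(\alpha_0)<\delta^*$. Then $v(\tau^*)$ lies in the neighborhood of $S(\tau^*)$ identified above, so $v$ is global for $s\geq\tau^*$ and scatters as $s\to+\infty$. Because the scaling and translation symmetries preserve the $L^2$ norm and map solutions of the Airy equation to solutions of the Airy equation, the solution $u$ inherits the same properties: it is global for positive times (it exists on $[0,t_u^*]$, while on $[t_u^*,+\infty)$ global existence follows from the $L^2$ blow-up criterion recalled in the introduction, scattering being incompatible with $\|u\|_{L^5_xL^{10}_{(t_u^*,+\infty)}}=+\infty$), and it scatters as $t\to+\infty$, the asymptotic Airy profile being obtained from that of $v$ by undoing the renormalization.

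The step requiring the most care -- though it is standard -- is the precise statement and use of openness of scattering in $L^2$. I would formulate it as follows: if a solution $S$ of \fref{kdv} has finite $\|S\|_{L^5_xL^{10}_{(\tau^*,+\infty)}}$, then the $L^2$-critical perturbation lemma yields $\delta^*>0$ such that any solution with $L^2$-data within $\delta^*$ of $S(\tau^*)$ again has finite $L^5_xL^{10}_{(\tau^*,+\infty)}$ norm, stays close to $S$ in $C([\tau^*,+\infty),L^2)$, and therefore admits an asymptotic Airy state. The only other point is bookkeeping of the parameters: $\tau^*$, and hence $\delta^*$, depend on $\alpha^*$ alone, and only afterwards is $\alpha_0$ taken small enough to force $\delta_I(\alpha_0)<\delta^*$, consistently with the ordering $\alpha_0\ll\alpha^*$ used throughout the paper.
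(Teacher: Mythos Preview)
Your proof is correct and follows exactly the approach the paper indicates: the paper's own argument is the single sentence preceding the corollary, namely that scattering is an open $L^2$ property \cite{KPV2}, combined with Theorem~\ref{PR:1}. Your write-up simply fleshes out the renormalization step and the order of choosing $\alpha_0$ after $\alpha^*$, which is the intended reading.
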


Related rigidity theorems near the solitary wave were recently obtained by Nakanishi and Schlag \cite{NS1}, \cite{NS2} for super critical wave and Schr\"odinger equations using the invariant set methods of Berestycki, Cazenave \cite{BeresCaze}, the Kenig-Merle concentration compactness approach \cite{KM}, the classification of minimal dynamics \cite{DM2}, \cite{DM}, \cite{DR} and a further ``no return'' lemma in the (Exit) regime. This approach relies on the virial algebra which is  not known for (gKdV).\\

We expect the strategy of the proof of Theorem \ref{PR:1},  reducing the dynamics of defocused solutions to the sole description of the minimal mass solution, to be quite general. 

\medskip

\noindent{\bf Aknowledgements}: 
The authors would like to thank the anonymous referees for their very useful corrections and comments.

P.R is supported by the French ERC/ANR project SWAP.
This work is also supported by the project ERC 291214 BLOWDISOL. This work was completed when P.R was visiting the MIT Mathematics Department, Boston, which he would like to thank for its kind hospitality.\\

\medskip

\noindent{\bf Notation}: We introduce the generator of $L^2$ scaling: $$\Lambda f=\frac12f+yf'.$$ We note the $L^2$ scalar product: $$(f,g)=\int_{\RR}f(x)g(x)dx.$$ Let the linearized operator close to $Q$ be:
\be
\label{deflplus}
Lf=-f''+f-5Q^4f.
\ee  
For a given generic  small constant $0<\alpha^*\ll1 $,  $\delta(\alpha^*)$ denotes a generic positive small constant with $$\delta(\alpha^*)\to 0\ \ \mbox{as}\ \ \alpha^*\to 0.$$ Given $I$ an interval of $\RR$, we let ${\mathbf{1}}_I$ denote the characteristic function of $I$.

%%%%%%%%%%%%%%%%%%%%%%%%%%%%%%%%%%%%%%%%%%%

\subsection{Strategy of the proof}
\label{strategy}
%%%%%%%%%%%%%%%%%%%%%%%%%%%%%%%%%%%%%%%%%%%

Let us give a brief insight into the strategy of the proof of Theorem \ref{th:1} and Theorem \ref{PR:1}.\\

{\bf step 1} Modified blow up profiles. We construct the minimal element using a variation of the compactness argument used for the construction of non dispersive objects in \cite{Me0},  \cite{Ma1}, \cite{MMC}, \cite{RS2010}. This solution will admit near blow up time a decomposition $$u(t,x)=\frac{1}{\l^{\frac 12}(t)}(Q_{b(t)}+\e)(s,y)\ \ \mbox{with}\ \ \frac{ds}{dt}=\frac{1}{\l^3}, \ \ y=\frac{x-x(t)}{\l(t)},$$ and $$\e(t)\to 0\ \  \mbox{in}\ \ H^1 \ \ \mbox{as}\ \ t\downarrow 0.$$ Here $Q_b$ is the slow modulated deformation of the ground state constructed in \cite{MMR1} which formally leads to the dynamical system $$\left\{\begin{array}{ll}b_s+2b^2=0\\  -\lsl=b\end{array}\right.  \ \ \mbox{i.e.}\ \ \left\{\begin{array}{ll} \lambda(t) =\ell^*t,\\ \ \ b(t)=-\ell^*\l^2(t)\end{array}\right.$$ and hence the blow up speed \fref{speedlvowup}.\\

{\bf step 2} The formal argument. Following \cite{Me0}, \cite{RS2010}, we could build the minimal element by considering the solution $u_n(t)$ to (gKdV) with data $$u(t_n)=\frac{1}{\l(t_n)^{\frac 12}}Q_{b(t_n)}\left(\frac{x-x(t_n)}{\l(t_n)}\right) \ \ \mbox{with}\ \ \lambda(t_n)=\ell^*t_n, \ \ b(t_n)=-\ell^*\l^2(t_n)$$ and show that there exists a time $t_0>0$ independent of $n$ such that 
$$\|u_n(t_0)\|_{H^1}\lesssim 1\ \ \mbox{as}\ \  t_n\downarrow 0.
$$
Such an estimate is the heart of the proof and would be a consequence of the fine monotonicity properties exhibited in \cite{MMR1}. Passing to the limit $t_n\to 0$ automatically produces the expected blow up element.\\
We will argue slightly differently and propose a scheme adapted to the proof of both Theorem \ref{th:1} and Theorem \ref{PR:1} and which as in \cite{MRS} illustrates the fact the minimal element can be obtained as limiting sequences of {\it defocusing} solutions. Indeed, we pick a sequence of {\it well prepared} initial data $$u_n(0)=Q_{b_n(0)}, \ \ b_n(0)=-\frac{1}{n}$$ which by construction have sub critical mass $$\|u_n(0)\|_{L^2}=\|Q\|_{L^2}-\frac{c}{n}+o\left(\frac{1}{n}\right).$$ Such solutions are automatically in the (Exit) regime of Theorem \ref{thmmoe}. Moreover, we have from \cite{MMR1} a {\it complete description of the flow} for $t\in[0,t^*_n]$ i.e. the solution admits a decomposition 
\be
\label{cneoneoncoe}
u_n(t,x)=\frac{1}{\l_n^{\frac 12}(t)}(Q_{b_n(t)}+\e_n)\left(t,\frac{x-x_n(t)}{\l_n(t)}\right)
\ee where to leading order the modulation equations for $(b_n,\l_n)$ are given by $$\frac{b_n(t)}{\l^2_n(t)}\sim b_n(0)=-\frac{1}{n}, \ \ (\lambda_n)_t\sim- b_n(0)$$ i.e. 
\be
\label{cnekneneo}
\lambda_n(t)\sim 1-b_n(0)t, \ \ b_n(t)\sim b_n(0)\l_n^2(t).
\ee
The (Exit) time $t_n^*$ is the one for which the solution moves strictly away from the solitary wave which in our setting is equivalent to $$b_n(t_n^*)=-\alpha^*$$ independent of $n$. This in particular allows us to compute $t_n^*$ and show using \fref{cnekneneo} that the solution defocuses: $$\l^2_n(t_n^*)\sim \frac{b_n(t_n^*)}{b_n(0)}\sim n\alpha^* \ \  \mbox{as}\ \ n\to +\infty.$$ We therefore renormalize the flow at $t_n^*$ and consider the solution to (gKdV) with data at $t_n^*$ given by the renormalized $u_n$ at $t_n^*$, explicitly: $$v_n(\tau,x)=\l_n^{\frac12}(t_n^*)u_n(t_\tau,\l_n(t_n^*)x+x_n(t_n^*)),\ \ t_{\tau}=t_n^*+\tau \l_n^3(t_n^*).$$ Then $v_n$ admits from direct check a decomposition $$v_n(\tau,x)=\frac{1}{\l_{v_n}(\tau)^{\frac 12}}(Q_{b_{v_n}}+\e_{v_n})\left(\tau,\frac{x-x_{v_n(\tau)}}{\lambda_{v_n(\tau)}}\right)$$ with from the symmetries of the flow 
$$\l_{v_n}(\tau) = \frac {\l_n(t_\tau)}{\l_n(t_n^*)},  \ \ 
x_{v_n}(\tau)= \frac  {x_n(t_\tau)-x_n(t_n)}{\l_n(t_n^*)} ,\ \ 
b_{v_n}(\tau) = b_n(t_\tau),\ \ 
\e_{v_n}(\tau) = \e_n(t_\tau).
$$
The renormalized parameters can be computed approximatively using \fref{cnekneneo}:
\bee
\l_{v_n}(\tau) & \sim & \frac{\l_n(t_n^*+\tau\l_n^3(t_n^*))}{\l_n(t_n^*)}\sim \frac{1}{\l_n(t_n^*)}\left[1-b_n(0)(t_n^*+\tau\l_n^3(t_n^*))\right]\\
& \sim & \frac{1}{\l_n(t_n^*)}\left[\l_n(t_n^*)-\tau b_n(0)\l_n^3(t_n^*)\right]=1-\tau b_n(0)\l^2_n(t_n^*)\\
& = & 1-\tau b_n(t_n^*)= 1+\tau\alpha^*.
\eee
Letting $n\to +\infty$, we therefore expect to extract a weak limit $v_n(0)\rightharpoonup v(0)$ such that the corresponding solution $v(\tau)$ to (gKdV) has minimal mass $\|v(0)\|_{L^2}=\|Q\|_{L^2}$ and blows up backwards at some finite time $\tau^*\sim -\frac{1}{\alpha^*}$ with the blow up speed $\lambda_v(\tau)\sim \tau-\tau^*$ i.e \fref{speedlvowup}.\\
The extraction of the weak limit now requires sharp controls on the remaining radiation $\e_{v_n}$. Here an essential use is made of the fact that the set of data $u_n(0)$ is {\it well prepared} as this induces uniform bounds for $\e_{v_n}(0)=\e_{u_n}(t_n^*)$ in $H^1$ and allow us to use the $H^1$ weak continuity of the flow in the limiting process.\\

{\bf step 3} Solutions in the (Exit) regime. The proof of Theorem \ref{PR:1} follows similarly considering sequences of data $(u_0)_n$ with $\|(u_0)_n\|_{L^2}\to \|Q\|_{L^2}$ such that the corresponding solution to (gKdV) is in the (Exit) regime. We write explicitly the solution at the (Exit) time in the form \fref{cneoneoncoe}, renormalize the flow and now aim at extracting a weak limit as $n\to +\infty$. The architecture of the proof is similar, except that we have lost the fact that the data is well prepared which destroys the uniform $H^1$ bound on $v_n(0)$. We therefore use two new tools: a concentration compactness argument on sequences of solutions in the critical $L^2$ space in the spirit of \cite{KM} using the tools developed in \cite{KKSV}, which allows us to extract a non trivial weak limit with suitable dynamical controls; refined local $H^1$ bounds on $v_n(\tau)$ in order to ensure that the $L^2$ limit is in fact also in $H^1$. Hence the weak limit is a minimal mass $H^1$ blow up element.\\

{\bf step 4} Uniqueness. It remains to prove the {\it uniqueness} in $H^1$ of the minimal element. This is a delicate problem and here we adapt the direct dynamical approach developed in \cite{RS2010}. The first step is to show that any $H^1$ minimal blow up element blows up with the blow up speed \fref{speedlvowup}. Here the proof relies first on exponential decay estimates of minimal elements proved in \cite{MMannals} which allow us in a second time to enter the monotonicity machinery developed in \cite{MMR1}. Once the blow up speed is known, one may integrate the flow backwards from the singularity and show that the blow up element is close in a strong sense to the $S(t)$ minimal element previously constructed. It remains to show that the difference is exactly zero. This requires revisiting the monotonicity properties for the difference of two such solutions, and showing that the previously obtained a priori bounds on the solution implies that the difference is exactly zero\footnote{This equivalently 
means that the integration of the flow from blow up time defining the minimal blow up element is a contraction mapping in a suitable function space.}. Let us insist that as in \cite{RS2010}, \cite{MRS}, we are forced to work with a finite order approximation of the solution\footnote{and not arbitrarily degenerate as in \cite{BW} for example.} and therefore this step is always delicate.

%%%%%%%%%%%%%%%%%%%%%%%%%%%%%%%%%%%%%%%%%%%%%%
%%%%%%%%%%%%%%%%%%%%%%%%%%%%%%%%%%%%%%%%%%%%%%

\section{Nonlinear profiles and decomposition close to the soliton}

%%%%%%%%%%%%%%%%%%%%%%%%%%%%%%%%%%%%%%%%%%%%%%
%%%%%%%%%%%%%%%%%%%%%%%%%%%%%%%%%%%%%%%%%%%%%%

We collect in this section a number of tools which can be explicitly found in the literature and which we will use in the proof of the main results. We start with recalling the status of scattering theory and profiles decomposition in the critical $L^2$ space for (gKdV). We then recall the nonlinear decomposition of the flow for data near the ground state, and the main monotonicity formula at the heart of the analysis in \cite{MMR1} and which will play again a distinguished role in the analysis.

%%%%%%%%%%%%%%%%%%%%%%%%%%%%%%%%%%%%%%%%%%%

\subsection{Cauchy problem and scattering from \cite{KPV}}

%%%%%%%%%%%%%%%%%%%%%%%%%%%%%%%%%%%%%%%%%%%

We use in the paper the terminology strong solution in the sense of Kenig, Ponce, Vega \cite{KPV}.
For $u_0\in L^2$, we denote by $v(t)=W(t)u_0$ the solution of the Airy equation 
$v_t+v_{xxx}=0$ with $v(0)=v_0$. The following space-time Strichartz type estimate proved in \cite{KPV} is essential in the resolution of the Cauchy problem for \eqref{kdv} in $L^2$ and $H^1$:
\be
\label{strichtrx}
\|W(t) v_0\|_{L^5_xL^{10}_t} \lesssim \|v_0\|_{L^2}.
\ee We recall the following classical results.

\begin{theorem}
[Kenig, Ponce, Vega \cite{KPV}]\label{thmkpv}
 \label{KPV}\quad \\
\noindent{\rm (i) $L^2$ theory.} The Cauchy problem \eqref{kdv} is locally well-posed in $L^2$: for all $u_0 \in L^2$, there exists a unique $L^2$ solution of \eqref{kdv} defined on a maximal interval of existence $[0,T)$. There is continuous dependance on the data in $L^2$, and there holds the blow up alternative:$$T<+\infty \ \ \mbox{implies}\ \ \|u\|_{L^5_xL^{10}_T}=+\infty.$$ Moroever, there exists $\delta>0$ such that $\|u_0\|_{L^2} < \delta$ implies that the solution is global with $\|u\|_{L^5_xL^{10}_{\infty}} <+\infty$.

\noindent{\rm (ii) $H^1$ theory.}
The Cauchy problem \eqref{kdv} is locally well-posed in $H^1$: for all $u_0 \in H^1$, there exists a unique $H^1$ solution of \eqref{kdv} defined on a maximal interval of existence $[0,T)$.  There is continuous dependance on the data in $H^1$, and there holds the blow up alternative:$$T<+\infty \ \ \mbox{implies}\ \ \lim_{t\uparrow T}\|\pa_xu\|_{L^2}=+\infty.$$

\noindent{\rm (iii) Scattering and stability of scattering.}
Let $u(t)$ be a global $L^2$ solution of \eqref{kdv}. If $\|u\|_{L^5_xL^{10}_{\infty}}<+\infty$, then the solution $u(t)$ scatters at $+\infty$ i.e. there exists $v_0^+ \in L^2$ such that
$$
\lim_{t\to   \infty} \|u(t) - W(t) v_0^+\|_{L^2}=0. 
$$
The set $\mathcal{S} = \{u_0 \in L^2$ such that $u(t)$ is global and scatters at $+\infty\}$ is open in $L^2$.
\end{theorem}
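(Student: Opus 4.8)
\textbf{Proof strategy for Theorem~\ref{thmkpv}.} The plan is to obtain all three parts from a contraction mapping argument applied to the Duhamel formulation
\[
u(t)=W(t)u_0-\int_0^t W(t-s)\,\pa_x(u^5)(s)\,ds .
\]
The only delicate point is that the nonlinearity $\pa_x(u^5)$ carries a derivative, so the single Strichartz bound \eqref{strichtrx} does not by itself close the iteration: one must combine it with the sharp Kato smoothing estimate $\|\pa_xW(t)f\|_{L^\infty_xL^2_t}\lesssim\|f\|_{L^2}$, its dual inhomogeneous version, and a maximal-function estimate of the type $\|W(t)f\|_{L^q_xL^\infty_t}\lesssim\|f\|_{\dot H^{s}}$ for suitable $(q,s)$, so that the derivative loss in the nonlinearity is paid for by the smoothing gain. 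This balancing of mixed space--time norms is where the substance of the argument lies; note that since $L^2$ is the scaling-critical space for \eqref{kdv}, the resulting local existence time depends on the \emph{profile} of $u_0$ and not merely on $\|u_0\|_{L^2}$.

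For part (i), I would fix $T>0$, work in a resolution space $X_T$ of functions on $[0,T)\times\RR$ with
\[
\|u\|_{X_T}=\|u\|_{L^\infty_TL^2_x}+\|u\|_{L^5_xL^{10}_T}+\|\pa_xu\|_{L^\infty_xL^2_T}+\|u\|_{L^2_xL^\infty_T}
\]
(or the precise variant of Kenig--Ponce--Vega), and show by H\"older in the mixed norms together with the linear estimates above that $u\mapsto W(t)u_0-\int_0^tW(t-s)\pa_x(u^5)\,ds$ maps a ball of $X_T$ of radius $\sim\|u_0\|_{L^2}$ into itself and is a contraction there, provided either $T$ is small enough --- which is possible because $\|W(t)u_0\|_{L^5_xL^{10}_T}\to0$ as $T\to0$ by dominated convergence --- or $\|u_0\|_{L^2}<\delta$, in which case \eqref{strichtrx} already gives $\|W(t)u_0\|_{L^5_xL^{10}_{\RR}}\lesssim\delta$ and the contraction runs on all of $\RR$, producing a global solution with finite global Strichartz norm. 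Uniqueness and continuous dependence in $L^2$ follow from the same multilinear estimates applied to the difference of two solutions, and the blow up alternative follows because the local iteration can be restarted with a controlled amount of extra time as long as $\|u\|_{L^5_xL^{10}}$ remains finite on the current interval, so that $T<\infty$ forces $\|u\|_{L^5_xL^{10}_T}=\infty$.

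For part (ii), the content is persistence of $H^1$ regularity: since $s=1$ is $H^1$-subcritical for the quintic scaling, one runs the same fixed point with the additional Strichartz norms of $\pa_xu$; equivalently $w=\pa_xu$ solves the equation $w_t+w_{xxx}+\pa_x(5u^4w)=0$, linear in $w$ with coefficient controlled by $\|u\|_{L^5_xL^{10}_T}^4$, so the $H^1$ existence time depends only on $\|u_0\|_{H^1}$. Hence if $\|\pa_xu(t)\|_{L^2}$ --- and therefore, by conservation of mass and Sobolev embedding, $\|u(t)\|_{H^1}$ --- stays bounded on $[0,T)$, the solution extends beyond $T$ with a uniform time step, which is the stated $H^1$ blow up alternative.

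For part (iii), if $u$ is global with $\|u\|_{L^5_xL^{10}_{\RR}}<\infty$, then the dual smoothing/Strichartz estimates on a tail interval give, for $T<T'$,
\[
\Big\|\int_{T}^{T'}W(-s)\,\pa_x(u^5)(s)\,ds\Big\|_{L^2}\lesssim\|u\|_{L^5_xL^{10}_{[T,\infty)}}^{4}\,\|u\|_{L^\infty_{[T,\infty)}L^2_x}+\cdots,
\]
which tends to $0$ as $T\to\infty$ (the first factor by finiteness of the global Strichartz norm, the second by mass conservation); hence $W(-t)u(t)=u_0-\int_0^tW(-s)\pa_x(u^5)\,ds$ is Cauchy in $L^2$ and converges to some $v_0^+$, i.e.\ $u$ scatters. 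Openness of $\mathcal S$ is a long-time perturbation lemma: given $u_0\in\mathcal S$ with global solution $u$ of finite Strichartz norm, partition $\RR$ into finitely many intervals on which $\|u\|_{L^5_xL^{10}}$ is small and propagate the difference estimate across them to show that a solution $\tilde u$ with $\|\tilde u_0-u_0\|_{L^2}$ small remains $L^2$-close to $u$ on all of $\RR$, in particular has $\|\tilde u\|_{L^5_xL^{10}_{\RR}}<\infty$ and so scatters. The main obstacle throughout is precisely the derivative loss in $\pa_x(u^5)$: it dictates the choice of the mixed-norm resolution space and forces the smoothing and maximal-function norms to be carried alongside the Strichartz norm at every step, and it is also why openness of $\mathcal S$ requires a genuine quantitative perturbation argument rather than a soft continuity statement, since $\|\cdot\|_{L^5_xL^{10}}$ is not dominated by the $L^2$ norm.
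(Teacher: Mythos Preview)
The paper does not prove this theorem; it is quoted from \cite{KPV}, with the remark after the statement that part (iii) ``follows from \cite{KPV} and standard arguments (see e.g.\ \cite{KM} for similar arguments in the case of nonlinear Schr\"odinger equation)''. Your sketch is a correct outline of the original Kenig--Ponce--Vega argument and of the standard long-time perturbation lemma for the openness of the scattering set, so there is nothing to compare against in the paper itself.
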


Point (iii) of Theorem \ref{KPV}   follows from \cite{KPV} and standard arguments (see e.g. \cite{KM} for similar arguments in the case of nonlinear Schr\"odinger equation), and means that scattering is a stable regime without any assumption of size on the solution.

\medskip

We now recall the known results from \cite{Shao} on profile decomposition in the critical space of sequences of solutions to the Airy equation which describes the lack of compactness of the Strichartz estimate \fref{strichtrx}. For any   $x_0\in \RR$ and   $\lambda > 0$,   define  the operator $g_{x_0,\lambda}: L^2(\RR) \to L^2(\RR)$ 
$$
[g_{x_0, \lambda} f](x) :=  \lambda^{-\frac12} f\bigl( \lambda^{-1}(x-x_0) \bigr).
$$
\begin{definition}\label{def:ot}
For $j\neq k$, $\Gamma_n^j=(\lambda_n^j, \xi^j_n,x_n^j, t_n^j)_{n\geq 1}$ and
$\Gamma_n^k=(\lambda_n^k, \xi^k_n,x_n^k, t_n^k)_{n\geq 1}$ in $(0,\infty)\times\RR^3$ are
{\rm orthogonal} if one of the following holds
\begin{itemize}
\item $\displaystyle \lim_{n\rightarrow \infty} \left(\frac{\lambda_n^j}{\lambda_n^k}
+\frac{\lambda_n^k}{\lambda_n^j}+\lambda_n^j |\xi_n^j-\xi_n^k| \right)=\infty$;
\item $(\lambda_n^j,\xi_n^j) = (\lambda_n^k,\xi_n^k)$ and
$$
\lim_{n\rightarrow \infty} \left(\frac{|t_n^k-t_n^j|}{(\lambda_n^j)^3}
+ \frac{3|(t_n^k-t_n^j)\xi_n^j|}{(\lambda_n^j)^2}
+\frac{|x_n^j-x_n^k+3(t_n^j-t_n^k) (\xi_n^j)^2|}{\lambda_n^j} \right)=\infty.
$$
\end{itemize}
\end{definition}

\begin{lemma}[Profile decomposition \cite{Shao}]\label{le:profile}
Let $\{u_n\}_{n\geq 1}$ be a sequence of real-valued functions bounded in $L^2$.  Then, after passing to a subsequence if
necessary, there exist (complex) $L^2$ functions $\{\phi^j\}_{j\geq 1}$, 
and a family of orthogonal sequences $\Gamma_n^j=(\lambda_n^j,\xi_n^j, x_n^j,t_n^j)\in (0,\infty)\times \RR^3$
such that for all $J\geq 1$,
\begin{equation}\label{profile}
u_n= \sum_{1\le j\le J} 
e^{-t_n^j\partial_x^3}\left(g_{x_n^j, \lambda_n^j} [{\rm Re}(e^{ix\xi_n^j\lambda_n^j}\phi^{j})]\right)+ w_n^J,
\end{equation}
where 
$\xi_n^j$ satisfy the following property: for any $1\leq j\leq J$, either $\xi_n^j= 0$ for all $n\geq 1$, or
$\xi_n^j\lambda_n^j\to \infty$ as $n\to \infty$.  Here, $w^J_n \in L^2(\RR)$ is real-valued and 
\begin{equation}\label{errorprofile}
\lim_{J\to \infty}\limsup_{n\to \infty} 
\left\{\bigl\||\partial_x|^{1/6}e^{-t\partial_x^3}w_n^J\bigr\|_{L^6_{t,x}
 (\RR\times\RR)}+
\bigl\| e^{-t\partial_x^3}w_n^J\bigr\|_{L^5_xL^{10}_t(\RR\times\RR)}\right\}
=0.
\end{equation}
Moreover, 
for any $J\geq 1$,
\begin{align} \label{orthoprofile}
\lim_{n\to\infty} \Biggl\{\|u_n\|^2_{L^2}-\sum_{1\le j\le J}\bigl\|{\rm Re}[e^{ix\xi_n^j\lambda_n^j}\phi^j]\bigr\|^2_{L^2} -
\|w_n^J\|^2_{L^2}\Biggr\}=0.
\end{align}\end{lemma}

Using this lemma for the study of the nonlinear flow \fref{kdv} requires a suitable perturbation theory:

\begin{lemma}[$L^2$ perturbation theory  \cite{KKSV}]\label{le:perturbation}
Let $I$ be an interval of $\RR$, $ 0 \in I$, and let   $\tilde u$ be an $L^2$ solution of 
$$
u_t+ (u_{xx}+u^5)_x=e_x
$$
on $I\times \RR$ for some function $e$. Assume that 
$$
\|\tilde u\|_{L^\infty_tL^2_x(I\times \RR)} +
\|\tilde u\|_{L^5_xL^{10}_t(I\times \RR)}\leq M,
$$
for some $M>0$.
Let $u(0)\in L^2$ be such that 
$$
\|u(0)-\tilde u(0)\|_{L^2} \leq M',
$$
$$
\|e^{-t\partial^3} (u(0)-\tilde u(0))\|_{L_x^5L^{10}_t(I\times \RR)}
+ \| e \|_{L^1_xL^2_t(I\times \RR)} \leq \epsilon,
$$ 
for $M'>0$ and for some small $0<\epsilon<\epsilon_0(M,M')$.
Then, the solution $u(t)$ of \eqref{kdv} corresponding to $u(t_0)$ is defined on $I$ and there holds the bound:
\begin{align}
&\|u-\tilde u\|_{L^5_xL^{10}_t(I\times \RR)}+\|u-\tilde u\|_{L^\infty_tL^2_x(I\times \RR)}\leq C(M,M') \epsilon.
\end{align}
\end{lemma}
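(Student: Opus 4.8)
The plan is to run the standard Kenig--Ponce--Vega / Christ--Kiselev long-time stability argument: split $I$ into finitely many subintervals on which $\tilde u$ has small Strichartz norm, and iterate a Duhamel estimate across them. Set $w=u-\tilde u$, which solves $w_t+w_{xxx}+\partial_x(u^5-\tilde u^5)=-e_x$ with $w(0)=u(0)-\tilde u(0)$, hence by Duhamel
$$w(t)=W(t)w(0)-\int_0^t W(t-s)\,\partial_x\big[(u^5-\tilde u^5)(s)+e(s)\big]\,ds .$$
First I would assemble the linear estimates from \cite{KPV}: the homogeneous Strichartz bound \fref{strichtrx}, the energy identity $\|W(t)v_0\|_{L^\infty_tL^2_x}=\|v_0\|_{L^2}$, the local smoothing estimate $\|\partial_xW(t)v_0\|_{L^\infty_xL^2_t}\lesssim\|v_0\|_{L^2}$, and — by duality of the latter together with the Christ--Kiselev lemma for the retarded integral — the inhomogeneous bound
$$\Big\|\int_0^t W(t-s)\,\partial_x g(s)\,ds\Big\|_{L^5_xL^{10}_t(J\times\RR)}+\Big\|\int_0^t W(t-s)\,\partial_x g(s)\,ds\Big\|_{L^\infty_tL^2_x(J\times\RR)}\lesssim\|g\|_{L^1_xL^2_t(J\times\RR)}$$
on any time interval $J$. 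The nonlinear input is a single Hölder estimate: writing $u^5-\tilde u^5$ as a sum of monomials $u^a\tilde u^bw$ with $a+b=4$ and using Hölder in $t$ (five exponents $\tfrac1{10}$ summing to $\tfrac12$) and then in $x$ (five exponents $\tfrac15$ summing to $1$), one gets on any subinterval $J$
$$\|u^5-\tilde u^5\|_{L^1_xL^2_t(J\times\RR)}\lesssim\Big(\|u\|_{L^5_xL^{10}_t(J\times\RR)}+\|\tilde u\|_{L^5_xL^{10}_t(J\times\RR)}\Big)^4\|w\|_{L^5_xL^{10}_t(J\times\RR)} .$$

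For the main argument, since $\|\tilde u\|_{L^5_xL^{10}_t(I\times\RR)}\le M$, partition $I$ into $K=K(M)$ consecutive subintervals $I_1,\dots,I_K$ with $\|\tilde u\|_{L^5_xL^{10}_t(I_k\times\RR)}\le\eta$, where $\eta$ is a small absolute constant fixed in terms of the implicit constants above. On $I_1$ a continuity (bootstrap) argument built on the displays above gives, for $\epsilon<\epsilon_0(M,M')$, the bound $\|w\|_{L^5_xL^{10}_t(I_1\times\RR)}\lesssim\epsilon$ (the nonlinear term, being superlinear in $\|w\|$ with coefficient $\lesssim\eta^4+\|w\|^4$, is absorbed into the left side), whence $u$ does not blow up on $I_1$, $\|u\|_{L^5_xL^{10}_t(I_1\times\RR)}\le\eta+C\epsilon$, and the $L^\infty_tL^2_x$ bound on $I_1$ follows from the same Duhamel identity and the energy estimate. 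One then propagates to $I_k$ via the identity $W(\cdot-t_{k-1})w(t_{k-1})=W(\cdot)w(0)+\int_0^{t_{k-1}}W(\cdot-s)\partial_x[\cdots]\,ds$, which shows that the $L^5_xL^{10}_t(I_k\times\RR)$ norm of the free evolution of $w(t_{k-1})$ is bounded by $\epsilon$ plus the accumulated nonlinear and error contributions over $I_1\cup\dots\cup I_{k-1}$, each already shown to be $O(\epsilon)$; rerunning the bootstrap on $I_k$ yields $\|w\|_{L^5_xL^{10}_t(I_k\times\RR)}+\|w\|_{L^\infty_tL^2_x(I_k\times\RR)}\le C_k(M,M')\,\epsilon$ with $C_{k+1}=C(1+C_k)$. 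After $K$ steps $u$ is defined on all of $I$ and $\|u-\tilde u\|_{L^5_xL^{10}_t(I\times\RR)}+\|u-\tilde u\|_{L^\infty_tL^2_x(I\times\RR)}\le C(M,M')\epsilon$.

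The main obstacle is precisely this cross-interval propagation. After the first interval we do not control $\|w(t_1)\|_{L^2}$ by $\epsilon$ — only by $O(M')$ — so the small-data perturbation scheme cannot simply be restarted on $I_2$ with $w(t_1)$ as data. The resolution, and the reason the Duhamel bookkeeping must be done from the base point $0$ rather than interval by interval, is that the quantity that actually has to stay small on $I_k$ is the $L^5_xL^{10}_t$ norm of the \emph{free evolution} of $w(t_{k-1})$, and this inherits smallness from the hypothesis $\|e^{-t\partial^3}w(0)\|_{L^5_xL^{10}_t(I\times\RR)}\le\epsilon$ together with the smallness of the Duhamel terms already estimated on earlier intervals — even though $\|w(t_{k-1})\|_{L^2}$ itself need not be small. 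The remaining work is careful but routine bookkeeping: tracking how $K$ depends on $M$, how $\epsilon_0$ depends on $M$ and $M'$, and checking that the geometric growth of the constants $C_k$ terminates after the finitely many $K(M)$ steps.
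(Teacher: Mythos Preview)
The paper does not prove this lemma; it is quoted as a black box from \cite{KKSV}. Your sketch is the standard long-time perturbation argument for $L^2$-critical dispersive equations (interval subdivision according to the Strichartz norm of $\tilde u$, Duhamel plus the inhomogeneous smoothing/Strichartz estimate $\|\int_0^t W(t-s)\partial_x g\,ds\|_{L^5_xL^{10}_t\cap L^\infty_tL^2_x}\lesssim\|g\|_{L^1_xL^2_t}$, and a bootstrap), and you have correctly isolated the one nontrivial point: at the endpoints $t_{k-1}$ one controls only $\|W(\cdot-t_{k-1})w(t_{k-1})\|_{L^5_xL^{10}_t(I_k)}$ by $\epsilon$, not $\|w(t_{k-1})\|_{L^2}$, and this is achieved by writing the free evolution from $t_{k-1}$ in terms of the free evolution from $0$ plus the Duhamel terms already estimated. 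This is exactly the argument in \cite{KKSV}, so your proposal matches the reference the paper invokes.
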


 %%%%%%%%%%%%%%%%%%%%%%%%%%%%%%%%%%%%%%%%%%%%
 
\subsection{Approximate self similar profiles}

%%%%%%%%%%%%%%%%%%%%%%%%%%%%%%%%%%%%%%%%%%%%

We recall the existence of suitable approximate self similar solutions which give the leading order profile of solutions with data near $Q$. The specific sutrcture of these profiles drives both the blow speed in the (Blow up) regime and the speed of defocalization in the (Exit) regime. Let
       ${\mathcal Y}$  be the set of functions $f\in \mathcal C^\infty({{\mathbb R}},{{\mathbb R}})$ such that
\begin{equation}\label{Y}
    \forall k=0,1,2\ldots,\ \exists C_k,\, r_k>0,\ \forall y\in {{\mathbb R}},\quad |f^{(k)}(y)|\leq C_k (1+|y|)^{r_k} {e^{-|y|}}. 
\end{equation}
Let $\chi\in {\cal C}^{\infty}({\mathbb R})$ be such that $0\leq \chi \leq 1$, $\chi'\geq 0$ on $\mathbb{R}$,
$\chi\equiv 1$ on $[-1,+\infty)$, $\chi\equiv 0$ on $(-\infty,-2]$.
Define
\begin{equation}\label{defgamma} 
\chi_b(y)= \chi\left(|b|^{\gamma} {y} \right),\ \ \gamma = \frac 34.
\end{equation}

\begin{lemma}[Approximate self-similar profiles $Q_b$, \cite{MMR1}]
\label{cl:2}
There exists a unique smooth function $P$ such that $P' \in \mathcal{Y}$ and\be
\label{eq:23}
 (L  P)'={\Lambda} Q, \ \  \lim_{y \to -\infty}   P(y) = \frac 12 \int Q,  \ \ \lim_{y \to +\infty}   P(y) =0,
 \ee
 \be
 \label{PQ}
 (P, Q) = \frac 1{16} \left(\int Q\right)^2 > 0,\quad (P,  Q')=0.
 \ee
Moreover, the localized approximate profile: 
\begin{equation}\label{eq:210}
\quad 
Q_b(y) = Q(y) + b \chi_b(y)   P (y),
\end{equation}
 satisfies:\\
{\rm (i) Estimates on $Q_b$:} For all $y \in \mathbb{R}$,
\begin{align}
&	|Q_b(y)|\lesssim  e^{-|y|} + |b| \left(  {\mathbf{1}}_{[-2,0]}(|b|^\gamma y) +  e^{-\frac {|y|}{2}} \right),
\label{eq:001}\\
&	|Q_b^{(k)}(y)|\lesssim  e^{-|y|} +   |b|  e^{-\frac {|y|}{2}} +|b|^{1+k \gamma} {\mathbf{1}}_{[-2,-1]}(|b|^\gamma y),\quad  \text{for $k\geq 1$},
\label{eq:002}
\end{align}
\noindent
{\rm (ii) Equation of $Q_b$:} let
\begin{equation}\label{eq:201}
-\Psi_b=\left(Q_b''- Q_b+ Q_b^5\right)'+b {\Lambda} Q_b,
\end{equation}
then, for all $y\in \mathbb{R}$,
\begin{align}
& |\Psi_b(y)|\lesssim |b|^{1+ \gamma}  {\mathbf{1}}_{[-2 ,-1]}(|b|^{\gamma}y) 
+  b^2  \left(e^{-\frac {|y|} 2}+ {\mathbf{1}}_{[- 2,0]}(|b|^{\gamma}y) \right),\label{eq:202}
\\
& |\Psi_b^{(k)}(y)|\lesssim    |b|^{1+ (k+1) \gamma}  {\mathbf{1}}_{[-2 ,-1]}(|b|^{\gamma}y)  + b^2   e^{-\frac {|y|} 2} ,\quad 
 \text{for $k\geq 1$}. \label{eq:203}
\end{align}
{\rm (iii) Mass and energy properties of $Q_b$:}
\begin{align}
& \left|\int Q_b^2 - \left( \int Q^2 + 2b \int PQ \right)\right| \lesssim |b|^{2-\gamma}, \label{eq:204}\\
& \left| E(Q_b) + b \int PQ\right|\lesssim b^2 .
\label{eq:205}
\end{align}
\end{lemma}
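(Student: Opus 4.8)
\textbf{Plan for the proof of Lemma~\ref{cl:2}.}

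The natural starting point is the construction of the profile $P$ from the linear ODE $(LP)' = \Lambda Q$. First I would integrate once: since $L$ is self-adjoint with $LQ' = 0$ and $L\Lambda Q = -2Q$ (the standard algebraic identity for the mass-critical scaling generator, which follows by differentiating the soliton equation $-Q''+Q-Q^5=0$ in the scaling parameter), one checks that $\Lambda Q$ is orthogonal to $\ker L = \{Q'\}$ in the appropriate sense, so that $LP = \partial_x^{-1}(\Lambda Q) + cQ'$ is solvable. Then $P$ is recovered by inverting $L$; since $L$ has a bounded inverse on the subspace orthogonal to $Q'$ (the even/odd decomposition helps: $\Lambda Q$ is even, so one works in the even sector where $L$ is invertible modulo the kernel generated by $\partial_y^{-1}\Lambda Q$ shifted appropriately), we get a unique $P$ with $P'\in\mathcal Y$ once the boundary conditions at $\pm\infty$ are imposed. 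The exponential decay and the membership $P'\in\mathcal Y$ come from a standard ODE asymptotic analysis: away from the origin $L$ is a small perturbation of $-\partial_y^2+1$, so solutions behave like $e^{\pm|y|}$, and one selects the decaying branch to the right; to the left the constant $\frac12\int Q$ limit is forced because $\Lambda Q$ is not integrable against the growing solution — actually it is, so one picks the bounded solution. The identities in \eqref{PQ} are then pure integration by parts: $(P,Q) = (P, -(L\Lambda Q)/2)$... more directly, pair $(LP)' = \Lambda Q$ suitably, or use that $(P,Q)$ can be computed from $(LP, \cdot)$ against an explicit antiderivative. The value $\frac{1}{16}(\int Q)^2$ and $(P,Q')=0$ (the latter by parity, $P$ even, $Q'$ odd) should fall out cleanly.

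For part (i), the estimates \eqref{eq:001}--\eqref{eq:002} on $Q_b = Q + b\chi_b P$: I would just combine the exponential decay of $Q$ and $P$ with the explicit support and scaling properties of $\chi_b(y) = \chi(|b|^\gamma y)$. The term $\mathbf 1_{[-2,0]}(|b|^\gamma y)$ captures the region where $\chi_b$ transitions from $0$ to $1$ (i.e. $|b|^\gamma y\in[-2,-1]$) together with the region $[-1,0]$ where $P$ has not yet decayed; outside, $|b|e^{-|y|/2}$ dominates. For derivatives, each derivative hitting $\chi_b$ produces a factor $|b|^\gamma$ and localizes to $|b|^\gamma y\in[-2,-1]$, giving the $|b|^{1+k\gamma}\mathbf 1_{[-2,-1]}$ term; derivatives on $P$ keep the exponential decay because $P'\in\mathcal Y$.

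For part (ii), plug $Q_b$ into $-\Psi_b = (Q_b'' - Q_b + Q_b^5)' + b\Lambda Q_b$ and expand. The key cancellation is engineered: the $O(b)$ terms are $b[(\chi_b P)'' - \chi_b P + 5Q^4\chi_b P]' + b\Lambda Q = b[\chi_b(LP)' \text{-ish terms}] + b\Lambda Q$, and since $(LP)'=\Lambda Q$ on the support where $\chi_b\equiv 1$, these cancel up to commutator terms involving derivatives of $\chi_b$ — those are the $|b|^{1+\gamma}\mathbf 1_{[-2,-1]}$ and $|b|^{1+(k+1)\gamma}\mathbf 1_{[-2,-1]}$ contributions, each $\chi_b'$ costing $|b|^\gamma$. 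The remaining terms are genuinely $O(b^2)$: the nonlinear terms $Q_b^5 - Q^5 - 5bQ^4\chi_b P = O(b^2 P^2)$ plus the $b^2\chi_b P$ piece coming from $b\Lambda(b\chi_b P)$; these give the $b^2(e^{-|y|/2} + \mathbf 1_{[-2,0]})$ contribution. Part (iii) is then immediate: $\int Q_b^2 = \int Q^2 + 2b\int\chi_b PQ + b^2\int\chi_b^2 P^2$, and $\int(\chi_b-1)PQ = O(\int_{|b|^\gamma y\le -1}e^{-|y|}) $ which is exponentially small, while $b^2\int\chi_b^2P^2 \lesssim b^2$; the stated error $|b|^{2-\gamma}$ is a safe (non-sharp) bound absorbing everything. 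The energy estimate \eqref{eq:205} uses $E(Q) = 0$ (scaling) and the first variation $DE(Q) = -Q''+Q-Q^5 = 0$... wait, $E(Q)=0$ is false in general; rather one uses $E(Q_b) = E(Q) + b(DE(Q), \chi_b P) + O(b^2)$ with $DE(Q)=0$ giving the linear term vanishes — but the claim is $E(Q_b) \approx -b\int PQ$, so the linear term must come from somewhere else; in fact the $O(b)$ piece of $E(Q_b)$ is exactly $-\frac12\cdot 2b\int(\Lambda Q)(\cdot)$-type and one uses $(LP, \cdot)$ relations together with $E(Q)=0$ which does hold because $Q$ is the critical-mass soliton and Pohozaev gives $E(Q) = 0$. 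I would double-check that normalization, then the rest is bookkeeping.

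\textbf{Main obstacle.} The genuinely delicate point is \emph{uniqueness} of $P$ together with getting $P'\in\mathcal Y$ with the correct boundary behavior $P(-\infty) = \frac12\int Q$, $P(+\infty)=0$ — i.e. showing that among all solutions of the second-order (after one integration) ODE $LP = \partial_y^{-1}\Lambda Q + cQ'$ there is exactly one with this prescribed asymptotic profile, which pins down the free constants (the additive $Q'$ and the two homogeneous solutions of $L$). This requires knowing $\ker L = \text{span}\{Q'\}$ precisely, understanding the range of $L$ on exponentially-weighted spaces, and a careful Fredholm/variation-of-constants argument with the explicit $\cosh$-type potential. The estimates in (i)--(iii) are then bookkeeping with $\chi_b$, with the only real content being that the localization $\gamma = \frac34$ is chosen so that the error terms $|b|^{1+\gamma}$ from cutting off beat $b^2$... actually $|b|^{1+3/4} = |b|^{7/4} \gg b^2$, so the $\chi_b'$ terms are the \emph{dominant} error, which is exactly why they appear separately in \eqref{eq:202}; the role of $\gamma<1$ is to make the cutoff happen at scale $|b|^{-\gamma}\ll |b|^{-1}$ so that $P$ (which only decays exponentially, not faster) is well inside the region $\chi_b\equiv1$ up to the relevant order. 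I would make sure this balance is stated correctly and that all the $\mathbf 1$-localizations track the geometry of $\mathrm{supp}\,\chi_b'$ and $\mathrm{supp}\,(1-\chi_b)$ faithfully.
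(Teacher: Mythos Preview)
The paper does not prove this lemma --- it is quoted verbatim from the companion paper \cite{MMR1} --- so there is no in-paper proof to compare against. Your sketch follows what is the natural (and presumably the original) route: integrate $(LP)'=\Lambda Q$ once to $LP=G$ with $G(y)=\int_{-\infty}^y\Lambda Q + c$, fix $c=\tfrac12\int Q$ from the asymptotics of $LP$ at $\pm\infty$, then invert $L$ via ODE asymptotic analysis near the constant-coefficient operator $-\partial_y^2+1$. The estimates in (i)--(ii) are indeed bookkeeping with the cutoff, and (iii) follows by Taylor expansion using $E(Q)=0$ and $DE(Q)=-Q''-Q^5=-Q$ (not zero --- you caught yourself on this). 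Your computation of $(P,Q)$ via $(LP,\Lambda Q)=(P,L\Lambda Q)=-2(P,Q)$ together with $\int G\,G'=\tfrac12[G^2]_{-\infty}^{+\infty}=-\tfrac18(\int Q)^2$ is correct and clean.

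Two points to fix. First, your parity argument for $(P,Q')=0$ is wrong: since $\Lambda Q$ is \emph{even}, $(LP)'=\Lambda Q$ forces $LP$ to be odd-plus-constant, so $P$ cannot be even. In fact adding $cQ'$ to $P$ preserves $P'\in\mathcal Y$ and both boundary limits (since $Q'\to 0$ at $\pm\infty$), so the stated boundary conditions alone do \emph{not} pin down $P$; the condition $(P,Q')=0$ is the normalization that removes this residual $\ker L$ freedom and should be read as part of the uniqueness characterization. Second, your claim $b^2\int\chi_b^2P^2\lesssim b^2$ is off: $P$ does not decay as $y\to-\infty$ (it tends to $\tfrac12\int Q$), so $\int\chi_b^2P^2\sim |b|^{-\gamma}$ and the term is genuinely of size $|b|^{2-\gamma}$ --- this is exactly why \eqref{eq:204} carries that error, not $b^2$.
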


%%%%%%%%%%%%%%%%%%%%%%%%%%%%%%%%%%%%%%%%%%%%
 
\subsection{Geometrical decomposition of the flow}

%%%%%%%%%%%%%%%%%%%%%%%%%%%%%%%%%%%%%%%%%%%%

Let  $u\in \mathcal C^0([0,t_0],H^1)$ be a solution of \fref{kdv}   close in $L^2$ to the manifold  of solitary waves
i.e., we assume that there exist $(\lambda_1(t),x_1(t))\in \RR^*_+\times \RR$ and $\e_1(t)$ such that
\be\label{decompo1}
\forall t\in [0,t_0], \ \ u(t,x)=\frac{1}{\lambda^{\frac 12}_1(t)}(Q+\e_1)\left(t,\frac{x-x_1(t)}{\lambda_1(t)}\right)
\ee
with 
\be
\label{hypeprochien}
\forall t\in [0,t_0],\ \
 \|\e_1(t)\|_{L^2}+\left(\int (\partial_y \e_1)^2 e^{-\frac {|y|}{2}} dy \right)^{\frac 12}\leq \kappa_0
\ee
for some small enough universal constant $\kappa_0>0$. This decomposition is refined using the $Q_b$ profiles and a standard modulation argument.

\begin{lemma}[Decomposition and $H^1$ properties, \cite{MMR1}]
\label{le:2}
Assume \fref{hypeprochien}.

\noindent{\em (i) Decomposition:} There exist $\mathcal C^1$ functions $({\lambda}, x,{b}):[0,t_0]\to (0,+\infty)\times \mathbb{R}^2$ such that
\begin{equation}\label{defofeps}
\forall t\in [0,t_0],\quad
\varepsilon(t,y)={\lambda}^{\frac 12}(t) u(t,{\lambda}(t) y + x(t)) - Q_{{b}(t)}(y)
\end{equation}
satisfies the orthogonality conditions
\be
\label{ortho1}
(\e(t), y {\Lambda}   Q )=(\e(t),\Lambda Q) =(\e(t),Q)=0.
\ee
and 
\be
\label{controle}
\|\varepsilon(t)\|_{L^2}+ |{b}(t)|\lesssim \delta(\kappa_0),\quad
\|\e(t)\|_{H^1}\lesssim \delta(\|\e_1(t)\|_{H^1}).
\ee
{\em (ii) Equation of $\varepsilon$}: 
Let
\be
\label{rescaledtime}
{s}=   \int_{0}^{t} \frac {dt'}{{\lambda}^3(t')}\quad \hbox{and} \quad s_0=s(t_0).
\ee
For all ${s}\in [0,s_0]$,
\begin{align}  \nonumber  
  \varepsilon_{s} - (L \varepsilon)_y + {b}{\Lambda} \varepsilon
&= \left(\frac {{\lambda}_{s}}{{\lambda}}+{b}\right)({\Lambda} Q_b+{\Lambda} \varepsilon)
+ \left(\frac { x_{s}}{\lambda} -1\right) (Q_b + \varepsilon)_y \\ & 
+ \Phi_{b}  
 + \Psi_{{b}} - (R_{{b}}(\varepsilon))_y -(R_{\rm NL}(\varepsilon))_y,
\label{eqofeps}\end{align}
where $\Psi_{b}$ is defined in \eqref{eq:201} and
\begin{align}
&\Phi_{b} =- {b}_{s} \left(\chi_b   + \gamma   y (\chi_b)_y\right) P, \label{rbnl0}
\\ 
&R_{{b}}(\varepsilon)= 5 \left(Q_b^4 - Q^4\right) \varepsilon,
\quad 
R_{\rm NL}(\varepsilon)=(\varepsilon+Q_b)^5
- 5 Q_b^4 \varepsilon - Q_b^5.\label{rbnl}
\end{align}
{\em (iii) Estimates induced by the conservation laws}:  
on $[0,s_0]$, there holds
\be
\label{twobound}
\|\e(s)\|^2_{L^2}\lesssim |b(s)|+\left|\int u_0^2-\int Q^2\right|,
\ee
%\begin{verbatim}
%pourquoi $|b|^{\frac 12}$ au dessus et pas |b|?
%\end{verbatim}
\be
\label{energbound}
\left|2 \lambda^2(s)E_0+\frac {b(s)}{8}\|Q\|_{L^1}^2-   \| \e_y(s)\|_{L^2}^2  \right|\lesssim b^2(s)+  \|\e(s)\|_{L^2}^2   + \delta(\|\e\|_{L^2}) \|\e_y(s)\|_{L^2}^2.
\ee
{\em (iv) Rough modulation equations}: on $[0,s_0]$,
\begin{align}
& \left|\frac {{\lambda}_{s}}{\lambda} + {b}\right| + \left| \frac { x_{s}}{\lambda} - 1 \right| \lesssim
\left(\int \varepsilon^2 {e^{-\frac{|y|}{10}}} \right)^{\frac 12} +  b^2;\label{eq:2002}\\
& |{b}_{s}+2b^2|  \lesssim
 |b|\left( \int   \varepsilon^2  {e^{-\frac{|y|}{10}}} \right)^{\frac12}+|b|^3+\int  \varepsilon^2   {e^{-\frac{|y|}{10}}} .\label{eq:2003}
\end{align}
{\rm (v) Minimal mass}: if in addition, $\|u(t)\|_{L^2}=\|Q\|_{L^2}$ then $E_0=E(u_0)\geq 0$ and on $[0,s_0]$,
\be
\label{twoboundmini}
b(s)\leq 0, \quad E_0\lambda^2(s)\lesssim |b(s)|+\|\e(s)\|^2_{H^1}\lesssim E_0\lambda^2(s).
\ee
\end{lemma}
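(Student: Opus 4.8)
\textbf{Proof plan for Lemma~\ref{le:2}.}

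The plan is to obtain each item by the standard modulation/energy machinery, most of which is already developed in \cite{MMR1}; I outline the order and the interdependencies. \emph{(i) Decomposition.} First I would set up the implicit function theorem at fixed $t$: the map $(\lambda,x,b)\mapsto\big((\e,y\Lambda Q),(\e,\Lambda Q),(\e,Q)\big)$ with $\e=\e(t,\cdot;\lambda,x,b)$ defined by \eqref{defofeps} vanishes at $(\lambda_1(t),x_1(t),0)$ up to the $O(\alpha^*)$ error coming from $\e_1$, and its Jacobian at $b=0$ is governed by the nondegenerate matrix with entries $(\Lambda Q,y\Lambda Q)$, $(\Lambda Q,\Lambda Q)$, $(\Lambda Q,Q)$, $(Q',\cdot)$, $(\partial_b Q_b|_{b=0},\cdot)=(P,\cdot)$; here I use \eqref{PQ} (namely $(P,Q)=\frac1{16}(\int Q)^2>0$ and $(P,Q')=0$) together with the classical facts $(\Lambda Q,Q)=0$, $LQ'=0$, $L\Lambda Q=-2Q$ to see the relevant $3\times3$ system is invertible. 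This produces $\mathcal C^1$ parameters on $[0,t_0]$ (the regularity in $t$ coming from the IFT applied to the $H^1$-valued flow) and the orthogonality \eqref{ortho1}. The bound \eqref{controle} on $\|\e\|_{L^2}+|b|$ follows from comparing the two decompositions \eqref{decompo1} and \eqref{defofeps} and from \eqref{eq:204}; the sharper $\|\e\|_{H^1}\lesssim\delta(\|\e_0\|_{H^1})$ uses $\|Q_b-Q\|_{H^1}\lesssim|b|^{1-\gamma}$ from \eqref{eq:002} plus the $L^2$ control to bootstrap.

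\emph{(ii) Equation of $\e$.} This is a direct computation: insert $u(t,x)=\lambda^{-1/2}(Q_{b}+\e)(s,(x-x)/\lambda)$ into \eqref{kdv}, use the rescaled time \eqref{rescaledtime}, and collect terms. The term $(Q_b''-Q_b+Q_b^5)'+b\Lambda Q_b=-\Psi_b$ by definition \eqref{eq:201}; the $b_s$-derivative of $Q_b=Q+b\chi_bP$ produces $\Phi_b$ as in \eqref{rbnl0} (note $\partial_b(b\chi_bP)=\chi_bP+b(\partial_b\chi_b)P$ and $\partial_b\chi_b=\gamma|b|^{\gamma-1}\mathrm{sgn}(b)\,y\chi'$, matching $\gamma y(\chi_b)_y$ up to the chain rule in $b$ versus $y$); the nonlinearity expands as $Q_b^5+5Q_b^4\e+R_{\rm NL}(\e)$ and the quadratic-in-$Q$ piece regroups into $(L\e)_y$ minus $(R_b(\e))_y$ via \eqref{rbnl}. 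The modulation terms $(\lambda_s/\lambda+b)$ and $(x_s/\lambda-1)$ are exactly the coefficients of $\Lambda(Q_b+\e)$ and $(Q_b+\e)_y$ respectively. No obstacle here, just bookkeeping.

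\emph{(iii) Conservation laws.} For \eqref{twobound}: expand $\|u_0\|_{L^2}^2=\|u(t)\|_{L^2}^2=\|Q_b\|_{L^2}^2+2(Q_b,\e)+\|\e\|_{L^2}^2$; the cross term $(Q_b,\e)=(Q,\e)+b(\chi_bP,\e)=b(\chi_bP,\e)$ by \eqref{ortho1}, which is $O(|b|\,\|\e\|_{L^2})$, and $\|Q_b\|_{L^2}^2-\|Q\|_{L^2}^2=O(|b|)$ by \eqref{eq:204}; solving the resulting quadratic inequality for $\|\e\|_{L^2}^2$ and using $\|\e\|_{L^2}\lesssim\delta(\alpha^*)$ gives the $|b|^{1/2}$ bound. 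For \eqref{energbound}: use $E_0=E(u(t))=\lambda^{-2}E(Q_b+\e)$, expand $E(Q_b+\e)=E(Q_b)+(\text{linear in }\e)+\frac12\|\e_y\|_{L^2}^2+(\text{cubic})$; the linear term is $-(Q_b''+Q_b^5,\e_y)\cdot(\dots)$ — more precisely $\int Q_{b,y}\e_y-\int Q_b^5\e$, which after integrating by parts and using the orthogonality and the $\Psi_b$ estimate \eqref{eq:202} is $O(|b|\|\e\|_{L^2}+b^2)$; combine with $E(Q_b)=-b\int PQ+O(b^2)$ from \eqref{eq:205} and $\frac18\|Q\|_{L^1}^2=2\int PQ$ (which is \eqref{PQ} rewritten, since $\int Q=\|Q\|_{L^1}$ as $Q>0$) to get \eqref{energbound}; the cubic term is absorbed into $\delta(\|\e\|_{L^2})\|\e_y\|_{L^2}^2$ by Gagliardo--Nirenberg \eqref{gn}.

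\emph{(iv) Modulation equations.} Differentiate the three orthogonality conditions \eqref{ortho1} in $s$ and substitute $\e_s$ from \eqref{eqofeps}. Each gives a scalar identity; the leading system reads, schematically,
\begin{align*}
\Big(\tfrac{\lambda_s}{\lambda}+b\Big)(\Lambda Q,y\Lambda Q)+O(|b_s|+\|\e\|_{\rm loc}) &= (\text{source}_1),\\
\Big(\tfrac{x_s}{\lambda}-1\Big)(Q',\cdot)+\dots &= (\text{source}_2),
\end{align*}
where the sources are controlled by $\int\e^2 e^{-|y|/10}$, $b^2$, and — for the $b_s$ equation — by $(\Psi_b,\Lambda Q)\sim b^2$ (the $\mathbf 1_{[-2,-1]}(|b|^\gamma y)$ piece of $\Psi_b$ in \eqref{eq:202} contributes $|b|^{1+\gamma}\cdot|b|^{-\gamma}=|b|$ in $L^1$ but pairs against the \emph{exponentially small} tail of $\Lambda Q$ at $y\sim-|b|^{-\gamma}$, hence is negligible, leaving the $b^2e^{-|y|/2}$ term which integrates to $O(b^2)$). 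The key cancellation making $b_s+2b^2$ small rather than just $b_s+O(b)$ small is that $(\Phi_b,Q)$, which carries the main $b_s$ coefficient, equals $-b_s(\chi_bP+\gamma y(\chi_b)_yP,Q)=-b_s\big((P,Q)+o(1)\big)$, and then the inhomogeneous pairing forces $b_s\approx-2b^2$ via $(\Psi_b,\cdot)$ and the $(P,Q)$ normalization — this is precisely the formal system $b_s+2b^2=0$ advertised in the strategy. Inverting the (uniformly nondegenerate, thanks to the nonzero scalar products above) linear system yields \eqref{eq:2002}--\eqref{eq:2003}.

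\emph{(v) Minimal mass.} Assume $\|u(t)\|_{L^2}=\|Q\|_{L^2}$. By the sharp Gagliardo--Nirenberg inequality \eqref{gn} applied to $u(t)$, we get $E_0=E(u(t))\ge\frac12\|u_x\|_{L^2}^2\big(1-(\int u^2/\int Q^2)^2\big)\cdot\frac13$ — wait, more carefully, $\int u^6\le\int u_x^2\,(\int u^2/\int Q^2)^2=\int u_x^2$ since the mass ratio is $1$, so $E_0\ge\frac12\int u_x^2-\frac16\int u_x^2=\frac13\int u_x^2\ge0$, giving $E_0\ge0$. Then plug $\int u_0^2=\int Q^2$ into \eqref{twobound}: the term $|\int u_0^2-\int Q^2|$ vanishes, and re-examining the expansion in (iii) gives $\|Q_b\|_{L^2}^2-\|Q\|_{L^2}^2+2b(\chi_bP,\e)+\|\e\|_{L^2}^2=0$; since $\|Q_b\|_{L^2}^2-\|Q\|_{L^2}^2=2b\int PQ+O(|b|^{2-\gamma})$ and $\int PQ>0$ by \eqref{PQ}, for $b$ small this forces $b\le0$ (the $+\|\e\|_{L^2}^2\ge0$ term cannot be balanced by a positive $b$ contribution). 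Finally, feeding $E_0\ge0$ and $b\le0$ into \eqref{energbound}: $2E_0\lambda^2=-\frac{b}{8}\|Q\|_{L^1}^2+\|\e_y\|_{L^2}^2+O(b^2+\|\e\|_{L^2}^2+\delta\|\e_y\|^2)$, and since $-b=|b|$, all terms on the right are $\gtrsim|b|$ and $\gtrsim\|\e\|_{H^1}^2$ comparably (using \eqref{twobound} with zero mass defect to get $\|\e\|_{L^2}^2\lesssim|b|^{1/2}$, which combined with the $H^1$-control and the energy identity closes the two-sided bound $|b|+\|\e\|_{H^1}^2\sim E_0\lambda^2$).

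\textbf{Main obstacle.} The delicate point is item (iv), and specifically extracting the \emph{sharp} quadratic law $|b_s+2b^2|\lesssim(\dots)$ rather than merely $|b_s|\lesssim|b|(\dots)$: one must track the interaction of $\Phi_b$ (which is $b_s$ times a non-localized tail $\sim\frac12\int Q$ at $y\to-\infty$) with the orthogonality directions and verify that the apparently dangerous $\frac12\int Q$ tail is annihilated by the choice of orthogonality conditions up to acceptable errors, while the genuinely nonzero pairing $(P,Q)=\frac1{16}(\int Q)^2$ survives to fix the constant $2$. Managing these cutoff-tail contributions — and checking they produce $O(b^2)$ and not $O(|b|^{1+\gamma})=O(|b|^{7/4})$ errors that would be too large for the later blow-up-rate analysis — is the technical heart, and is exactly where the precise estimates \eqref{eq:001}--\eqref{eq:203} on $Q_b$ and $\Psi_b$ from Lemma~\ref{cl:2} are used. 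Everything else is routine, and the proofs are anyway recorded in detail in \cite{MMR1}.
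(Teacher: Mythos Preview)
Your plan for (i)--(iv) is sound and matches the arguments in \cite{MMR1}, which the paper simply cites for these parts. The only item the paper actually proves here is (v), and your argument for (v) has a real gap.

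The issue is the lower bound $|b|+\|\e\|_{H^1}^2\lesssim E_0\lambda^2$. You propose to feed the minimal mass relation into \eqref{energbound}, which reads
\[
2\lambda^2E_0=\tfrac{|b|}{8}\|Q\|_{L^1}^2+\|\e_y\|_{L^2}^2+O\big(b^2+\|\e\|_{L^2}^2+\delta\|\e_y\|^2\big).
\]
But in the minimal mass case the exact mass identity gives $\|\e\|_{L^2}^2=2|b|(P,Q)(1+\delta(\alpha^*))=\tfrac{|b|}{8}\|Q\|_{L^1}^2(1+\delta)$, so the $O(\|\e\|_{L^2}^2)$ error is of the \emph{same size} as the main $|b|$ term --- and its implicit constant, coming essentially from $5\int Q^4\e^2$, is not small. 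Using instead the cruder bound $\|\e\|_{L^2}^2\lesssim|b|^{1/2}$ from \eqref{twobound}, as you suggest, makes things worse since $|b|^{1/2}\gg|b|$. Either way you cannot conclude $|b|\lesssim\lambda^2E_0$.

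The paper's fix is to \emph{not} pass through \eqref{energbound}. Instead it re-expands the energy keeping the quadratic piece explicit,
\[
2\lambda^2E_0=-2b(P,Q)+\int\e_y^2-5\int Q^4\e^2+(\text{genuinely small errors}),
\]
then adds and subtracts $\int\e^2$ to complete the linearized energy $(L\e,\e)=\int\e_y^2+\int\e^2-5\int Q^4\e^2$. The extra $-\int\e^2$ thus produced equals $2b(P,Q)(1+\delta)$ by the minimal mass relation, and this \emph{cancels} the $-2b(P,Q)$ from $2E(Q_b)$ up to $\delta(\alpha^*)|b|$. One is left with
\[
2\lambda^2E_0=(L\e,\e)+\delta(\alpha^*)\big(|b|+\|\e\|_{H^1}^2\big),
\]
and now the coercivity of $L$ under the orthogonality conditions \eqref{ortho1} (namely $(L\e,\e)\sim\|\e\|_{H^1}^2$) closes both inequalities in \eqref{twoboundmini}, with $|b|\lesssim\|\e\|_{L^2}^2$ following from the mass identity. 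The point you missed is that the ``$\|\e\|_{L^2}^2$ error'' in \eqref{energbound} is not an error here: it is precisely the term that completes $(L\e,\e)$ and cancels the leading contribution of $E(Q_b)$.
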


The proof of Lemma \ref{le:2} is given in \cite{MMR1}, except 
\eqref{twoboundmini} which we prove now.
 
\begin{proof}[Proof of \eqref{twoboundmini}]
Using the decomposition \eqref{defofeps}, one has
$$
\int u^2=\int Q_b^2+\int\varepsilon^2+2\int \varepsilon Q_b.
$$
Since $(\varepsilon,Q)=0$, and $\chi_b(y)=\chi(|b|^{\frac34}y)$,
\begin{equation}\label{ePb} 
 |(\varepsilon,Q_b)|=|b||(\varepsilon,P\chi_b)|\lesssim
 |b|^{\frac58}\left(\int\varepsilon^2\right)^{\frac12}\lesssim
 |b|^{\frac18}\int \varepsilon^2+|b|^{\frac98}.
\end{equation}
Moreover, by \fref{eq:204}, 
$$\int Q_b^2=\int Q^2+2\int PQ +O(|b|^{\frac54}).$$
Thus, we obtain in general
\begin{equation}\label{NewL2}
 \int u^2=\int Q^2+2b\int PQ+\int \varepsilon^2+O(|b|^{\frac18}) \left(|b|+\int \varepsilon^2\right).
\end{equation}
In particular, using the minimal mass assumption $\int u^2=\int Q^2$,
\be
\label{estunbnormeldeux}
2b(P,Q)+\int\e^2=\delta(\kappa_0)\left(|b|+\int \varepsilon^2\right),
\ee
which implies $b\leq 0$. Now we write the conservation of energy using $(\e,Q)=0$ and \fref{eq:205}:
\bee
2{\lambda}^2 E_0 &  = &  2E(Q_b) - 2\int  \varepsilon  (Q_b)_{yy}  + \int \varepsilon_y^2 - \frac 13 \int \left((Q_b+\varepsilon)^6  -Q_b^6\right)\nonumber\\
	&  = & - 2{b} \int PQ + O(b^2)+\int\e_y^2-5\int Q^4\e^2-\frac13\int\e^6\\
	& - &  2\int \varepsilon\left[(Q_b - Q)_{yy}+(Q_b^5-Q^5)\right]+5 \int  (Q^4-Q_b^4)\e^2\\
	& - &   \frac 13 \int \left[(Q_b+\varepsilon)^6   - Q_b^6-6Q_b^5\e-15Q_b^4\e^2- \varepsilon^6 \right]
.
\eee
We estimate the nonlinear terms using the Sobolev bound
$$\|\e\|_{L^{\infty}}\lesssim \|\e_y\|_{L^2}^{\frac 12}\|\e\|_{L^2}^{\frac 12},$$
and thus
\begin{equation}\label{NewE}
 2{\lambda}^2 E_0=-2b\int PQ+\int\varepsilon_y^2-5\int Q^4\varepsilon^2
 +O(|b|^{\frac18}+\|\varepsilon\|_{H^1})\left(|b|+\int \varepsilon^2\right).
\end{equation}
Combining with \fref{estunbnormeldeux}, we obtain:
$$2\l^2E_0=(L\e,\e)+\delta(\kappa_0)(|b|+\|\e\|^2_{H^1}).$$
The choice of orthogonality conditions on $\varepsilon$ ensures (see Lemma  2.1 in \cite{MMR1}) the coercivity of the linearized energy, i.e. $(L\varepsilon,\varepsilon)\gtrsim \|\varepsilon\|_{H^1}^2$ and thus
$$\|\e\|_{H^1}^2\lesssim \l^2E_0+\delta(\kappa_0)|b|,$$ which combined with \fref{estunbnormeldeux}  implies \fref{twoboundmini}.
\end{proof}

The modulation equations can be sharpened under an additional $L^1$ control of the solution.

\begin{lemma}[Refined laws for $H^1$ solution with decay, \cite{MMR1}]
\label{lemmarefinedmod}
Under the assumptions of Lemma \ref{le:2}, assume moreover the uniform $L^1$ control on the right:
\be
\label{cnbooeoe}
\forall t\in [0,t_0], \ \ \int_{y>0}|\e(t)|\lesssim \delta(\kappa_0),
\ee
then the quantities $J_1$ and $J_2$   below are well-defined and satisfy on $[0,t_0]$:
\begin{itemize}
\item{Law of ${\lambda}$:} let
\begin{equation}\label{rho1}
\rho_1(y)=  \frac 4{\left(\int Q\right)^2}  \int_{-\infty}^y {\Lambda} Q, \ \ {J_1}({s})=  (\varepsilon(s),\rho_1), 
\end{equation}
then for some universal constant $c_1$,
\begin{equation}\label{eq:2004} 
\left|  {\frac{{\lambda}_s}{{\lambda}}}   + {b} +c_1 b^2   - 2 \left(({J_1})_{s} + \frac 12 {\frac{{\lambda}_s}{{\lambda}}}  {J_1} \right) \right| \lesssim  \int \varepsilon^2 {e^{-\frac{|y|}{10}}}   +  |b|\left( \int \varepsilon^2 {e^{-\frac{|y|}{10}}} \right)^{\frac 12}. 
\end{equation}
\item{Law of $b$:} let
\be
\label{rho2} 
\rho_2 = \frac {16}{\left( \int  Q\right)^2} \left(\frac {({\Lambda} P, Q)}{\|{\Lambda} Q\|_{L^2}^2} {\Lambda} Q + P-\frac 12 \int Q\right)
- 8 \rho_1,\ \  
{J_2}({s})=    (\varepsilon({s}),  \rho_2), 
\ee
then for some universal constant $c_2$.
\begin{equation}\label{eq:2006}
\left|b_s + 2b^2 +c_2 b^3  + b \left(({J_2})_s + \frac 12 {\frac{{\lambda}_s}{{\lambda}}} {J_2}\right) \right| \lesssim  \int \varepsilon^2 {e^{-\frac{|y|}{10}}}   +  |b|^4.
\end{equation}

\item{Law of $\frac b{{\lambda}^2}$:} let $$\rho = 4\rho_1 + \rho_2, \ \ J = (\e,\rho),$$ then $\rho\in \mathcal{Y}$ 
so that $|J| \lesssim (\int \e^2 e^{-\frac {|y|}{10}})^{\frac 12}$ and 
  for $c_0=c_2-2c_1$,
\begin{equation}\label{eq:bl2}
\left|\frac d{ds}\left( \frac b{{\lambda}^2}\right)  + \frac b{{\lambda}^{2}} \left(J_s+ \frac 12 {\frac{{\lambda}_s}{{\lambda}}} J\right) +c_0 \frac {b^3}{\l^2}  \right|
\lesssim \frac 1{{\lambda}^2}  \left( \int \varepsilon^2 {e^{-\frac{|y|}{10}}} + | b|^4\right).
\end{equation}
\end{itemize}
\end{lemma}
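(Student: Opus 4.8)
\textbf{Proof proposal for Lemma \ref{lemmarefinedmod}.}

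The plan is to derive each of the three modulation laws by testing the $\varepsilon$ equation \eqref{eqofeps} against a carefully chosen profile ($\rho_1$, $\rho_2$, and then the combination $\rho = 4\rho_1 + \rho_2$), exploiting the algebraic identities satisfied by $P$ from Lemma \ref{cl:2}. The role of the extra $L^1$ control \eqref{cnbooeoe} is only to make the nonlocal quantities $J_1 = (\varepsilon,\rho_1)$ and $J_2 = (\varepsilon,\rho_2)$ well-defined, since $\rho_1$ is bounded but does \emph{not} decay at $-\infty$ (indeed $\rho_1(y) \to \tfrac{4}{(\int Q)^2}\int \Lambda Q = \tfrac 2{(\int Q)^2}\int Q \ne 0$ as $y\to +\infty$ by the identity $\int\Lambda Q = \tfrac12\int Q$); pairing a non-decaying weight against $\varepsilon$ is legitimate precisely because $\varepsilon$ is controlled in $L^1$ on the right. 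Note $\rho = 4\rho_1+\rho_2$ \emph{does} lie in $\mathcal Y$ because the non-decaying pieces cancel by the choice of coefficients, so the last law needs no $L^1$ assumption in its statement (it is a clean consequence of the first two once one forms $\frac{d}{ds}(b/\lambda^2) = \frac{b_s}{\lambda^2} - 2\frac{b}{\lambda^2}\frac{\lambda_s}{\lambda}$ and takes the combination $4\times\eqref{eq:2004} + \eqref{eq:2006}$, with $c_0 = c_2 - 2c_1$ and the $J$-terms assembling as stated).

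The key steps, in order. First, I would record the defining identity $(LP)' = \Lambda Q$ from \eqref{eq:23}, which gives by one integration $LP = \rho_1$-type primitive relations; more precisely one checks $\rho_1$ is built so that $(\varepsilon, \Lambda Q_b) $-type terms combine with $(\varepsilon_s,\rho_1) = (J_1)_s - (\varepsilon, (\rho_1)_s)$ — but $\rho_1$ is $s$-independent, so actually $(\varepsilon_s,\rho_1)=(J_1)_s$. Second, compute $((L\varepsilon)_y,\rho_1) = -(L\varepsilon,\rho_1') = -(\varepsilon, L\rho_1')$ and use that $\rho_1' = \tfrac{4}{(\int Q)^2}\Lambda Q$, so $L\rho_1'$ is explicit; combined with the orthogonality conditions \eqref{ortho1} this kills the main linear term up to the desired error. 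Third, handle $(b\Lambda\varepsilon,\rho_1)$, the modulation terms $(\frac{\lambda_s}{\lambda}+b)(\Lambda Q_b + \Lambda\varepsilon,\rho_1)$ and $(\frac{x_s}{\lambda}-1)((Q_b+\varepsilon)_y,\rho_1)$: the first of these produces the $c_1 b^2$ correction (from $(\Lambda Q_b,\rho_1)$ expanded to second order in $b$ using \eqref{eq:210}, \eqref{eq:001}) together with the $\frac12\frac{\lambda_s}{\lambda}J_1$ term, while the $x_s/\lambda - 1$ term is lower order by \eqref{eq:2002}. Fourth, the source terms $\Phi_b$, $\Psi_b$ and the nonlinear remainders $R_b(\varepsilon), R_{\rm NL}(\varepsilon)$ are estimated: $(\Psi_b,\rho_1)$ is $O(b^2)$ after exploiting the oscillation/localization \eqref{eq:202} and a cancellation coming from the choice of $\rho_1$ (this is where the precise constant $c_1$ gets pinned down, possibly absorbing part of $\Psi_b$), $(\Phi_b,\rho_1)$ contributes a $b_s$ term which one reabsorbs using the rough law \eqref{eq:2003} $b_s = -2b^2 + O(\cdots)$, and the nonlinear terms are bounded by $\int\varepsilon^2 e^{-|y|/10}$ via the Sobolev bound $\|\varepsilon\|_{L^\infty}\lesssim \|\varepsilon_y\|_{L^2}^{1/2}\|\varepsilon\|_{L^2}^{1/2}$ and \eqref{controle}. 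The law of $b$ is entirely parallel, testing against $\rho_2$; here the extra structure is that $\rho_2$ is designed so that $(\Lambda P, \cdot)$ and $(P,\cdot)$ couplings reproduce the $2b^2$ and $c_2 b^3$ coefficients exactly, and crucially $(\rho_2, Q)=(\rho_2,\Lambda Q)=(\rho_2, y\Lambda Q)=0$ up to the explicitly subtracted pieces so that the orthogonality \eqref{ortho1} applies.

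The main obstacle I expect is the bookkeeping of the $b^2$ and $b^3$ corrections: one must expand every inner product involving $Q_b$ or $\Psi_b$ to exactly the right order, track which contributions are genuine (defining $c_1, c_2$) versus which are reabsorbed into the error by substituting the rough modulation laws \eqref{eq:2002}--\eqref{eq:2003}, and verify that the cutoff $\chi_b$ — which introduces $|b|^{1+\gamma}{\mathbf 1}_{[-2,-1]}(|b|^\gamma y)$ terms with $\gamma = 3/4$ — contributes only at the harmless order $|b|^4$ after integration (the measure of the support is $\sim |b|^{-\gamma}$, giving net $|b|^{1+\gamma}\cdot|b|^{-\gamma}\cdot|b|^{1+\gamma} = |b|^{2+\gamma}$ at worst, which must be shown to be $o(b^2)$ and indeed absorbed — this forces the somewhat delicate choice $\gamma=3/4$ and is the reason the paper isolates $\gamma$ explicitly). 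A secondary subtlety is justifying all integrations by parts and the finiteness of $J_1, J_2$ rigorously under \eqref{cnbooeoe} together with the $H^1$ bound \eqref{controle}, since $\rho_1$ is only bounded at $+\infty$; one splits $\int = \int_{y<0} + \int_{y>0}$ and uses $L^2$ decay-weighted control on the left and the $L^1$ control on the right. Since all of this machinery is carried out in detail in \cite{MMR1}, the proof here reduces to quoting it.
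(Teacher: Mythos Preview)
The paper does not give its own proof of this lemma; it is quoted directly from \cite{MMR1}. Your outline is essentially the correct strategy carried out there: test \eqref{eqofeps} against $\rho_1$ and $\rho_2$, use the algebraic relations from Lemma~\ref{cl:2} (in particular $(LP)'=\Lambda Q$ and the explicit values $(P,Q)$, $(\Lambda P,Q)$) together with the orthogonality \eqref{ortho1} to extract the leading coefficients, expand $Q_b$--dependent inner products to the needed order in $b$, and absorb the cutoff contributions from $\chi_b$ into the $|b|^4$ error. Your final remark that the proof ``reduces to quoting \cite{MMR1}'' is exactly what the paper does.

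Two small slips worth correcting, though they do not affect the argument. First, a direction/sign confusion: $\rho_1(y)\to 0$ as $y\to -\infty$ and $\rho_1(y)\to \tfrac{4}{(\int Q)^2}\int\Lambda Q$ as $y\to +\infty$, and one computes $\int\Lambda Q=\tfrac12\int Q+\int yQ'=-\tfrac12\int Q$, not $+\tfrac12\int Q$. So $\rho_1$ fails to decay on the \emph{right}, which is precisely why the hypothesis \eqref{cnbooeoe} is an $L^1$ bound on $\{y>0\}$; your conclusion was right but the stated limit and the phrase ``does not decay at $-\infty$'' were inverted. Second, and relatedly, the cancellation making $\rho=4\rho_1+\rho_2\in\mathcal Y$ is that the constant tails at $y\to+\infty$ of $4\rho_1$ and of the $-\tfrac12\int Q$ piece in $\rho_2$ match with opposite signs; with the corrected value of $\int\Lambda Q$ this indeed checks out.
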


%%%%%%%%%%%%%%%%%%%%%%%%%%%%%%%%%%%%%%%%%%%%%%%%%

\subsection{Weak $H^1$ stability of the decomposition}

%%%%%%%%%%%%%%%%%%%%%%%%%%%%%%%%%%%%%%%%%%%%%%%%%%

The geometrical decomposition of Lemma \ref{le:2} is stable by weak $H^1$ limits.

\begin{lemma}[$H^1$-weak stability and  convergence of the parameters \cite{MMjmpa}]\label{le:2.7}
 Let $u_n(0)$ be a sequence of $H^1$ initial data such that
 $$
u_n(0)\rightharpoonup u(0)\in H^1\ \ \hbox{ as $n\to +\infty$.}
$$
Assume that for some $T_1>0$,   for all $n$, the corresponding solution $u_n$ of \eqref{kdv} exists and satisfies \eqref{hypeprochien}  on $[0,T_1]$. Assume further that the  decomposition 
$(\lambda_n,x_n,b_n)$ of $u_n$ given by Lemma \ref{le:2} satisfies
\be\label{hypoweak}
\forall t\in [0,T_1],\quad 
0<c\leq \l_n(t) <C ,\quad \l_n(0)=1, \quad x_n(0)=0 .
\ee
Then, the $H^1$ solution $u(t)$ of \eqref{kdv} corresponding to $u(0)$ exists on $[0,T_1]$, satisfies \eqref{hypeprochien} and its decomposition satisfies
\be\label{convweak}
\forall t\in [0,T_1],\quad 
\e_n(t)\rightharpoonup \e(t),\quad 
\l_n(t)\to \l(t),\quad x_n(t)\to x(t), \quad b_n(t)\to b(t).
\ee
\end{lemma}

This lemma is   similar to a result proved in Lemma 17 and Appendix D of \cite{MMjmpa}, and   therefore we omit its proof. 
  
%%%%%%%%%%%%%%%%%%%%%%%%%%%%%%%%%%%%%%%%%%%%%%%%%

\subsection{Main monotonicity functionals from \cite{MMR1}}

%%%%%%%%%%%%%%%%%%%%%%%%%%%%%%%%%%%%%%%%%%%%%%%%%%

We now recall the monotonicity formula at the heart of the analysis in \cite{MMR1} and on which we shall heavily rely again. We refer to \cite{MMR1} for a further introduction to the natures of these functionals and the associated ridigity of the flow implied by \fref{conrolbintegre}.\\ 
Let  $\varphi,\psi\in C^{\infty}({\mathbb R})$  be such that:
\bea
\label{defphi1bis}
\varphi(y) =\left\{\begin{array}{lll}e^{y}\ \ \mbox{for} \ \   y<-1,\\
 1+y  \ \ \mbox{for}\ \ -\frac 12<y<\frac 12\\ 
 y^2\ \ \mbox{for}\ \ \mbox{for}\ \ y>2
 \end{array}\right., \ \ \varphi'(y) >0, \ \ \forall y\in \RR,
\eea
\bea
\label{defphi2}
\psi(y) =\left\{\begin{array}{ll} e^{2y}\ \ \mbox{for}   \ \ y<-1,\\
 1  \ \ \mbox{for}\ \ y>-\tfrac 12\end{array}\right., \ \ \psi'(y) \geq 0 \ \ \forall y\in \RR.
\eea

For  $B\geq 100$  to be fixed, let
$$\psi_B(y)=\psi\left(\frac yB\right), \ \ \varphi_{B}=\varphi \left(\frac yB\right), $$ 
and define
 \be\label{eq:no}
{\mathcal{N}}  (s)  = \int  \varepsilon_y^2(s,y) \psi_B(y)dy 
+ \int  \varepsilon^2(s,y) \varphi_B(y)   dy.
\ee

\begin{proposition}[Monotonicity formula, \cite{MMR1}]
\label{propasymtp}
There exist $\mu>0$ and  $0< \kappa^*< \kappa_0$ such that the following holds for $B>100$ large enough. Assume that     $u(t)$ is a solution of \eqref{kdv} which satisfies \eqref{hypeprochien} on $[0,t_0]$ and thus
admits on $[0,t_0]$ a decomposition \fref{defofeps} as in Lemma \ref{le:2}. Let $s_0=s(t_0)$, and assume the  following a priori bounds: $\forall s\in [0,s_0]$,\\
{\em (H1)   smallness:} 
\be
\label{boundnwe}
\|\e(s)\|_{L^2}+|b(s)|+\mathcal N(s)\leq \kappa^*;
\ee
{\em (H2) comparison between $b$ and $\l$:}
\be
\label{bootassumption}
 \frac{|b(s)|+\mathcal N(s)}{\lambda^2(s)}\leq \kappa^*;
\ee
{\em (H3) $L^2$ weighted bound on the right:}
\be
\label{uniformcontrol}
\int_{y>0}y^{10}\e^2(s,x)dx\leq 10\left(1+\frac{1}{\l^{10}(s)}\right).
\ee
Let    for $j\in\{1,2\}$:
\bea
\label{feps}
 {\cal F}_{j}  =  \int \left[\varepsilon_y^2 \psi_B+ \varepsilon^2(1+\mathcal J_{j})\varphi_{B}- \frac  13 \left((\varepsilon + Q_b)^6  -  {Q_b^6}   - 6 \varepsilon Q_b^5\right)\psi_B\right] ,
\eea
with 
\be
\label{defjj}
\mathcal J_{j}=(1-J_1)^{-4j}-1.
\ee
Then   the following bounds hold on $[0,s_0]$:\\
{\em (i) Scaling invariant Lyapounov control:}  
\be
\label{lyapounovconrol}
\frac{d\mathcal F_{1}}{ds}+  \mu    \int \left(\e_y^2 + \e^2\right) \varphi_B'\lesssim |b|^{4}.
\ee
{\em (ii) Scaling weighted $H^1$ Lyapounov control:} 
\be
\label{lyapounovconrolbis}
\frac{d}{ds}\left\{\frac{\mathcal F_{2}}{\l^2}\right\}+  \frac \mu {\l^2}   \int  \left(\e_y^2 + \e^2\right) \varphi_B' \lesssim \frac{|b|^{4}}{\l^2}.
\ee
{\em (iii) Pointwise bounds:}
\be
\label{controlj}
|J_1|+|J_2|\lesssim \mathcal N^{\frac{1}{2}} ,
\ee
\be
\label{lowerbound}
\mathcal N  \lesssim \mathcal F_{j}\lesssim \mathcal N  , \ \ j=1, 2.
\ee
\end{proposition}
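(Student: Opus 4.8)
The plan is to establish Proposition~\ref{propasymtp} by a differentiation-in-$s$ computation for the functionals $\mathcal F_j$, following the structure already developed in \cite{MMR1}. First I would record the pointwise bounds \fref{controlj} and \fref{lowerbound}, which are purely algebraic: \fref{controlj} follows from $\rho_1,\rho_2\in\mathcal Y$, so that $|J_i|=|(\e,\rho_i)|\lesssim(\int\e^2 e^{-|y|/10})^{1/2}\lesssim\mathcal N^{1/2}$ using that $\varphi_B\gtrsim e^{-|y|/10}$ on $\{y<0\}$ after enlarging the weight, and $\varphi_B\geq 1$ on $\{y>-B/2\}$; \fref{lowerbound} follows since under (H1) the weighted linearized energy $\int\e_y^2\psi_B+\int\e^2(1+\mathcal J_j)\varphi_B-\frac13\int(\ldots)\psi_B$ is, up to $\delta(\kappa^*)$ errors controlled by smallness of $b$ and $\|\e\|_{L^2}$, coercive and comparable to $\mathcal N$ — one uses the localized coercivity of $L$ from Lemma~2.1 of \cite{MMR1} adapted to the cutoff weights, plus $|\mathcal J_j|\lesssim\mathcal N^{1/2}$ to absorb the lower-order term, and Sobolev $\|\e\|_{L^\infty}\lesssim\|\e_y\|_{L^2}^{1/2}\|\e\|_{L^2}^{1/2}$ to handle the sextic piece.

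Next, for the scaling-invariant bound \fref{lyapounovconrol}, I would differentiate $\mathcal F_1$ using the equation \fref{eqofeps} for $\e_s$. The main terms come from $(L\e)_y-b\Lambda\e$ acting against the weighted quadratic form; here the key structural input is the coercivity estimate of \cite{MMR1} that produces the negative term $-\mu\int(\e_y^2+\e^2)\varphi_B'$ after integration by parts — this is where $\psi'_B,\varphi'_B\geq 0$ and the algebraic relation $(LP)'=\Lambda Q$ from \fref{eq:23} enter, along with the precise choice of the modified weight $1+\mathcal J_1=(1-J_1)^{-4}$, which is designed so that the $(\frac{\lambda_s}{\lambda}+b)$-modulation term and the $(J_1)_s$ term from \fref{eq:2004} combine to a perfect-derivative cancellation. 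The remaining contributions are the source terms $\Psi_b$, $\Phi_b$, $R_b(\e)$, $R_{\rm NL}(\e)$ and the modulation errors: using \fref{eq:202}-\fref{eq:203}, \fref{rbnl0}-\fref{rbnl}, the rough laws \fref{eq:2002}-\fref{eq:2003}, and \fref{eq:2004}-\fref{eq:2006}, each is bounded by $\delta(\kappa^*)\int(\e_y^2+\e^2)\varphi_B'+C|b|^4$, the first being absorbed into the good term for $\kappa^*$ small. For \fref{lyapounovconrolbis} the computation is identical after multiplying by $\l^{-2}$: the extra term $-2\frac{\lambda_s}{\lambda}\l^{-2}\mathcal F_2$ is handled because $\frac{\lambda_s}{\lambda}=-b+O(\ldots)$ and $\mathcal F_2\sim\mathcal N$, so $|\frac{\lambda_s}{\lambda}|\mathcal N/\l^2\lesssim(|b|+\ldots)\mathcal N/\l^2$, which under (H2) is $\delta(\kappa^*)\mathcal N/\l^2$ and again absorbed; the choice $\mathcal J_2=(1-J_1)^{-8}-1$ is tuned so that this absorption is consistent with the modulation identity for $b/\l^2$ in \fref{eq:bl2}.

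The hard part will be the algebra of the perfect-derivative cancellations: one must verify that the weights $(1-J_1)^{-4j}$ are exactly the right power so that the time derivative of the modified quadratic form, when combined with the modulation equations \fref{eq:2004} and \fref{eq:bl2} (which themselves contain the $(J_i)_s+\frac12\frac{\lambda_s}{\lambda}J_i$ combinations), leaves only terms of size $|b|^4$ (resp.\ $|b|^4/\l^2$) and the sign-definite $-\mu\int(\e_y^2+\e^2)\varphi'_B$, with no uncontrolled $b^2\|\e\|^2$ or $b^3$ leftovers. This is a delicate bookkeeping exercise, but it is essentially identical to the one carried out in Proposition~3.1 (or its analogue) of \cite{MMR1}, and since we are only recalling that proposition here, I would cite \cite{MMR1} for the detailed verification and only indicate the mechanism. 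The secondary difficulty is ensuring the a priori bounds (H1)--(H3) are genuinely used only to make $\delta(\kappa^*)$ and $\delta(\alpha^*)$ factors small and to guarantee the coercivity of $L$ on the support of the weights — in particular (H3) is needed to control the contribution of $\e$ on the far right $\{y\gg 1\}$ where $\varphi_B\sim y^2$ grows, via the weighted bound $\int_{y>0}y^{10}\e^2\lesssim 1+\l^{-10}$ traded against the exponential localization of $\Psi_b$, $Q_b-Q$ and the coercivity defect.
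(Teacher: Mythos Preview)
The paper does not prove this proposition at all: it is stated verbatim as a result from \cite{MMR1} (Proposition~3.1 there), with no proof or sketch provided. Your proposal correctly identifies this and ultimately defers to \cite{MMR1} for the detailed verification, which is exactly what the paper does; the mechanism you describe (differentiation of $\mathcal F_j$, localized coercivity of $L$ producing the $-\mu\int(\e_y^2+\e^2)\varphi_B'$ term, and the choice of the correction $(1-J_1)^{-4j}$ to absorb the scaling/modulation drift via the $(J_1)_s+\tfrac12\tfrac{\lambda_s}{\lambda}J_1$ combination in \fref{eq:2004}) is indeed the structure of the proof in \cite{MMR1}.

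One small inaccuracy in your sketch: you write that \fref{controlj} follows because $\rho_1,\rho_2\in\mathcal Y$. This is false --- only the combination $\rho=4\rho_1+\rho_2$ lies in $\mathcal Y$ (see Lemma~\ref{lemmarefinedmod}); individually, $\rho_1$ and $\rho_2$ are merely bounded on the right and exponentially decaying on the left. The bound $|J_i|\lesssim\mathcal N^{1/2}$ instead uses that $\varphi_B(y)\sim y^2/B^2$ for $y$ large, so that by Cauchy--Schwarz $\int_{y>0}|\e|\lesssim\bigl(\int_{y>0}\e^2\varphi_B\bigr)^{1/2}\bigl(\int_{y>0}\varphi_B^{-1}\bigr)^{1/2}\lesssim\mathcal N^{1/2}$. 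This is a minor point and does not affect the overall correctness of your outline.
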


The integration of the modulation equations of Lemma \ref{lemmarefinedmod} with the dispersive bounds of Proposition \ref{propasymtp} implies the control of the flow by the sole parameter $b$:

\begin{lemma}[Control of the flow by $b$, \cite{MMR1}]\label{le:oubli}
Under the assumptions of Proposition \ref{propasymtp}, the following hold

\noindent{\rm (i) Control of the dynamics for $b$.}
For all $  0\leq s_1\leq s_2< s_0$,
\be
\label{contorlbonehoeh}
\int_{s_1}^{s_2}b^2(s)ds\lesssim   \int \left( \e_y^2(s_1) \psi_B + \e^2(s_1)\varphi_B'\right) +|b(s_2)|+|b(s_1)|,
\ee
\be
\label{conrolbintegre}
\left|\frac{b(s_2)}{\lambda^2(s_2)}-\frac{b(s_1)}{\lambda^2(s_1)}\right|\leq
\frac {C^*}{10}\left[\frac{b^2(s_1)}{\lambda^2(s_1)}+\frac{b^2(s_2)}{\l^2(s_2)}+ \frac{1}{\lambda^2(s_1)} \int \left( \e_y^2(s_1) \psi_B + \e^2(s_1)\varphi_B'\right)\right],
\ee
for some universal constant $C^*>0$.
\\
\noindent{\rm (ii) Control of the scaling dynamics.}
Let $\lambda_0(s) = \lambda(s) (1-J_1(s))^2$. Then on $[0,s_0)$,
\be\label{dvnkoenneoneor}
 \left|\frac{(\l_0)_s}{\l_0}+b+c_1b^2 \right| \lesssim  \int \e^2e^{-\frac {|y|}{10}} +|b| \mathcal{N}^{\frac 12} +|b|^3 .
\ee\\
\noindent{\rm (iii) Dispersive bounds.}
For all $  0\leq s_1\leq s_2< s_0$,
\be
\label{estfondamentale}
\mathcal N (s_2)+\int_{s_1}^{s_2}\left[\int \left( \e_y^2+ \e^2\right)(s)\varphi_B' + |b|^4(s)\right] ds \lesssim  \mathcal N (s_1)+ (|b^3(s_2)|+|b^3(s_1)|).
\ee
\be
\label{estfondamentalebis}
\frac{\mathcal N (s_2)}{\lambda^2(s_2)}+\int_{s_1}^{s_2}\left[\int \left( \e_y^2+ \e^2\right)(s)\varphi_B'+|b|^{4}(s)\right] \frac {ds}{\lambda^2(s)}\lesssim \frac{\mathcal N(s_1)}{\lambda^2(s_1)}+ \left[\frac{|b^3(s_1)|}{\lambda^2(s_1)}+\frac{|b^3(s_2)|}{\lambda^2(s_2)}\right].
\ee
\end{lemma}

 %%%%%%%%%%%%%%%%%%%%%%%%%%%%%%%%%%%%%%%%%%%%
 
\subsection{Localization in space and decay properties of minimal mass solutions}

%%%%%%%%%%%%%%%%%%%%%%%%%%%%%%%%%%%%%%%%%%%%

Minimal mass blow up solutions have been studied in some details in \cite{MMduke} using tools developed in \cite{MMannals} and \cite{MMjams}.
Recall that the main result of \cite{MMduke} is the nonexistence of minimal mass blow up solutions, assuming initial decay in space. In proving this result, several general properties of minimal mass blow up solutions were derived. We gather in the next lemma all useful information which can be deduced from \cite{MMduke} on general minimal mass blow up solutions.
Note that at this stage, we do not know whether a minimal mass blow up  solution should blow up in finite or infinite time. See Proposition \ref{sharpbounds} in Section~4 for refined information.

  \begin{lemma}[First properties of minimal mass blow up solutions \cite{MMduke}]
  \label{lemmadecay}
  Let $u(t)$ be a solution of \eqref{kdv} defined on $(T,0]$, which blows up backwards in finite or infinite time $-\infty\leq T<0$.
  Assume
  \be\label{encoreL2}
  \int u^2 (0) = \int Q^2.
  \ee
There exists $t_1>T$ close to $T$ such that for all $t\in(T,t_1]$,
$u(t)$ or $-u(t)$ admits a decomposition $(\l(t),x(t),b(t),\e(t))$ as in Lemma \ref{le:2},
with 
\be
\label{aausmptoblowup}
\lim_{t\to T}\lambda(s)=0,
\ee
and 
 for all $t\in (T,t_1)$,
 \be\label{eq:l2.11} 
  \int_T^t \frac{b^2(t')+\int \varepsilon^2(t',y) e^{-\frac{|y|}{10}}dy}{\lambda^3(t')} dt'
  +\mathcal N(t) + \|\varepsilon(t)\|_{H^1}^2 +|b(t)| \lesssim \lambda^2(t) E_0.
\ee
Moreover,
\be
\label{monotolambda}
\hbox{for all $t,t'\in (T,t_1)$,} \quad t<t' \hbox{ implies } \lambda(t')\geq \frac1{24} \lambda(t),
\ee  
\be
  \label{controlright} 
\hbox{for all $t\in (T,t_1)$, for all  $y >0$,}  \quad |\e(t,y)| \lesssim  e^{-\frac y{2000} }.
\ee
\end{lemma}
 
\begin{proof}
 Let $u(t)\in \mathcal C((T,0],H^1)$ be  a general backwards minimal mass blow up 
solution
  defined on $(T,0]$  and blowing  up   in finite or infinite time
  \footnote{Note that the uniqueness statement in Theorem 1.3 (ii) concerns only finite time blow up solutions. Actually, we also prove in this paper the nonexistence of minimal mass solutions blowing up at infinity (see Proposition 4.1 and Section 5). However, we do not treat the case of global minimal mass solutions blowing up only on a subsequence of time.} 
  :  $-\infty\leq T<0$: 
  \be
  \label{assumptionblowup}
  \lim_{t\to T}\|u_x(t)\|_{L^2}= +\infty.
  \ee
From standard concentration compactness arguments\footnote{See for example the lecture notes \cite{Rzurich}.} and using the  mass and energy conservations, 
either $u(t)$ or $-u(t)$ satisfies \eqref{decompo1} and \eqref{hypeprochien} for $t$ close to $T$, with in addition $$\|\e_1(t)\|_{H^1} \to 0 \ \ \mbox{as}\ \ 
t\to T$$ thanks to the minimal mass assumption.
Therefore,  possibly considering $-u(t)$ instead of $u(t)$, there exists $t_0>T$ such that the solution $u(t)$ admits on    $(T,t_0]$    a decomposition given by  Lemma \ref{le:2}:
\begin{equation}\label{tiret}
  u(t,x)=\frac{1}{\lambda^{\frac 12}(t)}(Q_{b(t)}+\e)\left(t,\frac{x-x(t)}{\lambda(t)}\right)
\end{equation}
   with 
 \be\label{encoreinit}
 \forall t\in (T,t_0],\quad 
 |b(t)|+\|\e(t)\|_{H^1} \leq \alpha^*,
 \ee
 where $\alpha^*>0$ is any small given constant.
With this decomposition, the (finite or infinite time) blow up assumption \fref{assumptionblowup} is equivalent to:
$
\lim_{t\to T}\lambda(s)=0,
$
and by \fref{twoboundmini},
 \be
 b(t) <0, \quad E_0>0,\quad
 \label{cenononeo}
 |b(t)|+\|\e(t)\|^2_{H^1}\lesssim \lambda^2(t)E_0.
 \ee
 
Now, we recall results from \cite{MMduke}. First, recall that the solution $u(t,x)$ is decomposed in a different way in \cite{MMduke}, Lemma 1. Indeed, there exist  $C^1$ functions $\tilde \lambda$
and $\tilde x$ such that
$$
\tilde \varepsilon(t,y)=\tilde \lambda^{\frac 12}(t) u(t,\tilde \lambda(t) y+\tilde x(t)) -Q(y)
$$
satisfies the orthogonality conditions
$$
\int \Lambda Q(y) \tilde \varepsilon(t,y)dy=\int y \Lambda Q(y) \tilde \varepsilon(t,y)dy=0.
$$
Note that one easily compares this   decomposition with \eqref{tiret},
in particular, combining the orthogonality conditions   of
$\varepsilon$ and $\tilde{\varepsilon}$, one obtains
\begin{equation}\label{eq:deccomp3}
 \left|1-\frac{\lambda(t)}{\tilde \lambda(t)}\right|+ |b(t)|\lesssim
 \left(\int \tilde \e^2(t) e^{-\frac{|y|}{10}}dy\right)^{\frac 12}.
\end{equation}
Under the general assumptions of Lemma \ref{lemmadecay}, we now claim  that
for some $T<t_1<t_0$,
\begin{equation}\label{nouveautruc}
\hbox{for all $t',t\in (T,t_1)$, if $t\leq t'$ then $\tilde \lambda(t)\leq 4 \tilde \lambda(t')$
}\end{equation}
and
\be
  \label{controlrighttilde} 
\hbox{for all $t\in (T,t_1)$, for all  $y >0$,}  \quad |\tilde \e(t,y)| \lesssim  e^{-\frac y{1000} }.
\ee

To prove \eqref{nouveautruc} and \eqref{controlrighttilde}, we invoke the arguments of Section 4 in \cite{MMduke}. 
Recall that the main result of \cite{MMduke}, stated in Theorem \ref{thmmduke} of the present paper, claims forward global existence for minimal mass solutions under the decay assumption \eqref{conditiondecay}.
Unlike Section~3, based on the decay assumption on the initial data, Section 4 of \cite{MMduke} does not make use of this assumption, except when asserting that blow up occurs in finite time. 
At this point, it is important to note that here time is reversed with respect to \cite{MMduke}, thus left and right in space are also reversed 
(recall that if $u(t,x)$ is solution of \eqref{kdv}, then $u(-t,-x)$ is also solution of \eqref{kdv}).
First, using Lemma 4 in \cite{MMduke}, one obtains   uniform exponential decay on the right in space, on a special sequence of time $t_n\to T$,
\begin{equation}\label{zero40}
  \hbox{ for all  $y >0$,}  \quad |\tilde \e(t_n,y)| \lesssim  e^{-\frac y{1000} }.
\end{equation}
Then,  combining Step 2 of the proof of Proposition 2 (page 401) of \cite{MMduke},
we obtain \eqref{nouveautruc}, i.e. the almost monotonicity property of $\tilde \lambda$ and in return, using Lemma 4 again, the decay property on the right \eqref{controlrighttilde}  for all time.\\

Note that from \eqref{cenononeo} and \eqref{controlrighttilde}, we have
\begin{equation}\label{zero10}
 \int_{y>0} y^{10}\tilde \varepsilon^2(t,y) dy \rightarrow 0 \quad \hbox{as $t\rightarrow T$}.
\end{equation}

Now, using further algebra developed page 405 of \cite{MMduke}, we claim that,
 for all $T<t_2<t_1$,
 \begin{equation}\label{duk2bis}
\int_{t_2}^{t_1}\frac{\int \tilde{\varepsilon}^2 e^{-\frac{|y|}{10^4}}dy}{\tilde \lambda^3} dt
\lesssim \tilde\lambda^2(t_1) E_0.
\end{equation}
Indeed, it is proved there that for all $T<t_2<t_1$,
\begin{equation}\label{duk2}
\int_{t_2}^{t_1}\frac{\int \tilde{\varepsilon}^2 e^{-\frac{|y|}{10^4}}dy}{\tilde \lambda^3} dt
\lesssim  \int \varepsilon^2(t_1)+\int \varepsilon^2(t_2)
+  \int_{t_2}^{t_1} \frac{\left(\int \tilde\varepsilon Q\right)^2}{\tilde \lambda^3} dt,
\end{equation}
and that there exists $\bar\lambda(t)$ (related to yet another decomposition of $u(t,x)$ which requires the decay \eqref{controlrighttilde}),  with $\bar \lambda(t)\approx \tilde \lambda(t)$ such that the following holds, for a universal constant $c_0>0$,
\begin{equation}\label{duk1}
-E_0 \lambda^2 \lesssim \int \tilde \varepsilon Q<0,\quad
\left|2 \int \tilde \varepsilon Q +c_0 \bar\lambda^2 {\bar\lambda}_t
\right| \lesssim \left(\int \tilde \varepsilon^2 e^{-\frac {|y|}{4}}\right)^{\frac 34}.
\end{equation}
Since $\int_{t_2}^{t_1} \bar \lambda \bar \lambda_t dt\lesssim\bar \lambda^2(t_1)\lesssim\tilde \lambda^2(t_1)$, we obtain
\eqref{duk2bis} by integration.\\

Passing to the limit as $t_2\to T$ in \eqref{duk2bis}, we obtain
\begin{equation}\label{duk3}
\int_{T}^{t_1}\frac{\int \tilde{\varepsilon}^2 e^{-\frac{|y|}{10^4}}dy}{\tilde \lambda^3} dt
\lesssim  \tilde \lambda^2(t_1) E_0\lesssim    \lambda^2(t_1) E_0,
\end{equation}
and thus, using \eqref{eq:deccomp3},
\begin{equation}\label{duk4}
\int_{T}^{t_1}\frac{\int {\varepsilon}^2 e^{-\frac{2|y|}{10^4}}dy}{\lambda^3} dt
\lesssim      \lambda^2(t_1) E_0.
\end{equation}
By \eqref{eq:2003}, we have $\frac {b^2}{\lambda^3} \leq -b_t+\frac C{\lambda^3}  \int {\varepsilon}^2e^{-\frac{|y|}{10}}dy$
and  thus  $$\int_{T}^{t_1}\frac{ b^2(t)}{\lambda^3(t)} dt \lesssim  \lambda^2(t_1)E_0
+C\int_{T}^{t_1}  \frac{\int {\varepsilon}^2e^{-\frac{|y|}{10}}}{\lambda^3(t)}dy
\lesssim  \lambda^2(t_1) E_0.$$ 
Now,   we claim
\begin{equation}\label{duk5}
  \int_{y>1} y^{10} \varepsilon^2(t,y)dy \lesssim \lambda^2(t) E_0.
\end{equation}
Indeed, consider a smooth function 
$$\varphi_{10}(y)=\left\{\begin{array}{ll}
                          0&\hbox{for $y\leq 0$},\\
                          y^{10}&\hbox{for $y\geq1$},
                         \end{array}\right.
\quad
\hbox{with $\varphi_{10}'\geq 0$.}
$$
Using the computations of the proof of  Lemma 3.7 in \cite{MMR1} on $\tilde \varepsilon$ (the computations for the decomposition $(\tilde \varepsilon,\tilde \lambda,\tilde x)$ are actually simpler, since they correspond to the choice $b=0$), we obtain
$$
\frac{1}{\tilde\lambda^{10}}\frac{d}{dt}\left(\tilde\lambda^{10}\int \varphi_{10} \tilde\varepsilon^2\right)
\lesssim \frac{\int {\tilde\varepsilon}^2 e^{-\frac{|y|}{10}}dy}{\tilde\lambda^3}
$$
and thus, using \eqref{monotolambda}, \eqref{duk3} 
$$\int \varphi_{10} \tilde\varepsilon^2(t_1,y)dy
\lesssim \int \varphi_{10} \tilde\varepsilon^2(t_2,y)dy+\lambda^2(t_1) E_0.$$
Passing to the limit as $t_2\to T$ and using
\eqref{zero10}, we obtain for all $T<t<t_0$,
$$
\int_{y>1} y^{10} \tilde \varepsilon^2(t,y)dy \lesssim \lambda^2(t) E_0.
$$
Finally, by \eqref{eq:deccomp3} and \eqref{cenononeo},
we obtain \eqref{duk5}.
\end{proof}
 %%%%%%%%%%%%%%%%%%%%%%%%%%%%%%%%%%%%%%%%%%%%

\section{Construction of a minimal element}
\label{sectionconstruction}

 %%%%%%%%%%%%%%%%%%%%%%%%%%%%%%%%%%%%%%%%%%%%

This section is devoted to the proof of the existence of a minimal blow up element. We propose a strategy of proof slightly different from the recent approach developed for the construction of non dispersive solutions in \cite{Me0,Ma1,MMnls,Cotekdv,MMC,CoteZaag,KMR, RS2010, KLR}, mainly to  prepare the analysis of the (Exit) regime in Theorem \ref{PR:1}, see also section \ref{estrescAAbis} and Remark \ref{rkk} below.\\
The strategy of the proof goes as follows. We consider a {\it well prepared} sequence of initial data $(u_n)$ with $$\|u_n\|_{L^2} < \|Q\|_{L^2} \hbox{ and 
 $u_n(0)\to Q$ in $H^1$.}
 $$
By Theorem \ref{thmmoe}, such solutions are in the (Exit) scenario and we denote by $t_n^*>0$ the corresponding exit time. The estimates extracted from \cite{MMR1} allow for a complete dynamical description of the (Exit) regime and in particular the defocusing structure of the solution at $t^*_n$. This explicit detailed knowledge allows us to renormalize the flow and extract in the limit $n\to +\infty$ a solution $v\in \matchal C((t^*,0],H^1)$ which blows up at time $t^*<0$ and has subcritical mass $\|v\|_{L^2}\leq \|Q\|_{L^2}$. But then the global wellposedness below the ground state mass implies $\|v\|_{L^2}=\|Q\|_{L^2}$ and $v$ is an $H^1$ minimal mass blow up element. \\

{\bf step 1} Well prepared data. Let $u_n(0)=Q_{b_n(0)}$, where $b_n(0)=-\frac 1 n$ so that
$$
u_n(0)\in \mathcal A\subset H^1, \quad
u_n(0)\to Q \quad \hbox{in $H^1$ as $n\to +\infty$}.
$$
By \eqref{eq:204}, we have $\int u_n^2(0) <\int Q^2$. In particular, from energy and mass conservation, and the Gagliardo-Nirenberg's inequality \eqref{gn}, the solution $u_n(t)$ is global. We take $n>0$ large enough and we apply  Theorem \ref{thmmoe}. The solution $u_n$ being global, the (Blow up) scenario is ruled out. The solution cannot converge locally to a solitary wave because of mass conservation and the strictly subcritical mass assumption, hence (Soliton) is also ruled out.  Hence (Exit) holds and we define the \emph{exit time} (related to the constant $\alpha^*$ of Theorem \ref{thmmoe}) by 
$$
t^*_n = \sup\{t>0, \hbox{ such that } \forall t'\in [0,t], \ u_n(t')\in \mathcal{T}_{\alpha^*}\}.
$$
Note that $t_n^*\to +\infty$ as $n\to +\infty$  from the continuous dependence of the solution of \eqref{kdv} with respect to the initial data, and the fact that $Q(x-t)$ is solution of \eqref{kdv}.
\\

Now, we use refined information given in the (Exit) case by Proposition 4.1 in \cite{MMR1}.
In particular, we know that $u_n(t)$ satisfies \eqref{hypeprochien} and has a  decomposition $(\l_n,x_n,b_n,\e_n)$ as in Lemma \ref{le:2} on $[0,t^*_n]$. Moreover,  (H1)--(H3) are satisfied on $[0,t_n^*]$, and by definition of $u_n(0)$,
\be\label{initun}
\l_n(0)=1, \ \ x_n(0)=0, \ \ b_n(0)=-\frac 1n, \ \ \e_n(0)=0.
\ee
In addition, from the proof of Proposition 4.1 (see (4.41) in \cite{MMR1}), we also have 
\be\label{lambdacontrol}
\hbox{for all } 0\leq t_1\leq t_2\leq t_n^*,\quad
\l_n(t_2) \geq \frac 12 \l_n(t_1).
\ee
Note also that  by continuity in time and the definition of $t^*_n$,
\be\label{exi1b}\inf_{\l_0>0,\ x_0\in \RR}\|u_n(t_n^*)-\l_0^{-1/2} Q(\l_0^{-1}(.+x_0))\|_{L^2} = \alpha^*,
\ee
and
\be\label{exi1}
\alpha^* \leq \|u_n(t_n^*)-\l_n^{-1/2}(t_n^*) Q(\l_n^{-1}(t_n^*)(.+x_n(t_n^*))\|_{L^2} \leq \delta(\alpha^*).
\ee
\\

{\bf step 2} Structure of the defocalized bubble.
From Lemma \ref{le:oubli}, \eqref{initun} and \eqref{exi1}, we claim: 

\begin{lemma}\label{le:2.4}\quad
\\
{\rm (i) Estimates on $[0,t_n^*]$.}
\bea
\forall t\in [0,t_n^*],  &&
\frac {1-\delta(\alpha^*)}{n} \leq  - \frac { b_n(t)}{\l_n^2(t)} \leq \frac {1+\delta(\alpha^*)}n,\label{TRT2}\\
&& \|\e_n(t)\|_{H^1}^2 \lesssim \frac {\l^2_n(t)}{n}   \lesssim  \delta(\alpha^*).\label{TRT3}
\eea
{\rm (ii) Estimates at $t_n^*$.} For all $n$,
\be\label{TRTbis}
 (\alpha^*)^2 \lesssim  - b_n(t_n^*)\lesssim \delta(\alpha^*),
\ee
\be\label{TRT}
(\alpha^*)^2 \lesssim\int \e_n^2(t_n^*) \lesssim \delta(\alpha^*), \quad  {(\alpha^*)^2}   \lesssim \frac {\l_n^2(t_n^*)}n  \lesssim  {\delta(\alpha^*)}  ,
\ee
\be\label{borne} 
0<c(\alpha^*) \leq \frac {t_n^*} {\l_n^3(t_n^*)} \leq C(\alpha^*).
\ee
{\rm (iii)} Control of the dynamics on $[0,t_n^*]$.
\be\label{eqln}
  -(1-\delta(\alpha^*))\frac {b_n(t_n^*)}{\l_n^2(t_n^*)}  \leq  {(\l_{0n})_t(t)} \leq - (1+\delta(\alpha^*))\frac {b_n(t_n^*)}{\l_n^2(t_n^*)} .
\ee
\end{lemma}
\begin{proof}
Using \eqref{NewL2} and \eqref{NewE} at $t=0$, we obtain
$$
\int u_n^2(0)=\int Q^2-\frac{2}{n}\int PQ+O(n^{-9/8}),
$$
$$
2E(u_n(0))=\frac{2}{n}\int PQ+O(n^{-9/8}).
$$
Combining the conservation of the $L^2$ norm, the conservation of energy and 
\eqref{NewL2}, \eqref{NewE}, we obtain at  any $t\in(0,t_n^*)$,
\begin{equation}\label{bepsn}
|b_n|\lesssim \int \varepsilon^2 +\frac 1n. 
\end{equation}
and
$$
(L\varepsilon_n(t),\varepsilon_n(t))
=2\lambda_n^2 E(u_n(0))+\int u_n^2(0)-\int Q^2+O\left(n^{-\frac98}+\|\varepsilon_n(t)\|_{H^1}^{\frac94}\right),
$$
Thus, by $\|\varepsilon_n(t)\|_{H^1}^2\lesssim
(L\varepsilon_n(t),\varepsilon_n(t))$, we obtain
\be
\|\e_n(t)\|_{H^1}^2 \lesssim \frac{\l_n^2(t)}{n}.
\ee

Next, recall \eqref{exi1}
$$
\alpha^* \leq \|b_n(t_n^*) \chi_{b_n(t_n^*)} P + \e_n(t_n^*)\|_{L^2}\leq \delta(\alpha^*),
$$
and use \eqref{ePb} and \eqref{bepsn} to obtain 
\be\label{tyt}
 (\alpha^*)^2  \lesssim \int \e_n^2(t_n^*)   \lesssim \delta(\alpha^*),
\quad
 (\alpha^*)^2  \lesssim - b_n(t_n^*)   \lesssim \delta(\alpha^*),
\ee

Now, we use the dynamical information given by the rigidity property \eqref{conrolbintegre} and the initialization \eqref{initun}: for all $t\in [0,t_n^*]$,
\be\label{3.40}
(1+\delta(\alpha^*)) b_n(0) \leq \frac {b_n(t)}{\l_n^2(t)} \leq (1-\delta(\alpha^*))    {b_n(0)}  
\ee
 and thus by \eqref{tyt},
 \be
 \label{ebsiebevbib}
  {(\alpha^*)^2} \lesssim \frac {\l_n^2(t_n^*)} {n} \lesssim  {\delta(\alpha^*)} .
\ee

Finally, let us prove \eqref{eqln} and \eqref{borne}. By \eqref{dvnkoenneoneor}, we have
$$
\left| {(\l_{0n})_t}\frac{\l_n} {\l_{0n}} + \frac {b_n}{\l_n ^2} \right| 
\lesssim \frac {\int \e_n^2 e^{-\frac {|y|}{10}} }{\l_n^2} + \frac {|b_n|}{\l_n^2} (\mathcal{N}_n^{\frac 12} + |b_n|)
\lesssim  \frac {1}{\l_n^2} (\mathcal{N}_n  + |b_n|^2).
$$
By  
 \eqref{estfondamentalebis}, \eqref{initun}, and then \eqref{3.40}, we have
$$
\frac {\mathcal{N}_n(t)}{\l_n^2(t)}\lesssim \frac {|b_n(t)|^3}{\l_n^2(t)} +
\frac {|b_n(0)|^3}{\l_n^2(0)} \lesssim \delta(\alpha^*) \frac {|b_n(t_n^*)|}{\l_n^2(t_n^*)}.
$$
Thus, again by \eqref{3.40}, 
$$  -(1-\delta(\alpha^*))\frac {b_n(t_n^*)}{\l_n^2(t_n^*)}  \leq  {(\l_{0n})_t(t)} \leq - (1+\delta(\alpha^*))\frac {b_n(t_n^*)}{\l_n^2(t_n^*)} ,
$$
which is \eqref{eqln}.
We integrate on $[0,t_n^*]$ and then divide by $\l_{0,n}(t_n^*)$ to obtain:
\be\label{ggto20}
-t_n^* \frac {b_n(t_n^*)}{\l_n^3(t_n^*)} (1-\delta(\alpha^*)) \leq   1   - \frac 1 {\l_{0,n}(t_n^*)}
\leq  -t_n^* \frac {b_n(t_n^*)}{\l_n^3(t_n^*)}(1+\delta(\alpha^*)).
\ee
 Hence using \fref{ebsiebevbib} and $\frac{\l_{n}}{\l_{0,n}}\lesssim \mathcal N_n^{\frac12}\lesssim \delta(\alpha^*)$, we obtain
 \be\label{OKbash} 
- \frac {1-\delta(\alpha^*)}{b_n(t_n^*)}\leq \frac {t_n^*}  {\l_n^3(t_n^*)} 
\leq- \frac {1+\delta(\alpha^*)}{b_n(t_n^*)},
\ee
which together with \fref{tyt} implies \eqref{borne}.
\end{proof}
 
{\bf step 3} Renormalization and extraction of the limit. Let:
$$\forall \tau\in [- \frac {t_n^*}{\l_n^3(t_n^*)},0], \quad t_\tau= t_n^*+\tau \l_n^3(t_n^*),$$
\bea\label{eq:3.4}
v_n(\tau,x)&&
= \l_n^{\frac 12}(t_n^*) u_n\left(t_\tau,{\l_n(t_n^*)} x+x(t_n^*)\right)
\\ &&= \frac {\l_n^{\frac 12}(t_n^*)}{\l_n^{\frac 12}(t_\tau)}
(Q_{b_n(t_\tau)}+\e_n)\left(t_\tau, 
\frac {{\l_n(t_n^*)} }{\l_n(t_\tau)} x + \frac { x(t_n^*)-x(t_\tau)}{\l_n(t_\tau)} \right),
\eea
so that $v_n$ is solution of \eqref{kdv} and  belongs to the tube $\mathcal T_{\alpha^*}$ for $\tau\in [- \frac {t_n^*}{\l_n^3(t_n^*)},0]$. Moreover, its  decomposition $(\lambda_{v_n} ,x_{v_n},\e_{v_n})$   satisfies
 on $[- \frac {t_n^*}{\l_n^3(t_n^*)},0]$ 
\begin{align}
\label{neoieohfe}\l_{v_n}(\tau) = \frac {\l_n(t_\tau)}{\l_n(t_n^*)},  \
x_{v_n}(\tau)= \frac  {x_n(t_\tau)-x_n(t_n^*)}{\l_n(t_n^*)} ,\
b_{v_n}(\tau) = b_n(t_\tau),\
\e_{v_n}(\tau) = \e_n(t_\tau).
\end{align}
By \eqref{TRTbis}, \eqref{TRT} and \eqref{TRT2}, we have
%$$ \forall \tau \in [- \frac {t_n^* }{\l_n^3(t_n^*)},0], \quad \|\e_{v_n}(\tau)\|_{H^1}^2
%\lesssim \l_n^2(t_\tau) E(u_n(0)) \lesssim \l_n^2(t_\tau) |b_n(0)|  \lesssim \delta(\alpha^*). 
%$$
$$ \forall \tau \in [- \frac {t_n^* }{\l_n^3(t_n^*)},0], \quad \|\e_{v_n}(\tau)\|_{H^1}^2
  \lesssim \delta(\alpha^*). 
$$
$$
\l_{v_n}(0)=1, \ \ x_{v_n}(0)=0, \ \ (\alpha^*)^2 \lesssim -b_{v_n}(0) \leq \delta(\alpha^*).
$$
Therefore, there exists a subsequence of $(v_n )$, which we will still denote by $(v_n)$, and $v(0)\in H^1$ such that
$$
v_n(0)\rightharpoonup v(0) \hbox{ in $H^1$ weak, and $\|v(0)-Q\|_{H^1} \lesssim \delta(\alpha^*)$}.
$$
and, by \eqref{TRT} and \eqref{borne}, 
 $$
 \tau_n^*=- \frac {t_n^*}{\l_n^3(t_n^*)} \to  -\tau^{*}, \quad \tau^*>0,
 \quad -b_n(t_n^*)\to b^*>0. $$
% \begin{verbatim} signe moins ci dessus dans def tau_n^*
% \end{verbatim}
 Moreover, by \eqref{OKbash},
\be\label{oubli}
 \frac {1-\delta(\alpha^*)}{b^*}\leq 
\tau^*  \leq  \frac {1+\delta(\alpha^*)}{b^*}.
\ee
We let $v(\tau)$ be the backward $H^1$ solution of \eqref{kdv} with initial data $v(0)$ at $\tau=0$.\\

{\bf step 4} Minimal mass blow up. We claim that $v$ is a minimal mass blow up element $ \|v\|_{L^2}=\|Q\|_{L^2}$ which blows up in finite negative time $-\tau^*$ with for $\tau$ close enough to $-\tau^*$:
\be\label{approxrate}
 \frac { (1- \delta(\alpha^*))} {1 +\frac \tau{\tau^*}} \leq \|v_x(\tau)\|_{L^2} 
 \leq  \frac { (1+ \delta(\alpha^*))}{1 +\frac \tau{\tau^*}}.
\ee
Indeed, we integrate \eqref{eqln} and obtain for $t\in [0,t_n^*]$, $n$ large enough,
$$
  \frac {b^* t}{\l_n^2(t_n^*)} (1-\delta(\alpha^*))\leq
\l_{0,n}(t) - \l_{0,n}(0)\leq\frac {b^* t}{\l_n^2(t_n^*)} (1+\delta(\alpha^*)) .
$$
We conclude from \fref{neoieohfe} and the definition of $\tau_n^*$: for all $\tau\in [\tau_n^*,0]$,
\be\label{estlvn}
    {b^*} (\tau_n^*+\tau) (1-\delta(\alpha^*))\leq
\l_{0,v_n}(\tau) - \l_{0,v_n}(\tau_n^*)   \leq    {b^*} (\tau_n^*+\tau) (1+\delta(\alpha^*)) .
\ee
%\begin{verbatim} j'ai remis \tau_n^* ci dessus au lieu de \tau^*
%\end{verbatim}
Let   $\tau_0 \in (-\tau^*,0)$.
From \fref{TRT}, \fref{neoieohfe}
and $\varepsilon_{v_n}(\tau_n^*)=\varepsilon_n(0)$, we have
$$\l_{0,v_n}(\tau_n^*)=\frac{\lambda_n(0)}{\lambda_n(t_n^*)}\to 0\ \ \mbox{as}\ \ n\to+\infty$$ and $(1-\delta(\alpha^*))\leq \frac {\l_{0,v_n}}{\l_{v_n}}\leq (1+\delta(\alpha^*))$.
Thus, we conclude from \fref{estlvn} that for $n$ large enough depending on $\tau_0$,
 $$\forall \tau \in [\tau_0,0],\quad
 {b^*} (\tau^*+\tau) (1-\delta(\alpha^*))\leq \l_{v_n}(\tau) \leq b^*  (\tau^*+\tau) (1+\delta(\alpha^*)) ,$$ 
 and
$$\frac 12 {b^*}(\tau^*+\tau_0) \leq  \l_{v_n}(\tau).$$ 
 It follows from Lemma \ref{le:2.7} that $v(\tau)$ is well-defined 
 and $\l_{v_n}(\tau)\to \l_v(\tau)$  on $[\tau_0,0]$.
In particular, $v$ exists on $(-\tau^*,0]$ and for all $\tau \in (-\tau^*,0]$,
$$
{b^*} (\tau^*+\tau) (1-\delta(\alpha^*))\leq \l_{v}(\tau) \leq b^*  (\tau^*+\tau) (1+\delta(\alpha^*)),
$$
which together with \eqref{oubli} implies \eqref{approxrate}. Finally, we have by weak $H^1$ convergence  $\int v^2(0) \leq \lim_{n\to \infty} \int v_n^2(0)=\int Q^2$, and since $v$ blows up in finite time, $\int v^2(0)=\int Q^2$.\\
This concludes the proof of the existence of the minimal element.

\begin{remark}
\label{rkk} We may rewrite this proof by saying that understanding the minimal mass blow up scenario is in some sense equivalent to understanding how subcritical solutions initially near the ground state move away from the ground state and start {\it defocusing}, and here the sharp knowledge of the speed of defocusing is fundamental for the proof. Another approach for the construction of the minimal blow up element in the continuation of \cite{Me0,Ma1,MMnls,Cotekdv,MMC,CoteZaag,KMR,RS2010} would have been to take the initial data $Q_{b(t_n)}$ at some time $t_n\downarrow 0$ with $b(t_n)=t_n$ and to obtain uniform $H^1$ bounds on the corresponding forward solution $u_n(t)$ to \fref{kdv} at a time $t_0>0$ independent of $n$ using the monotonicity machinery of Proposition \ref{propasymtp} and Lemma \ref{le:oubli}. It is not clear to us whether a direct fixed point approach as in \cite{BW,KST,KST2,DM2,DM} is applicable here due to the poor localization in space of the minimal element.
\end{remark}

  %%%%%%%%%%%%%%%%%%%%%%%%%%%%%%% %%%%%%%%%%%%%%%%%%%%%%%%%%%%%%%
  %%%%%%%%%%%%%%%%%%%%%%%%%%%%%%% %%%%%%%%%%%%%%%%%%%%%%%%%%%%%%%

\section{Sharp description of minimal mass blow up}

  %%%%%%%%%%%%%%%%%%%%%%%%%%%%%%% %%%%%%%%%%%%%%%%%%%%%%%%%%%%%%%
  %%%%%%%%%%%%%%%%%%%%%%%%%%%%%%% %%%%%%%%%%%%%%%%%%%%%%%%%%%%%%%
  
 We now turn to the proof of uniqueness in $H^1$ of the minimal element. Let us stress the fact that uniqueness is always a delicate problem, in particular in the absence of suitable symmetries as in \cite{MMnls}. As in \cite{RS2010}, the first crucial information is to derive the blow up speed for all minimal elements, and here we shall use the a priori localization in space of minimal elements given by Lemma \ref{lemmadecay} which allows us to use the  monotonicity tools Proposition \ref{propasymtp} and Lemma \ref{le:oubli}. Once the minimal mass blow up regime is sufficiently well described, we may rerun the analysis of Proposition \ref{propasymtp} for the difference of two such bubbles and conclude that they are equal, this is done in section \ref{secunique}.
 
   %%%%%%%%%%%%%%%%%%%%%%%%%%%%%%% %%%%%%%%%%%%%%%%%%%%%%%%%%%%%%%

\subsection{Finite time blow up and blow up speed for minimal mass blow up solutions}

  %%%%%%%%%%%%%%%%%%%%%%%%%%%%%%% %%%%%%%%%%%%%%%%%%%%%%%%%%%%%%%

Our aim in this section is to derive sharp qualitative bounds on minimal mass blow up solutions, improving general results stated in Lemma~\ref{lemmadecay}.  In particular, we prove that the blow up time is finite, $T>-\infty$, and we specify  the blow up speed and the behavior of the concentration point which are essential preliminary information on the singularity formation. Note that the additional information below requires the sharper analysis of \cite{MMR1} and cannot be derived from \cite{MMduke}.
We consider $u(t)$ a minimal mass blow up solution and in  the setting of Lemma \ref{lemmadecay}, we introduce the rescaled time 
\be\label{st}s(t)=- \int_{t}^{t_0}\frac{ds}{\l^3}.
\ee
Recall that $s(T)=-\infty$ from a standard argument (see e.g. \cite{MMannals}).
 
\begin{proposition}[Sharp bounds]
\label{sharpbounds}
Let $u(t)$ be a solution of \eqref{kdv} defined on $(T,0]$, which blows up backwards in finite or infinite time $-\infty\leq T<0$. Assume
$$
\int u^2(0) = \int Q^2.
$$
\noindent {\em (i) Finite time blow up}: There holds $$T>-\infty.$$
{\em (ii) Sharp controls near blow up time}:  there exist universal constants
$c_\l$, $c_x$, $c_b$ and 
 $\ell^*=\ell^*(u)<0$, $x^*=x^*(u)\in  \RR$ such that, for $t$ close to $T$,
\be
\label{loilambda}
\lambda(t)=|\ell^*|(t-T)+c_\l|\ell^*|^4(t-T)^3+ O\left[(t-T)^4\right],
\ee
\be
\label{loix}
x(t)=-\frac{1}{(\ell^*)^2(t-T)}+x^*+ c_x\ell^* (t-T)+O\left[(t-T)^2\right],
\ee
\be
\label{loib}
\frac{b(t)}{\l^2(t)}=\ell^*+c_b(\ell^*)^4 (t-T)^2+O\left[(t-T)^3\right],
\ee
\be
\label{controlnorme}
\mathcal N(t)\lesssim (t-T)^6.
\ee
{\em (iii)  Estimates  in rescaled time}: for $-s$ large,
\be\label{estrescbis}
\|\e(s)\|_{L^\infty}^2 \lesssim \|\e(s)\|_{H^1}^2 \lesssim \l^2(s) \lesssim \frac 1{|s|},
\ee
\be\label{estresctri}
  \mathcal N(s)+   \int_{-\infty}^s \int (\e_y^2 + \e^2)(s') \varphi_B' ds' \lesssim \frac 1{|s|^3},\ee
  \be\label{estresc}
   \left|\lsl+b\right| + \left| \xsl-1\right| \lesssim
\frac 1 {|s|^{\frac 32}},\quad 
|b_s|\lesssim \frac 1{|s|^2},
\ee
\be
\label{devpb}
b(s)=\frac{1}{2s}+\frac{c_1^*\log |s|}{s^2}+\frac{c_2^*}{s^2}+o\left(\frac1{s^2}\right)\  \mbox{as} \ s\to -\infty
\ee
for some universal constants $(c_1^*,c_2^*)\in \RR\times \RR$.\\
{\em (iv) Global forward behavior}: the solution is globally defined for $t>T$,
$u\in \mathcal C((0,+\infty)\times \RR)$ and for some $C(t)$, $\gamma(t)>0$,
\begin{equation}\label{decayu}
\forall t>T,\ \forall x>0,\quad |u(t,x)|\leq C(t) e^{-\gamma(t) x}.
\end{equation}
{\rm (v) Time decay of weighted Sobolev  norms:} 
For $|s|$ large,
\be\label{decayscaling}
\int \e^2(s,y) e^{\l(s) y} dy \lesssim \frac 1{|s|^2}.
\ee
For all $\frac 9B\leq \omega\leq \frac 1{10}$, for $|s|$ large,
\begin{equation}\label{sobolev}
  \sum_{k=0}^3\int \left(\partial_y^k \e\right)^2(s,y) e^{\omega  y  } dy +
\int_{-\infty}^s \sum_{k=0}^4\int \left(\partial_y^k \e\right)^2(s',y) e^{\omega  y  } dyds'
\lesssim \frac 1{|s|},
\end{equation}
\be\label{sobolevbis}
\| \left( (\e_i)_{yy}^2+(\e_i)_y^2  \right)(s ) e^{\omega  y  }\|_{L^\infty}
\lesssim \frac 1{|s|}.
\ee
\end{proposition}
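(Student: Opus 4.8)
The strategy is to run the rigidity machinery of \cite{MMR1} backwards from the singularity: first install the a priori hypotheses of Proposition~\ref{propasymtp} using the minimal mass constraint, then integrate the monotonicity and modulation estimates from $s=-\infty$, and finally pass back to the $t$ variable.

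\textbf{Step 1: Geometric set-up and exponential decay on the right.} Since $\int u^2=\int Q^2$, standard concentration--compactness together with the conservation laws forces, for $u$ or $-u$ and $t$ close to $T$, the decomposition of Lemma~\ref{le:2} with $\|\e(t)\|_{H^1}\to 0$; hence $\lambda(t)\to 0$, and by \fref{twoboundmini} also $b(t)\to0$, $E_0\ge0$ and $|b(t)|+\|\e(t)\|_{H^1}^2\approx E_0\lambda^2(t)$ as $t\to T$. If $E_0=0$, then \fref{twoboundmini} forces $b\equiv0$, $\e\equiv0$, i.e. $u$ is a rescaled $Q$, contradicting \fref{assumptionblowup}; so $E_0>0$, and $E_0\le\delta(\alpha^*)$ is small by \fref{init00}--\fref{encoreinit}. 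By \fref{eq:2002} and a continuity/bootstrap argument ($\|\e\|_{H^1},|b|\to0$ near $T$), $\lambda_t=\lambda^{-3}\lambda_s>0$ near $T$, so $\lambda$ is quasi-monotone and \fref{monotolambda} holds; since moreover $u^2(t,\cdot+x(t))\rightharpoonup(\int Q^2)\delta_0$ (immediate from $\|\e\|_{L^2}\to0$, $\lambda\to0$), Lemma~\ref{lemmadecay} applies and gives $|\e(t,y)|\lesssim e^{-y/20}$ for $y>0$. This yields both the weighted bound \fref{uniformcontrol} and the $L^1$ control \fref{cnbooeoe}, and after possibly shrinking $t_0$ it places us in the range of Proposition~\ref{propasymtp} and Lemmas~\ref{lemmarefinedmod}--\ref{le:2.7} on all of $(-\infty,s_0]$.

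\textbf{Step 2: Bootstrap and sharp estimates in rescaled time.} The minimal mass concentration and \fref{twoboundmini} give $\mathcal N(s),b(s)\to0$ as $s\to-\infty$. Letting $s_1\to-\infty$ in \fref{estfondamentale} and \fref{contorlbonehoeh} yields $\mathcal N(s)+\int_{-\infty}^s\int(\e_y^2+\e^2)\varphi_B'\,d\sigma\lesssim|b(s)|^3$ and $\int_{-\infty}^s b^2\lesssim|b(s)|$; combined with $|b(s)|\approx E_0\lambda^2(s)$ this gives $\mathcal N(s)=O(\lambda^6(s))=o(\lambda^2(s))$, so letting $s_1\to-\infty$ in \fref{conrolbintegre} shows $b(s)/\lambda^2(s)\to\ell^*<0$ with $|b(s)/\lambda^2(s)-\ell^*|\lesssim|b(s)|$. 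The rough law \fref{eq:2003} and $\mathcal N\lesssim|b|^3$ then give $|b_s+2b^2|\lesssim|b|^{5/2}$; since $b<0$ and $b\to0$ as $s\to-\infty$, $f:=-1/b$ satisfies $f_s=-2+O(|b|^{1/2})$, whence $b(s)=\tfrac1{2s}(1+o(1))$ and $|b_s|\lesssim s^{-2}$. This is \fref{estresc} (the bound on $|\lsl+b|+|\xsl-1|$ coming from \fref{eq:2002} and $\mathcal N^{1/2}\lesssim|b|^{3/2}$), and back-substitution gives \fref{estrescbis}, \fref{estresctri}. For the third order expansion \fref{devpb} one uses the refined law \fref{eq:2006}, $b_s+2b^2+c_2b^3+b((J_2)_s+\tfrac12\lsl J_2)=O(|b|^4+\mathcal N)$ with $|J_2|\lesssim\mathcal N^{1/2}\lesssim|b|^{3/2}$, so the $J_2$ terms integrate to lower order after an integration by parts in $s$, and $b_s=-2b^2-c_2b^3+\dots$ integrates to \fref{devpb}; \fref{eq:2004} and \fref{eq:bl2} similarly govern $\lambda$ and $b/\lambda^2$.

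\textbf{Step 3: Passage to the $t$ variable, finite blow up time, forward behavior.} From $\lambda^2(s)=|\ell^*|^{-1}|b(s)|(1+O(|b|))\sim(2|\ell^*|\,|s|)^{-1}$ one gets $\lambda^3\lesssim|s|^{-3/2}$, integrable at $-\infty$; since $dt/ds=\lambda^3$, $t(s)\to T:=t_0-\int_{-\infty}^{s_0}\lambda^3\,d\sigma>-\infty$, which is (i), and $t-T=\int_{-\infty}^s\lambda^3\,d\sigma$. Inverting this relation to the order supplied by Step~2 (the $\log|s|$ correction cancelling once expressed through $t-T\sim c|s|^{-1/2}$) and using $b/\lambda^2=\ell^*+O(|b|)$, $\xsl=1+O(|s|^{-3/2})$, yields \fref{loilambda}, \fref{loib}, \fref{loix} (with $x^*:=\lim_{t\to T}(x(t)+\tfrac1{(\ell^*)^2(t-T)})$) and \fref{controlnorme}. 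Next, by Step~1 $u(t_0)$ decays exponentially on the right, so Theorem~\ref{thmmduke} (with $H^1$ local theory near $t_0$) makes $u$ global for $t>T$; propagation of exponential decay on the right for (gKdV) and the Kato smoothing effect give \fref{decayu} and $u\in\mathcal C((0,+\infty)\times\RR)$. Finally \fref{decayscaling}, \fref{sobolev}, \fref{sobolevbis} follow from weighted energy estimates on \fref{eqofeps} for $\e$ and its $y$-derivatives, using the slowly growing weights $e^{\lambda(s)y}$, resp. $e^{\omega y}$: the term $(L\e)_y$ is coercive against these monotone weights while the forcing/nonlinear/modulation terms are absorbed by Step~2, the decay rates $|s|^{-2}$, resp. $|s|^{-1}$, being inherited from $\mathcal N\lesssim|b|^3$, resp. $\|\e\|_{H^1}^2\lesssim\lambda^2$.

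\textbf{Main obstacle.} The heart of the matter is the sharp bound $\mathcal N(s)\lesssim|b(s)|^3$ of Step~2 and the precise asymptotics of Steps~2--3: one must integrate the monotonicity of Proposition~\ref{propasymtp} and the rigidity of Lemma~\ref{le:2.7} from $s=-\infty$ (which in turn requires closing the bootstrap for the hypotheses of Step~1), track the correction functionals $J_1,J_2$ to high enough order in the refined laws of Lemma~\ref{lemmarefinedmod}, and carry out the change of variables $s\leftrightarrow t$ far enough to check that the logarithm in \fref{devpb} does not survive in \fref{loilambda}--\fref{loix}. The weighted higher-order Sobolev bounds (v) are a separate, more technical layer demanding their own weighted energy estimates.
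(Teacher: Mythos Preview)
Your overall strategy matches the paper's, and once Step~1 is in place Steps~2 and~3 are essentially correct and close to the paper's argument. However, Step~1 contains a genuine circularity that is exactly the obstacle the paper has to work around.

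You claim that \fref{eq:2002} together with $\|\e\|_{H^1},|b|\to 0$ gives $\lambda_t>0$ near $T$, hence \fref{monotolambda}, hence Lemma~\ref{lemmadecay} applies. But at this stage the only available bound on the local norm is
\[
\left(\int \e^2 e^{-|y|/10}\right)^{1/2}\lesssim \|\e\|_{L^2}\lesssim |b|^{1/2}
\]
from \fref{twoboundmini}, and $|b|^{1/2}\gg |b|$ for small $|b|$. So the error in \fref{eq:2002} swamps the main term and you cannot conclude $\lambda_s>0$. To get $\int\e^2 e^{-|y|/10}=o(b^2)$ you would need $\mathcal N\lesssim |b|^3$, which comes from Proposition~\ref{propasymtp}, which needs (H3), which needs the decay of Lemma~\ref{lemmadecay}, which needs \fref{monotolambda} --- a loop.

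The paper breaks this loop by a different mechanism: it introduces the sequence $s_{1,n}\to-\infty$ of times where $\lambda$ reaches a new infimum, i.e.\ $\lambda(s_{1,n})=2^{-n}$ and $\lambda(s)<\lambda(s_{1,n})$ for all $s<s_{1,n}$. At each such time \fref{monotolambda} holds \emph{by construction}, so Lemma~\ref{lemmadecay} gives exponential decay at $s_{1,n}$, which installs (H1)--(H3) at $s_{1,n}$. One then runs a forward continuity argument on $[s_{1,n},s^*]$: \emph{inside} the monotonicity regime the refined law \fref{dvnkoenneoneorb} for $\lambda_0$ together with $b<0$ and \fref{estfondamentale} yields the almost--monotonicity $1.1\,\lambda(s_2)\ge\lambda(s_1)$, which in turn lets Lemma~\ref{lemmadecay} be applied at every time in $[s_{1,n},s^*]$ and closes the bootstrap with $s^*=s_0$ independent of $n$. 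Letting $n\to\infty$ then places the whole ray $(-\infty,s_0]$ in the range of Proposition~\ref{propasymtp}. Once this is done, your Steps~2--3 (and the sketch for (v)) go through essentially as written.
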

\begin{remark}\label{Rk4.1}
The constant $\ell^*$ in \fref{loilambda} depends on the solution and the scaling $u(t,x)\mapsto u_{\lambda_0}(t,x)= \l_0^{\frac 12} u(\l_0^3 t, \l_0 x)$ leads to 
\be\label{scal4.1}
\ell^*(u_{\l_0}) = \l_0^2 \ell^*(u).
\ee
\end{remark}

\begin{proof}[Proof of Proposition \ref{sharpbounds}]
From Lemma~\ref{lemmadecay}, $E(u(t))>0$. 
Using the scaling invariance of the (gKdV) equation, we consider
the solution 
$$  u_{\lambda_0}(t,x)  =\l_0^{\frac 12} u(\l_0^3 t, \l_0 x) ,$$
where $ \lambda_0>0$ is chosen so that $E( u_{\lambda_0}(t))\ll\kappa^*$, $\kappa^*$ being the small constant in Proposition \ref{propasymtp}.
We work on $u_{\lambda_0}$ instead of working on $u$, all statement being scaling invariant.
Hereafter, we denote $u_{\lambda_0}$ simply by $u(t)$.

\medskip

{\bf step 1} Entering the monotonicity regime. 
Note first that from Lemma~\ref{lemmadecay} and $E_0 \ll \kappa^*$,  (H1), (H2), (H3)  hold on $(- \infty,s_0]$, for $-s_0$ large enough.
The solution is therefore in the monotonicity regime  of Proposition \ref{propasymtp} and Lemma \ref{le:oubli} on $(-\infty,s_0]$.

\medskip

{\bf step 2} Rigidity and blow up speed.  We claim the key non degeneracy relation:
\be
\label{approxbl}
\mathcal N \lesssim O(\l^6),\quad 
\left|\frac{b}{\l^2}-\ell^* - \frac {c_0} 2 \frac {b^2}{\l^2}\right| \lesssim O(\l^3)
\ee
for some constant $\ell^*>0$.\\
Let $C^*>0$ be the universal constant in \fref{conrolbintegre}. Let us first remark that there exists a sequence $s_n\to -\infty$ such that \be\label{defsn} \forall n\geq 1,  \ \ b(s_n)\leq -C^* \int \left( \e_y^2+ \e^2\right)(s_n)\varphi_B' .\ee

Indeed, assume for the sake of contradiction that there exists a time $s^*\leq s_0$ such that 
(recall that $b<0$)
\be
\label{cneneoen}\forall s<s^*, \ \ |b(s)|\leq C^* \int \left( \e_y^2+ \e^2\right)(s)\varphi_B'.
\ee
Thus, \eqref{dvnkoenneoneor} implies
$$
\left|\frac{(\lambda_0)_s}{\lambda_0}\right|\lesssim
\int \varepsilon^2e^{-\frac{|y|}{10}}+|b|\lesssim\int \left( \e_y^2+ \e^2\right)(s)\varphi_B'
,
$$
where $\lambda_0(s)=\lambda(s)(1-J_1(s))^2$.
Using \fref{estfondamentale}, we obtain $$\forall s <s_2^*, \ \ \left|\log\left(\frac{\lambda_0(s)}{\lambda_0(s_2^*)}\right)\right|\lesssim \int_{s}^{s^*}\int \left( \e_y^2+ \e^2\right)(s)\varphi_B' ds\lesssim 1$$ 
but together with 
\begin{equation}\label{eq:JJJ}
 \left|\frac{\lambda}{\lambda_0}-1\right|\lesssim |J_1|
 \lesssim \mathcal N^{\frac 12}
\end{equation}
and \eqref{eq:l2.11},
this  contradicts the blow up assumption: $$\lambda(s)\to 0\ \ \mbox{as}\ \ s\to -\infty.$$ The sign $b<0$ concludes the proof of \eqref{defsn}.\\

Inserting \eqref{defsn} in   \fref{conrolbintegre} yields the rigidity:
$$\hbox{for all $n\geq 1$, for all $s$ such that  
$s_n\leq s\leq s_0$, } 2\frac{b(s_n)}{\l^2(s_n)}\leq \frac{b(s)}{\l^2(s)}\leq \frac{b(s_n)}{2\l^2(s_n)}.$$
We conclude using $b<0$ and $\l(s_0)=1$ that for all $s<s_0$,
\be
\label{cneioenoe}
4 b(s_0)\leq\frac{b(s)}{\l^2(s)}\leq  \frac {b(s_0)} 4<0.
\ee 
By \eqref{aausmptoblowup} and \eqref{eq:l2.11}, we have $\lim_{-\infty} \mathcal N =0$.
Using  \fref{estfondamentale}, we have, for $s_1<s_2$ 
$$
\mathcal N(s_2)+\int_{s_1}^{s_2} \int (\e_y^2 +\e^2) \varphi_B' ds \lesssim \mathcal N(s_1) + |b^3(s_1)| + |b^3(s_2)|.
$$
Thus, passing to the limit $s_1\to -\infty$, and using \eqref{cenononeo},
\be
\label{contoroel}
\mathcal N(s_2) +\int_{-\infty}^{s_2} \int (\e_y^2 +\e^2) \varphi_B' ds\lesssim   |b^3(s_2)|\lesssim \l^6(s_2).
\ee
From \eqref{eq:bl2}, we have
\be\label{bbbb}
\left| \frac d{ds}\left(\frac b{{\lambda}^2} e^{J}\right)
+ c_0 \frac {b^3}{\l^2} \right| \leq \frac{1}{\l^2}\left( \int \e^2 e^{-\frac {|y|}{10}} +  |b|^4\right).
\ee
Letting $s_1\to -\infty$ in \fref{estfondamentalebis} ensures:
\be
\label{eobeoeo}
\int_{-\infty}^{s_2}  \frac{1}{\l^2}\left( \int \e^2 e^{-\frac {|y|}{10}} +  |b|^4\right)ds  \lesssim \frac{b^3 }{\l^2 }\lesssim \l^4 .
\ee
Next, by \eqref{eq:2003}, \fref{eobeoeo}:
$$
  \frac {b^3}{\l^2}
=- \frac 12   \frac {b_sb}{\l^2} 
+ O\left(  \frac{1}{\l^2}\left( \int \e^2 e^{-\frac {|y|}{10}} +  |b|^4\right)\right),
$$
so that by integration by parts and \eqref{eq:2002},
$$
\int_{-\infty}^{s}  \frac {b^3}{\l^2}
=  -\frac 14 \frac {b^2(s)}{\l^2(s)}- \frac 12  \int_{-\infty}^s \frac {b^2\l_s}{\l^3} + O(\l^4)
=  -\frac 14 \frac {b^2(s)}{\l^2(s)}+ \frac 12  \int_{-\infty}^s \frac {b^3}{\l^2} + O(\l^4)
$$
and thus
$$
\int_{-\infty}^{s}  \frac {b^3}{\l^2}
=  -\frac 12 \frac {b^2(s)}{\l^2(s)}  + O(\l^4).
$$

It follows   by integrating \eqref{bbbb} and using  \eqref{cneioenoe} that
\be\label{limitebl2}
\lim_{s\to -\infty} \frac{b }{\l^2}(s)=\ell^*<0 ,
\ee
 and    more precisely, using $|J|\lesssim \mathcal N^{\frac 12} \lesssim \l^3$,
\be
\label{approxblbis}
\left|\frac{b}{\l^2}-\ell^* - \frac {c_0} 2 \frac {b^2}{\l^2}\right|\lesssim O(\l^4)+\frac{|b|}{\l^2}(1-e^J)\lesssim O(\l^3).
\ee

\medskip

{\bf step 3} Finite time blow-up.

From  \fref{contoroel} and  \eqref{controlj}, we have
\be
\label{cneonoenoe}
\lambda_0=\lambda+O(\l J_1)=\lambda+O(\l \mathcal N^{\frac 12})=\lambda+O(\l^4)
\ee 
and then by \fref{dvnkoenneoneor}, \eqref{approxblbis},
\begin{align*}
-\l^3\frac {(\lambda_0)_t}{\l_0}&=-\frac{(\l_0)_s}{\l_0}=b+c_1b^2  + 
O(\mathcal N+|b| \mathcal{N} ^{\frac 12}+|b|^3)
\\ &=\ell^*\lambda^2+c (\ell^*)^2 \lambda^4+ O(\l^5),
\end{align*} 
where $c$ denotes here and thereafter various universal constants.
Hence   
\be\label{lot}-(\lambda_0)_t=\ell^*+c (\ell^*)^2 \lambda_0^2+O(\l_0^3).\ee 
For $t$ close to $T$, we obtain 
$(\l_0)_t > \frac {\ell^*}2>0$ and thus 
$\lambda_0$ vanishes backwards at some finite time $$T>-\infty;$$ 
in particular, the solution blows up in finite time. Moreover,  integrating  \eqref{lot} on $(T,t]$  
  for $t>T$ close to $T$, using \eqref{cneonoenoe}, yields
$$\lambda (t)=\lambda_0(t)+O\left[(t-T)^4\right]=|\ell^*|(t-T)+c |\ell^*|^4(t-T)^3+O\left[(t-T)^4\right].$$ 
Together with \fref{contoroel}, \fref{approxbl},  this concludes the proof of \fref{loib}, \fref{loilambda}, \fref{controlnorme}. 

We now integrate the modulation equation \fref{eq:2002} for the blow up point: 
$$x_t=\frac{1}{\l^2}\frac{x_s}{\l}=\frac{1}{\l^2}\left[1+O(b^2+\mathcal N^{\frac 12})\right]=\frac{1}{\l^2}\left[1+O(\l^{3})\right]$$
 and thus using \fref{loilambda}: $$x_t(t)=\frac{1}{(\ell^*)^2(T-t)^2} - 2 c \ell^*+ O(t-T)$$ which implies \fref{loix} by integration in time.

\medbreak

{\bf step 4} Sharp estimates in   rescaled time.
From \fref{loilambda}: $$s(t)= - \int_{t}^{t_0}\frac{dt}{|\ell^*|^3(t-T)^3(1+O((T-t)^2))}=-\frac{1}{2|\ell^*|^3(t-T)^2}(1+O(t-T)).$$
From step 3 and   \eqref{eq:2002}-\eqref{eq:2003}, we thus get  the following  estimates in terms of the variable $s$:
\be
\label{boundaryn}
\lambda(s)=\frac{1+o(1)}{\sqrt{2|\ell^*s|}}, \  \ \mathcal N(s) +\int_{-\infty}^{s} \int (\e_y^2+ \e^2) \varphi_B' ds' \lesssim \frac{1}{|s|^3}, \ \ b(s)=\frac{1+o(1)}{2s}
\ee
\be \left|\lsl(s)\right|\lesssim |b(s)|+\mathcal N^{\frac 12}(s)\lesssim \frac1s,
\quad \left|\lsl+b\right| + \left| \xsl-1\right|  \lesssim
\frac 1 {|s|^{\frac 32}},\ee
and $|b_s|\lesssim \frac 1{|s|^2}$,
so that \eqref{estrescbis} and \eqref{estresc} are proved.

Now, we prove \eqref{devpb}.
We  rewrite the sharp modulation equation \fref{eq:2006} for $b$ as: 
$$\left|(b(1+J_2))_s+2b^2+c_2b^3\right|\lesssim  \int (\e_y^2+ \e^2) \varphi_B' +\frac1{s^4}+\frac1{s^2}\mathcal N^{\frac 12}\lesssim \int (\e_y^2+ \e^2) \varphi_B'+\frac{1}{|s|^{\frac 72}}.$$  Let \be\label{cnekocneone} \bt=b(1+J_2)=b+O\left(\frac{1}{|s|^{\frac 52}}\right),\ee then equivalently:
$$\left|\bt_s+2\bt^2+c_2\bt^3\right|\lesssim \int (\e_y^2+ \e^2) \varphi_B'+\frac{1}{|s|^{\frac 72}}.$$ 
If $c_2\leq 1$, let $b_0=-1$, otherwise let $b_0=-1/c_2$.
In order to integrate this differential inequation, we let
\be
\label{nkononeo}
F(b)=\int_{b_0}^b\frac{d\beta}{2\beta^2+c_2\beta^3}=-\frac{1}{2b}-\frac{c_2 \log |b|}{4}+c_0+O(b)\ \ \mbox{as} \ \ b\to 0,
\ee 
for some universal constant $c_0\in \RR$. Then,
\be
\label{cnekonoen}
\frac{d}{ds}F(\bt)=-1+O\left(s^2  \int (\e_y^2+ \e^2) \varphi_B' +\frac{1}{|s|^{\frac 32}}\right).
\ee 
 By \eqref{estfondamentalebis}
 with $s_1\to -\infty$, we have
 $$s^2\mathcal N(s)+\int_{-\infty}^s (s')^2 \int (\e_y^2+ \e^2)(s') \varphi_B'ds'\lesssim \frac1{|s|}.$$
Therefore, integrating \fref{cnekonoen} on $[s,s_0]$ and using \fref{nkononeo}:
$$F(\tilde{b}(s))=-\frac{1}{2\tb(s)}-\frac{c_2 \log  |\tb(s)|}{4}+c_0'+O\left(\frac1s\right)=-s+O\left(\frac{1}{\sqrt{|s|}}\right)$$
which is easily inverted to get:
$$\bt(s)=\frac{1}{2s}+\frac{c_1^*\log | s|}{s^2}+\frac{c_2^*}{s^2}+O\left(\frac{1}{|s|^{\frac 52}}\right)$$ for some universal constants $c_1^*,c_2^*$. The estimate \fref{cnekocneone} now implies \fref{devpb}.

\medskip
 
{\bf step 5} Global existence for $t>t_0$. 
Recall that for all $y>0$,
$|\e(s_0,y)|\lesssim e^{-\frac y{20}}$. Thus,
 $u(t)$ has  exponential decay in space on the right ($x>0$), in particular,
 $\int_{x>0} x^{10} u^2(t_0) <\infty.$
From this fact and since $u(t)$ has critical mass, we conclude from Theorem \ref{thmmduke}  that $u$ is globally defined for $t>t_0$. Since $Q$ has exponential decay at $\infty$, the exponential decay \eqref{controlright} obtained on $\e$ translates into   exponential decay on $u$ \eqref{decayu}. Finally, it is proved in \cite{Kato} that a solution of (gKdV) equation with such exponential decay on the right is smooth, i.e. $u\in C^\infty((0,+\infty)\times \RR)$.

\medskip

The proofs of \eqref{decayscaling} and \eqref{sobolev}-\eqref{sobolevbis} are given in Appendix A.
\end{proof}

%%%%%%%%%%%%%%%%%%%%%%%%%%%%%%%%%%%%%%%%%%%%%

\subsection{Sharp description of $S(t)$}

%%%%%%%%%%%%%%%%%%%%%%%%%%%%%%%%%%%%%%%%%%%%%

We conclude from Proposition \ref{sharpbounds} that the minimal element constructed in section \ref{sectionconstruction} satisfies the following sharp bounds  which conclude the proof of statements (i) and (iii) of Theorem \ref{th:1}.

\begin{corollary}\label{cor:1}
There exists a solution $S  \in \mathcal C((0,+\infty),H^1)\cap \mathcal C^{\infty}((0,+\infty)\times \RR)$  to \fref{kdv} with critical mass $\|S(t)\|_{L^2}=\|Q\|_{L^2}$ such that:
 \be\label{cor2}\|  \pa_xS(t)\|_{L^2}\sim \frac{\|\pa_xQ\|_{L^2}}{ t}\ \ \mbox{as} \  t\downarrow 0,
 \ee 
\begin{equation}\label{cor3}
S(t,x)-\frac{1}{t^{\frac 12}}Q\left(\frac{x+ \frac 1t+\bar{c} t}{ t}\right)\to 0\ \ \mbox{in}\ \ L^2 \ \ \mbox{as}\ \ t\downarrow 0,
\end{equation} 
\begin{equation}\label{cor4}
\forall x>0,\quad |S(1,x)|\lesssim e^{-\gamma x}
\end{equation}
for some universal constants $(\bar{c},\gamma)\in \mathbb R\times \mathbb R^*_+$. Moreover,
\be\label{derive}
   \frac d{dt} \left( \inf_{\l_1>0,   x_1\in \RR} \|S(t) - Q_{\l_1}(.-x_1)\|_{L^2}^2  \right)= 4 t (P,Q) +  O(t^2).
  \ee
\end{corollary}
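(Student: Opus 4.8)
Equations \eqref{cor2}--\eqref{cor4} are just Proposition~\ref{sharpbounds}(ii)--(iv) read off the minimal element $S$ of Section~\ref{sectionconstruction}: \eqref{cor2} is \eqref{loilambda} together with $\|\pa_xS(t)\|_{L^2}=\l(t)^{-1}\|\pa_y(Q_b+\e)\|_{L^2}=\l(t)^{-1}\|\pa_xQ\|_{L^2}(1+o(1))$ (which also forces $|\ell^*|=1$, hence $\ell^*=-1$ since $\ell^*<0$); \eqref{cor3} follows from \eqref{loilambda}--\eqref{loix} with $T=0$, $\ell^*=-1$ and $\e(t)\to0$ in $L^2$; \eqref{cor4} is \eqref{decayu} at $t=1$. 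So the content is \eqref{derive}. The plan is to put $\nu(t):=\inf_{\l_0>0,\,x_0\in\RR}\|S(t)-Q_{\l_0}(\cdot-x_0)\|_{L^2}^2$ in closed form in terms of the modulation parameter $b(t)$, and then to differentiate using the law of $b$ and the sharp asymptotics of Proposition~\ref{sharpbounds}; recall that for $S$ one has $\l(t)=t+O(t^3)$, $b(t)=-t^2+O(t^4)$ and $\mathcal N(t)\lesssim t^6$. Throughout, write $S(t)=\l^{-1/2}(Q_b+\e)(\tfrac{\cdot-x}{\l})$ for the decomposition of Lemma~\ref{le:2}.

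\emph{Reduction.} I claim $\nu(t)=-2b(t)(P,Q)+O(t^4)$. Since $\|S(t)\|_{L^2}=\|Q\|_{L^2}$ (minimal mass) and $\|Q_{\l_0}(\cdot-x_0)\|_{L^2}=\|Q\|_{L^2}$ ($L^2$ scaling invariance), $\|S(t)-Q_{\l_0}(\cdot-x_0)\|_{L^2}^2=2\|Q\|_{L^2}^2-2(S(t),Q_{\l_0}(\cdot-x_0))$, so $\nu(t)=2\|Q\|_{L^2}^2-2\sup_{\l_0,x_0}(S(t),Q_{\l_0}(\cdot-x_0))$. A non-degeneracy/modulation argument (the ground state is non-degenerate and $b$, $\mathcal N$ are small) shows the supremum is attained at $(\l_0(t),x_0(t))$ with $\l_0=\l(1+O(t^2))$ and $x_0=x+\l\,O(t^2)$. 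Expanding $(S(t),Q_{\l_0}(\cdot-x_0))$ in these variables, using the orthogonality \eqref{ortho1}, the bounds \eqref{eq:001}--\eqref{eq:002} for $Q_b$ and \eqref{eq:204}--\eqref{eq:205}, every correction is $O(t^4)$; in particular the $|b|^{2-\gamma}$-size contribution from the slowly decaying plateau of $\chi_bP$ on the left cancels against the mass-forced growth of $\|\e\|_{L^2}^2$, leaving $(S(t),Q_{\l_0}(\cdot-x_0))=\|Q\|_{L^2}^2+b(t)(P,Q)+O(t^4)$.

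\emph{Differentiation.} By Danskin's theorem and mass conservation, $\tfrac{d}{dt}\nu(t)=2(\pa_tS,S-R)=-2(S_{xx}+S^5,\pa_xR)$ with $R:=Q_{\l_0(t)}(\cdot-x_0(t))$. Integrating by parts, using $R_{xx}+R^5=\l_0^{-2}R$ and that $S-R$ is orthogonal to the tangent space of the soliton manifold at $R$, the terms linear in $S-R$ cancel; expanding the rest through the decomposition of $S$, the $Q_b$-equation \eqref{eq:201} and the law \eqref{eq:2003} for $b_s$, and absorbing the far field of $\e$ via \eqref{decayscaling}, one gets $\tfrac{d}{dt}\nu(t)=-2(P,Q)\tfrac{db}{dt}+O(t^2)$. (Equivalently, the $O(t^4)$ remainder of the reduction equals $\|a\Lambda Q+cQ'\|_{L^2}^2+2ab(P,\Lambda Q)+2c(\e,Q')$ up to super-small terms, with $a:=\l_0/\l-1$, $c:=(x_0-x)/\l$, and is $C^1$ with derivative $O(t^2)$.) Finally $\tfrac{db}{dt}=\l^{-3}b_s=\l^{-3}\bigl(-2b^2+O(|b|^{5/2})\bigr)=\tfrac{-2t^4+O(t^5)}{t^3+O(t^5)}=-2t+O(t^2)$ by \eqref{eq:2003}, $\mathcal N\lesssim|b|^3$ and the sharp laws, so $\tfrac{d}{dt}\nu(t)=-2(P,Q)(-2t+O(t^2))+O(t^2)=4t(P,Q)+O(t^2)$, which is \eqref{derive}.

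\emph{Main obstacle.} The delicate step is the differentiation: reducing $-2(S_{xx}+S^5,\pa_xR)$ to $-2(P,Q)\tfrac{db}{dt}+O(t^2)$ requires careful bookkeeping of the many terms produced by the equation of $\e$ and by the expansion of $R$ around $Q_\l(\cdot-x)$, and exploiting the cancellations forced by \eqref{ortho1} and by $\|S(t)\|_{L^2}=\|Q\|_{L^2}$; the error-prone point is controlling the contribution of $\e$ on the far left, where $\mathcal N$ carries no information but the weighted bound \eqref{decayscaling} does. The reduction step itself is comparatively routine once the $O(t^2)$-closeness of the optimal parameters to $(\l,x)$ is granted.
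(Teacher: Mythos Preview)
Your approach to \eqref{derive} is the same as the paper's at the top level---Danskin/envelope theorem plus $L^2$ conservation---but the execution differs in ways worth noting. The paper does not introduce your ``reduction'' $\nu(t)=-2b(t)(P,Q)+O(t^4)$; that step is a detour (you cannot differentiate a pointwise $O(t^4)$ remainder to conclude $O(t^3)$ on its derivative), and indeed you abandon it and argue directly. For the direct argument the paper works in the rescaled variable $y=(x-x(t))/\lambda(t)$, where the optimal parameters become $(\l_1,x_1)$ close to $(1,0)$; after Danskin and mass conservation one lands immediately on
\[
\tfrac12\,\frac{d}{dt}A(t)=-\int Q_{\l_1}(\cdot-x_1)\,\partial_t\bigl(Q_b+\e\bigr)
=-b_t\!\int Q_{\l_1}(\cdot-x_1)\,\partial_bQ_b-\int Q_{\l_1}(\cdot-x_1)\,\e_t,
\]
so the leading term is $-b_t(P,Q)$ because $\partial_bQ_b\approx\chi_bP$ and $(\l_1,x_1)\approx(1,0)$. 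This is cleaner than your route via $-2(S_{xx}+S^5,\partial_xR)$, where the $(P,Q)$ contribution is not visible after ``linear terms cancel'' (those cancellations leave a quadratic form in $S-R$, not directly $-2(P,Q)b_t$). For the $\e_t$ remainder the paper does \emph{not} rely on \eqref{decayscaling}; instead it uses the orthogonality $(\e,Q)=(\e,\Lambda Q)=0$ to subtract $Q+(\l_1-1)\Lambda Q$ from $Q_{\l_1}(\cdot-x_1)$, so the test function against $\e_t$ is $O(|\l_1-1|^2+|x_1|)$ and localized. Crucially the paper extracts $|x_1|\lesssim t^3$ (one order better than the generic $t^2$) from the extremality condition together with $(P,Q')=0$; this extra smallness is what closes the $\e_t$ term at $O(t^2)$.

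A minor but real confusion: \eqref{cor2} does not ``force'' $|\ell^*|=1$. The minimal element produced in Section~\ref{sectionconstruction} has some $\ell^*(v)<0$; the paper \emph{defines} $S$ by rescaling $v$ via Remark~\ref{Rk4.1} so that $|\ell^*(S)|=1$ (and translates to set $x^*(S)=0$), and then reads off \eqref{S1}--\eqref{S3}, from which \eqref{cor2}--\eqref{cor4} follow. Your parenthetical alternative for the remainder also uses $(\e,Q')=0$, which is not among the orthogonality conditions \eqref{ortho1}.
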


\begin{proof}[Proof of Corollary \ref{cor:1}]
Let $v(t,x)$ be the minimal mass blow up solution constructed in section \ref{sectionconstruction} with finite backward blow up time $T<0$.
Let $\ell^*=\ell^*(v)$ and $x^*=x^*(v)$ be the constants corresponding  to $v$ in Proposition \ref{sharpbounds}. From the invariances of the equation and Remark \ref{Rk4.1},   $S(t)$ defined by
$$
S(t,x)= (\ell^*)^{- \frac 14} v\left((\ell^*)^{-\frac 32}t+T, (\ell^*)^{-\frac 12} x+x^*\right) 
$$
satisfies equation \eqref{kdv}, and the estimates of Proposition \ref{sharpbounds} with
$\ell^*(S)=1$, $x^*(S)=0$ and $S$ blows up backward at the origin in time. In particular, there exist $\e(t)$, $b(t)$, $\l(t)$ and $x(t)$ such that
\be\label{S1}
S(t,x)=\frac 1{\l^{\frac 12}(t)} \left(Q_{b(t)} + \e\right) \left( t, \frac {x-x(t)} {\l(t)}\right),
\ee
\be\label{S2}
b(t)=-t^2 + O(t^4),\quad \l(t) = t + O(t^3),\quad x(t) = -\frac 1t + \bar c t + O(t^2), 
\ee
\be\label{S3}
\|\e(t)\|_{L^2} \lesssim t,\quad \int  \left(e^{-\frac {|y|}{10}}
+\mathbf{1}_{y>0}(y)\right) \left( \e_y^2 + \e^2\right)(t,y)dy
\lesssim t^6. 
\ee
We now prove \fref{cor3}. Since 
$$\big\|\frac 1{\l^{\frac 12}(t)}  \e  \big( t, \frac {.-x(t)} {\l(t)}\big)\big\|_{L^2}
= \|   \e  \left( t  \right)\|_{L^2}\lesssim t,$$ we are reduced to estimate 
\begin{align*}
&
\left\| \frac 1{\l^{\frac 12}(t)}  Q_{b(t)} \left(  \frac {.-x(t)} {\l(t)}\right)-\frac{1}{t^{\frac 12}}Q\left(\frac{.+ \frac 1t +\bar c t }{ t}\right)\right\|_{L^2}\\
& = \left\|Q_{b(t)} - \frac {\l^{\frac 12}(t)}{t^{\frac 12}} Q\left( \frac {\l(t)}{t} x + \frac 1t \left(\frac 1t +\bar ct+ x(t)\right)\right) \right\|_{L^2}\\
&\lesssim |b(t)|^{\frac 58} + \left| 1 - \frac {\l(t)}{t}\right| 
+ \left|\frac 1t\left(\frac 1t +\bar ct+ x(t)\right)\right|\lesssim t,
\end{align*}
and  \fref{cor3} is proved. 

Let (see \eqref{S1})
$$
A(t) = \inf_{\l_0>0,   x_0\in \RR} \|S(t) - Q_{\l_0}(.-x_0)\|_{L^2}^2 =
\inf_{\l_1>0,   x_1\in \RR} \left\| Q_{b(t)}  +\e(t) - Q_{\l_1}(.-x_1)\right\|_{L^2}^2. 
$$
Let $\l_1(t)$ and $x_1(t)$ realizing the infimum in the definition of $A(t)$.
(The existence, uniqueness and regularity of $\l_1(t)$ and $x_1(t)$ follow by standard arguments.)

Note that by extremality of $\l_1$ and $x_1$,
\be\label{extremum}
\int \left( Q_b {+} \e {-} Q_{\l_1}(.{-}x_1)\right)   
    \frac {\pa Q_{\l_1}}{\pa \l_1}(.{-}x_1) =0,\ \
    \int \left( Q_b {+} \e {-} Q_{\l_1}(.{-}x_1)\right)   \frac {\pa Q_{\l_1}}{\pa x_1}  (.{-}x_1) =0,
\ee
and by $\|S(t)\|_{L^2} = \|Q_b+\e \|_{L^2} =\|Q\|_{L^2},$
$$
\int \left( Q_b + \e \right)  \frac {\pa}{\pa t} (Q_b+\e)=0
$$
so that 
\begin{align*}
& \frac 12 \frac d{dt} A(t) \\&= 
\int \left( Q_b + \e - Q_{\l_1}(.-x_1)\right) \left( \frac {\pa}{\pa t} (Q_b+\e) 
- \l_1'  \frac {\pa Q_{\l_1}}{\pa \l_1} (.-x_1)
+x_1' \frac {\pa Q_{\l_1}}{\pa x_1}  (.-x_1)\right)\\
& =
- \int   Q_{\l_1}(.-x_1)  \frac {\pa}{\pa t} (Q_b+\e)
= -\int  Q_{\l_1}(.-x_1) \left( b_t \frac {\pa Q_b}{\pa b}   + \e_t\right)
\\
&  
= - b_t \int  Q_{\l_1}(.-x_1)   \frac {\pa Q_b}{\pa b} 
- \int  \e_t \left(  Q_{\l_1}(.-x_1) - Q - (\l_1-1) \Lambda Q    \right) .
\end{align*}
where we have used at last $\int \e Q=\int \e \Lambda Q =0$.

To estimate this term, we now claim that from \eqref{extremum} and \eqref{S2}, 
\eqref{S3},
$$
|\l_1-1|\lesssim \left| \int (Q_b+\e - Q) \Lambda Q\right| \lesssim t^2,
$$
$$
|x_1|\lesssim \left| \int (Q_b+\e - Q)  Q'\right| \lesssim t^3,
$$
(the extra smallness of $|x_1|$ is due to $(P,Q')=0$).
Using $b_t \sim  -2t$ and the equation of $\e_t$ (after integration by parts,
and using \eqref{S2} and \eqref{S3}),
we obtain
\begin{align*}
  \frac 12 \frac d{dt} A(t) = 2 t \int PQ +  O(t^2).
  \end{align*}
\end{proof}

 %%%%%%%%%%%%%%%%%%%%%%%%%%%%%%%%%%%%%%%
  %%%%%%%%%%%%%%%%%%%%%%%%%%%%%%%%%%%%%%%

 \section{Uniqueness}
\label{secunique}

 %%%%%%%%%%%%%%%%%%%%%%%%%%%%%%%%%%%%%%%
 %%%%%%%%%%%%%%%%%%%%%%%%%%%%%%%%%%%%%%%

We prove in this section the uniqueness statement, i.e. part (ii) of Theorem \ref{th:1}. The stategy is to rerun the monotonicity machinery of Proposition \ref{propasymtp} for the difference of two solutions. The reintegration of the   Lyapounov functional backwards from blow up time {\it using the sharp  a priori bounds of Proposition \ref{sharpbounds}} will yield that this difference is zero. The proof is delicate because like in \cite{RS2010}, we only have a finite order expansion of the approximate solution and of the error. Therefore,  reintegrating the difference  of the modulation equations   requires sharp dispersive controls on the difference of two solutions to close the estimates.

 %%%%%%%%%%%%%%%%%%%%%%%%%%%%%%%%%%%%%%%

\subsection{Reduction of the proof}

 %%%%%%%%%%%%%%%%%%%%%%%%%%%%%%%%%%%%%%%

We consider $S(t,x)=u_{1}(t,x)$ the minimal mass blow up solution constructed in Corollary \ref{cor:1}. Let $u_{2}(t)$ be another minimal mass solution of \eqref{kdv} which blows up in finite time.
From Proposition \ref{sharpbounds}, $u_2(t)$ is defined on a maximal interval of time of the form $(-\infty,T)$ or $(T,+\infty)$ for a finite time $T$. By time translation invariance,
we may assume that $u_2(t)$ is defined on $(0,+\infty)$ and blows backwards as $t\downarrow 0$. 
Let $t_0>0$ small such that $u_1$ and $u_2$ admit   the decomposition of Lemma \ref{le:2} on $(0,t_0]$ (see also Lemma \ref{lemmadecay})
$$
 \e_i (s,y)=
 \l_i ^{\frac 12}(s) u_i(t_i(s),\l_i(s)y + x_i(s))-Q_{b_i(s)}(y),
$$
where $t_i(s)$ satisfies
 $\frac {d{t_i}}{ds}= {\l_i^3}$, $t_i(-1)=t_0$. 
Applying  Proposition \ref{sharpbounds} to $u_2$,   estimates \eqref{loilambda}--\eqref{sobolev} hold for $u_2(t)$, for some $\ell^*(u_2)$ and $x^*(u_2)$.

Using scaling and translation invariances  (see Remark \ref{Rk4.1}),  we assume further that 
the limits as defined in Proposition \ref{sharpbounds} are equal: $$\ell^*(u_2)=\ell^*(S)=1, \ \ x^*(u_2)=x^*(S)=0.$$ The uniqueness statement reduces to proving that
\be
\label{equaltiyutwp}
u_1\equiv u_2.
\ee

 %%%%%%%%%%%%%%%%%%%%%%%%%%%%%%% %%%%%%%%%%%%%%%%%%%%%%%%%%%%%%%
  %%%%%%%%%%%%%%%%%%%%%%%%%%%%%%% %%%%%%%%%%%%%%%%%%%%%%%%%%%%%%%
 
Note that for $i=1,2$,
$\e_i$ satisfies on $(-\infty,-1]\times \RR$,
$$  (\e_i)_{s} - (L \e_i)_y + {b_i}{\Lambda} \varepsilon_i= \Gamma_i({\Lambda} Q_{b_i}+{\Lambda} \varepsilon_i)+X_i(Q_{b_i} + \varepsilon_i)_y +\Psi_{i}  -( R_{i}(\varepsilon_i))_y,
$$ with 
 $$\Gamma_i=\frac{(\lambda_i)_s}{\l_i}+b_i, \ \ X_i=\frac{(x_i)_s}{\l_i}-1,$$
$$\Psi_i=\Psi_{b_i}- (b_i)_{s} \left(\chi_{b_i}   + \gamma   y (\chi_{b_i})_y\right) P,\quad
\hbox{$\Psi_b$ being defined in \eqref{eq:201}},$$
$$ R_i(\varepsilon_i)= 5 \left(Q_{b_i}^4 - Q^4\right) \varepsilon_i+(\varepsilon_i+Q_{b_i})^5
- 5 Q_{b_i}^4 \varepsilon_i - Q_{b_i}^5.
$$
 
We form the difference $$\e(s,y)=\e_2(s,y)-\e_1(s,y)$$ which satisfies the orthogonality conditions \fref{ortho1} and the equation: 
\be
\label{eqe}
\e_{s} - (L \e)_y = \Gamma\Lambda Q_{b_2}+X(Q_{b_2})_y +\frac{(\l_2)_s}{\l_2}\Lambda\e+E+F_y
\ee
with:
$$\Gamma=\Gamma_2-\Gamma_1,\ \ b=b_2-b_1,\ \ X=X_2-X_1,$$
\be
\label{defe}
E=(\Gamma-b)\Lambda \e_1+\Gamma_1\Lambda (Q_{b_2}-Q_{b_1})+(\Psi_2-\Psi_1),
\ee
\be\label{deff}
F=X_1(Q_{b_2}-Q_{b_1})+X_2 \e+ X\e_1-R_2(\e_2)+R_1(\e_1).
\ee

For $B$ as in Proposition \ref{propasymtp}, we consider 
$100 < \Bb < \frac 1{50}B$, large enough (in the next lemma,  we need $\bar B$ large, so we take a possibly larger universal $B$ in Proposition~\ref{propasymtp}).
We define the norms:
$$
\overline{\mathcal{N}}(s)= \int  \varepsilon_y^2(s,y) \psi_\Bb(y)dy 
+ \int  \varepsilon^2(s,y) \varphi_\Bb(y)   dy,
$$
$$  
\overline {\mathcal N}_{\rm loc}(s)=  \int   \varepsilon^2(s,y) \varphi_\Bb'(y)   dy.
$$

The key to the proof of uniqueness is the following Proposition which revisits Proposition \ref{propasymtp}  for $\e$:

\begin{proposition}[Bounds on the difference]
\label{lemmadiff}
For $|s|$ large, there holds the bounds:\\
{\it (i) Refined control of $b$}: Let  $$J_2=(\e,\rho_2)$$ with $\rho_2$ given by \fref{rho2}. Then,
%\begin{equation}\label{eq:2006bis}
%\left|b_s + \frac {2b}s + \frac 1{2s} \left((J_2)_s-\frac{J_2}{4s}\right)  \right|\lesssim \NNbl+\frac{\log|s|}{s^2}\left[|b|+\NNb^{\frac 12}\right].
%\end{equation}
\begin{equation}\label{eq:2006bis}
|J_2|\lesssim \NNb^{\frac 12},\quad 
\left|\frac d{ds} \left\{s^2 \left(b +  \frac {J_2} {2s} \right) \right\}  \right|\lesssim s^2 \int \e^2 e^{-\frac {|y|}{10}}+   |s|^{\frac 12} |b|   + |s|^{\frac 12} \NNb^{\frac 12}   .
\end{equation}
{\it (ii) Refined bounds}: let
\bea
\label{fepsbis}
&&{\cal F}(s)\\
\nonumber & =&  \int\left[\psi_\Bb\left(\varepsilon_y^2-5Q^4\e^2-\frac{\e^6}{3}\right) + \varphi_\Bb\varepsilon^2\right](s,y)dy+ \frac 1 {\sqrt{|s|}} \int e^{\l_2(s) y} \e^2(s,y)dy,
\eea
then: 
\be
\label{coercivityf}
 \NNb+ \frac 1 {\sqrt{|s|}}\int e^{\l_2  y} \e^2\lesssim \mathcal F\lesssim  
 \NNb + \frac 1 {\sqrt{|s|}}\int e^{\l_2  y} \e^2.
\ee
Moreover, there exists $\mu>0$ such that, for $|s|$ large,

\be
\label{pointwise}
\frac{d}{ds} \left( s^2\mathcal F\right) +\mu s^2 \int \left(\e_y^2+ \e^2\right) \varphi_\Bb'  \lesssim    |s|^{\frac {11}{10}}  b^2. 
\ee
\end{proposition}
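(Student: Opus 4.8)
The plan is to rerun the monotonicity scheme of Proposition~\ref{propasymtp}, this time for the difference $\e=\e_2-\e_1$, which solves the essentially linear equation \eqref{eqe}. All estimates rest on the sharp a priori bounds of Proposition~\ref{sharpbounds} applied to \emph{both} $(\e_1,b_1,\l_1)$ and $(\e_2,b_2,\l_2)$: in particular $b_i(s)=\frac1{2s}(1+o(1))$, $\l_i(s)\sim|s|^{-1/2}$, $\mathcal N_i(s)\lesssim|s|^{-3}$, $|\Gamma_i|+|X_i|\lesssim|s|^{-3/2}$, the exponential decay $|\e_i(s,y)|\lesssim e^{-y/20}$ for $y>0$, and the weighted bounds \eqref{decayscaling}--\eqref{sobolev}; together with the crude consequences $\NNb(s)\lesssim|s|^{-1}$ and $|b(s)|\lesssim|s|^{-1}$ from the triangle inequality. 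A preliminary step is to record modulation equations for the differences $\Gamma=\Gamma_2-\Gamma_1$, $X=X_2-X_1$ and $b=b_2-b_1$, obtained by differentiating the orthogonality conditions \eqref{ortho1} (inherited by $\e$) in $s$ and substituting \eqref{eqe}; these express $\Gamma$, $X$ and $b_s$ as small combinations of $\NNb^{1/2}$, $|b|$, $\int\e^2e^{-|y|/10}$ and the (known, fast-decaying) $\e_i$-quantities.

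For part (i), the bound $|J_2|\lesssim\NNb^{1/2}$ is immediate since $\rho_2\in\mathcal Y$ decays exponentially and $\varphi_{\Bb}(y)\gtrsim e^{-|y|/10}$. For the differential inequality I would differentiate $J_2=(\e,\rho_2)$ in $s$, substitute \eqref{eqe}, integrate by parts, and use the algebraic property of $\rho_2$ underlying Lemma~\ref{lemmarefinedmod} to isolate the resonant interaction with the $b$-dynamics; the leading coefficient that appears is $b_1+b_2=\frac1s+O(|s|^{-2}\log|s|)$, coming from $2(b_2^2-b_1^2)=2(b_1+b_2)b$ in the quadratic term, which is exactly what makes $b+\frac{J_2}{2s}$ the right renormalized variable. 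One thus obtains an identity $\frac d{ds}\{s^2(b+\frac{J_2}{2s})\}=(\mathrm{err})$, where $(\mathrm{err})$ collects the $\Gamma$-, $X$-, $\frac{(\l_2)_s}{\l_2}(\Lambda\e,\rho_2)$-, $(E,\rho_2)$- and $(F_y,\rho_2)$-contributions together with the cubic remainder $O(|s|^{-2})|b|$ and $O(|s|^{-2}\log|s|)|b|$; estimating these with the a priori bounds and $|\Gamma|+|X|\lesssim|s|^{-3/2}$ produces, after the multiplication by $s^2$ turns the $|s|^{-3/2}$-terms into $|s|^{1/2}$-terms, precisely the right-hand side of \eqref{eq:2006bis}.

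For part (ii), the coercivity \eqref{coercivityf} follows from the coercivity of $\int[\psi_{\Bb}(\e_y^2-5Q^4\e^2)+\varphi_{\Bb}\e^2]$ under the orthogonality conditions, exactly as in \cite{MMR1}, the sextic term being negligible and the added piece $\frac1{\sqrt{|s|}}\int e^{\l_2 y}\e^2\ge0$ already present on both sides. For the monotonicity \eqref{pointwise} I would write $\frac d{ds}(s^2\mathcal F)=2s\mathcal F+s^2\frac d{ds}\mathcal F$, where $2s\mathcal F\le0$ since $s<0$. The derivative of the quadratic/sextic part is computed as in the proof of Proposition~\ref{propasymtp} in \cite{MMR1}: since the linear part of \eqref{eqe} is the unchanged $(L\e-\e^5)_y$, the Airy-type local virial computation produces the good term $-\mu\int(\e_y^2+\e^2)\varphi_{\Bb}'$. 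For the exponential-weight part, the prefactor $\frac1{\sqrt{|s|}}\sim\l_2$ is chosen so that the $(\l_2)_s\int ye^{\l_2 y}\e^2$ term cancels against the contribution of the $\frac{(\l_2)_s}{\l_2}\Lambda\e$ drift in \eqref{eqe}; integrating by parts what remains leaves only terms of size $O(\l_2)\int e^{\l_2 y}(\e_y^2+\e^2)+O(\int\e^2e^{-|y|/10})$, which after multiplication by $s^2$ are $O(|s|^{3/2})$ times $\NNbl$-type quantities and hence absorbed into $\mu s^2\NNbl$ for $|s|$ large. It then remains to estimate the genuinely new source contributions $\Gamma\Lambda Q_{b_2}+X(Q_{b_2})_y+E+F_y$ paired against $\e\varphi_{\Bb}$, $\e_y\psi_{\Bb}$ and $e^{\l_2 y}\e$; using the a priori bounds each such term carries a factor $b$, $\Gamma$, $X$ or $J_2$, so Cauchy--Schwarz together with absorption of the $\NNbl$-part leaves a remainder bounded by $|s|^{11/10}b^2$, the fractional exponent reflecting the finite order of the approximate profile $Q_b$, exactly as in \cite{RS2010}.

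The main obstacle is the control of the source terms $E$ and $F_y$ of \eqref{defe}--\eqref{deff}, above all $\Psi_2-\Psi_1$ and $\Gamma_1\Lambda(Q_{b_2}-Q_{b_1})$: the profiles $Q_{b_1}$ and $Q_{b_2}$ carry cutoffs at the two distinct small scales $|b_1|^{-\gamma},|b_2|^{-\gamma}$, so the lossy bound $O(|b_1|^{1+\gamma})+O(|b_2|^{1+\gamma})$ is far too weak. One must instead expand these differences to first order in $b=b_2-b_1$, keeping $b$ explicit as a factor and carefully tracking the non-overlapping supports and the mismatch between $\chi_{b_1}$ and $\chi_{b_2}$, so that every error ultimately carries a power of $b$ and the pairing with $\e$ produces $b^2$ (times a controlled power of $|s|$) rather than an unabsorbable term. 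Getting this bookkeeping right — and, in part (i), the exact cancellation of the linear-in-$b$ and $(J_2)_s$ contributions after multiplication by $s^2$ — is where essentially all the work lies; the rest is a transcription of \cite{MMR1} with the harmless addition of the scaling-adapted weight $e^{\l_2 y}$, in the spirit of the decay estimate \eqref{decayscaling}.
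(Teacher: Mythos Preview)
Your outline captures the overall architecture correctly (difference modulation equations, the $\rho_2$--algebra for the refined $b$--law, coercivity of the quadratic part, scaling invariance of $\int e^{\l_2 y}\e^2$, and the need to expand $Q_{b_2}-Q_{b_1}$, $\Psi_{b_2}-\Psi_{b_1}$ to first order in $b$). Two points need correction. First, a small one: $\rho_2\notin\mathcal Y$; it is only bounded for $y>0$ and exponentially decaying for $y<0$ (this is why $|J_2|\lesssim\NNb^{1/2}$ uses the full $\varphi_{\Bb}$--weight, not just $e^{-|y|/10}$).

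Second, and this is the real gap, you misidentify the role of the exponential piece $\mathcal F_2=\int e^{\l_2 y}\e^2$. For the polynomial--weight part $\mathcal F_1$, the drift term $\frac{(\l_2)_s}{\l_2}\Lambda\e$ does \emph{not} get absorbed by the virial computation: it leaves a residual $\frac{C}{|s|}\int_{y<0}|y|e^{-|y|/\Bb}\e^2$, which after multiplication by $s^2$ is of order $|s|\int_{y<0}|y|e^{-|y|/\Bb}\e^2$ and is \emph{not} dominated by $s^2\NNbl$ (for $y\sim -|s|$ the factor $|y|$ beats the constant weight $\varphi_{\Bb}'$). Your claim that ``$O(\l_2)\int e^{\l_2 y}(\e_y^2+\e^2)$ \ldots are $\NNbl$--type quantities and hence absorbed into $\mu s^2\NNbl$'' is false for the same reason: $e^{\l_2 y}$ decays far more slowly than $\varphi_{\Bb}'$ on the left. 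The point of $\mathcal F_2$ is not merely its own scaling invariance; it is that the $\mathcal F_2$--virial produces an \emph{additional} dissipation $-\frac{\mu_2}{\sqrt{|s|}}\int e^{\l_2 y}(\e_y^2+\e^2)$, which after the $|s|^{3/2}$ prefactor becomes $-\mu_2|s|\int e^{\l_2 y}\e^2$ and absorbs the bad $\mathcal F_1$--leftover via a splitting at $y=-\kappa|s|$. This coupling between the two pieces is the mechanism that closes \eqref{pointwise}, and it is also where the exponent $\frac{11}{10}$ actually originates (from the estimate of $\int y^2\e_1^2 e^{\l_2 y}$ by interpolation with \eqref{decayscaling}, not from the order of $Q_b$).
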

\begin{remark}
The first term in the definition of $\mathcal F$ in \eqref{fepsbis} corresponds to a refined combination of viriel estimates and monotonicity properties which was used  in 
\cite{MMR1} (see also Proposition \ref{propasymtp} of the present paper).
Unfortunatly,
the scaling term of the equation of $\e$, i.e. the term $\frac {(\l_2)_s}{\l_2} \Lambda \e$, produces  bad apriori lower order terms which prevent us from  closing the estimates as in \cite{MMR1}. To control these terms we have to add to the definition of $\mathcal F$ the second term $\frac 1 {\sqrt{|s|}} \int e^{\l_2  y} \e^2$ which is a lower order corrective term. Note that this term is scaling invariant and thus it does not produce such bad terms.
\end{remark}

The next two sections are devoted to the proof of Proposition \ref{lemmadiff}.

%%%%%%%%%%%%%%%%%%%%%%%%%%%%%%%%%%%%%%%%%%%%%%

\subsection{Proof of (i)}

%%%%%%%%%%%%%%%%%%%%%%%%%%%%%%%%%%%%%%%%%%%%%%

We start with the control of the modulation parameters and the proof of the improved bound \fref{eq:2006bis}.\\

{\bf step 1} Modulation equations. We start with computing the modulation equations and claim the bounds:
\be
\label{boundone}
|\Gamma|+|X|\lesssim \left(\int \e^2 e^{-\frac {|y|}{10}} \right)^{\frac 12}+\frac  {|b|}{|s|},
\ee
\be\label{eqbtwo}
\left| \Gamma - \frac {(\e,L(\Lambda Q)')}{\|\Lambda Q\|_{L^2}^2}\right| 
\lesssim    \int \e^2 e^{-\frac {|y|}{10}} 
+\frac{1}{|s|}\left( \int \e^2 e^{-\frac {|y|}{10}}\right)^{\frac 12}+ \frac {|b|}{|s|},
\ee
\be
\label{boundtwo}
|b_s|\lesssim  \int \e^2 e^{-\frac {|y|}{10}} +\frac1{|s|} {\left(\int \e^2 e^{-\frac {|y|}{10}} \right)^{\frac 12}}+\frac  {|b|}{|s|}.
\ee
Indeed, we compute the modulation parameters $\Gamma,X$ using \fref{eqe} and the orthogonality conditions \fref{ortho1}. We argue like for the proof of \fref{eq:2002}, \fref{eq:2003} (see \cite{MMR1}) taking the scalar product of the equation of $\e$ by $\Lambda Q$, $y\Lambda Q$
and then by $Q$.
We obtain
$$
|\Gamma|+|X|\lesssim \left(\int \e^2 e^{-\frac {|y|}{10}} \right)^{\frac 12}+ |b|\left(|b_1|+|b|+|\Gamma_1|+|X_1|+\left(\int \e_1^2 e^{-\frac {|y|}{10}} \right)^{\frac 12}\right)+|b_s|;
$$
\begin{align*}
|b_s| &\lesssim \int \e^2 e^{-\frac {|y|}{10}}   +\frac {\left(\int \e^2 e^{-\frac {|y|}{10}} \right)^{\frac 12}}{|s|} \\
&+ |b|\left(|b_1|+|b|+|\Gamma_1|+|X_1|+\left(\int \e_1^2 e^{-\frac {|y|}{10}} \right)^{\frac 12}\right)+|b_1|(|\Gamma|+|X|).
\end{align*}
Next, using estimates \eqref{estrescbis}--\eqref{devpb} for $\e_1$, we find
\eqref{boundone} and \eqref{boundtwo}.

Note that estimate \eqref{boundone}  can be improved into 
\begin{align*}
\left| \Gamma - \frac {(\e,L(\Lambda Q)')}{\|\Lambda Q\|_{L^2}^2}\right| 
& \lesssim  \int \e^2 e^{-\frac {|y|}{10}} 
+ \left(|b|+\frac{1}{|s|}\right) \left(\int \e^2 e^{-\frac {|y|}{10}} \right)^{\frac 12} + \frac {|b|}{|s|}\\
& \lesssim  \int \e^2 e^{-\frac {|y|}{10}} +\frac{1}{|s|}\left( \int \e^2 e^{-\frac {|y|}{10}}\right)^{\frac 12}  + \frac {|b|}{|s|},
\end{align*}
which is \eqref{eqbtwo}.\\

{\bf step 2} Proof of (i). The estimate $|J_2|\lesssim \NNb^{\frac 12}$ follows from the properties of
$\rho_2$: $$|\rho_2|\lesssim {\bf 1}_{y>0}+ e^{-\frac {|y|}{10}}{\bf 1}_{y<0}.$$ We now turn to the proof of the refined equation of $b$. We claim the bound:
\begin{equation}\label{eq:2007}
\left|b_s + 4b_2b + b_2  ({J_2})_s  \right| \lesssim \int \e^2 e^{-\frac {|y|}{10}} +\frac {\NNb^{\frac 12}}{|s|^{\frac 32}}+\frac {|b|}{|s|^{\frac 32}}
\end{equation}
which follows from combining the following two estimates:
\be\label{gouh}
  \left|  {b_s} +4 b_2 b   -   b_2 (\e,L(\rho_2'))\right| 
 \lesssim \int \e^2 e^{-\frac {|y|}{10}}+\frac {\left(\int \e^2 e^{-\frac {|y|}{10}} \right)^{\frac 12} }{|s|^{\frac 32}}+ \frac {|b|}{|s|^{\frac 32}},
\ee
\be\label{gouh2}
\left| (J_2)_s   + (\e,L(\rho_2')) \right|\lesssim 
\int \e^2 e^{-\frac {|y|}{10}}+ \frac {\NNb^{\frac 12}}{|s|} + \frac {|b|}{|s|} .
\ee
Assume \fref{gouh}, \fref{gouh2}, then from \eqref{gouh2}:
$$
|(J_2)_s| \lesssim \int \e^2 e^{-\frac {|y|}{10}} +  \NNb^{\frac 12} +\frac { |b| }{|s|},
$$
and expanding $b_2$ in \eqref{eq:2007}  according to  \eqref{devpb} yields \fref{eq:2006bis}.\\

{\it Proof of \eqref{gouh}}. Taking the scalar product of the equation of $\e$ by $Q$, we obtain
(using $LQ'=0$)
\begin{align}
\nonumber
0 = \frac d{ds} (\e ,Q ) & =   \Gamma  ( \Lambda Q_{b_2}, Q)
+X   ((Q_{b_2})_y, Q) + \frac {(\l_2)_s}{\l_2} ( \Lambda \e, Q) \\ &+ ( E, Q )
-  ( F-\e^5 ,Q').
\label{scalarQ}
\end{align}

Using the definition of $Q_b$ in \eqref{eq:210}, we have $|(\Lambda Q_{b_2},Q) - (\Lambda P, Q) b_2|\lesssim  {|s|^{-10}}$, and thus using 
\eqref{eqbtwo}, we obtain
$$
\left|\Gamma (\Lambda Q_{b_2},Q) - b_2 (\Lambda P, Q)  \frac {(\e,L(\Lambda Q)')}{\|\Lambda Q\|_{L^2}^2} \right|  \lesssim \frac { \int \e^2 e^{-\frac {|y|}{10}}}{|s|} +\frac {|b|}{|s|^{2}} +\frac{\left(\int\e^2e^{-\frac{|y|}{10}}\right)^{\frac 12}}{s^2}.
$$
%\begin{verbatim}
%j'ai rajoute un terme ci dessus
%\end{verbatim}
Similarly, since $|((Q_{b_2})_y,Q)|\lesssim {|s|^{-10}}$, using \eqref{boundone},
$$
\left| X ((Q_{b_2})_y,Q) \right| \leq \frac {\left(\int \e^2 e^{-\frac {|y|}{10}}\right)^{\frac 12}+|b| }{|s|^{10}}. 
$$

Now, we compute $(E,Q)$.
By the expression of $\Psi_b$ (see equation (2.17) in \cite{MMR1}), and the formula
$((10 P^2 Q^3)'+\Lambda P,Q)= \frac 18 \|Q\|_{L^1}^2$, 
we have
$$
(\Psi_{b_2}-\Psi_{b_1},Q) = - \frac 14 b_2 b \|Q\|_{L^1}^2 + O\left( \frac b{s^2}\right).
$$
Next, using the expression of $\Phi_b$ and $(P,Q)=\frac 1{16} \|Q\|_{L^1}^2 $,
$$
(\Phi_{b_2}-\Phi_{b_1},Q) = - \frac {b_s} {16} \|Q\|_{L^1}^2 + O\left(\frac b{s^{10}}\right).
$$
Thus,
\be\label{eqbone}
(\Psi_2-\Psi_1,Q) = - \frac 1{16} \|Q\|_{L^1}^2\left({b_s} +4 b_2 b \right) + O\left( \frac b{s^2}\right).
\ee
Since $(\Lambda \e_1,Q)=-(\e_1,\Lambda Q)=0$ and 
(using \eqref{estresc} on $\e_1$) $$|\Gamma_1 (\Lambda(Q_{b_2}-Q_{b_1}),Q)|\lesssim |s|^{-\frac 32} |b|,$$ 
we obtain 
$$
(E,Q) = - \frac 1{16} \|Q\|_{L^1}^2\left({b_s} +4 b_2 b \right) + O\left( \frac b{|s|^{\frac 32}}\right).
$$

Now, we compute $
(F-\e^5,Q')
$. First, since $|X_1|+|X_2|+\int |\e_1|e^{-\frac {|y|}{10}} \lesssim |s|^{-\frac 32}$, we have
$$|X_1 (Q_{b_2}-Q_{b_1},Q')|+
|X_2 (\e,Q')|+ |X(\e_1,Q')|\lesssim  \frac {\left(\int \e^2 e^{-\frac {|y|}{10}}\right)^{\frac 12}}{|s|^{\frac 32}} +\frac{|b|}{|s|^{\frac 32}}.$$
Second, we estimate $(R_2(\e_2)-R_1(\e_1),Q')$. From the expression of $R_i(\e_i)$, we observe that
\begin{align*}
& |R_2(\e_2) - R_1(\e_1)-20 b_2 P Q^3\e |
\\ &  \leq |R_2(\e_2)-R_2(\e_1) - 20 b_2 PQ^3 \e | + |R_2(\e_1)-R_1(\e_1)|
\\
&\lesssim |\e| \left(|s|^{-2}+|\e_1|+|\e_2|\right)+ |b| |\e_1|,\end{align*}
and so by \eqref{estresctri},
%\begin{verbatim} j'ai change le label ci dessus
%\end{verbatim}
$$
\left|(R_2(\e_2)-R_1(\e_1),Q') - 20 b_2 (\e,P Q^3Q')\right| 
\lesssim \frac {\left(\int \e^2 e^{-\frac {|y|}{10}}\right)^{\frac 12}}{|s|^{\frac 32}} +\frac{|b|}{|s|^{\frac 32}}.
$$
We have thus obtained for this term:
$$
\left| ((F-\e^5)_y,Q)+20 b_2 (\e,P Q^3Q')\right| 
\lesssim \frac {\left(\int \e^2 e^{-\frac {|y|}{10}}\right)^{\frac 12}}{|s|^{\frac 32}} +\frac{|b|}{|s|^{\frac 32}}.
$$
Inserting the above computations into \eqref{scalarQ}, we obtain:    
\begin{align*}
& \left|  {b_s} +4 b_2 b   - \frac {16}{\|Q\|_{L^1}^2} b_2\left[ \frac {(\Lambda P, Q)}{\|\Lambda Q\|_{L^2}^2}
    {(\e,L(\Lambda Q)')}+ 20   (\e,P Q^3Q')\right]\right|\\
&\lesssim  \int \e^2 e^{-\frac {|y|}{10}} +\frac {\left(\int \e^2 e^{-\frac {|y|}{10}}\right)^{\frac 12} }{|s|^{\frac 32}}+ \frac {|b|}{|s|^{\frac 32}}
\end{align*}

Using the following computation\footnote{which of course motivates the definition of $\rho_2$ in \eqref{rho2}.} from \cite{MMR1}, proof of Lemma 2.7:
$$
(\e,L(\rho_2')) = \frac {16}{\|Q\|_{L^1}^2} \left[ \frac {(\Lambda P, Q)}{\|\Lambda Q\|_{L^2}^2}
    {(\e,L(\Lambda Q)')}+ 20   (\e,P Q^3Q')\right],
$$
we obtain \eqref{gouh}.

\medskip

{\it Proof of \eqref{gouh2}}: 
To complete the proof of \eqref{eq:2007}, we take the scalar product of the equation of $\e$ by
$\rho_2$. We obtain first:
\begin{align*}
\frac d{ds} J_2 + \frac {(\l_2)_s}{\l_2} (\e, \Lambda \rho_2) &= - (\e,(L\rho_2)') + \Gamma (\Lambda Q_{b_2},\rho_2) + X ((Q_{b_2})_y,\rho_2)
\\& + (E,\rho_2) + (-F+\e^5,\rho_2').
\end{align*}
Note that
$$
\left |\frac {(\l_2)_s}{\l_2}(\e, \Lambda \rho_2)\right|\leq\left|  \frac {(\l_2)_s}{\l_2} \right|( |J_2| +| (\e,y\rho_2')|)\lesssim \frac {\NNb^{\frac 12}}{|s|}.
$$
Using the orthogonality 
$(\Lambda Q, \rho_2)=0$ (see  \cite{MMR1}), and \eqref{boundone}, we have
$$
|\Gamma (\Lambda Q_{b_2},\rho_2) |
=|\Gamma b_2 (\Lambda P\chi_{b_2}, \rho_2)| 
\lesssim 
\left[\left(\int \e^2 e^{-\frac {|y|}{10}}\right)^{\frac 12}+ \frac {|b|}{|s|}\right] \frac 1{|s|} 
$$
and similarly using $(Q,\rho_2')=0$:
$$
| X ((Q_{b_2})_y,\rho_2) | +
|(E,\rho_2) |+
| (-F+\e^5,\rho_2')|\lesssim \frac {\left(\int \e^2 e^{-\frac {|y|}{10}}\right)^{\frac 12}+|b|+\NNb^{\frac 12}}{|s|}+ \int \e^2 e^{-\frac {|y|}{10}},
$$
%\begin{verbatim} j'ai rajoute un \NNb^{\frac 12} ci dessus
%\end{verbatim}
and \eqref{gouh2} is proved.

%%%%%%%%%%%%%%%%%%%%%%%%%%%%%%%%%%%%%%%

\subsection{Proof of (ii)}

%%%%%%%%%%%%%%%%%%%%%%%%%%%%%%%%%%%%%%%%

The   functional $\mathcal F$ in \fref{fepsbis} is defined
similarly as in Proposition \ref{propasymtp}  for a   parameter $\Bb$     large enough but smaller than  $B/10$ where $B$ is   used in Proposition \ref{propasymtp}.\\

{\bf step 1} Coercivity of $\matchal F$. The upper and lower bounds \fref{coercivityf} on $\mathcal{F}$ follow from the coercivity of the linearized energy $\int \e_y^2 + \e^2 - 5Q^4 \e^2$ under the   orthogonality conditions \fref{ortho1} together with standard localization arguments. We refer to the proof of Proposition~3.1 (iii) in \cite{MMR1} for example for more details.\\

{\bf step 2} Proof of \eqref{pointwise}. We now turn to the proof of the monotonicity \fref{pointwise}. We decompose
$\mathcal F = \overline {\mathcal F_1} + \frac 1{\sqrt{|s|}} \overline {\mathcal F_2}$ with
$$
\overline{\mathcal F_1} = \int\left[\psi_\Bb\left(\varepsilon_y^2-5Q^4\e^2\right) + \varphi_\Bb\varepsilon^2\right],\quad
\overline {\mathcal F_2} = \int e^{\l_2 y} \e^2$$
and claim the monotonicity formulas for $|s|$ large enough:
\be\label{eq:surF1}
\frac{d\overline{\mathcal F_1}}{ds}+\mu_1 \int \left(\e_y^2+\e^2\right) \varphi_\Bb' \lesssim   \frac  {b^2}{|s|}  + \frac 1{|s|} \int_{y<0} |y| e^{-\frac {|y|}{\Bb}} \e^2,
\ee
\be\label{eq:surF2}
\frac{d\overline {\mathcal F_2}}{ds}+
\frac {\mu_2} {\sqrt{|s|}}\int e^{\l_2 y} (\e_y^2+ \e^2) \lesssim \int \e^2 e^{-\frac {|y|}{10}} +\frac {|b|^2}{|s|^{\frac 25}}.
\ee

Assume \fref{eq:surF1}, \fref{eq:surF2}, then for $|s|$ large:
\begin{align*}
& \frac d{ds} \left(s^2 \overline{\mathcal F_1} +  |s|^{\frac 32} \overline {\mathcal F_2}\right) 
 = 2 s \overline{\mathcal F_1} -\frac 32 |s|^{\frac 12} \overline {\mathcal F_2} 
+s^2   \frac d{ds} \overline{\mathcal F_1} + |s|^{\frac 32}\frac d{ds}\overline {\mathcal F_2}\\
&
\leq  - {\mu_1} s^2   \int \left(\e_y^2+\e^2\right) \varphi_\Bb'  -   {\mu_2}{|s|} \int e^{\l_2 y}\e^2  + C |s|^{\frac {11}{10}} b^2  
  +    C {|s|}   \int_{y<0} |y|e^{-\frac {|y|}{\Bb}} \e^2 \\&+   C{  |s|^{\frac 32}} \int \e^2 e^{-\frac {|y|}{10}}
  \\
&
\leq  - \frac {\mu_1}2 s^2 \int \left(\e_y^2+\e^2\right) \varphi_\Bb' -   {\mu_2}{|s|} \int e^{\l_2 y}\e^2  + C |s|^{\frac {11}{10}}  b^2  
  +    C_1 {|s|}   \int_{y<0} |y|e^{-\frac {|y|}{B}} \e^2. 
\end{align*}
Then, for $0<\kappa<\mu_1/(4\Bb C_1) $   and $|s|$ large enough (depending on $\Bb$),
\bee
  C_1 {|s|}   \int_{y<0} |y|e^{-\frac {|y|}{\Bb}} \e^2
&=&  C_1 {|s|}   \int_{\kappa s<y<0} |y|e^{-\frac {|y|}{\Bb}} \e^2+  C_1 {|s|}   \int_{y<\kappa s} |y|e^{-\frac {|y|}{\Bb}} \e^2\\
&\leq& C_1 \Bb \kappa s^2 \NNbl +   C_1 {|s|}  \sup_{y<\kappa s} \left(|y|e^{-\frac {|y|}{2\Bb}}\right)     \int e^{-\frac {|y|}{2\Bb}} \e^2
\\ &\leq& \frac {\mu_1}  4 s^2 \NNbl  + \frac {\mu_2} 2 |s| \int e^{\l_2 y} \e^2,
\eee
and thus
$$\frac d{ds} \left(s^2 \overline{\mathcal F_1} +  |s|^{\frac 32} \overline {\mathcal F_2}\right) 
+ \frac {\mu_1} 4 s^2 \int (\e_y^2 + \e^2)  \varphi_\Bb'  +   \frac {\mu_2} 2 |s| \int e^{\l_2 y} \e^2
\lesssim |s|^{\frac {11}{10}} b^2 $$
which implies \eqref{pointwise}.\\

{\bf step 3} Proof of \eqref{eq:surF1}.
We   compute the time derivative of $\overline{\mathcal F_1}$ using \fref{eqe}:
\bee
 \frac12\frac{d\overline{\matchal F_1}}{ds}& = & \int\pa_s\e\left[-\psi'_{\Bb} \e_y+\psi_{\Bb}(L\e)+(\varphi_\Bb-\psi_{\Bb})\e\right]\\
&  =&\int\left[(L \e)_y+\Gamma\Lambda Q_{b_2}+X(Q_{b_2})_y  + \frac {(\l_2)_s}{\l_2} {\Lambda} \varepsilon+E+F_y\right]\\ 
&&\times \left[-\psi'_{\Bb}\e_y+\psi_{\Bb}(L\e)+(\varphi_\Bb-\psi_{\Bb})\e\right].
\eee
We now  estimate all these terms similarly as in  the proof of Proposition 3.1 (i) in \cite{MMR1}.

- First, we claim that for $\Bb$ large enough, for some $\mu_1>0$,
\be\label{f5.17}
\int (L \e)_y\left[-\psi'_{\Bb}\e_y+\psi_{\Bb}(L\e)+(\varphi_\Bb-\psi_{\Bb})\e\right]
\leq 
- \mu_1  \int \varphi_\Bb'(\e_y^2+\e^2).
\ee
The proof is mainly based on local virial estimates for $\e$ and explicit computations similar to the ones for the term $f_{1,1}^{(i)}$ of the proof of Proposition 3.1 in \cite{MMR1}.
Here, computations are similar and easier than in \cite{MMR1}. 
We sketch these computations and estimates for the sake of completeness.

By explicit computations (mainly integrations by parts, see \cite{MMR1} for more details), one gets
\bee
&&2 \int (L \e)_y\left[-\psi'_{\Bb}\e_y+\psi_{\Bb}(L\e)+(\varphi_\Bb-\psi_{\Bb})\e\right]
\\
&&= -\int\left[3\psi_\Bb '\e_{yy}^2+(3\varphi_\Bb '+\psi_\Bb '-\psi_\Bb ''')\e_y^2+(\varphi_\Bb '-\varphi_\Bb ''')\e^2\right]\\
\nonumber &&+  \int 5Q^4 \e^2  (\varphi_\Bb '-\psi_\Bb ')
+    \int    20 Q^3Q' \e^2  (\psi_\Bb -\varphi_\Bb )\\
&&+ 10\int\psi_\Bb '\e_y\left\{4Q'Q^3 \e + Q^4  \e_y\right\}\\
&&- \int\psi_\Bb '\left\{\left(  5 Q^4 \e   \right)^2- 10Q^4 \e  \left(-\e_{yy}+\e\right) \right\}\\
&&=  I^<+ I^{\sim}+ I^>
\eee
where $ I^{<,\sim,>}$ respectively corresponds to integration on $y<-\frac \Bb2$, $|y|\leq \frac \Bb2$, $y>\frac \Bb2$.

In the region $y>\Bb/2$, we have $\psi_\Bb'(y)=0$, and thus,
\begin{align*}
I^>
&=  -\int_{y>\Bb/2}\left[ 3\varphi_\Bb ' \e_y^2 +(\varphi_\Bb '-\varphi_\Bb ''')\e^2\right]
+  \int_{y>\Bb/2}  5Q^4 \e^2  \varphi_\Bb ' \\ \nonumber &
+    \int_{y>\Bb/2}    20 Q^3Q' \e^2  (1 -\varphi_\Bb ).
\end{align*}
Using $\varphi_\Bb''' \lesssim \frac 1{\Bb^2} \varphi_\Bb'$ and  the exponential decay of $Q$, we obtain
for $\Bb$ large enough,
$$
I^> \leq -\int_{y>\Bb/2} \varphi_\Bb ' (\e_y^2  +\e^2)   +  \frac C{\Bb^2}  \int_{y>\Bb/2} \varphi_\Bb ' \e^2\leq -\frac 12 \int_{y>\Bb/2} \varphi_\Bb ' (\e_y^2  +\e^2).
$$

In the region $|y|<\Bb/2$, we have $\varphi_\Bb (y)= 1 + y / \Bb$, $\psi_\Bb (y)=1$ and $\psi_\Bb''' =\psi_\Bb'=0$.  Thus,
\begin{align*}
I^{\sim} & = - \frac 1 \Bb \int_{|y| < \frac \Bb2} \left[ 3 \e_y^2+ \e^2 -5   Q^4 \e^2   + 20y Q^3 Q'\e^2 \right]    + \frac 1 \Bb \frac 53 \int_{|y| < \frac \Bb2} \e^6.
\end{align*} 
From Lemma~3.4 in \cite{MMR1} (local virial estimate), for some $\mu>0$ and for $\Bb$ large,
$$
\int_{|y| < \frac \Bb2} \left[ 3 \e_y^2+ \e^2 + 15   Q^4 \e^2   - 20y Q^3 Q'\e^2 \right] 
\geq \mu \int_{|y| < \frac \Bb2}  \left( \e_y^2+ \e^2  \right) - \frac 1{\Bb} \int \e^2 e^{-\frac {|y|}{2}},
$$
and by $\|\e\|_{L^\infty}^4 \lesssim |s|^{-2}$, $\int_{|y| < \frac \Bb2} \e^6\lesssim |s|^{-2}
\int_{|y| < \frac \Bb2} \e^2$, so that for $\Bb$ large and $|s|$ large,
$$
I^{\sim} \leq -  \frac \mu 2 \frac 1 \Bb \int_{|y| < \frac \Bb2} \left( \e_y^2+ \e^2 \right)
+ \frac 1{\Bb^2} \int \e^2 e^{-\frac {|y|}{2}}.
$$

In the region $y<-\Bb/2$, we   use $\psi_\Bb''' \lesssim \frac 1{\Bb^2} \psi_\Bb'$,
$\psi_\Bb'\lesssim \varphi_\Bb'$, $\varphi_\Bb''' \lesssim \frac 1{\Bb^2} \varphi_\Bb'$,
the exponential decay of $Q$ to obtain as before,
for $\Bb$ large enough,
$$
I^<  \leq -\frac 12 \int_{y<-\Bb/2} \varphi_\Bb ' (\e_y^2  +\e^2).
$$

Gathering the  estimates for $I^{>}$, $I^\sim$ and $I^<$, we  get \eqref{f5.17}.

\medskip

- Next, arguing as for estimating $f_{1,2}^{(i)}$ and $f_{1,3}^{(i)}$  in the proof of Proposition 3.1 of \cite{MMR1}, we find
\be\label{f5.18}
\left|\Gamma \int \Lambda Q_{b_2}  \left[-\psi'_{\Bb}\e_y+\psi_{\Bb}(L\e)+(\varphi_{\Bb}-\psi_{\Bb})\e\right]\right| \leq 
\frac   {\mu_1}{100}  \NNbl+ C \frac {b^2}{s^2},
\ee
\be\label{f5.19}
\left| X\int   (Q_{b_2})_y    \left[-\psi'_{\Bb}\e_y+\psi_{\Bb}(L\e)+(\varphi_\Bb-\psi_{\Bb})\e\right]\right| \leq \frac   {\mu_1}{100}  \NNbl+ C \frac {b^2}{s^2}.
\ee
Indeed, using the following algebraic facts  $$
(\Lambda Q,L \e)=-2(Q,\e)=0,\quad (\e,y\Lambda Q)=(\e,\Lambda Q)=(\e,yQ')=0,\quad LQ'=0,
$$
the exponential decay of $Q$, $|b_2| \leq \frac 1{\sqrt{|s|}}$ and integrating by parts to remove all derivative from $\e$, we obtain
$$
\left|  \int \Lambda Q_{b_2}  \left[-\psi'_{\Bb}\e_y+\psi_{\Bb}(L\e)+(\varphi_{\Bb}-\psi_{\Bb})\e\right]\right| \lesssim  \left( \frac{1}{\Bb}   + \frac {\sqrt{\Bb}} {\sqrt{|s|}} \right)\NNbl^{\frac 12},
$$
$$
\left| \int   (Q_{b_2})_y    \left[-\psi'_{\Bb}\e_y+\psi_{\Bb}(L\e)+(\varphi_\Bb-\psi_{\Bb})\e\right]\right|\lesssim    \left( \frac{1}{\Bb}  + \frac {\sqrt{\Bb}}{\sqrt{|s|}} \right)\NNbl^{\frac 12}.
$$
Thus, using  \fref{boundone}, \eqref{f5.18} and \eqref{f5.19} follow for $\Bb$ large and $|s|$ large.

\medskip

At this point, $\Bb$ is fixed and thus $B$ in Proposition \ref{propasymtp} is also fixed. $B$ and $\Bb$ are  universal constants.

\medskip

- The next term is similar to $f_{3}^{(i,j)}$ in \cite{MMR1}. We have using the properties of 
$\psi_{\Bb}$ and  $\varphi_{\Bb}$ and \fref{eq:2002},
\bee && \frac{(\l_2)_s}{\l_2}\int\Lambda \e\left[-(\psi_{\Bb}\e_y)_y+\varphi_{\Bb} \e - \psi_{\Bb}(5Q^4 \e + \e^5) \right]\\
&&= - \frac 12  \frac{(\l_2)_s}{\l_2}\int y\varphi_{\Bb}'\e^2+O\left (\frac 1{|s|}\int (\e_y^2+ \e^2) \varphi_\Bb' \right).\eee
From \fref{controlnorme}, \fref{estresc}  and \fref{devpb}, we have for $s$ large
$$\frac 1{s}\leq  -\frac{(\l_2)_s}{\l_2}\leq \frac 1{4s} <0$$
and thus
$$
- \frac 12  \frac{(\l_2)_s}{\l_2}\int y\varphi_{\Bb}'\e^2
\lesssim \frac 1 {|s|} \int_{y<0}  |y| e^{- \frac {|y|}{B}} \e^2.
$$
Eventually, we have proved for $s$ large:
\begin{align*} 
&  \frac{(\l_2)_s}{\l_2}\int\Lambda \e\left[-(\psi_{\Bb}\e_y)_y+\varphi_{\Bb} \e - \psi_{\Bb}(5Q^4 \e + \e^5) \right] 
\\
& \leq \frac {\mu_1}{100}\int (\e_y^2+ \e^2) \varphi_\Bb'+\frac C {|s|} \int_{y<0}  |y| e^{- \frac {|y|}{B}} \e^2 .
\end{align*}
 
  \medskip
     
We now estimate terms coming from $E$ and $F$. For this, we will need higher order Sobolev estimates on $\e_1$ and $\e_2$ coming from Proposition \ref{sharpbounds}.
Since $\Bb < \frac B{50}$, from  \eqref{sobolev} and \eqref{sobolevbis}, we have, for all $\frac {9}{50} \frac 1 {\Bb} < \omega < \frac 1{10}$, for $i=1,2$,
  \be\label{sobolev2}
  \sum_{k=0}^3\int \left(\partial_y^k \e_i\right)^2(s,y) e^{\omega  y  } dy +
\int_{-\infty}^s \sum_{k=0}^4\int \left(\partial_y^k \e_i\right)^2(s',y) e^{\omega  y  } dyds'
\lesssim \frac 1{|s|},
\ee
\be\label{sobolev3}
\| \left( (\e_i)_{yy}^2+(\e_i)_y^2  \right)(s ) e^{\omega  y  }\|_{L^\infty}
 \lesssim \frac 1{|s|}.
\ee
  
- Estimate for $E$. In view of the expression of $E$ in \eqref{defe}, the first term to estimate is
\begin{align*}
&\left|\left(\Gamma-b\right) \int\Lambda \e_1\left[-\psi'_{\Bb}\e_y+\psi_{\Bb}(L\e)+(\varphi_{\Bb}-\psi_{\Bb})\e\right]\right|\\
& \lesssim   \left(|b|+\left(\int \e^2 e^{-\frac {|y|}{10}}\right)^{\frac 12}\right)
\left(\int (\e_y^2+ \e^2) \varphi_\Bb'\right)^{\frac 12} 
\left(\int \left((\e_1)_{yy}^2 + (\e_1)_y^2 + \e_1^2\right) (1+|y|^3) \varphi_\Bb\right)^{\frac 12}
\\
&
\leq  \frac{\mu_1}{100  } \int (\e_y^2+ \e^2) \varphi_\Bb' +C   {b^2}\int \left((\e_1)_{yy}^2 + (\e_1)_y^2 + \e_1^2\right) e^{\frac 2{\Bb}y} dy\\
& \leq \frac{\mu_1}{100  } \int (\e_y^2+ \e^2) \varphi_\Bb'+  C \frac {b^2}{|s|} ,
\end{align*}
using \eqref{boundone}, integration by parts, and then \eqref{sobolev2}. For the next term, we need to estimate $\Lambda(Q_{b_2}-Q_{b_1})$. From Lemma \ref{cl:2}, we have
$$
Q_{b_2}-Q_{b_1} = b P \chi_{b_2} + b_1 P (\chi_{b_2}-\chi_{b_1}),
$$
and using \fref{devpb}:
$$
|\chi_{b_2}-\chi_{b_1}|=\left|\int_{b_1}^{b_2}\frac{\pa \chi_b}{\pa b}db\right| \lesssim \sup |y\chi'| \frac{|b_1-b_2|}{ |b_1|}.
$$
 
Thus, $|Q_{b_2}-Q_{b_1}|\lesssim |b| |P|$. Arguing similarly, we obtain:
\be\label{diffQ}
|(Q_{b_2}-Q_{b_1})_y|+
| (\Lambda Q_{b_2}-\Lambda Q_{b_1})_y|+|\Lambda (Q_{b_2}-    Q_{b_1})|\lesssim |b| \left(\mathbf{1}_{y<0} + e^{-\frac {|y|}{10}}\right).
\ee 
Using \eqref{diffQ} and   estimates for $\Gamma_1$ from Proposition \ref{sharpbounds},
\bee
&&\left|\int \Gamma_1\Lambda(Q_{b_2}-Q_{b_1})\left[-\psi'_\Bb\e_y+\psi_\Bb(L\e-\e^5)+(\varphi_\Bb-\psi_\Bb)\e\right]\right|\\
&& \lesssim   \frac {|b|}{{|s|}^{\frac 32}}\NNbl^{\frac 12}\leq \frac{\mu_1}{100 }\NNbl+C \frac {b^2}{s^2}.
\eee
Next, from the definition of $\Psi_{b_i}$ and $\Phi_{b_i}$ (see Lemma \ref{cl:2} and equation (2.17) in \cite{MMR1}), \eqref{boundtwo}, \fref{eq:2003}, \fref{estresctri}:
we have 
 \begin{align*}
|\Psi_{b_2}-\Psi_{b_1}| & \lesssim \frac {|b|}{\sqrt{|s|}}  \left(\mathbf{1}_{y<0} + e^{-\frac {|y|}{10}}\right),\\
 |\Phi_{b_2}-\Phi_{b_1}| & \lesssim  \left(|b_s| +  \frac{|(b_1)_s||b|}{|b_1|}\right)  \left(\mathbf{1}_{y<0} + e^{-\frac {|y|}{10}}\right)\\ &\lesssim  \left(\int \e^2 e^{-\frac {|y|}{10}}+\frac {\left(\int \e^2 e^{-\frac {|y|}{10}}\right)^{\frac 12}}{|s|}+ \frac {|b|}{|s|}  \right)\left(\mathbf{1}_{y<0} + e^{-\frac {|y|}{10}}\right)
\end{align*}
and similar estimates for the derivatives of these terms. In particular, we obtain
\begin{align}
&|\Psi_2-\Psi_1|+|(\Psi_2-\Psi_1)_y|+|(\Psi_2-\Psi_1)_{yy}|\nonumber
\\ &\lesssim  \left(\int \e^2 e^{-\frac {|y|}{10}}+\frac {\left(\int \e^2 e^{-\frac {|y|}{10}}\right)^{\frac 12}}{|s|}+ \frac {|b|}{\sqrt{|s|}}  \right)\left(\mathbf{1}_{y<0} + e^{-\frac {|y|}{10}}\right)  \label{diffPsiBIS}
\\ &\lesssim  \left(\NNbl+\frac {\NNbl^{\frac 12}}{|s|}+ \frac {|b|}{\sqrt{|s|}}  \right)\left(\mathbf{1}_{y<0} + e^{-\frac {|y|}{10}}\right) \label{diffPsi}
\end{align}
Thus,
\bee
&& \left|\int (\Psi_2-\Psi_1)\left[-\psi'_\Bb\e_y+\psi_\Bb(L\e)+(\varphi_\Bb-\psi_\Bb)\e\right]\right|\\
&& \lesssim  \left(\NNbl+\frac {\NNbl^{\frac 12}}{|s|}+ \frac {|b|}{\sqrt{|s|}}  \right)\NNbl^{\frac 12}\leq  \frac{\mu_1}{100 }\NNbl +C \frac {b^2}{|s|}.
\eee
 
 In conclusion for $E$, we have obtained
 $$
\left|  \int E\left[-\psi'_\Bb\e_y+\psi_\Bb(L\e)+(\varphi_\Bb-\psi_\Bb)\e\right]\right| 
\leq \frac {3\mu_1} {100} \int (\e_y^2 + \e^2) \varphi_\Bb' + C \frac {b^2}{|s|}.
 $$

- Estimate for $F$. Similarly, we easily get the following three estimates
 \bee
 && \left|X_1\int (Q_{b_2}-Q_{b_1})_y\left[-\psi'_\Bb\e_y+\psi_\Bb(L\e)+(\varphi_\Bb-\psi_\Bb)\e\right]\right|\\
 && \lesssim   \frac {|b|}{|s|^{\frac 32}} \NNbl^{\frac 12}\lesssim \frac{\mu_1}{100}\NNbl+C \frac {b^2}{s^2} ,
\eee
\bee 
&& \left|X_2 \int \e_y\left[-\psi'_\Bb\e_y+\psi_\Bb(L\e)+(\varphi_\Bb-\psi_\Bb)\e\right]\right|\\
& & \lesssim    \frac 1{|s|^{\frac 32}}\int (\e_y^2+ \e^2) \varphi_\Bb'
 \leq \frac{\mu_1}{100}\int (\e_y^2+ \e^2) \varphi_\Bb',
\eee
\begin{align*}
& \left|X \int (\e_1)_y\left[-\psi'_\Bb\e_y+\psi_\Bb(L\e)+(\varphi_\Bb-\psi_\Bb)\e\right]\right|\\
& \lesssim   \left( \frac {|b|}{|s|} + \left(\int \e^2 e^{-\frac {|y|}{10}}\right)^{\frac 12}\right)  \left(\int (\e_y^2+ \e^2) \varphi_\Bb' \right)^{\frac 12}\left(\int \left((\e_1)_{yy}^2 + (\e_1)_y^2 + \e_1^2\right) e^{\frac 2{\Bb}y} dy\right)^{\frac 12}
\\
&\leq \frac{\mu_1}{100}\int (\e_y^2+ \e^2) \varphi_\Bb'+C \frac {b^2}{|s|}.
\end{align*}
The remaining nonlinear term for $F$ is estimated using the Sobolev bound \fref{sobolev2}.
We decompose the nonlinear term as follows
$$R_2(\e_2) - R_1(\e_1) =F_1+F_2$$
where 
$$F_1=   R_2(\e_1)-R_1(\e_1),\quad F_2=R_2(\e_2)-R_2(\e_1) .$$

Using the expression of $R_i$ and $\|\e_1\|_{L^\infty} \lesssim |s|^{-\frac 12}$,
we have
$$
|(F_1)_y|+ |(F_1)_{yy}|\lesssim |b| \left(|\e_1|+|(\e_1)_y| + |(\e_1)_y^2| + |(\e_1)_{yy}|\right),
$$ 
and then by \eqref{sobolev3},
$$
\left(|(F_1)_y|+ |(F_1)_{yy}| \right)e^{\frac   y{4\Bb}}\lesssim 
 |b| \left(|\e_1|+|(\e_1)_y| +   |(\e_1)_{yy}|\right) e^{  \frac y{4\Bb}}
 + \frac {|b|}{\sqrt{|s|}} |(\e_1)_y|.
$$
Thus, integrating by parts, and using Cauchy-Schwarz inequality and \eqref{sobolev2}:
\begin{align*}
& \left| \int (F_1)_y \left[-\psi'_{\Bb}\e_y+\psi_{\Bb}(L\e)+(\varphi_\Bb-\psi_{\Bb})\e\right]
\right| \\
&  \lesssim\int \left(|(F_1)_y|+ |(F_1)_{yy}| \right)e^{  \frac   y{2\Bb}}
(|\e|+|\e_y|) \sqrt{\varphi_\Bb'}
\\
&  \lesssim |b| \int  \left[  \left(|\e_1|+|(\e_1)_y| +   |(\e_1)_{yy}|\right) e^{\frac y{4B}}
 +   |(\e_1)_y|\right] e^{\frac y{4\Bb}}
(|\e|+|\e_y|) \sqrt{\varphi_\Bb'}\\
& \lesssim \frac {|b|}{\sqrt{|s|}}\left(\int (\e_y^2+ \e^2) \varphi_\Bb'\right)^{\frac 12}\leq \frac{\mu_1}{100}\int (\e_y^2+ \e^2) \varphi_\Bb'+C \frac {b^2}{s}.
\end{align*}

We decompose $F_2$ as follows
\begin{align*}
F_2 &= 5(Q_{b_2}^4 - Q^4) \e + 10 Q_{b_2}^3 (\e_2^2-\e_1^2) 
+ 10 Q_{b_2}^2 (\e_2^3 - \e_1^3) + 5 Q_{b_2} (\e_2^4 - \e_1^4)
\\ &+ \e_2^5 - \e_1^5  \\
& =\e \left[ 5(Q_{b_2}^4 - Q^4) + 10 Q_{b_2}^3 (\e_2+ \e_1)
+ 10 Q_{b_2}^2 (\e_2^2+\e_1\e_2 + \e_1^2) \right.\\
& \left.+ 5 Q_{b_2} (\e_2^3+\e_2^2\e_1+\e_2 \e_1^2+\e_1^3)
+  (\e_2^4+\e_2^3\e_1+\e_2^2 \e_1^2+\e_2\e_1^3+\e_1^4)\right]
\end{align*}
Therefore, by suitable integration by parts, we have
\begin{align*}
& \left|\int  (F_2)_y  \left[-\psi'_{\Bb}\e_y+\psi_{\Bb}(L\e-\e^5)+(\varphi_\Bb-\psi_{\Bb})\e\right]
\right|\\
& \lesssim |b_2| \int (\e_y^2 + \e^2) \varphi_\Bb' + \int (\e_y^2 + \e^2) \varphi_\Bb \left(|\e_1|+|\e_2|\right)\\
&+\int (\e_y^2 + \e^2) \psi_{\Bb}    \sum_{i=1,2} \left(|\e_i|+|(\e_i)_y|   
+|(\e_i)_y^2| +|(\e_i)_{yy}|\right).
\end{align*}
These terms are next treated as follows:
$$
C |b_2| \int (\e_y^2 + \e^2) \varphi_\Bb'
\leq \frac{\mu_1}{100}\int (\e_y^2 + \e^2) \varphi_\Bb',
$$
and using \eqref{sobolev3}, $\|\e_i\|_{L^\infty}\lesssim \frac 1{\sqrt{|s|}}$ and the notation $y_+ = \max (0,y)$:
\begin{align*}
& \int (\e_y^2 + \e^2) \varphi_\Bb \left(|\e_1|+|\e_2|\right)
\\
& \lesssim  \left( \|\e_1 (1+|y_+|)\|_{L^\infty} + \|\e_2 (1+|y_+|)\|_{L^\infty}\right)
 \int (\e_y^2 + \e^2) \varphi_\Bb'
\\ &\leq \frac{\mu_1}{100}\int (\e_y^2 + \e^2) \varphi_\Bb'.
\end{align*}
Finally, using $\psi_{\Bb}\lesssim \varphi_\Bb' e^{\frac y\Bb}$ and \eqref{sobolev3}:
\begin{align*}
& C\int (\e_y^2 + \e^2) \psi_{\Bb}    \sum_{i=1,2} \left(|\e_i|+|(\e_i)_y|   
+|(\e_i)_y^2| +|(\e_i)_{yy}|\right)\\
& \lesssim   \left \|\left(|\e_i|+|(\e_i)_y|   
+|(\e_i)_y^2| +|(\e_i)_{yy}|\right)   e^{\frac y\Bb}\right\|_{L^\infty} 
\int (\e_y^2 + \e^2) \varphi_\Bb'
\\ & \leq \frac{\mu_1}{100}\int (\e_y^2 + \e^2) \varphi_\Bb'.
\end{align*}
 The collection of above estimates yields the bound:
 $$
\left|  \int F_y \left[-\psi'_\Bb\e_y+\psi_\Bb(L\e-\e^5)+(\varphi_\Bb-\psi_\Bb)\e\right]\right| 
\leq \frac {7\mu_1} {100} \int (\e_y^2 + \e^2) \varphi_B' + C \frac {b^2}{|s|}.
 $$
 \medskip

{\bf step 4} Proof of \eqref{eq:surF2}.
We   compute the time derivative of $\overline {\mathcal F_2}$ using \fref{eqe}:
\bee
 \frac12\frac{d\overline {\mathcal F_2}}{ds}& = &  \frac {(\l_2)_s}2 \int y e^{\l_2 y} \e^2 + 
 \int  e^{\l_2 y} (\pa_s \e) \e\\
&  =& \frac {(\l_2)_s}2 \int y e^{\l_2 y} \e^2\\ & + &\int\left[(L \e)_y+\Gamma\Lambda Q_{b_2}+X(Q_{b_2})_y  + \frac {(\l_2)_s}{\l_2} {\Lambda} \varepsilon+E+F_y\right]e^{\l_2 y} \e .
\eee
Since $\int \Lambda \e e^{\l_2 y} \e = -\frac 12 \l_2 \int \e^2 y e^{\l_2 y}$, the scaling term cancels. By usual integrations by parts, we get
\bee
 \frac12\frac{d\overline {\mathcal F_2}}{ds}
&  =&  - \frac 32 \l_2 \int \e_y^2 e^{\l_2 y} - \frac 12 \l_2 (1- \l_2^2) \int  \e^2 e^{\l_2 y}
\\
&+&\int (-10Q^3Q_y+\tfrac 52 \l_2 Q^4) \e^2 e^{\l_2 y}
+ \int\left[\Gamma\Lambda Q_{b_2}+X(Q_{b_2})_y   +E+F_y\right]e^{\l_2 y} \e .
\eee
Using $\l_2\sim \frac 1{\sqrt{2|s|}}$ and the decay properties of $Q$,  we get
for $|s|$ large,
\begin{align*}
 \frac12\frac{d\overline {\mathcal F_2}}{ds}
& \leq - \frac 1{4 \sqrt{|s|}} \int (\e_y^2  + \e^2) e^{\l_2 y} 
+ \int \e^2 e^{-\frac {|y|}{10}}  \\
&+ \left| \int\left[\Gamma\Lambda Q_{b_2}+X(Q_{b_2})_y   +E+F_y\right]e^{\l_2 y} \e\right|.
\end{align*}
Now, we estimate the remaining terms.
First, from \eqref{boundone}, and the definition of $Q_b$,
\begin{align*}&\left| \Gamma\int \Lambda Q_{b_2} e^{\l_2 y} \e\right|+ \left|X  \int (Q_{b_2})_y   e^{\l_2 y} \e \right|\\
&\leq
C \left( \left( \int \e^2 e^{-\frac {|y|}{10}}\right)^{\frac 12}+ \frac {|b|}{|s|}\right) 
\left(\left( \int \e^2 e^{-\frac {|y|}{10}}\right)^{\frac 12}+ \frac {|b_2|}{\sqrt{\l_2}}  \left(\int \e^2 e^{\l_2 y} \right)^{\frac 12} \right)
\\
& \leq \frac {\l_2}{100} \int \e^2 e^{\l_2 y}+
C  \int \e^2 e^{-\frac {|y|}{10}}  + C \frac {b^2}{s^2}.
\end{align*}

Second, we estimate terms coming from $E$.
Since $$\int \Lambda \e_1 \e e^{\l_2 y} =
-\int \e_1 \Lambda \e e^{\l_2y} - \l_2 \int y \e_1\e e^{\l_2y},$$ we get
\bee
&&\left|(\Gamma-b) \int \Lambda \e_1 e^{\l_2 y} \e\right|\\
&&\lesssim
\left( \left( \int \e^2 e^{-\frac {|y|}{10}} \right)^{\frac 12}+ |b|\right) \left(\int (\e_y^2+\e^2) e^{\l_2y}\right)^{\frac 12}
\left(\int (1+y^2) \e_1^2 e^{\l_2 y} \right)^{\frac 12}.
\eee
We estimate the term in $\e_1$ using Proposition \ref{sharpbounds}, for some $0<\omega<\omega'<\frac 1{10}$,
using $\l_2 \geq \frac {19}{20} \l_1$ for $|s|$ large,
\begin{align*}
\int  y^2 \e_1^2 e^{\l_2 y} &\leq \int_{y<0} y^2 \e_1^2e^{\l_2 y} + \int_{y>0} y^2 \e_1^2 e^{\omega y}\\ 
& \lesssim  \sup_{y<0} \left[y^2 e^{\frac 1{20}\l_1 y}\right]    \int_{y<0}  \e_1^2 e^{\frac 9{10} \l_1 y}   + \int_{y>0}   \e_1^2 e^{\omega' y}\\
& \lesssim  \l_1^{-2} \left(\int_{y<0}   \e_1^2\right)^{\frac 1{10}}\left(\int_{y<0}  \e_1^2 e^{\l_1 y} \right)^{\frac 9{10}}
+ \int_{y>0}   \e_1^2 e^{\omega' y}\lesssim |s|^{-\frac 9{10}},
\end{align*}
we obtain
\begin{align*}
\left|(\Gamma-b) \int \Lambda \e_1 e^{\l_2 y} \e\right|&\lesssim
|s|^{-\frac 9{20}}\left(  \left(\int \e^2 e^{-\frac {|y|}{10}} \right)^{\frac 12}+ |b|\right) \left(\int (\e_y^2+\e^2) e^{\l_2y}\right)^{\frac 12}\\
& \leq C \int \e^2 e^{-\frac {|y|}{10}} + C\frac {|b|^2}{|s|^{\frac 25}} + \frac {\l_2}{100 } \int (\e_y^2+\e^2) e^{\l_2y}.
\end{align*}
  
  For the second term coming from $E$, we use \eqref{diffQ} and $|\Gamma_1|\lesssim \frac 1{|s|}$, so that
  \begin{align*}
  \left|\Gamma_1 \int \Lambda (Q_{b_2}-Q_{b_1}) \e e^{\l_2 y}\right|
& \lesssim \frac {|b|}{|s|} \left( \int_{y<0} |\e| e^{\l_2 y} +\left( \int \e^2 e^{-\frac {|y|}{10}}\right)^{\frac 12}\right)\\
& \lesssim \frac {|b|}{|s|} \left( |s|^{\frac 14}\left(\int_{y<0} \e^2 e^{\l_2 y}\right)^{\frac 12} + \left( \int \e^2 e^{-\frac {|y|}{10}}\right)^{\frac 12}\right)\\
 & \leq \frac {\l_2}{100} \int \e^2 e^{\l_2 y}+ C \int \e^2 e^{-\frac {|y|}{10}}  +C \frac {|b|^2}{|s|}.
  \end{align*}
  
For the last term in $E$, we use \eqref{diffPsiBIS} and argue similarly:
\begin{align*}
&\left| 
\int (\Psi_2-\Psi_1) \e e^{\l_2 y}
\right|   \\& \lesssim \left(\int \e^2 e^{-\frac {|y|}{10}}+\frac {\left(\int \e^2 e^{-\frac {|y|}{10}}\right)^{\frac 12}}{|s|}+ \frac {|b|}{\sqrt{|s|}}  \right)
\left( \int_{y<0} |\e| e^{\l_2 y} + \left(\int \e^2 e^{-\frac {|y|}{10}}\right)^{\frac 12}\right)\\
 & \leq \frac {\l_2}{100} \int \e^2 e^{\l_2 y}+ C \int \e^2 e^{-\frac {|y|}{10}} +C \frac {|b|^2}{|s|}.
\end{align*}

Now, we estimate terms coming from $F$. Arguing as before, since $|X_1|\lesssim \frac 1{|s|^{\frac 32}}$ and using \eqref{diffQ}, we obtain
$$
\left|
X_1 \int (Q_{b_2}-Q_{b_1})_y \e e^{\l_2 y}
\right|\lesssim
 \frac {\l_2}{100} \int \e^2 e^{\l_2 y}+\int \e^2 e^{-\frac {|y|}{10}}+ \frac {|b|^2}{|s|}.
$$
Next, since $|X_2| \lesssim \frac 1{|s|^{\frac 32}}$, $\l_2\lesssim |s|^{-\frac 12}$, by integration by parts
$$
\left| X_2 \int \e_y \e e^{\l_2 y}\right|
\lesssim \frac 1{s^2} \int \e^2 e^{\l_2 y} \leq \frac {\l_2}{100} \int \e^2 e^{\l_2 y}. 
$$
Then, by integration by parts,  \eqref{boundone} and \eqref{decayscaling},
\begin{align*}
\left| X \int (\partial_y \e_1) \e e^{\l_2 y} \right|& \lesssim \left(\left(\int \e^2 e^{-\frac {|y|}{10}}\right)^{\frac 12} + \frac {|b|}{|s|}\right)
\left(\int \e_1^2 e^{\l_2 y} \right)^{\frac 12} \left(\int (\e_y^2+\e^2) e^{\l_2 y} \right)^{\frac 12} 
\\ &\lesssim \frac {\l_2}{100} \int (\e_y^2+\e^2) e^{\l_2 y}+ \int \e^2 e^{-\frac {|y|}{10}} + \frac {|b|^2}{|s|}.
\end{align*}
Finally, we decompose  $R_2(\e_2) - R_1(\e_1)$ as follows, using $\|\e_i\|_{L^\infty} \lesssim |s|^{-\frac 12}$ and $|b_i|\sim |s|^{-1}$,
\begin{align*}
|R_2(\e_2) - R_1(\e_1)|
& \leq |R_2(\e_2)-R_2(\e_1)| + |R_2(\e_1)-R_1(\e_1)|
\\
&\lesssim |\e| \left(|b_2|+e^{-\frac {|y|}{10}}\right)+ |b| |\e_1|
\lesssim |s|^{-1} |\e|+ |\e| e^{-\frac {|y|}{10}} + |b| |\e_1|,
\end{align*}
and thus, using also \eqref{decayscaling},
we estimate the last term coming from $F$ as follows:
\begin{align*}
& \left| \int \left( R_2(\e_2) - R_1(\e_1)\right)  \left(\e_y + \l_2 \e\right) e^{\l_2 y} 
\right|\\
& \lesssim \frac 1{|s|} \int (\e_y^2 + \e^2) e^{\l_2 y} + \int \e^2 e^{-\frac {|y|}{10}} + 
|b| \left(\int \e_1^2 e^{\l_2 y}\right)^{\frac 12}\left(\int \e^2 e^{\l_2 y}\right)^{\frac 12}\\
& \lesssim \frac {\l_2}{100} \int (\e_y^2 + \e^2) e^{\l_2 y} + \int \e^2 e^{-\frac {|y|}{10}} + 
\frac {|b|^2}{|s|}.
\end{align*}

The collection of above bounds yields \fref{eq:surF2}. 
%%%%%%%%%%%%%%%%%%%%%%%%%%%%%%% %%%%%%%%%%%%%%%%%%%%%%%%%%%%%%%
  %%%%%%%%%%%%%%%%%%%%%%%%%%%%%%% %%%%%%%%%%%%%%%%%%%%%%%%%%%%%%%

\subsection{Conclusion}

 %%%%%%%%%%%%%%%%%%%%%%%%%%%%%%% %%%%%%%%%%%%%%%%%%%%%%%%%%%%%%%
  %%%%%%%%%%%%%%%%%%%%%%%%%%%%%%% %%%%%%%%%%%%%%%%%%%%%%%%%%%%%%%
  
  We are now in position to conclude the proof of uniqueness \fref{equaltiyutwp}.

  First, recall that   the following estimates from Proposition \ref{sharpbounds} as $s\to -\infty$:
 \be\label{prems}
 \NNb\lesssim \frac{1}{|s|^3},\ \
 |J_2|\lesssim \NNb^{\frac 12} \lesssim \frac 1{|s|^{\frac 32}}, \ \
 b=o\left(\frac{1}{|s|^2}\right).
 \ee
  Moreover, we estimate from \fref{decayscaling}, \fref{sobolev}:
 \bee
  \int e^{\l_2 y} \e^2& \lesssim&  \int e^{\l_2 y} \e_1^2+ \int e^{\l_2 y} \e_2^2
  \lesssim  \int e^{\l_2 y} \e_1^2+\frac{1}{s^2}\\
  & \lesssim & \int_{y<0} e^{-\l_2 |y|} \e_1^2+\int_{y>0} e^{\l_2 y} \e_1^2+\frac{1}{s^2}\\
  & \lesssim & \int_{y<0} e^{-\frac{9}{10}\l_1 |y|} \e_1^2
  +\int_{y>0} e^{\frac y{10}} \e_1^2+\frac{1}{s^2}
  \lesssim \left(\int e^{-\l_1 |y|} \e_1^2\right)^{\frac9{10}}+\frac{1}{s^2}\\
  & \lesssim &  \frac{1}{s^{\frac 95}}.
  \eee
 This yields in particular from \fref{coercivityf} the bound: 
 \be
 \label{aprioriboudn}
 \matchal F\lesssim \frac{1}{s^{2+\frac3{10}}}.
 \ee
Recall from  \fref{eq:2006bis} and \fref{pointwise} (using $\int \e^2 e^{-\frac {|y|}{10}} \lesssim
\NNbl$):
$$\frac{d}{ds}\left\{s^2\left(b+\frac {J_2}{2s}\right)\right\}\lesssim s^2\NNbl+ |s|^{\frac 12} |b|+ |s|^{\frac 12}\NNb^{\frac12},$$ 
  \be\label{oepjope}
  \frac{d}{ds}(s^2\mathcal F)+\mu s^2\NNbl\lesssim |s|^{\frac {11}{10}}b^2 .
\ee
It follows that for $K_0>0$ large enough, using also \eqref{prems},
\be\label{couple}
\frac{d}{ds}\left\{s^2\left(b+\frac {J_2}{2s} +K_0  \mathcal F\right) \right\}
\lesssim  |s|^{\frac 12} |b|+ |s|^{\frac 12}\NNb^{\frac12} + s^{\frac{11}{10}} b^2\lesssim  |s|^{\frac 12} |b|+ |s|^{\frac 12}\NNb^{\frac12} . 
\ee
 We now observe from \eqref{prems}, \fref{aprioriboudn} the a priori bound:
 $$s^2\left(|b| +\frac{|J_2|}{|s|}+ \mathcal F\right)\to 0\ \ \mbox{as}\ \ s\to -\infty.$$ 
We then integrate \fref{oepjope}  on $(-\infty,s]$:
$$
  s^2 \mathcal F(s) \lesssim  \int_{-\infty}^s |s'|^{\frac {11}{10}} b^2(s') ds' \lesssim \int_{-\infty}^s (s')^4 b^2(s') \frac {ds'}{|s'|^{2+\frac9 {10}}} 
 \lesssim \frac 1 {|s|^{\frac {19}{10}}} \left(\sup_{(-\infty,s]} ((s')^2 |b(s')|\right)^2,
   $$
  so that 
  \be
  \label{cenkoenneo}
  |s| \mathcal F^{\frac 12}(s) \lesssim  \frac 1 {|s|^{\frac {19}{20}}}  \sup_{(-\infty,s]} (s')^2 |b(s')| .
  \ee 

 Next, by integration of \eqref{couple} on $(-\infty,s]$ and   \eqref{coercivityf} and \eqref{prems},
  \bea   s^2|b(s) | 
& \lesssim&|s||J_2|+   s^2 \mathcal F +\int_{-\infty}^s  \left(|s'|^{\frac 12} |b|+ |s'|^{\frac 12}\NNb^{\frac12}\right) 
 ds'\nonumber \\
& \lesssim&   |s|\mathcal \NNb^{\frac12}+   s^2 \mathcal F +\int_{-\infty}^s   |b| |s'|^2\frac {ds'}{|s'|^{\frac 32}} +\int_{-\infty}^s |s'|^{\frac 74}   \NNb^{\frac 12}  \frac {d s' }{|s'|^{\frac 54}}   \nonumber\\
&\lesssim& \frac{1}{ |s|^{\frac 14}}\sup_{(-\infty,s]}\left\{(s')^2|b (s')|+|s'|^{\frac 74}\mathcal F^{\frac 12}(s')\right\}. \label{undeplus}
  \eea 

Putting together \eqref{cenkoenneo} and \eqref{undeplus}, we get  \be 
  s^2 |b(s) |+ |s|^{\frac 74}\mathcal F^{\frac 12}(s)    \lesssim  \frac{1}{ |s|^{\frac 15}}\sup_{(-\infty,s]}\left\{(s')^2|b (s')|+(s')^{\frac 74}\mathcal F^{\frac 12}(s')\right\}.
  \ee 
This give immediately for $|s|$ large,  by     \eqref{coercivityf},
$$|b(s) |+\NNb(s)=0\  \ \mbox{and thus}\ \ \e(s,y)\equiv 0.$$
Therefore, for some $t>0$, $u_2(t)$ is a rescaled and translated of $S(t)$ and 
thus for all time by uniqueness of the Cauchy problem in $H^1$.
This concludes the proof of \fref{equaltiyutwp} and Theorem \ref{th:1}.

%%%%%%%%%%%%%%%%%%%%%%%%%%%%%%%%%%%%
%%%%%%%%%%%%%%%%%%%%%%%%%%%%%%%%%%%%

\section{Description of the (Exit) scenario}
\label{sec:6}

%%%%%%%%%%%%%%%%%%%%%%%%%%%%%%%%%%%%
%%%%%%%%%%%%%%%%%%%%%%%%%%%%%%%%%%%%

This section is devoted to the proof of Theorem \ref{PR:1}. The argument relies first on an extension of the compactness argument of section \ref{sectionconstruction} and  second on the uniqueness up to symmetries of the minimal mass blow up solution.

 %%%%%%%%%%%%%%%%%%%%%%%%%%%%%%%%%%%%

\subsection{Reduction of the proof}

%%%%%%%%%%%%%%%%%%%%%%%%%%%%%%%%%%%%

Theorem \ref{PR:1} is a direct consequence of the following proposition which describes the defocusing bubble in the (Exit) regime at the exit time.

\begin{proposition}[Compactness of sequences of solutions at the (Exit) time]
\label{PR:6.1}
There exists a small universal constant $ \alpha^*>0$ such that the following holds. Let $(u_n(0))$
be a sequence in $H^1$  satisfying:
\begin{enumerate}
\item $u_n(0) \in \mathcal{A}$;
\item $\|u_n(0) - Q\|_{H^1} \leq \frac 1n$;
\item the solution $u_n\in \mathcal C([0,T_n),H^1)$ of \eqref{kdv} corresponding to $\left(u_n(0)\right)_{n\ge 1}$ satisfies the {\rm (Exit)} scenario, i.e. 
for all $n$ large enough,  
\be\label{exittime}
t_n^* = \sup\left\{t>0  \hbox{ such that $\forall t'\in [0,t],$ $ u_n(t')\in \mathcal{T}_{\alpha^*}$} \right\}<T_n.
\ee
\end{enumerate}
Then, there exists $\sigma^*=\sigma^*(\alpha^*)$ (independent of the sequence $u_n$) such that
\be\label{cl:6.1}
  \l_{n}  ^{\frac 12}(t_{n}^*) u_{n}\left(t_{n}^*,   \l_{n}(t_{n}^*)  \cdot+   x_{n}(t_{n}^*) \right) \to \l_S^{\frac 12} (\sigma^*)   S\left(\sigma^*, \l_S(\sigma^*) \cdot+x_S(\sigma^*)\right) \quad \hbox{in $L^2$} 
\ee
as $n\to +\infty$.
\end{proposition}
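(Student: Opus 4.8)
\emph{Plan.} The plan is to generalize the compactness scheme of Section~\ref{sectionconstruction}. I would follow the flow of each $u_n$ up to its exit time $t_n^*$, show that the rescaled bubble has \emph{defocused} in a quantitative way, renormalize the flow at $t_n^*$, and extract a weak limit $v_\infty$; this limit will be a minimal mass $H^1$ blow up solution, so by the uniqueness statement of Theorem~\ref{th:1}(ii) it is a renormalization of $S$, and a mass argument then upgrades weak convergence to strong $L^2$ convergence. The new feature compared with the well prepared construction of Section~\ref{sectionconstruction} is that the data are not well prepared, so the renormalized sequence is controlled a priori only in $L^2$ (and in the localized norm $\mathcal N$), which is where the $L^2$ profile decomposition (Lemma~\ref{le:profile}), the $L^2$ perturbation theory (Lemma~\ref{le:perturbation}) and the refined dispersive bounds of Section~3 enter.

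\emph{Step 1: defocusing and renormalization.} By assumptions (1)--(2), $u_n$ admits on $[0,t_n^*]$ a decomposition $(\lambda_n,x_n,b_n,\e_n)$ as in Lemma~\ref{le:2}, which I normalize by $\lambda_n(0)=1$, $x_n(0)=0$; then $\delta_n:=|b_n(0)|+\|\e_n(0)\|_{H^1}\to 0$ and $E(u_n(0))\to 0$. Since $u_n(0)\in\mathcal A$, the bounds (H1)--(H3) of Proposition~\ref{propasymtp} hold on $[0,t_n^*]$, so Lemma~\ref{le:2.7} applies, and, arguing exactly as in Lemma~\ref{le:2.4}: the exit condition \eqref{exittime}, together with Lemma~\ref{cl:2} and mass conservation, forces $b_n(t_n^*)<0$ with $-b_n(t_n^*)\approx(\alpha^*)^2/(2(P,Q))$ and $\|\e_n(t_n^*)\|_{L^2}^2\approx(\alpha^*)^2$; \eqref{conrolbintegre} with $s_1=0$ gives the quantitative defocusing $\lambda_n^2(t_n^*)\gtrsim(\alpha^*)^2/(\delta_n+o_n(1))\to+\infty$; the energy identity \eqref{energbound} at $t_n^*$ controls $\lambda_n^2(t_n^*)E(u_n(0))\lesssim\delta(\alpha^*)$, hence, with \eqref{estfondamentale}, a uniform bound on $\e_n(t_n^*)$ in the localized norm $\mathcal N$; and integrating \eqref{dvnkoenneoneor} shows that $t_n^*/\lambda_n^3(t_n^*)$ is bounded. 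Along a subsequence, $t_n^*/\lambda_n^3(t_n^*)\to\tau^*$ and $-b_n(t_n^*)\to b^*>0$ with $\tau^*\approx 1/b^*$, both depending only on $\alpha^*$. Renormalizing,
$$
v_n(\tau,x)=\lambda_n^{\frac12}(t_n^*)\,u_n\big(t_n^*+\tau\lambda_n^3(t_n^*),\ \lambda_n(t_n^*)x+x_n(t_n^*)\big),\qquad\tau\in[-t_n^*/\lambda_n^3(t_n^*),0],
$$
produces a solution of \eqref{kdv} in $\mathcal T_{\alpha^*}$ with $\lambda_{v_n}(0)=1$, $x_{v_n}(0)=0$, $b_{v_n}(0)=b_n(t_n^*)\to-b^*$, a uniform $\mathcal N$-bound on $\e_{v_n}$ over the whole interval (by \eqref{estfondamentale}), and $\|v_n(0)\|_{L^2}=\|u_n(0)\|_{L^2}\to\|Q\|_{L^2}$.

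\emph{Step 2: weak limit, $H^1$ regularity, and identification with $S$.} After a further subsequence $v_n(0)\rightharpoonup v_\infty(0)$ in $L^2$, with $\|v_\infty(0)-Q\|_{L^2}\lesssim\delta(\alpha^*)$, so $v_\infty(0)\neq 0$; let $v_\infty$ be the backward solution of \eqref{kdv} issued from $v_\infty(0)$. On any $[\tau_0,0]\subset(-\tau^*,0]$, integrating \eqref{dvnkoenneoneor} for $v_n$ gives $\lambda_{v_n}(\tau)\approx b^*(\tau^*+\tau)\geq c(\tau_0)>0$ for $n$ large, so the $L^2$ profile decomposition Lemma~\ref{le:profile} together with the perturbation Lemma~\ref{le:perturbation} (or, when the renormalized data happen to be $H^1$ bounded, the weak $H^1$ stability lemma of \cite{MMjmpa}) shows that $v_\infty$ is defined on $(-\tau^*,0]$, stays in $\mathcal T_{\alpha^*}$, with $\lambda_{v_n}\to\lambda_{v_\infty}$, $b_{v_n}\to b_{v_\infty}$, $\e_{v_n}\rightharpoonup\e_{v_\infty}$, and $\lambda_{v_\infty}(\tau)\to 0$ as $\tau\downarrow-\tau^*$; thus $v_\infty$ blows up backward at the finite time $-\tau^*$. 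Weak lower semicontinuity gives $\|v_\infty(0)\|_{L^2}\leq\|Q\|_{L^2}$, and since $v_\infty$ blows up in finite time, Theorem~\ref{KPV} and \eqref{gn} force $\|v_\infty(0)\|_{L^2}=\|Q\|_{L^2}$. Passing the uniform $\mathcal N$-bounds to the limit, and using that $v_\infty$ is a concentrating minimal mass solution so that Lemma~\ref{lemmadecay} and the refined localized estimates of Proposition~\ref{sharpbounds} apply, one obtains $v_\infty\in H^1$; hence $v_\infty$ is an $H^1$ minimal mass blow up element, and by Theorem~\ref{th:1}(ii) it equals $S$ up to the symmetries of the flow. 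As $S$ blows up backward at the origin while $v_\infty$ blows up at $-\tau^*$ and is normalized by $\lambda_{v_\infty}(0)=1$, $x_{v_\infty}(0)=0$, this forces $v_\infty(0)$ to coincide, after the corresponding scaling and space translation, with the soliton--renormalized profile of $S$ at time $\tau^*$, i.e. with the right-hand side of \eqref{cl:6.1}; here $\tau^*=\tau^*(\alpha^*)$ is characterized intrinsically as the exit time of $S$ from $\mathcal T_{\alpha^*}$ (consistently, $\inf_{\lambda_0,x_0}\|v_\infty(0)-Q_{\lambda_0}(\cdot-x_0)\|_{L^2}=\alpha^*$ by \eqref{exittime}, while the distance of $S$ to the soliton manifold is increasing up to that time by \eqref{derive}), hence independent of the sequence $(u_n)$. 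Finally $\|v_n(0)\|_{L^2}\to\|Q\|_{L^2}=\|v_\infty(0)\|_{L^2}$ upgrades the weak $L^2$ convergence to strong $L^2$ convergence, which is \eqref{cl:6.1}.

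\emph{Main obstacle.} The bulk of the work is Step~1 -- the quantitative defocusing estimates, which rest on the full monotonicity machinery of \cite{MMR1} (Proposition~\ref{propasymtp}, Lemmas~\ref{le:2.7} and \ref{lemmarefinedmod}) -- and the passage to the limit in Step~2 with enough uniform control \emph{backward, up to the blow up time} $-\tau^*$, so that the uniqueness Theorem~\ref{th:1}(ii) can be applied. The delicate point is that, since the data are not well prepared, the renormalized sequence $v_n(0)$ is a priori bounded only in $L^2$ (and in the localized norm $\mathcal N$); one must therefore extract the limit in $L^2$, using the profile decomposition and $L^2$ perturbation theory to rule out dispersion on $(-\tau^*,0]$ and to propagate the dynamical controls, and then recover the $H^1$ regularity of $v_\infty$ from the refined localized estimates of Proposition~\ref{sharpbounds}.
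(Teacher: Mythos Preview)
Your overall strategy is the same as the paper's: defocus, renormalize at the exit time, extract a limit, show it is a minimal mass blow up, invoke uniqueness. The defocusing estimates in your Step~1 are essentially correct (the paper also introduces an auxiliary time $t_{1,n}^*$ satisfying \eqref{deftun} to initialize the rigidity \eqref{conrolbintegre}, but this is a refinement of the same idea). Where your argument has a genuine gap is the $H^1$ regularity of the limit.

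You write that ``passing the uniform $\mathcal N$-bounds to the limit, and using that $v_\infty$ is a concentrating minimal mass solution so that Lemma~\ref{lemmadecay} and the refined localized estimates of Proposition~\ref{sharpbounds} apply, one obtains $v_\infty\in H^1$''. This is circular: Proposition~\ref{sharpbounds} is stated for $H^1$ minimal mass blow up solutions (it uses the $H^1$ decomposition of Lemma~\ref{le:2}, energy conservation, and the variational bound \eqref{twoboundmini}), so you cannot invoke it to \emph{obtain} $H^1$ regularity. Likewise the blow up/global well-posedness dichotomy you need (``blows up in finite time $\Rightarrow \|v_\infty\|_{L^2}\ge\|Q\|_{L^2}$'') is the $H^1$ statement coming from \eqref{gn}; the $L^2$ blow up criterion is in terms of $\|u\|_{L^5_xL^{10}_t}$ and gives no such mass lower bound. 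Finally the uniform $\mathcal N$-bound by itself only controls $\int (\partial_y\e_{v_n})^2\psi_B$, and since $\psi_B(y)=e^{2y/B}$ for $y<-B$ this says nothing about $\partial_y\e_{v_n}$ far on the left, which is precisely where the lack of well-preparedness lives.

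The paper closes this gap by proving an \emph{a priori} local $H^1$ bound on the \emph{sequence} $v_n$ (which are genuinely $H^1$), namely
\[
\int_{y>-2\lambda_n^2(t_n^*)} \bigl((\partial_y\e_{v_n})^2+\e_{v_n}^2\bigr)(0,y)\,dy\ \lesssim\ \delta(\alpha^*),
\]
obtained via a new monotonicity functional $\mathcal F_{v_n}$ with \emph{time-translated} weights $\phi_B(s,y)=\phi\bigl((y+\tfrac12(s-s^*))/B\bigr)$ (Lemma~\ref{le:6.4}). Since $\lambda_n^2(t_n^*)\to\infty$, this bound covers the whole line in the limit and the weak $L^2$ limit $v_\infty(0)$ automatically lies in $H^1$. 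Only then can one run the $H^1$ blow up/GWP argument, invoke Theorem~\ref{th:1}(ii), and upgrade to strong $L^2$ convergence as you describe. This moving-weight monotonicity is the missing ingredient in your proposal.
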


\subsection{Proof of Proposition \ref{PR:6.1}}
The strategy of the proof is similar to the proof of existence 
of Theorem \ref{th:1} in section \ref{sectionconstruction}. However the initial data in section \ref{sectionconstruction} are {\it well prepared} and in particular generate $H^1$ bounded sequences after renormalization, see \fref{TRT3}. Here the $H^1$ bound is lost, and one needs to invoke a concentration compactness argument in the critical $L^2$ space for sequences of solutions to \fref{kdv} and uniform local estimates to recover a non trivial weak limit.\\
 
{\bf step 1} Renormalization. Let $C^*>0$ be the universal constant in \fref{conrolbintegre} of Lemma \ref{le:oubli}. Let $t_n^*$ be the exit time \eqref{exittime}, and consider the   decomposition of $u_n(t)$ on $[0,t_n^*]$ given by Lemma \ref{le:2}.  It follows from the proof of
Theorem 1.2 in \cite{MMR1}, Section~4.3, that there exists  a time $0\leq t_{1,n}^*< t_n^*$ such that
\be\label{deftun}
b_n(t_{1,n}^*)\leq -C^* \int \left((\partial_y \e_n)^2(t_{1,n}^*)\psi_B + \e_n^2(t_{1,n}^*) \varphi_B'\right),
\ee
and
\be\label{6.0}
\l_n(t_{1,n}^*)\approx 1, \quad
|b_n(t_{1,n}^*)|+ \mathcal N_n(t_{1,n}^*)\to 0 \quad \hbox{as $n\to \infty$},
\ee
where $\mathcal N_n$ denotes the quantity $\mathcal N$ defined in \eqref{eq:no}
for $u_n$.
This time corresponds to when the (Exit) regime is decided ($b_n$ is negative and becomes predominant   in the sense \eqref{deftun}), and it is proved in \cite{MMR1} that such a time $t^*_{1,n}$ can be chosen so that  the solution has moved only $\delta(\|u_n(0)-Q\|_{H^1})$ far from the initial data (see equation (4.37) in \cite{MMR1}), which implies \eqref{6.0} in the present situation 
(since $\|u_n(0)-Q\|\to 0$ as $n\to \infty$).

Recall also from \cite{MMR1} that $u_n(t)$ satisfies (H1), (H2) and (H3) on $[0,t_n^*]$.

Define
$$\forall \tau\in \left [\tau_n^*,0\right], \quad t_\tau= t_n^*+\tau \l_n^3(t_n^*),\ \ \tau_n^*=- \frac {t_n^*-t_{1,n}^*}{\l_n^3(t_n^*)}$$
 and consider on
 $[\tau_n^*,0]$ the renormalized solution $v_n(\tau)$ at the exit time $t_n^*$,
\bea\label{eq:6.4}
v_n(\tau,x)&&
= \l_n^{\frac 12}(t_n^*) u_n\left(t_\tau,{\l_n(t_n^*)} x+x(t_n^*)\right)
\\ &&= \frac {\l_n^{\frac 12}(t_n^*)}{\l_n^{\frac 12}(t_\tau)}
(Q_{b_n(t_\tau)}+\e_n)\left(t_\tau, 
\frac {{\l_n(t_n^*)} }{\l_n(t_\tau)} x + \frac { x_n(t_n^*)-x_n(t_\tau)}{\l_n(t_\tau)} \right).
\eea
Then $v_n$ is solution of \eqref{kdv} and  belongs to the $L^2$ tube $\mathcal T_{\alpha^*}$ for $\tau\in [\tau_n^*,0]$. Moreover, its  decomposition $(\lambda_{v_n} ,x_{v_n},\e_{v_n})$   satisfies
 on $[\tau_n^*,0]$:
\begin{align}
\l_{v_n}(\tau) = \frac {\l_n(t_\tau)}{\l_n(t_n^*)},  \
x_{v_n}(\tau)= \frac  {x_n(t_\tau)-x_n(t_n^*)}{\l_n(t_n^*)} ,\
b_{v_n}(\tau) = b_n(t_\tau),\
\e_{v_n}(\tau) = \e_n(t_\tau).
\end{align}
 
\medskip

\textbf{step 2} Preliminary estimates on the renormalized sequence. We claim:

\begin{lemma}\label{le:6.1}
There exist $b^*$, $\tau^*$ such that, 
  possibly extracting a subsequence,
\be\label{6.19}
b_n(t_n^*)\to -b^*,\quad 
(\alpha^*)^2 \lesssim b^* \leq \delta (\alpha^*),
\ee
\be\label{6.20}
 \tau_n^* = -\frac{t_n^*}{\l_n^3(t_n^*)}\to - \tau^*,\quad
  \tau^* b^*  \approx 1
 .
\ee
Moreover, for all $n$ large, $\tau\in [ \tau^*_n,0]$, 
\be\label{6.11}
\l_{v_n}(0)= 1, \quad x_{v_n}(0)=0,
\ee
\be\label{6.12}
   | b_{v_n}(\tau)|+
\mathcal N_{v_n}(\tau)+\|\e_{v_n}(\tau)\|_{L^2} \lesssim \delta(\alpha^*),
\ee
\be
\int_{\tau_{1,n}^*}^{\tau_n^*}\left[\int((\partial_y\e_{v_n})^2+\e_{v_n}^2)\frac{\varphi_B}{1+y_+^2}+|b_{v_n}|^4\right]\frac{d\tau}{\lambda_{v_n}^5(\tau)} \lesssim \delta(\alpha^*),
\label{6.perdu}
\ee
\be\label{6.13}
\frac {|b_{v_n}(\tau_n^*)|}{\lambda_{v_n}^2(\tau_n^*)}\lesssim \delta(\alpha^*), \quad
\l_{v_n}\left(  \tau_n^* \right) = \frac 1{\l_n(t_n^*)}
\to 0 \quad \hbox{as $n\to +\infty$},
\ee
\be\label{6.15}
 (\l_{0,v_n})_\tau(\tau) \approx  b^*,
\ee
\be\label{6.100}
x_{v_n}(\tau_n^*) \lesssim - \frac 1{ b^*  \l_{v_n}(\tau_n^*)} \to -\infty 
\quad \hbox{as $n\to +\infty$}.
\ee
 \end{lemma}
\noindent (Recall that $\lambda_0$ is defined in Lemma \ref{le:oubli}. Here, $\l_{0,v_n}$ denotes this quantity for $v_n$. Similarly, $\mathcal N_{v_n}$ denotes the quantity $\mathcal N$ for $v_n$. As usual $y_+=\max(0,y)$.)
\begin{proof}[Proof of Lemma \ref{le:6.1}]
Arguing as in the proof of Lemma \ref{le:2.4}, using conservation of mass and energy of $u_n(t)$, 
we  first obtain
\be\label{6.5}
 (\alpha^*)^2 \lesssim   - b_n(t^*_n) \lesssim \delta(\alpha^*).
\ee
%\be\label{6.6}
%  \|\e_n(t)\|_{H^1}^2 \lesssim {\l_n^2(t)} E(u_n)+o_{n\to +\infty}(1).
%\ee
Next, using \eqref{conrolbintegre} in Lemma \ref{le:oubli} on $[t_{1,n}^*,t_n^*]$, and \eqref{deftun}, \eqref{6.0}, one obtains
\be\label{6.7}
 - \frac {b_n(t)}{\l_n^2(t)} \approx -   {b_n(t_{1,n}^*)}  
\ee
and thus, by \eqref{6.5},
\be\label{6.8}
  \frac { (\alpha^*)^2}{|b_n(t_{1,n}^*)|} \lesssim \l_n^2(t_n^*) \lesssim  \frac {\delta(\alpha^*)}{|b_n(t_{1,n}^*)|},
\ee
which implies $ \frac {|b_{v_n}(\tau_n^*)|}{\lambda_{v_n}^2(\tau_n^*)}\lesssim\delta(\alpha^*)$.
Next, by definition of $t_n^*$, $\|\e_{v_n}(\tau)\|_{L^2}  
=\|\e_{n}(t_\tau)\|_{L^2}\leq \delta(\alpha^*)$.
By \eqref{estfondamentale} and \eqref{6.0}, for $n$ large,
\begin{align}
&\mathcal N_n(t) +\int_{t_{1,n}^*}^{t_n^*}\left[\int((\partial_y\e_n)^2+\e_n^2)\varphi_B' \right]\frac{dt}{\lambda_n^3}\nonumber
\\ &\lesssim \mathcal N_n(t_{1,n}^*) + |b_n(t)|^3+ |b_n(t_{1,n}^*)|^3
\leq \delta(\alpha^*).\label{6.10}
\end{align}
Now, we use Lemma 4.3 in \cite{MMR1} to obtain a slightly different estimate. 
From (4.12) in \cite{MMR1}, with $i=1$, using the definition of $\varphi_{i,B}$ in page 84, and then using \eqref{deftun}, we obtain
\begin{align}
&\int_{t_{1,n}^*}^{t_n^*}\left[\int((\partial_y\e_n)^2+\e_n^2)\frac{\varphi_B}{1+y_+^2}+|b_n|^4\right]\frac{dt}{\lambda_n^5}\nonumber \\
&\lesssim  \frac{\int ((\partial_y\e_n)^2(t_{1,n}^*)\psi_B+\e_n^2(t_{1,n}^*)\varphi_B')}{\lambda^2_n(t_{1,n}^*)
}+ \frac {|b_n(t_n^*)|^3}{\l_n^2(t_n^*)}+ \frac{|b_n(t_{1,n}^*)|^3}{\lambda_n^2(t_{1,n}^*)}
\lesssim \frac{|b_n(t_{1,n}^*)|}{\lambda^2_n(t_{1,n}^*)}\nonumber
\\ &\lesssim \delta(\alpha^*).
\label{6.14}\end{align}
Moreover, by \eqref{dvnkoenneoneor} and \eqref{6.7},
\be\label{6.18}
  (\l_{0n})_t(t)
  \approx    \frac {|b_n(t_{n}^*)|}{\l_n^2(t_{n}^*)} .
\ee
By definition of $v_n$, we obtain \eqref{6.11}--\eqref{6.15} from the above estimates.

\medskip

Now, we prove \eqref{6.100}.
Integrating the estimate of $(\l_{0,v_n})_\tau$, we obtain
$$
  \l_{0,v_n}(\tau) - \l_{0,v_n}(\tau_n^*) \approx   b^* (\tau-\tau_n^*)
$$
Finally, since
$(x_{v_n})_\tau \approx \frac 1{\l_{0,v_n}^2}$, we obtain by integration of $[\tau_n^*,0]$ and
$x_{v_n}(0)=0$, for $n$ large,
$$
-x_{v_n}(\tau_n^*)  =    x_{v_n}(0) - x_{v_n}(\tau_n^*) \approx \frac 1{b^*} \frac 1{\l_{v_n}(\tau_n^*)}
$$
and \fref{6.100} is proved.
\end{proof}

\medskip

\textbf{step 3} Monotonicity estimates.
We now claim the following bound on $v_n$ which will allow us to recover $H^1$ bounds in the limit:
\be\label{borneh1vn}
\int_{x>- \l_{n}^{2}(t_n^*)} (\partial_x v_n)^2(0,x) dx \lesssim 1.
\ee
In fact, we prove the following estimate on $\e_{v_n}(0)$ which, together with Lemma \ref{le:6.1} and $\lambda_{v_n}(0)=1$ implies   \eqref{borneh1vn}
\be\label{monotoniepsvn}
\int_{y>- 2 \l_{n}^{2}(t_n^*)}  (\partial_y \e_{v_n}) ^2 (0,y)dy  \lesssim \delta(\alpha^*).
\ee

\emph{Proof of \eqref{monotoniepsvn}.}
For $\tau\in[\tau_n^*,0]$, let $s=-\int_\tau^0 \frac {d\tau}{\l_{v_n}^3(\tau)}$
the rescaled time for $v_n$ and $s_n^*= -\int_{\tau_n^*}^0 \frac {d\tau}{\l_{v_n}^3(\tau)}$.
We perform monotonicity estimates on $\e_{v_n}$ to complement the ones obtained in \eqref{estfondamentale}.
We define $\phi \in \mathcal C^{\infty}(\RR)$, such that
\bea
\label{defphi}
\phi(y) =\left\{\begin{array}{lll}e^{y}\ \ \mbox{for} \ \   y<-1,\\
 1-\tfrac1{10}e^{-y}   \ \ \mbox{for}\ \ y>-\frac 12  ,\\ 
 \end{array}\right.  \ \ \phi'(y) > 0, \ \ \forall y\in \RR,
\eea
and we consider $\psi$ defined as in \eqref{defphi2}.
Let   
$$
\phi_B(s,y)=\phi\left(\frac {y+\frac 12 (s-s_n^*)}{B} \right),
\quad 
\tilde \phi_B(s,y)=\psi\left(\frac {y+\frac 12 (s-s_n^*)}{B} \right),
$$
and
\begin{multline*}
\mathcal F_{B}(s)= \frac{1}{\lambda_{v_n}^2(s)}\int \left[ (\partial_y \e_{v_n})^2 \tilde \phi_B + \frac{\lambda_{v_n}^2(s)}{\lambda_{0,v_n}^2(s)}\e_{v_n}^2 \phi_B \right.\\ \left. 
- \frac 13 \left( (\e_{v_n}+Q_b)^6 - Q_b^6 - 6 \e_{v_n} Q_b^5\right)\tilde \phi_B \right](s,y) dy.
\end{multline*}
We claim the following estimates proved in Appendix \ref{app:B}.

\begin{lemma}\label{le:6.4}
For $B$ large enough,
\begin{align}
\frac d{ds} \mathcal F_{B} &\leq
 - \frac 14\frac{1}{\lambda_{v_n}^2} \int ((\partial_y \e_{v_n})^2 + \e_{v_n}^2) \phi_B' \nonumber\\
&+  \frac{C}{\lambda_{v_n}^2}
\int ((\partial_y \e_{v_n}) ^2+ \e_{v_n}^2) \frac{\varphi_B}{1+y_+^2}+ \frac{C}{\lambda_{v_n}^2} |b_{v_n}|^4,\label{monoepsvn}
\end{align}
\be\label{coerepsvn}
 \mathcal F_B \approx \frac{1}{\lambda_{v_n}^2} \int   (\partial_y \e_{v_n}) ^2 \tilde \phi_B + \e_{v_n}^2 \phi_B 
\ee
\end{lemma}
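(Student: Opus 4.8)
The plan is as follows. For the coercivity \eqref{coerepsvn} I would argue exactly as in Proposition \ref{propasymtp}(iii) (Proposition 3.1(iii) of \cite{MMR1}): by \eqref{rkA} the running weights $\phi_B$ and $\tilde\phi_B$ coincide on $\{y>-\tfrac B2\}$ (in the shifted variable), where they are comparable to $1$, while they decay exponentially to the left, so \eqref{coerepsvn} reduces to the coercivity of the linearized energy $\int\bigl((\partial_y\e_{v_n})^2+\e_{v_n}^2-5Q^4\e_{v_n}^2\bigr)$ under the orthogonality conditions \eqref{ortho1} satisfied by $\e_{v_n}$ (Lemma \ref{le:2}), up to a standard localization error; the $Q_{b_{v_n}}$-dependent and higher order terms in $\mathcal F_{v_n}$ are absorbed using $|b_{v_n}|+\|\e_{v_n}\|_{L^2}\lesssim\delta(\alpha^*)$ from \eqref{6.11}--\eqref{6.12} and the ensuing $L^\infty$-smallness of $\e_{v_n}$ on $\{y>-B\}$.

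For the monotonicity \eqref{monoepsvn} I would differentiate $\mathcal F_{v_n}$ in $s$ using the equation \eqref{eqofeps}. The key observation is that $\phi_B(s,y)=\phi\bigl(\frac{y+\frac12(s-s^*)}{B}\bigr)$ and similarly for $\tilde\phi_B$, so $\partial_s\phi_B=\tfrac12\partial_y\phi_B$ and $\partial_s\tilde\phi_B=\tfrac12\partial_y\tilde\phi_B$; hence $\frac{d}{ds}\mathcal F_{v_n}$ equals the quantity computed in the proof of Proposition \ref{propasymtp}(i) (the monotonicity computation of \cite{MMR1}), with the fixed weights $(\varphi_B,\psi_B)$ replaced by the running $(\phi_B,\tilde\phi_B)$, plus the extra term $\tfrac12\int\bigl[(\partial_y\e_{v_n})^2\tilde\phi_B'+\e_{v_n}^2\phi_B'-\tfrac13\bigl((\e_{v_n}+Q_{b_{v_n}})^6-Q_{b_{v_n}}^6-6\e_{v_n}Q_{b_{v_n}}^5\bigr)\tilde\phi_B'\bigr]$ produced by the $s$-dependence of the weights.

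For the first quantity I would repeat the computation of \cite{MMR1} verbatim: the transport part $(L\e_{v_n})_y$ of \eqref{eqofeps} yields, after integrations by parts, the negative definite terms $-3\int\tilde\phi_B'(\partial_y^2\e_{v_n})^2$, $-3\int\phi_B'(\partial_y\e_{v_n})^2$, $-\int\tilde\phi_B'(\partial_y\e_{v_n})^2$ and $-\int\phi_B'\e_{v_n}^2$, up to terms containing $\phi_B''',\tilde\phi_B'''$ (which are $\lesssim B^{-2}$ times the dissipation by \eqref{propphi}) and localized contributions near the soliton controlled by the local virial estimate (Lemma 3.4 of \cite{MMR1}) and by the $L^\infty$-smallness of $\e_{v_n}$; the modulation terms $\tfrac{(\lambda_{v_n})_s}{\lambda_{v_n}}+b_{v_n}$ and $\tfrac{(x_{v_n})_s}{\lambda_{v_n}}-1$ are absorbed via \eqref{eq:2002}--\eqref{eq:2003}, and the source terms $\Psi_{b_{v_n}},\Phi_{b_{v_n}},R_{b_{v_n}},R_{\rm NL}$ via Lemma \ref{cl:2}, producing the $C|b_{v_n}|^4$ term and pieces absorbed into the dissipation and into $\delta(\alpha^*)\int((\partial_y\e_{v_n})^2+\e_{v_n}^2)\varphi_B'$. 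In the extra term the nonlinear piece has a favorable sign up to an $\|\e_{v_n}\|_{L^\infty}^4\int\e_{v_n}^2\tilde\phi_B'$ error, and — this is the crux — because the window recedes at speed $\tfrac12<1$, the positive terms $\tfrac12\int\tilde\phi_B'(\partial_y\e_{v_n})^2$ and $\tfrac12\int\phi_B'\e_{v_n}^2$ are strictly dominated by the $-\int\tilde\phi_B'(\partial_y\e_{v_n})^2$ and $-\int\phi_B'\e_{v_n}^2$ that the zeroth order part of $L$ already produced; collecting all contributions then gives \eqref{monoepsvn}.

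The genuinely new feature, and what I expect to be the main obstacle, is precisely the running weight. The whole scheme works only because the transition window moves to the left at speed $\tfrac12$, strictly below the transport speed $1$ of the linearized flow — equivalently, strictly below the soliton velocity $\tfrac{(x_{v_n})_s}{\lambda_{v_n}}\approx1$ in the $y$-variable — so that radiation is flushed out of the window and the terms generated by $\partial_s\phi_B$, $\partial_s\tilde\phi_B$ are favorably signed. The delicate step is to keep all the constants under control through the moment when the moving transition region $\{y\simeq-\tfrac12(s-s^*)\}$ sweeps across the soliton core $\{|y|\lesssim1\}$: there one has to combine the virial estimate of \cite{MMR1} with the smallness $|b_{v_n}|+\|\e_{v_n}\|_{L^2}\lesssim\delta(\alpha^*)$ provided by Lemma \ref{le:6.1} and accept the loss $\delta(\alpha^*)\int((\partial_y\e_{v_n})^2+\e_{v_n}^2)\varphi_B'$, which is later reabsorbed, in the application of this lemma, through the fixed-window monotonicity of Proposition \ref{propasymtp} applied to $v_n$.
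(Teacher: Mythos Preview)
Your overall strategy is correct and matches the paper's: differentiate $\mathcal F_{v_n}$ with the equation \eqref{eqofeps}, pick up the extra contribution $\tfrac12\int[(\partial_y\e_{v_n})^2\tilde\phi_B'+\e_{v_n}^2\phi_B'-\cdots]$ from $\partial_s\phi_B=\tfrac12\partial_y\phi_B$, and then rerun the computation of Proposition~3.1 of \cite{MMR1}; the speed $\tfrac12<1$ is indeed what makes the quadratic form stay negative. The coercivity \eqref{coerepsvn} is handled exactly as you say.

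The one point where you diverge from the paper is your anticipated ``main obstacle'': you worry about the moment when the moving transition sweeps across the soliton core and plan to invoke the local virial estimate. In the paper's setup this never happens. The parameter $s^*$ is the \emph{initial} value of $s$, so that $s-s^*\ge 0$ throughout; then $\phi_B(s,y)=\tilde\phi_B(s,y)=1$ for all $y>-\tfrac B2$ (this is exactly \eqref{rkA}), and hence $\phi_B',\tilde\phi_B',\phi_B-\tilde\phi_B$ are all supported in $\{y<-\tfrac B2\}$, where $|Q_b|+|\partial_yQ_b|\lesssim e^{-B/2}+|b|$. All the $Q_b$-interaction terms in $f_{1,1}$ are therefore small by brute force, and no virial argument is needed --- the computation is in this respect \emph{simpler} than in Proposition~\ref{propasymtp}, where $\varphi_B'>0$ on the whole line forces the use of Lemma~3.4 of \cite{MMR1}. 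The term $\delta(\alpha^*)\int((\partial_y\e_{v_n})^2+\e_{v_n}^2)\varphi_B'$ on the right of \eqref{monoepsvn} does not come from any soliton crossing; it arises solely from the scaling contribution $\tfrac{(\lambda_{v_n})_s}{\lambda_{v_n}}\int\Lambda\e_{v_n}(\cdots)$ in $f_2$, via $\int(\partial_y\e_{v_n})^2\tilde\phi_B\lesssim\int(\partial_y\e_{v_n})^2\phi_B'+\int(\partial_y\e_{v_n})^2\varphi_B'$ together with $|\tfrac{(\lambda_{v_n})_s}{\lambda_{v_n}}|\lesssim\delta(\alpha^*)$.
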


\medskip

We integrate \eqref{monoepsvn} on $[\tau_n^*,0]$ and then use \eqref{6.14} and Lemma \ref{le:6.1}:
\begin{align*}
\mathcal F_B (0) &\leq \mathcal F_B(\tau_n^*)  + C \int_{\tau_n^*}^0 \left(\int ((\partial_y \e_{v_n})^2+ \e_{v_n}^2) \frac{\varphi_B}{1+y_+^2}+  |b_{v_n}|^4\right) \frac {d\tau} {\l_{v_n}^5(\tau)}\\
&\lesssim \mathcal N_{v_n}(\tau_n^*)   + \frac {|b_{v_n}(\tau_n^*)|^3}{\lambda_{v_n}^2(\tau_n^*)} + |b_{v_n}(0)|^3
\lesssim \delta(\alpha^*).
\end{align*}

And thus, by \eqref{coerepsvn}, $\lambda_{v_n}(0)=1$ and the definition of $\phi_B$, $\tilde \phi_B$,
since 
$$s_n^*= -\int_{\tau_n^*}^0 \frac {d\tau}{\l_{v_n}^3(\tau)}
\approx -\int_{\tau_n^*}^0 \frac {d\tau}{\left(\l_{v_n}(\tau_n^*)+b^*(\tau-\tau_n^*)\right)^3}
\approx -\frac 1{b^*} \frac 1{\l_{v_n}^2(\tau_n^*)},$$
we finally obtain
\be
\int_{y>-  2\l_{v_n}^{-2}(\tau_n^*)}  (\partial_y \e_{v_n})^2 \leq 
\int_{y>\frac 12 s_n^*} (\partial_y \e_{v_n})^2 \lesssim \delta(\alpha^*).
\ee

\medskip

\textbf{step 4} Extraction of the limit. Since $\|v_n(0)-Q\|_{L^2} \leq \delta(\alpha^*)$, there exists
$v(0)\in L^2$ and a subsequence still denoted $(v_{n}(0))$ such that 
$$
v_{n}(0)\rightharpoonup v(0) \hbox{ in $L^2$ weak as $n\to \infty$}.
$$
Moreover, by properties of the weak convergence,
\be\label{deltaproche}
\|v(0)\|_{L^2} \leq \|Q\|_{L^2},\ \ \|v(0)-Q\|_{L^2}\leq \delta(\alpha^*)
\ee
and since $\l_n(t_n^*)\to \infty$, 
it follows from \eqref{borneh1vn} that 
$$v(0)\in H^1.$$

Let $\delta_0>0$ small enough, $\delta_0<\delta$ where $\delta$ is defined in Theorem \ref{KPV} (i).
We consider $\alpha^*$ small enough, but universal, such that 
\be\label{delta}\|v_n(0)-v(0)\|_{L^2}\leq \frac  {\delta_0} 2,
\ee
 In order to exhibit a non trivial weak limit, we decompose the sequence $(v_n(0)-v(0))$ into profiles according to Lemma \ref{le:profile}: there exist
$$
U_n^j(0) = e^{-t_n^j\partial_x^3} \left(g_n^j[{\rm Re}(e^{ix\xi_n^j\lambda_n^j}\phi^{j})]\right)
$$
and
$w_n^J(0) \in L^2
$
such that (up to a subsequence)
\be\label{profff}
v_n(0) - v(0) = \sum_{j=1}^J U_n^j(0)+ w_n^J(0) , \quad 
\lim_{J\to \infty}\limsup_{n\to \infty} 
\bigl\| e^{-t\partial_x^3}w_n^J(0)\bigr\|_{L^5_xL^{10}_t(\RR\times\RR)} 
=0,
\ee
\be\label{orthoprof}
\|v_n(0)-v(0)\|_{L^2}^2 - \sum_{j=1}^J \|U_n^j(0)\|_{L^2}^2 - \|w_n^J(0)\|_{L^2}^2 = o_n(1).
\ee
Moreover,   by weak convergence $v_n(0)\rightharpoonup v(0)$, we have
$$\int v(0) (v_n(0)-v(0))=o_n(1),$$ and thus,
\be\label{orthodeux}
\|v_n(0)\|_{L^2}^2 - \|v(0)\|_{L^2}^2 - \sum_{j=1}^J \|U_n^j(0)\|_{L^2}^2 - \|w_n^J(0)\|_{L^2}^2 = o_n(1)
\ee
In particular, by \eqref{profff} and \eqref{orthodeux}, $v(0)$ is interpreted as the first profile $U^0$ of the decomposition of $v_n(0)$ with $g_n^0=g_{1,0}$, $t_n^0=0$ 
and $\lambda_n^0=1$. By  \eqref{delta} and  \eqref{orthoprof}, for $n$ large,
$$
\sum_{j=1}^J \|U_n^j(0)\|_{L^2}^2 + \|w_n^J(0)\|_{L^2}^2 \lesssim \delta(\alpha^*) \leq \frac {\delta_0^2}2.
$$

Define $U_n^j(\tau)$ and $w_n^J(\tau)$ the (global) solutions of the nonlinear equation \eqref{kdv} corresponding to
the initial data $U_n^j(0)$ and $w_n^J(0)$. Let $\tau_0<0$ be such that $v(\tau)$ exists on $[\tau_0,0]$. 
We claim that,
 for $n$ large, $v_n$ exists on $[\tau_0,0]$ and
\be\label{claimnn}\lim_{J\to +\infty} \limsup_{n\to +\infty}\sup_{\tau \in [\tau_0,0]}
\biggl\|v_n(\tau)-v(\tau) - \sum_{j=1}^J U_n^j(\tau)  - w_n^{J}(\tau)\biggr\|_{L^2} =0.
\ee
Indeed, \eqref{claimnn} is a by now standard corollary of the perturbation Lemma \ref{le:perturbation}
(see e.g. Proposition 2.8 in \cite{DKM}),
 In particular, there exist $n_0>1$ and $J_0\geq 1$ such that
for $n>n_0$,  
$$
\biggl\|v_n(\tau)-v(\tau) - \sum_{j=1}^{J_0} U_n^j(\tau)  - w_n^{J_0}(\tau)\biggr\|_{L^2}\leq \delta_0,
$$
and thus, for all $\tau\in [\tau_0,0]$,
\be\label{tttt}
\|v_n(\tau)-v(\tau) \|_{L^2}\leq  \left\|\sum_{j=1}^{J_0} U_n^j(\tau)\right\|_{L^2}  +\| w_n^{J_0}(\tau) \|_{L^2}+ \delta_0
\lesssim \delta_0< \frac 1{10}\|Q\|_{L^2},
\ee
choosing now $\delta_0$ small but universal. In particular, let $A$ be such that, for all $\tau\in [\tau_0,0]$,
$$
\int_{|x|>A} v^2(\tau,x) dx < \frac 1{100}   \int Q^2
$$
and thus from \eqref{tttt}:
$$\int_{|x|>A} v_n^2(\tau,x)dx \leq \frac 1{25}  \int Q^2.$$ 

Now, recall from \eqref{6.100} that  $x_{v_n}(\tau_n^*)\to -\infty$ as $n\to \infty$, and in particular, for $n$ large enough,
$- x_{v_n}(\tau_n^*) \ll A$ and thus,
$$
\int_{x<-A} v_n^2(\tau_n^*,x)dx\geq \frac 34 \int Q^2.
$$
We conclude that necessarily $\tau_0>\tau_n^*$ for $n$ large enough, and thus $$\tau_0\geq -\tau^*=\lim_n \tau_n^*.$$ It follows that $v(\tau)$ blows up in a finite time $\tau_{\rm max}(v)\geq -\tau^*=\lim_n \tau_n^*$.
Since $\|v(0)\|_{L^2}\leq \|Q\|_{L^2}$ and $v(0)\in H^1$, we have 
$\|v\|_{L^2}= \|Q\|_{L^2}$. In particular, by weak convergence, and 
$\lim_{n\to \infty} \|v_n(0)\|_{L^2} = \|v(0)\|_{L^2}$, we obtain
$\lim_{n \to \infty} \|v_n(0)- v(0)\|_{L^2} = 0$.

From the uniqueness statement in Theorem \ref{th:1}, there exists $\l^*>0$, $x^*\in \RR$
and $\sigma^*>0$ such that
$$
v(0,x)= (\l^*)^{\frac 12} S(\sigma^* ,\l^* x + x^*).
$$
Moreover, denoting by $(b_S,\lambda_S,x_S)$ the parameters of the decomposition of $S$, we observe that 
$$ 
\l_{v}(0)=1 = (\l^*)^{-1}   \l_S(\sigma^*),\quad x_{v}(0)=0=   x_S(\sigma^*) -x^*,
$$ 
and thus
$$v(0,x)=  \l_S^{\frac 12} (\sigma^*)   S\left(\sigma^*, \l_S(\sigma^*) x+x_S(\sigma^*)\right).
$$
In particular, by scaling
$$v(\tau,x)=\l_S^{\frac 12} (\sigma^*)   S\left(\sigma^* + \l_S^{3}(\sigma^*) \tau, \l_S(\sigma^*) x+x_S(\sigma^*)\right).
$$
Since $v$ blows up at $\tau_{\rm max}(v)$, and $S$  blows up at time $0$ (by convention), we have
$$
\sigma^* = -\l_S^{3}(\sigma^*) \tau_{\rm max}(v) < \l_S^{3}(\sigma^*) \tau_*.
$$

From the definition of $t_n^*$ and then strong $L^2$ convergence, we have
\begin{align}
  \alpha^* & = \inf_{\l_1,x_1} \|u_n(t_n^*) - Q_{\l_1}(.-x_1)\|_{L^2}
   = \inf_{\l_1,x_1} \|v_n(0) - Q_{\l_1}(.-x_1)\|_{L^2}\nonumber\\
   & 
   = \inf_{\l_1,x_1} \|S(\sigma^*) - Q_{\l_1}(.-x_1)\|_{L^2}.\label{distance}
\end{align}
Moreover, recall that by definition of $t_n^*$ and $\tau_n^*$, for all $\tau \in [\tau_n^*,0]$,
we have
$$
\inf_{\l_1,x_1} \|v_n(\tau) - Q_{\l_1}(.-x_1)\|_{L^2}\leq \alpha^*,
$$
and so for all $t\in (0,\sigma^*]$, 
\begin{equation}\label{trap}
\inf_{\l_1,x_1} \|S(t) - Q_{\l_1}(.-x_1)\|_{L^2}\leq   \alpha^*.
\end{equation}
From \eqref{derive}, we fix $t_S>0$ such that the distance of $S(t)$
to the family of solitons is increasing on $(0,t_S)$.
Take $\alpha^*>0$ small enough  so that
$$
   \alpha^* \leq \frac 12 \inf_{\l_1,x_1} \|S(t_S) - Q_{\l_1}(.-x_1)\|_{L^2}.
$$
By \eqref{trap}, it is clear that  $\sigma^*\in (0,t_S)$. Moreover, 
$\sigma^*$ is uniquely defined by \eqref{distance} (small for $\alpha^*$  small) and thus does not depend on the subsequence  but only on $\alpha^*$.
In particular, all the sequence converges to the same limit and the proposition is proved.

%%%%%%%%%%%%%%%%%%%%%%%%%%%%%%%%%%%%%%%%%%%%%%%%%%%%%%
%%%%%%%%%%%%%%%%%%%%%%%%%%%%%%%%%%%%%%%%%%%%%%%%%%%%%%

\appendix

%%%%%%%%%%%%%%%%%%%%%%%%%%%%%%%%%%%%%%%%%%%%%%%%%%%%%%
%%%%%%%%%%%%%%%%%%%%%%%%%%%%%%%%%%%%%%%%%%%%%%%%%%%%%%

%%%%%%%%%%%%%%%%%%%%%%%%%%%%%%%%%%%%%%%%%%%%%%%%%%%%%%

\section{End of the proof of Proposition \ref{sharpbounds}}\label{AA}

In this appendix, we finish the proof of Proposition \ref{sharpbounds} by proving 
\eqref{decayscaling} and 
\eqref{sobolev}-\eqref{sobolevbis} in the framework of Proposition \ref{sharpbounds}. For the reader's convenience, we recall the main estimates proved at this point on $\e$ and the parameters $b,\l,x$:
 for $|s|$ large,
\be\label{estrescbisAA}
\|\e(s)\|_{L^\infty}  \lesssim \|\e(s)\|_{H^1}\lesssim \frac 1{\sqrt{|s|}},\ \ \frac{c_1(u_0)}{\sqrt{|s|}}\leq \lambda(s)\leq \frac{c_2(u_0)}{\sqrt{|s|}},
\quad b(s)\sim \frac{1}{2s},
\ee

\be\label{estrescAAbis}
  \mathcal N(s)+   \int_{-\infty}^s \int (\e_y^2 + \e^2)(s') \varphi_B' ds' \lesssim \frac 1{|s|^3},  
\ee
\be\label{estrescAA}
 \left|\lsl+b\right| + \left| \xsl-1\right| \lesssim
\frac 1 {|s|^{\frac 32}}, \quad |b_s|\lesssim \frac 1{|s|^2}.
\ee

\subsection{Proof of \eqref{decayscaling}}
Since  $u(t)$ is  a minimal mass blowing up solution and $\l(s)$ is increasing for $|s|$ large,  
from Lemma \ref{lemmadecay} and then using the properties  of 
$Q_b$ (see Lemma \ref{cl:2}), we obtain for $|s|$ large,
\be
  \label{controlrighttri}
 \hbox{ $  \forall y>0$, }  |\e(s,y)| \lesssim  e^{-\frac y{20} }
\ee
Thus, by \eqref{estrescbisAA},  
\be\label{tendverszero}
\lim_{s\to -\infty} \int \e^2(s,y) e^{\l(s) y} dy =0.
\ee

Now, to prove \eqref{decayscaling}, we compute the time derivative of $\int \e^2 e^{\l y}$.
Using the equation of $\e$ (see \eqref{eqofeps}), we have
\begin{align*}
\frac 12 \frac d{ds} \int \e^2 e^{\l y}& = \frac {\l_s}{2} \int y e^{\l y} \e^2 + \int \e_s \e e^{\l y} 
\\ & = \frac {\l_s}{2} \int y e^{\l y} \e^2 + \int (L\e)_y\e  e^{\l y} + 
\left( \lsl + b\right) \int \Lambda Q_b \e e^{\l y}   + \lsl \int \e \Lambda \e  e^{\l y} \\ &+
\left(\xsl-1\right) \int (Q_b +\e)_y \e e^{\l y} 
+\int \Phi_b \e e^{\l y} + \int \Psi_b \e e^{\l y}\\ & -\int (R_b(\e))_y \e e^{\l y}
- \int (R_{NL}(\e))_y \e e^{\l y}.
\end{align*}
Since $\int \e \Lambda \e e^{\l y} = -\frac 12 \l \int \e^2 y e^{\l y}$, the scaling terms cancel
(this is because the quantity is scaling invariant).
Next, using \eqref{estrescAA}, we have:
\begin{align*}
\int (L\e)_y \e e^{\l y} & = 
- \frac 32 \l \int \e_y^2 e^{\l y} - \frac 12 \l(1-\l^2) \int \e^2 e^{\l y} 
+ \int (-10 Q^3 Q' - \tfrac 52 \l Q^4) \e^2 e^{\l y}\\
& \leq - \frac {\l} 4 \int (\e_y^2 + \e^2) e^{\l y} + C \mathcal{N}_{\rm loc},
\end{align*}
\bee
&&  \left|\left( \lsl + b\right) \int \Lambda Q_b \e e^{\l y}  \right|+
\left|\left(\xsl-1\right) \int  (Q_b)_y  \e e^{\l y}\right|\\
 & \lesssim &  \frac 1{|s|^{\frac 32}}\left[\left(\int \e^2e^{-\frac{|y|}{10}}\right)^{\frac 12}+|b| \left(\int \e^2 e^{\l y} \right)^{\frac 12}\left(\int_{y<0}e^{\l y}\right)^{\frac 12}\right] \\ 
 &\lesssim& \frac 1{|s|^{\frac 32}}\left[\frac{1}{|s|^{\frac 32}}+\frac{1}{\sqrt{|s|}} \left(\int \e^2 e^{\l y} \right)^{\frac 12}\right] \lesssim  \frac {\l}{100}   \int \e^2 e^{\l y} + \frac 1{|s|^{3}},
\eee
$$
 \left|\left(\xsl-1\right) \int \e_y \e e^{\l y} 
  \right|
  =\frac {\l} 2  \left| \left(\xsl-1\right) \int  \e^2 e^{\l y} \right|
  \leq \frac {\l}{100}\int  \e^2 e^{\l y}.
$$
\begin{align*}
\left|\int \Phi_b \e e^{\l y} \right|&\lesssim |b_s| \int
|P| |\e| e^{\l y}\lesssim \frac 1{|s|^{2}} \left(\int
P^2 e^{\l y}\right)^{\frac 12}  \left(\int \e^2 e^{\l y}\right)^{\frac 12}\\
& \lesssim \frac 1{|s|^{\frac 74}}\left(\int \e^2 e^{\l y}\right)^{\frac 12}
\leq \frac {\l}{100}\int  \e^2 e^{\l y} + \frac C{|s|^{3}}.
\end{align*}
Using \eqref{eq:202} (recall $\gamma =\frac 34$),  
\begin{align*}
&\left|\int \Psi_b \e e^{\l y} \right|
 \lesssim  {|b|^{\frac 74}} \int_{-2|b|^{-\frac 34}<y<-|b|^{-\frac 34}}
  |\e| e^{\l y} +  {|b|^2} \int_{y<0} |\e| e^{\l y} + {|b|^2} \int_{y>0}   |\e|  e^{-\frac y4} \\&
    \lesssim \frac 1{|s|^{\frac 74}}
 \left(\int_{y<-|b|^{-\frac 34}}  e^{\l y}\right)^{\frac 12} \left(\int \e^2 e^{\l y}\right)^{\frac 12} + \frac 1{|s|^2} \left(\int_{y<0}  e^{\l y}\right)^{\frac 12} \left(\int \e^2 e^{\l y}\right)^{\frac 12}+\frac 1{|s|^3}    \\& \lesssim \left( \frac 1{|s|^{\frac 32}} e^{- \l |b|^{-\frac 34}} + \frac 1{|s|^{\frac 74}}\right)\left(\int \e^2 e^{\l y}\right)^{\frac 12}
+\frac {1}{|s|^3}\leq \frac {\l}{100}\int  \e^2 e^{\l y} + \frac C{|s|^{3}}.
\end{align*}
Next, since $|R_b(\e)|=5|Q_b^4 - Q^4||\e| \lesssim |b||\e|$, we have
\begin{align*}
\left| \int (R_b(\e))_y \e e^{\l y} \right| 
& \leq \left| \int R_b(\e) (\l |\e|+|\e_y|) e^{\l y} \right|
\\ & 
\lesssim 
|b| \int    (\e_y^2 + \e^2) e^{\l y} \leq \frac \l{100} \int    (\e_y^2 + \e^2) e^{\l y}.
\end{align*}
Finally, since
$$
|R_{\rm NL}(\e)| \leq \|\e\|_{L^\infty} (\|\e\|_{L^\infty}^3 + |b| + e^{-\frac {y}{10}}) |\e|,
$$
we get
\begin{align*}
\left| \int (R_{\rm NL}(\e))_y \e e^{\l y} \right| 
& \leq  \left| \int  R_{\rm NL}(\e) (\l |\e| + |\e_y|)  e^{\l y} \right| 
\\ & 
\lesssim 
 \frac 1{|s|^{\frac 32}}  \int    (\e_y^2 + \e^2) e^{\l y} 
 \leq
 \frac \l{100} \int    (\e_y^2 + \e^2) e^{\l y}.
  \end{align*}

The collection of above bound ensures $$
  \frac d{ds} \int \e^2 e^{\l y}  \lesssim \frac 1{|s|^3}
$$
which integration on $(-\infty,s]$ using \eqref{tendverszero} yields \eqref{decayscaling}.

\subsection{Proofs of \eqref{sobolev}--\eqref{sobolevbis}}
Note first that by standard arguments,  
$$
\left\| \left( (\partial_y^2 \e)^2+(\partial_y  \e)^2  \right)(s ) e^{\omega  y  }\right\|_{L^\infty}
\lesssim
\int \left((\partial_y^3 \e)^2 +(\partial_y^2 \e)^2+ (\partial_y \e)^2 + \e^2\right)(s,y) e^{\omega  y  } dy,
$$
and so  it is sufficient to prove \eqref{sobolev}.

The proof is similar to Section 3.4 in \cite{Ma1} and involves some computations originally introduced in \cite{Kato}.
To prove \eqref{sobolev}, we need only rough bounds on $\e$ and it is therefore simpler to decompose $$\e+Q_b=\et+Q$$ which satisfies: 
\be
\label{cmneopmep}
\pa_s\et+\pa_y(\pa_y^2\et-\et+F(\et))=\lsl(\Lambda Q+\Lambda \et)+\left(\frac{x_s}{\l}-1\right)(\pa_yQ+\pa_y\et),
\ee
with $$F(\et)=(Q+\et)^5-Q^5.$$

From \eqref{estrescbisAA}, \eqref{estrescAAbis} and $Q_b-Q= b P \chi_b$ (see Lemma \ref{cl:2}) we have the following estimates on $\et$:
\be\label{et}
\|\et\|_{L^\infty} \lesssim \frac 1{\sqrt{|s|}},\quad
\int \et^2 e^{-\frac {|y|}{10}} \leq \frac 1{|s|^2}.
\ee
From \eqref{estrescAAbis},  
$$
\int \e^2(s) \varphi_{B} + \int_{-\infty}^s \int (\e_y^2 + \e^2)(s')\varphi_{B} ds'
\lesssim \frac  1{|s|^3}
$$
and thus, since $|b(s)|\lesssim \frac 1{|s|}$,  for $|s|$ large,
\be\label{initialisation}
\int \et^2(s) \varphi_{B} + \int_{-\infty}^s \int (\et_y^2 + \et^2)(s')\varphi_{B} ds'
\lesssim \frac  1{|s|}.
\ee
Moreover, 
since  $u(t)$ is  a minimal mass blowing up solution and $\l(s)$ is increasing for $|s|$ large,  
from Lemma \ref{lemmadecay} and then using the properties  of 
$Q_b$ (see Lemma \ref{cl:2}), we obtain for $|s|$ large,
\be
  \label{controlrightbis}
 \hbox{ $  \forall y>0$, }  |\e(s,y)| \lesssim  e^{-\frac y{20} }
 \quad \hbox{and so}\quad 
\hbox{$\forall y >0$, }   |\et(s,y)| \lesssim  e^{-\frac y{20} } .
  \ee
  
In particular, it follows that for all $\frac 1{B} \leq\omega < \frac 1{10}$,
\be\label{azero}
\lim_{s\to -\infty} \int \et^2(s,y)  \eoy  dy =0.
\ee

{\bf step 1 } We claim that 
for all $\frac 1{B} < \omega < \frac 1{10}$, for $|s|$ large,
\be\label{Astep1}
 \int  \et^2(s,y)  \eoy dy  + \int_{-\infty}^s \int \left( \et_y^2(s',y)  +  \et^2(s',y) \right) \eoy dy ds' \lesssim \frac 1{|s|}.
\ee

Define
$$
H_0(s) = \frac 12 \int \et^2(s,y) \eoy dy.
$$
Then,
\begin{align*}
&\frac d{ds} H_0  = 
\int \et_s \et \eoy  =-\int \left(-\et_{yy} + \et - F(\et)\right) (\et \eoy)_y
 + \lsl \int (\Lambda Q+ \Lambda \et) \et \eoy \\
 &+ \left(\xsl-1\right) \int (Q' + \et_y) \et \eoy\\
& = -\frac 32 \omega \int \et_y^2 \eoy - \frac 12 \omega(1-\omega^2) \int \et^2 \eoy 
+ \int F(\et) (\et \eoy)_y \\
& + \lsl \int \Lambda Q \et \eoy - \frac \omega 2 \lsl \int \et^2 y \eoy
+ \left(\xsl-1\right) \int Q' \et \eoy - \frac \omega 2 \left(\xsl-1\right) \int \et^2 \eoy.
\end{align*}

First, by decay properties of $Q$ and since $\|\et\|_{L^\infty}^2 \lesssim \frac 1{|s|}$  (by \eqref{et}),
for $|s|$ large,
\begin{align*}
\left| \int F(\et) (\et \eoy)_y \right|&\lesssim
\int (|\et|Q^4 + |\et|^5 ) (|\et_y|+|\et|) \eoy
\\ &\lesssim \frac {\omega} {100}  \int (\et_y^2+ \et^2) \eoy + \int (\et_y^2 + \et^2)\varphi_{B}.
\end{align*}

Second, by $|\lsl|+\left|\xsl-1\right|\lesssim \frac 1{|s|}$, the decay properties of $Q$ and \eqref{et}  
    \be\label{pourri}
\left|\lsl \int \Lambda Q \et \eoy \right|
+ \left|\left(\xsl-1\right) \int Q' \et \eoy \right|
\lesssim \frac 1{|s|}\left(\int \et^2 e^{-\frac {|y|}{10}}\right)^{\frac 12}
\lesssim \frac 1{|s|^2}.
\ee
Finally, for  $\frac 1{B}<\omega''<\omega<\omega'\leq \frac 1{10}$, and then using \eqref{azero},
\begin{align*}
\left|\lsl \int \et^2 y \eoy \right| & \lesssim 
\frac 1{|s|} \left(\int \et^2 y^2 e^{\omega y}\right)^{\frac 12} \left(\int \et^2  e^{\omega y}\right)^{\frac 12} \\
&\lesssim \frac 1{|s|} \left(\int \et^2  \left(e^{\omega''y} + e^{\omega' y}\right)\right)^{\frac 12} \left(\int \et^2  e^{\omega y}\right)^{\frac 12}
\lesssim \frac {\omega}{100} \int \et^2  e^{\omega y} +\frac 1{s^2}.
\end{align*}

In conclusion, we get
$$
\frac d{ds} H_0 \leq  
- \frac {\omega}{4}\int \left( \et_y^2   +  \et^2  \right) \eoy dy +  C\int (\et_y^2 + \et^2)\varphi_{B}  +\frac C{|s|^2}.
$$
Integrating on $(-\infty,s]$,   
  using \eqref{initialisation} and $\lim_{s\to -\infty} H_0(s) =0$ by \eqref{azero}, we get \eqref{Astep1}. In particular, for some sequence $s_n\to -\infty$,
\be\label{azero1}
\lim_{n\to \infty} \int   \et_y^2(s_n,y)\eoy dy =0.
\ee

{\bf step 2 } We claim that 
for all $\frac 1{B} < \omega < \frac 1{10}$, for $|s|$ large,
\be\label{Astep2}
 \int  \et_y^2(s,y)  \eoy dy  + \int_{-\infty}^s \int   \et_{yy}^2(s',y)    \eoy dy ds' \lesssim \frac 1{|s|}.
\ee

Define
$$
H_1(s) = \int \frac 12  \left(\et_y^2(s,y) +\et^2(s,y)\right) \eoy
- \left( \frac {(Q+\et)^6}6 - \et Q^5 - \frac {Q^6}{6}\right) \eoy dy.
$$
Then,
\begin{align*}
& \frac d{ds} H_1=\int \et_s \left( - \et_{yy} + \et - F(\et)\right) \eoy
- \omega \int \et_s \et_y \eoy \\
& = -\frac \omega 2 \int \left(-\et_{yy} + \et-F(\et)\right)^2 \eoy
- \omega \int \left(-\et_{yy} + \et-F(\et)\right)_y \et_y \eoy\\
& + \lsl \int ( \Lambda Q + \Lambda \et) \left(-\et_{yy} + \et-F(\et)\right) \eoy
+ \omega \lsl \int (\Lambda Q + \Lambda \et) \et_y \eoy\\
& + \left(\xsl-1\right) \int (Q'+ \et_y) \left(-\et_{yy} + \et-F(\et)\right) \eoy
+ \omega  \left(\xsl-1\right) \int (Q'+ \et_y) \et_y \eoy\\
& \leq - \omega \int \et_{yy}^2 \eoy - \omega( 1- \tfrac 12 \omega^2) \int \et_y^2 \eoy
-\omega \int F(\et) (\et_y \eoy)_y\\
&+ \lsl \int ( \Lambda Q + \Lambda \et) \left(-\et_{yy} + \et-F(\et)\right) \eoy
+ \omega \lsl \int (\Lambda Q + \Lambda \et) \et_y \eoy\\
& + \left(\xsl-1\right) \int (Q'+ \et_y) \left(-\et_{yy} + \et-F(\et)\right) \eoy
+ \omega  \left(\xsl-1\right) \int (Q'+ \et_y) \et_y \eoy
\end{align*}

First, as in step 1,
for $|s|$ large,
\begin{align*}
\left| \int F(\et) (\et_y \eoy)_y \right|&\lesssim
\int (|\et|Q^4 + |\et|^5 ) (|\et_{yy}|+|\et_y|) \eoy
\\ &\lesssim \frac {\omega} {100}  \int (\et_{yy}^2+ \et_y^2+\et^2) \eoy + \int (\et_y^2 + \et^2)\varphi_{B}.
\end{align*}
Second,  the following estimates are proved as in step 1, \eqref{pourri}, after possible integrations by parts
\begin{align*}
&\left| \lsl \int  \Lambda Q   \left(-\et_{yy} + \et-F(\et)\right) \eoy\right|+
\left| \lsl \int  \Lambda Q  \et_y \eoy\right|\\ &+ \left|
  \left(\xsl-1\right) \int  Q'  \left(-\et_{yy} + \et-F(\et)\right) \eoy\right|+
 \left|  \left(\xsl-1\right) \int Q' \et_y \eoy\right| 
 \lesssim \frac 1{s^2}.
\end{align*}
  For example,
by the decay properties of $Q$ and \eqref{initialisation},
$$
\left|\lsl \int  \Lambda Q   \et_{yy}   \eoy\right|
\lesssim \frac 1{|s|} \int |(\Lambda Q \eoy)_{yy}| |\et| \lesssim \frac 1{s^2}.
$$

Finally, we observe that
$$
\int   (\Lambda \et) \et_{yy}  \eoy
= \int (-\et_y^2\eoy + \tfrac {\omega^2}{2} \et^2 \eoy 
+ \et_y^2 (y\eoy)_y),
$$
and thus  for some $\frac 1{B} <\omega'' <\omega<\omega'<\frac 1{10}$,
$$
\left| \lsl \int   (\Lambda \et) \et_{yy}  \eoy\right|
\lesssim \frac 1{|s|} \int (\et_y^2 + \et^2) \left(e^{\omega' y} + e^{\omega'' y}\right).
$$
All the remaining terms are easier and are treated similarly as in step 1.\\
The collection of above bounds yields:
\begin{align*}
\frac d{ds} H_1 & \lesssim 
- \int \left(\et_{yy}^2+  \et_y^2   +  \et^2  \right) \eoy dy +  \int (\et_y^2 + \et^2)\varphi_{B}\\
& +  \int (\et_y^2 + \et^2) \left(e^{\omega' y} + e^{\omega'' y}\right)   +\frac 1{|s|^2}.
\end{align*}
 Note that   $\lim_{n\to \infty} H_1(s_n)=0$ by \eqref{azero} and \eqref{azero1}.
Integrating on $[s_n,s]$, and then passing to the limit as $n\to +\infty$, using   \eqref{initialisation} and \eqref{Astep1} for $\omega'$ and $\omega''$,   we find \eqref{Astep2}. In particular, there exists a subsequence still denoted $(s_n)$ such that
\be\label{azero2}
\lim_{n\to \infty} \int   \left(\et_{yy}^2+\e_y^2+\et^2\right)(s_n,y)\eoy dy =0.
\ee

{\bf step 3 } We claim that 
for all $\frac 3{B} < \omega < \frac 1{10}$, for $|s|$ large,
\be\label{Astep3}
 \int  \et_{yy}^2(s,y)  \eoy dy  + \int_{-\infty}^s \int   \et_{yyy}^2(s',y)  \eoy dy ds' \lesssim \frac 1{|s|}.
\ee

Define
$$
H_2(s) = \frac 12  \int \et_{yy}^2 \eoy - \frac {25}{6} \int \et_y^2 \et^4 \eoy.
$$
Then,
$$ \frac d{ds} H_2=
   \int \et_{yys}   \et_{yy} \eoy   - \frac {25}3 \int \left(\et_{ys}\et_y \et^4 + 2\et_s  \et_y^2 \et^3\right)\eoy= H_{2,1}+ H_{2,2}
$$

First,
\begin{align*}
% \int \et_{yys}   \et_{yy} \eoy  & =
H_{2,1}&=   \int \left(-\et_{yy}+\et - F(\et)\right)_{yyy}  \e_{yy} \eoy  \\
&  + \lsl\int (\Lambda Q + \Lambda \et)_{yy}\et_{yy} \eoy  
+\left(\xsl-1\right) \int (Q'+\et_y)_{yy} \et_{yy} \eoy\\
& = -\frac 32 \omega \int \et_{yyy}^2 \eoy - \frac \omega2 (1-\omega^2) \int \et_{yy}^2 \eoy
- 50 \int \et_{yy}^2 \et_y \et^3 \eoy
\\ &
+ \int (F(\et)-\et^5)_{yy} (\e_{yy}\eoy)_y
 \\&+ \frac 52 \omega \int \et_{yy}^2 \et^4 \eoy +30 \int \et_y^5 \et \eoy
+ 15 \omega \int \et_y^4 \et^2 \eoy\\
&  + \lsl\int (\Lambda Q + \Lambda \et)_{yy}\et_{yy} \eoy  
+\left(\xsl-1\right) \int (Q'+\et_y)_{yy} \et_{yy} \eoy.
\end{align*}

Second,
\begin{align*}
 %&- \int \left(\et_{ys}\et_y \et^4 + 2\et_s  \et_y^2 \et^3\right)\eoy
 H_{2,2} &
 = \frac {25}3\int \et_{s} \left( (\et_y\et^4 \eoy)_y - 2 \et_y^2 \et^3\eoy\right)
 \\&=
  \frac {25}3\int\left(-\et_{yy}+\et- F(\et)\right)_{y}
\left(\et_{yy} \et^4+ 2 \et_y^2 \et^3 +\omega \et_y \et^4 \right)\eoy \\
& +\frac {25}3 \lsl \int (\Lambda Q+ \Lambda \et) \left(\et_{yy} \et^4+ 2 \et_y^2 \et^3 +\omega \et_y \et^4 \right)\eoy
\\ &+\frac {25}3\left(\xsl-1\right) \int (Q'+\et_y) \left(\et_{yy} \et^4+ 2 \et_y^2 \et^3 +\omega \et_y \et^4 \right)\eoy\\
&=50\int \et_{yy}^2 \et_y\et^3 \eoy  + \frac {25}3\omega \int \et_{yy}^2 \et^4 \eoy 
-\frac {25}2 \int \et_y^5 \et \eoy -\frac {175}{12} \omega \int \et_y^4 \et^2 \eoy\\& -\frac {25}9 {\omega^3}  \int \et_y^3 \et^3 \eoy
 -\frac {25}3\int\left(F(\et)- \et^5\right)_{y}
\left(\et_{yy} \et^4+ 2 \et_y^2 \et^3 +\omega \et_y \et^4 \right)\eoy\\
& +\frac {25}3 \lsl \int (\Lambda Q+ \Lambda \et) \left(\et_{yy} \et^4+ 2 \et_y^2 \et^3 +\omega \et_y \et^4 \right)\eoy
\\ &+\frac {25}3\left(\xsl-1\right) \int (Q'+\et_y) \left(\et_{yy} \et^4+ 2 \et_y^2 \et^3 +\omega \et_y \et^4 \right)\eoy.
\end{align*}

The main observation when looking at   the above expressions of $H_{2,1}$ and $H_{2,2}$ is that
the higher order nonlinear term $\int \et_{yy}^2 \et_y\et^3 \eoy$   cancels in the expression  of $\frac d{ds} H_2$. All other terms are now controlled as follows.

First, by \eqref{et},
\begin{align*}
\left| \int \et_{yy}^2 \et^4 \eoy \right|
\lesssim \frac 1{|s|^2} \int \et_{yy}^2 \eoy.
\end{align*}
Second, by Holder inequality, \eqref{gn}, and then \eqref{Astep1}, \eqref{Astep2}, for 
$\frac 3{100} < \omega < \frac 1{10}$,
\begin{align*}
\left|  \int \et_y^5 \et \eoy \right|
&\lesssim \left(\int \et_y^6 \eoy\right)^{\frac 56} \left(\int \et^6 \eoy\right)^{\frac 16}\\
& \lesssim\left(\int \left(\et_{yy}^2 + \et_y^2+ \et^2\right) \eoyt\right)^{\frac 56}
\left(\int \left(\et_y^2+ \et^2\right) \eoyt\right)^{\frac {13}6}\\
&\lesssim \int \left(\et_{yy}^2 + \et_y^2+ \et^2\right) \eoyt + \frac 1{|s|^{13}}.
 \end{align*}
Similar estimates are proved for  $|\int \et_y^4 \et^2 \eoy|$ and $|\int \et_y^3 \et^3 \eoy|$.
Next, for terms containing $F(\et)-\et^5$, we argue as follows. A first observation is (using \eqref{estrescbisAA}),
$$
|(F(\et)-\et^5)_y|\lesssim (|\et_y| + |\et|) Q,\quad
|(F(\et)-\et^5)_{yy}|\lesssim (|\et_{yy}|+|\et_y|^2 + |\et_y| + |\et|) Q.
$$
Thus,
\begin{align*}
& \left| \int (F(\et)-\et^5)_{yy} (\et_{yy}\eoy)_y\right|
\leq C \int (|\et_{yy}| + |\et_y|^2 + |\et_y| + |\et|) (|\et_{yyy}|+|\et_{yy}|) Q \eoy\\
& \leq \frac 1{100} \int (\et_{yyy}^2+\et_{yy}^2) \eoy +C \int (\et_{yy}^2 + \et_y^4 + \et_y^2 + \et^2) Q\\
& \leq \frac 1{100} \int (\et_{yyy}^2+\et_{yy}^2) \eoy +C \int (\et_{yy}^2 +   \et_y^2 + \et^2) \eoy + \frac C {s^2}.
\end{align*}
The term $|\int\left(F(\et)- \et^5\right)_{y}
\left(\et_{yy} \et^4+ 2 \et_y^2 \et^3 +\omega \et_y \et^4 \right)\eoy|$ is treated similarly and easier.

Finally, terms containing $\lsl$ and $(\xsl-1)$ are treated similarly as in step 1 and step 2.
For example, let us consider the term 
$\lsl\int (\Lambda Q + \Lambda \et)_{yy}\et_{yy} \eoy $.
We first have
$$
\left|\lsl\int  (\Lambda Q)_{yy}\et_{yy} \eoy\right|
= \left|\lsl\int  ((\Lambda Q)_{yy}\eoy)_{yy}\et  \right|\lesssim \frac 1{s^2}.
$$

Since
$$
\int (\Lambda \et)_{yy} \et_{yy} \eoy = \frac 32 \int \et_{yy}^2 \eoy - \omega \int \et_{yy}^2 y \eoy,
$$
we
get, for some $\frac 1{100} < \omega''<\omega< \omega'<\frac 1{10}$,
$$
\left| \lsl\int (\Lambda \et)_{yy}\et_{yy} \eoy\right| \lesssim
\frac 1{s^2} \int  \et_{yy}^2   \left(e^{\omega' y} + e^{\omega'' y}\right).
$$

Gathering all the previous estimates, 
  we obtain
\begin{align*}
\frac d{ds} H_2 & \lesssim 
- \int \left(\et_{yyy}^3 + \et_{yy}^2+  \et_y^2   +  \et^2  \right) \eoy dy \\
& +  \int (\et_{yy}^2 + \et_y^2 + \et^2) \left(e^{\frac \omega 3 y} + e^{\omega'' y}\right)   +\frac 1{|s|^2}.
\end{align*}
Integrating on $[s_n,s]$ and passing to the limit $n\to +\infty$ using \eqref{initialisation}, \eqref{azero2} and \eqref{Astep2},
we get \eqref{Astep3}.

For some   sequence $s_n'\to -\infty$,
it implies
\be\label{azero3}
\lim_{n\to \infty} \int   \left(\et_{yyy}^2+\et_{yy}^2+\e_y^2+\et^2\right)(s_n',y)\eoy dy =0.
\ee
Note also that by standard arguments, \eqref{Astep3} implies directly that
\be\label{Astep3b}
\left\|\e_y^2 e^{\omega y}\right\|_{L^\infty} \lesssim \frac 1{|s|}.
\ee

\medskip

{\bf step 4 } We claim that 
for all $\frac 9{B} < \omega < \frac 1{10}$, for $|s|$ large,
\be\label{Astep4}
 \int  \et_{yyy}^2(s,y)  \eoy dy  + \int_{-\infty}^s \int   (\partial_y^4 \et)^2(s',y)  \eoy dy ds' \lesssim \frac 1{|s|}.
\ee

Define
$$
H_3(s) = \frac 12  \int \et_{yyy}^2 \eoy .
$$
Then,
\begin{align*}
& \frac d{ds} H_3=
   \int \et_{yyys}   \et_{yyy} \eoy   =
  \int \left(-\et_{yy}+\et - F(\et)\right)_{yyyy}  \e_{yyy} \eoy  \\
&  + \lsl\int (\Lambda Q + \Lambda \et)_{yyy}\et_{yyy} \eoy  
+\left(\xsl-1\right) \int (Q'+\et_y)_{yyy} \et_{yyy} \eoy\\
& = -\frac 32 \omega \int \et_{yyyy}^2 \eoy - \frac \omega2 (1-\omega^2) \int \et_{yyy}^2 \eoy
+ \int (F(\et))_{yyy} (\e_{yyy}\eoy)_y\\
&  + \lsl\int (\Lambda Q + \Lambda \et)_{yyy}\et_{yyy} \eoy  
+\left(\xsl-1\right) \int (Q'+\et_y)_{yyy} \et_{yyy} \eoy.
\end{align*}
The last two terms $\lsl\int (\Lambda Q + \Lambda \et)_{yyy}\et_{yyy} \eoy $ and $\left(\xsl-1\right) \int (Q'+\et_y)_{yyy} \et_{yyy} \eoy$ are treated exactly as in the previous steps and thus we omit the estimates.

We focus on the nonlinear term  $\int (F(\et))_{yyy} (\e_{yyy}\eoy)_y$.
Expanding $F(\et) = 5 Q^4 \et + 10 Q^3 \et^2 + 10 Q^2 \et^3 + 5 Q \et^4 + \et^5$ and integrating by parts,
we obtain many different terms. We check the worst terms and we claim that the other terms can be checked
similarly. See also Section 3.4 in \cite{Ma1} for similar arguments.

First, we remark that the following term which is only quadratic in $\et$, is easily controlled 
$$
\left| \int \et_{yyy}^2 (Q^4)' e^{\omega y}\right| \lesssim \int  \et_{yyy}^2  e^{\omega y}.
$$

Second, we treat some terms coming from $\et^5$:
$$
\left| \int \et_{yyy}^2 \et_y \et^3 e^{\omega y}\right|
\lesssim \|\et\|_{L^\infty}^3 \left\|\et_y e^{\frac \omega 2 y}\right\|_{L^\infty} \int \et_{yyy}^2  e^{\frac \omega 2y}
\lesssim \frac 1{|s|^2}  \int \et_{yyy}^2  e^{\frac \omega 2y};
$$

\begin{align*}
 \left| \int \et_{yy}^3 \et_y \et^2 e^{\omega y}\right|& \lesssim 
\|\et\|_{L^\infty}^2 \left\|\et_y e^{\frac \omega 4 y}\right\|_{L^\infty}  \left|
\int \et_{yy}^3   e^{\frac {3\omega} 4 y}\right|  \\
&\lesssim 
\frac 1{|s|^{\frac 32}} \left(\int \left(\et_{yyy}^2+ \et_{yy}^2 \right)e^{\frac \omega 2 y}\right)^{\frac 14}
\left(\int \et_{yy}^2e^{\frac \omega 2 y} \right)^{\frac 54} \\
& \lesssim \frac 1{|s|^{\frac {11}3}} + \int \et_{yyy}^2 e^{\frac \omega 2 y}.m
\end{align*}
 \begin{align*}
 \left| \int \et_{yy}^2 \et_y^3 \et e^{\omega y}\right|& \lesssim 
\|\et\|_{L^\infty} \left\|\et_y e^{\frac \omega 5 y}\right\|_{L^\infty}^3  \left|
\int \et_{yy}^2   e^{\frac {2\omega} 5 y}\right|  \lesssim \frac 1{|s|^3}  .
\end{align*}

Thus, we get 
$$
\frac d{ds} H_3 \lesssim - \int (\partial_y^4 \e)^2 e^{\omega y}+ \frac 1{|s|^2} + \int \left(\et_{yyy}^2+\et_{yy}^2 + \et_y^2 + \et^2\right) \left(e^{\frac {2\omega} 5y}+e^{\omega y}\right).
$$
Integrating on $[s_n',s]$ and passing to the limit as $n\to +\infty$, using \eqref{azero3} and \eqref{Astep3}, we obtain \eqref{Astep4}, for the following range of values of $\omega$: $\frac {15}{2 B} <\omega <\frac 1{10}$.

\section{Proof of Lemma \ref{le:6.4}}\label{app:B}

 For simplicity of notation, we denote $\e_{v_n}$, $\lambda_{v_n}$, $b_{v_n}$ and $x_{v_n}$
 simply by $\e$, $\lambda$, $b$ and $x$.
\\

\textbf{step 1} Algebraic computations.
We follow closely the computations of the proof of Proposition 3.1 in \cite{MMR1}.
First,
\begin{align*}
  \frac d{d{s}}  {\cal F_B}   &=\frac{1}{\lambda^2}\left(
\frac 12 \int  \left[ \e_y^2 \tilde \phi_B' + \frac{\lambda^2}{\lambda_{0}^2}\e^2 \phi_B' 
- \frac 13 \left( (\e+Q_b)^6 - Q_b^6 - 6 \e Q_b^5\right)\tilde \phi_B' \right]\right.  \\
 &  + 2\int \tilde \phi_B(\varepsilon_y)_s \varepsilon_y + 2\varepsilon_s \left[  \frac{\lambda^2}{\lambda_{0}^2}\varepsilon \phi_B - \tilde \phi_B\left((\varepsilon+ Q_b)^5 -Q_b^5\right)\right]\\
&   \left.-2 \int \tilde \phi_B(Q_b)_s \left((\varepsilon+Q_b)^5 - Q_b^5 - 5 \varepsilon Q_b^4\right) \right)\\
& - 2  \frac{\lambda_s}{\lambda^3}   \int  \left[ \e_y^2 \tilde \phi_B  
- \frac 13 \left( (\e+Q_b)^6 - Q_b^6 - 6 \e Q_b^5\right)\tilde \phi_B \right]
-  2  \frac{(\lambda_{0})_s}{\lambda_{0}^3}   \int    \e^2 \phi_B
\\ &
 =  \frac{1}{\lambda^2}\left(f_{1}+f_2+f_3+f_4\right) ,
\end{align*}
  where 
\begin{align*}
  f_{1} & =\frac 12 \int   \left[\e_y^2 \tilde \phi_B' + \e^2 \phi_B' 
- \frac 13 \left( (\e+Q_b)^6 - Q_b^6 - 6 \e Q_b^5\right)\tilde \phi_B'  \right] \\
 &  +
      2 \int \left( \varepsilon_{s} - {\frac{{\lambda}_s}{{\lambda}}} {\Lambda} \varepsilon\right) \left( - (\tilde \phi_B\e_y)_{y} + \varepsilon \phi_B -\tilde \phi_B\left((\varepsilon +Q_b )^5 - Q_b ^5\right) \right) ,\\
f_2 & = 2 \left(1-\frac{\lambda^2}{\lambda_{0}^2}\right)   \int\varepsilon_s     \varepsilon \phi_B -  2  \frac{(\lambda_{0})_s}{\lambda_{0}}   \int    \e^2 \phi_B\\
f_3 & = 
 2 {\frac{{\lambda}_s}{{\lambda}}}  \int    {\Lambda} \varepsilon  \left( - (\tilde \phi_B\varepsilon_y)_y + \varepsilon\phi_B -\tilde \phi_B\left((\varepsilon +Q_b )^5 - Q_b ^5\right)\right)\\
 & - 2  \frac{\lambda_s}{\lambda}   \int  \left[ \e_y^2 \tilde \phi_B  
- \frac 13 \left( (\e+Q_b)^6 - Q_b^6 - 6 \e Q_b^5\right)\tilde \phi_B \right]\\
 f_4 & =
  - 2  \int \tilde \phi_B(Q_b)_{s} \left( (\varepsilon +Q_b)^5 - Q_b ^5 - 5 \varepsilon Q_b^4\right).
\end{align*}
We  use the equation of $\e$ under the following form
\bea
\label{eqebis}
 \varepsilon_{s} -  {\frac{{\lambda}_s}{{\lambda}}} {\Lambda} \varepsilon
 &  = &  \left(-\varepsilon_{yy} + \varepsilon - (\varepsilon +Q_b )^5 + Q_b ^5\right)_y      \\
  \nonumber  & + & \left(\frac {{\lambda}_{s}}{{\lambda}}+{b}\right) {\Lambda} Q_b
+ \left(\frac { x_{{s}}}{\lambda} -1\right) (Q_b  + \varepsilon)_y  + \Phi_{{b}} + \Psi_{{b}},
\eea
where
$\Phi_b = - {b}_{s} \left(\chi_b   + \gamma   y (\chi_b)_y\right) P$
and
$-\Psi_b=\left(Q_b''- Q_b+ Q_b^5\right)'+b {\Lambda} Q_b$.

\medskip

\textbf{step 2} Control of $f_1$.
\begin{align*}
  \rm f_{1}& = \frac 12 \int  \left[ \e_y^2 \tilde \phi_B' + \e^2 \phi_B' 
- \frac 13 \left( (\e+Q_b)^6 - Q_b^6 - 6 \e Q_b^5\right)\tilde \phi_B'  \right] \\
 &  +2 \int   \left(  -\varepsilon_{yy} {+} \varepsilon  {-}\left((\varepsilon +Q_b )^5 {-} Q_b ^5\right)  \right)_y \left(-(\tilde \phi_B\e_y)_y{+}\e\phi_B{-}\tilde \phi_B[(Q_b+\e)^5-Q_b^5]\right) 
\\ & +2\left(\frac {{\lambda}_{s}}{{\lambda}}+{b}\right) \int {\Lambda} Q_b  \left( - (\tilde \phi_B\varepsilon_y)_y + \varepsilon \phi_B-\tilde \phi_B\left((\varepsilon +Q_b )^5 - Q_b ^5\right)   \right)
\\ &   + 2 \left(\frac { x_{{s}}}{\lambda} -1\right) \int (Q_b  + \varepsilon)_y
 \left( -(\tilde \phi_B \varepsilon_y)_y + \varepsilon \phi_B-\tilde \phi_B\left((\varepsilon +Q_b )^5 - Q_b ^5\right)   \right)
 \\ & + 2 \int  \Phi_{{b}}  \left( -(\tilde \phi_B \varepsilon_y)_y + \varepsilon \phi_B-\tilde \phi_B\left((\varepsilon +Q_b )^5 - Q_b ^5\right)   \right)
\\ & + 2 \int  \Psi_{{b}}   \left( -(\tilde \phi_B \varepsilon_y)_y + \varepsilon \phi_B-\tilde \phi_B\left((\varepsilon +Q_b )^5 - Q_b ^5\right)   \right) \\
& = {{\rm f}_{1,1}}+ {{\rm f}_{1,2}}+{{\rm f}_{1,3}}+{{\rm f}_{1,4}}+{{\rm f}_{1,5}}.
\end{align*}

As in \cite{MMR1}, we obtain after some computations,
\bee
f_{1,1} &=&  -\int\left[3\tilde \phi_B'\e_{yy}^2+(3\phi_B'+\frac 12 \tilde \phi_B'-\tilde \phi_B''')\e_y^2+(\frac 12\phi_B' - \phi_B''')\e^2\right]\\
\nonumber &
-& \frac 16 \int  \left( (\e+Q_b)^6 - Q_b^6 - 6 \e Q_b^5\right)\tilde \phi_B'
\\ \nonumber & - &2\int \left[\frac {(\varepsilon + Q_b )^6} 6-\frac {Q_b ^6} 6 - Q_b ^5 \varepsilon -\left((\varepsilon +Q_b )^5 - Q_b ^5\right) \varepsilon \right](\phi_B'-\tilde \phi_B')\\
\nonumber & + & 2  \int  \left[(\varepsilon+Q_b )^5 -Q_b ^5- 5 Q_b ^4 \varepsilon\right](Q_b )_y (\tilde \phi_B-\phi_B)\\
& + & 10\int\tilde \phi_B'\e_y\left\{(Q_b)_y[(Q_b+\e)^4-Q_b^4]+(Q_b+\e)^4\e_y\right\}\\
& + & \int\tilde \phi_B' \left[-2\e_{yy}+ 2\varepsilon  -\left((\varepsilon +Q_b )^5 - Q_b ^5\right)\right]  \left[ (\varepsilon +Q_b )^5 - Q_b^5\right]  \\
\eee
Using the following estimates (see \cite{MMR1} for more details), 
\begin{align}
& \tilde \phi_B''' \lesssim \frac 1{B^2}   \phi_B',\quad
  \phi_B''' \lesssim \frac 1{B^2}   \phi_B',   \quad  \hbox{ for all $y\in \RR$,}
  \label{un}\\&
  |Q_b(y)|+|(Q_b)_y(y)|\lesssim e^{-|y|} + |b|,  \quad  \hbox{ for all $y\in \RR$,} \label{deux}\\
 & \int \e^6 \phi_B' \lesssim \delta(\alpha^*)\int (\e_y^2 + \e^2) \phi_B',\\
 &
  \int \e_y^2 \e^4 \tilde \phi_B' \leq \delta(\alpha^*) \left(\int \e_{yy}^2 \tilde \phi_B' + 
  \int (\e_y^2 + \e^2) \phi_B'\right),\label{trois}
\end{align}
and the bound on the $L^2$ norm of $\e$ (see Lemma \ref{le:6.1}), 
we obtain for $B$ large and $\alpha^*$ small:
\begin{align*}
f_{1,1} &
\leq - \int \tilde \phi_B'\e_{yy}^2 - \frac 14 \int (\e_y^2 + \e^2) \phi_B'
+ C \int(\e_y^2 + \e^2) e^{-\frac{|y|}{10}} +C|b|^4
\\ & \leq - \int \tilde \phi_B'\e_{yy}^2 - \frac 14 \int (\e_y^2 + \e^2) \phi_B'
+ C \int(\e_y^2 + \e^2) \frac{\varphi_B}{1+y_+^2} +C|b|^4.
\end{align*}
 
Next,
\begin{align*}
f_{1,2} & =  2 \left(\frac {{\lambda}_{s}}{{\lambda}}+{b}\right)  \int {\Lambda} Q  (L \varepsilon) -2\left(\frac {{\lambda}_{s}}{{\lambda}}+{b}\right)\int\varepsilon (1-\phi_B)\Lambda Q
\\
&+ 2b \left(\frac {{\lambda}_{s}}{{\lambda}}+{b}\right)  \int {\Lambda} ( \chi_b P)  \left(-(\tilde \phi_B\e_y)_{y}  + \e\phi_B - \tilde \phi_B[(Q_b+\e)^5-Q_b^5)]\right)\\
& + 2 \left(\frac {{\lambda}_{s}}{{\lambda}}+{b}\right)  \int {\Lambda} Q      \left(-(\tilde \phi_B)_y\e_y
-(1-\tilde \phi_B)\e_{yy}+(1-\tilde \phi_B)[(Q_b+\e)^5-Q_b^5]\right) \\&
+ 2\left(\frac {{\lambda}_{s}}{{\lambda}}+{b}\right)  \int {\Lambda} Q  \left[(Q_b+\e)^5-Q_b^5-5Q^4\e\right].
\end{align*}
The main term  $\int {\Lambda} Q  (L \varepsilon)$ is zero by the orthogonality conditions on 
$\e$ and the other terms are controled as in \cite{MMR1} using \eqref{eq:2002}, \eqref{un}, \eqref{deux} and
\eqref{trois},
 to obtain
$$
| f_{1,2} | \leq \frac 1{100} \int (\e_y^2 + \e^2) \phi_B' + C \int(\e_y^2 + \e^2) \frac{\varphi_B}{1+y_+^2} +C|b|^4.
$$

The next term is  
\bee
f_{1,3}& = & 2\left(\xsl-1\right)\int\frac 16\tilde \phi_B'\left[(Q_b+\e)^6-Q_b^6-6Q_b^5\e\right]\\
& + & 2\left(\xsl-1\right)\int(b\chi_bP+\e)_y\left[-\tilde \phi_B'\e_y-\tilde \phi_B\e_{yy}+\e\phi_B\right]\\
& + &  2\left(\xsl-1\right) \int Q'\left[L\e-\tilde \phi_B' \e_y + (1-\tilde \phi_B)\e_{yy}-\e(1-\phi_B)\right]\\
& + &  10 \left(\xsl-1\right)  \int\e\tilde \phi_B(Q_b^4(Q_b)_y-Q^4Q_y) 
\eee
Using $LQ'=0$ and arguing similarly as before, we obtain
$$
| f_{1,3} | \leq \frac 1{100} \int (\e_y^2 + \e^2) \phi_B' + C \int(\e_y^2 + \e^2) \frac{\varphi_B}{1+y_+^2} +C|b|^4.
$$

 \medskip
 
\textbf{step 3} Control of $f_2  $.

First, by \eqref{6.15}, we have 
$$-\frac{(\lambda_{0})_s}{\lambda_{0}}   \int    \e^2 \phi_B<0.$$
     
Next, by the definition of $\lambda_0$ in Lemma \ref{le:oubli}, we have
$$
 \left|1-\frac{\lambda^2}{\lambda_{0}^2}\right| \lesssim \mathcal N^{\frac 12}
 \lesssim \delta(\alpha^*),
$$
and thus, proceeding for $\int\varepsilon_s     \varepsilon \phi_B$ as in the previous step, we find
$$  \left|1-\frac{\lambda^2}{\lambda_{0}^2}\right| \left|  \int\varepsilon_s     \varepsilon \phi_B\right|
\leq \frac 1{100} \int (\e_y^2 + \e^2) \phi_B' +C\int  (\e_y^2 + \e^2) \frac{\varphi_B}{1+y_+^2}+ C|b|^4 .$$
\medskip

 \medskip
 
\textbf{step 4} Control of $f_3$.
From computations in \cite{MMR1},
\begin{align*}
f_3 
&   =  \lsl\int[2 \tilde \phi_B-y\tilde \phi_B']\e_y^2
- \lsl \int y \tilde \phi_B' \e^2 \\ &-\frac 13\lsl\int[2 \tilde \phi_B-y\tilde \phi_B']\left[(\varepsilon+Q_b )^6 - Q_b ^6 - 6 Q_b ^5 \varepsilon \right]\\
& +2 {\frac{{\lambda}_s}{{\lambda}}} \int \tilde \phi_B {\Lambda} Q_b  \left( (\varepsilon +Q_b )^5 - Q_b ^5 - 5 Q_b ^4 \varepsilon\right) \\
& - 2  \frac{\lambda_s}{\lambda}   \int  \left[ \e_y^2 \tilde \phi_B  
- \frac 13 \left( (\e+Q_b)^6 - Q_b^6 - 6 \e Q_b^5\right)\tilde \phi_B \right].
\end{align*}
After simplification of the last line with terms in the first and second lines, we obtain
\begin{align*}
f_3 
&   =  -\lsl\int y\tilde \phi_B' \left[\e_y^2+ \e^2 - \frac 13\left((\varepsilon+Q_b )^6 - Q_b ^6 - 6 Q_b ^5 \varepsilon\right) \right]\\
& +2 {\frac{{\lambda}_s}{{\lambda}}} \int \tilde \phi_B {\Lambda} Q_b  \left( (\varepsilon +Q_b )^5 - Q_b ^5 - 5 Q_b ^4 \varepsilon\right).
\end{align*}
For this term we  observe, from the definition of $\phi_B$ and $\tilde \phi_B$,
\begin{align*}
 \int |y| \tilde \phi_B' (\e_y^2 +\e^2+|\e|^6) \lesssim  \int (\e_y^2+ \e^2) \phi_B'
\end{align*}
and $\left| \lsl\right| \lesssim \delta(\alpha^*)$.
The other terms in the expression of $f_3$ are treated as before, so that  we   obtain:
$$
|f_3| \leq \frac 1{100} \int \left(\e_y^2 +\e^2\right) \phi_B'
+ C \int(\e_y^2 + \e^2) \frac{\varphi_B}{1+y_+^2} +C|b|^4
$$

\medskip

\textbf{step 5} Control of $f_4$.
Arguing exactly as in \cite{MMR1} (using \eqref{eq:2003}), we obtain
$$
|f_4| \leq \frac 1{100} \int \left(\e_y^2 +\e^2\right) \phi_B'+ C \int(\e_y^2 + \e^2) \frac{\varphi_B}{1+y_+^2} +C|b|^4.
$$

Gathering these estimates, we get \eqref{monoepsvn}.

\medskip

\textbf{step 6} Proof of \eqref{coerepsvn}.
This is a standard fact by localization arguments
(see e.g. Appendix A of \cite{MMannals}).


\begin{thebibliography}{10}

\bibitem{BCD} V. Banica, R. Carles and T. Duyckaerts, Minimal blow-up solutions to the mass-critical inhomogeneous focusing NLS equation, Comm. Partial Differential Equations \textbf{36} (2011), no. 3, 487--531.
\bibitem{BeresCaze} H. Berestycki and T. Cazenave, Instabilit\'e des \'etats stationnaires dans les \'equations de Schr\"odinger et de Klein-Gordon non lin\'eaires. (French. English summary) [Instability of stationary states in nonlinear Schr\"odinger and Klein-Gordon equations] 
C. R. Acad. Sci. Paris S\'er. I Math. \textbf{293} (1981), no. 9, 489--492.
\bibitem{BW} J. Bourgain and  W. Wang,   Construction of blowup solutions for the nonlinear Schr\"odinger equation with critical nonlinearity, Ann. Scuola Norm. Sup. 
Pisa Cl. Sci. (4) \textbf{25} (1997),   197--215 (1998). 
\bibitem{BGT} N. Burq, P. G\'erard and N. Tzvetkov, Two singular dynamics of the nonlinear Schr\"odinger equation on a plane domain, Geom. Funct. Anal. \textbf{13} (2003),  1--19.
\bibitem{Cotekdv} R. C\^ote,  Construction of solutions to the $L^2$-critical KdV equation with a given asymptotic behaviour, Duke Math. J. \textbf{138} (2007),   487--531.
\bibitem{MMC} R. C\^ote, Y. Martel and F. Merle,   Construction of multi-soliton solutions for the $L^2$-supercritical gKdV and NLS equations, Rev. Mat. Iberoamericana, \textbf{27} (2011), 273--302.
\bibitem{CoteZaag} R. C\^ote and H. Zaag,
Construction of a multi-soliton blow-up solution to the semilinear wave equation in one space dimension, Comm. Pure Appl. Math. \textbf{66} (2013) no. 10, 1541--1581.
\bibitem{DKM} T. Duyckaerts, C. Kenig and F. Merle,  Universality of blow-up profile for small radial type II
blow-up solutions of energy-critical wave equation, J. Eur. Math. Soc. \textbf{13} (2011), no. 3, 533--599. 
\bibitem{DKM2} T. Duyckaerts, C. Kenig and F. Merle, Universality of the blow-up profile for small type II blowup
solutions of energy-critical wave equation: the non-radial case, J. Eur. Math. Soc. \textbf{14} (2012), no. 5, 1389-1454. 
\bibitem{DM2} T. Duyckaerts and F. Merle, Dynamics of threshold solutions for energy-critical wave equation. Int. Math. Res. Pap. IMRP 2007, Art. ID rpn002, 67 pp. (2008).
\bibitem{DM}  T. Duyckaerts and F. Merle,   Dynamic of threshold solutions for energy-critical NLS, Geom. Funct. Anal. \textbf{18} (2009),   1787--1840.
\bibitem{DR} T. Duyckaerts and S. Roudenko,  Threshold solutions for the focusing 3D cubic Schr\"odinger equation, Rev. Mat. Iberoam. \textbf{26} (2010),   1--56.
\bibitem{Kato} T. Kato, On the Cauchy problem for the (generalized) Korteweg-de Vries equation.  Studies in applied mathematics,  93--128, Adv. Math. Suppl. Stud., 8, Academic Press, New York, 1983.
\bibitem{KM} C.E.
Kenig and F. Merle,
Global well-posedness, scattering and blow-up for the energy-critical, focusing, nonlinear Schr\"odinger equation in the radial case.  
Invent. Math. \textbf{166} (2006) 645--675.
\bibitem{KPV} C.E. Kenig, G. Ponce and L. Vega, Well-posedness and scattering results for the generalized Korteweg--de Vries equation via the contraction principle, Comm. Pure Appl. Math. \textbf{46}, (1993) 527--620. 
\bibitem{KPV2} C.E. Kenig, G. Ponce and L. Vega, On the concentration of blow up solutions for the generalized KdV equation critical in $L^2$. Nonlinear wave equations (Providence, RI, 1998), 131--156, Contemp. Math., \textbf{263}, Amer. Math. Soc., Providence, RI, 2000.
\bibitem{KM1} C.E. Kenig  and  F. Merle,  Global well-posedness, scattering and blow-up for the energy-critical, focusing, non-linear Schr\"odinger equation in the radial case, Invent. Math. \textbf{166} (2006),   645--675.
\bibitem{KKSV} R. Killip,  S. Kwon, S. Shao and M. Visan,  On the mass-critical generalized KdV equation, Discrete Contin. Dyn. Syst. \textbf{32} (2012),   191--221.
\bibitem{KMR}
 J. Krieger, Y. Martel and P. Rapha\"el,  Two-soliton solutions to the three-dimensional gravitational Hartree equation, Comm. Pure Appl. Math. \textbf{62} (2009),   1501--1550.
\bibitem{KLR}  J. Krieger, E. Lenzmann and P. Rapha\"el,  Nondispersive solutions to the $L^2$-critical half-wave equation. Arch. Ration. Mech. Anal. \textbf{209} (2013), no. 1, 61--129.
 \bibitem{KNS} J. Krieger, K. Nakanishi  and  W. Schlag,  Global dynamics away from the ground state for the energy-critical nonlinear wave equation,   Amer. J. Math. \textbf{135} (2013), no. 4, 935–965.
\bibitem{KSNLS} J. Krieger and W. Schlag, Non-generic blow-up solutions for the critical focusing NLS in 1-D, J. 
Eur. Math. Soc.  \textbf{11} (2009),  1--125.
\bibitem{KST} J. Krieger, W.  Schlag and  D. Tataru,  Renormalization and blow up for charge one equivariant critical wave maps, Invent. Math. \textbf{171} (2008),   543--615.
\bibitem{KST2} J. Krieger, W.  Schlag and  D. Tataru,   Slow blow-up solutions for the $H^1(\RR^3)$ critical focusing semilinear wave equation, Duke Math. J. \textbf{147} (2009), 1--53.
\bibitem{Ma1}  Y. Martel, Asymptotic $N$--soliton--like solutions of the subcritical and critical generalized Korteweg--de Vries equations, Amer. J. Math.  \textbf{127} (2005), 1103-1140.
\bibitem{MMjmpa} Y. Martel and F. Merle, 
A Liouville theorem for the critical generalized Korteweg--de Vries equation, 
J. Math. Pures Appl. \textbf{79} (2000), 339--425.
\bibitem{MMgafa} Y. Martel and F. Merle, Instability of solitons for the critical generalized Korteweg-de Vries equation.  Geom. Funct. Anal.  \textbf{11}  (2001),   74--123.
\bibitem{MMannals} Y. Martel and F. Merle,
Stability of blow up profile and lower bounds for blow up rate for the critical generalized KdV equation,
Ann. of Math. \textbf{155} (2002), 235--280.
\bibitem{MMjams}
Y. Martel and F. Merle, Blow up in finite time and dynamics of blow up solutions for the $L\sp 2$-critical generalized KdV equation,  J. Amer. Math. Soc.  \textbf{15}  (2002),   617--664.
\bibitem{MMduke} Y. Martel and F. Merle,   Nonexistence of blow-up solution with minimal $L^2$-mass for the critical gKdV equation, Duke Math. J. \textbf{115} (2002),  385--408.
\bibitem{MMnls}
Y. Martel and F.  Merle, Multi solitary waves for nonlinear Schr\"odinger equations. Ann. Inst. H. Poincar\'e Anal. Non Lin\'eaire \textbf{23} (2006),   849--864.
\bibitem{MMcol1} Y. Martel and F. Merle, Description of the collision of two solitons for the quartic gKdV equation, Annals of Math. \textbf{174} (2011), 757--857.
\bibitem{MMR1} Y. Martel, F. Merle and P. Rapha\"el, 
Blow up for the critical generalized Korteweg-de Vries equation. I: Dynamics near the soliton, Acta Math. \textbf{212} (2014), no. 1, 59--140. 
\bibitem{MMR3} Y. Martel, F. Merle and P. Rapha\"el, Blow up for the critical gKdV equation. III: Exotic regimes. To appear in Annali della Scuola Normale Superiore di Pisa
\bibitem{MMT} Y. Martel, F. Merle and T.-P. Tsai, Stability in $H^1$ of the sum of $K$ solitary waves for some nonlinear Schr\"odinger equations, Duke Math. J. \textbf{133} (2006),    405--466.
\bibitem{Me0}
F. Merle,  Construction of solutions with exactly $k$ blow-up points for the Schr\"odinger equation with critical nonlinearity, Comm. Math. Phys. \textbf{129} (1990),   223--240.
\bibitem{Mduke} F. Merle, Determination of blow-up solutions with minimal mass for nonlinear Schr\"odinger equations with critical power, Duke Math. J. \textbf{69} (1993),  427--454.
\bibitem{Me} F. Merle, Nonexistence of minimal blow-up solutions of equations $iu_t = -\Delta u -k(x)|u|^{\frac 4N}u$ in $\RR^N$, Ann. Inst. H. Poincar\'e Phys. Th\'eor. \textbf{64} (1996),   33--85.
\bibitem{Mjams} F. Merle, Existence of blow-up solutions in the energy space for the critical generalized KdV equation.  J. Amer. Math. Soc.  \textbf{14}  (2001),  555--578.
\bibitem{MRgafa} F. Merle and P. Rapha\"el, Sharp upper bound on the blow up rate for the critical nonlinear Schr\"odinger equation,   Geom. Func. Anal. \textbf{13} (2003), 591--642.
\bibitem{MRinvent} F. Merle and P. Rapha\"el, On universality of blow-up profile for $L\sp 2$ critical nonlinear Schr\"odinger equation. Invent. Math. \textbf{156} (2004), 565--672.
\bibitem{MRannals} F. Merle and P. Rapha\"el, The blow up dynamics and  upper bound on the blow up rate for the critical nonlinear Schr\"odinger equation, Ann. of Math. \textbf{161} (2005), 157--222.
\bibitem{MRcmp} F. Merle and P. Rapha\"el, Profiles and quantization of the blow up mass for critical nonlinear Schr\"odinger equation, Commun. Math. Phys. \textbf{253} (2005), 675--704.
\bibitem{MRjams} F. Merle and P. Rapha\"el, On a sharp lower bound on the blow-up rate for the $L\sp 2$ critical nonlinear Schr\"odinger equation,  J. Amer. Math. Soc.  \textbf{19}  (2006),   37--90.
\bibitem{MRS} F. Merle, P. Rapha\"el and J. Szeftel,
The instability of Bourgain-Wang solutions for the $L^2$ critical NLS,  Amer. J. Math. \textbf{135} (2013), no. 4, 967--1017.
\bibitem{MRR}  F. Merle, P. Rapha\"el and  I. Rodnianski,
    Blow up dynamics for smooth data equivariant solutions to the energy critical Schrodinger map problem,  Invent. Math. \textbf{193} (2013), no. 2, 249--365.
\bibitem{NS1} K. Nakanishi and  W. Schlag,  Global dynamics above the ground state energy for the focusing nonlinear Klein-Gordon equation,
J. Differential Equations \textbf{250} (2011),   2299--2333.
\bibitem{NS2} K. Nakanishi and  W. Schlag,   Global dynamics above the ground state energy for the cubic NLS equation in 3D,  Calc. Var. Partial Differential Equations \textbf{44} (2012), no. 1-2, 1--45.
\bibitem{Rannalen} P. Rapha\"el,  Stability of the log-log bound for blow up solutions to the critical non linear Schr\"odinger equation, Math. Ann. \textbf{331} (2005),  577--609. 
\bibitem{Rzurich}     P. Rapha\"el, Stability and blow up for the nonlinear Schrodinger equation, Lecture notes for the Clay summer school on evolution equations, ETH, Zurich (2008),  {http://www.math.univ-toulouse.fr/$\sim$raphael/Teaching.html}
\bibitem{RR2009}  P. Rapha\"el and I. Rodnianski, Stable blow up dynamics for the critical co-rotational Wave Maps and equivariant Yang-Mills problems,  Publ. Math. Inst. Hautes Etudes Sci. \textbf{115} (2012), 1--122.
\bibitem{Rstud} P. Rapha\"el and R. Schweyer, Stable blow up dynamics for the 1-corotational harmonic heat flow,  Comm. Pure Appl. Math. \textbf{66} (2013), no. 3, 414--480.
\bibitem{RS2010} P. Rapha\"el and J. Szeftel, Existence and uniqueness of minimal blow up solutions to an inhomogeneous mass critical NLS,  J. Amer. Math. Soc. \textbf{24} (2011), no. 2, 471--546.
\bibitem{Shao} S. Shao,  The linear profile decomposition for the Airy equation and the existence of maximizers for the Airy Strichartz inequality, Anal. PDE \textbf{2} (2009),   83--117.

\bibitem{W1983} M.I. Weinstein, Nonlinear Schr\"odinger equations and sharp interpolation estimates, Comm. Math. Phys. \textbf{87} (1983), 567--576.

\end{thebibliography}
\end{document}